\DeclareRobustCommand{\SkipTocEntry}[4]{}
\renewcommand{\subsection}[1]{\vspace{.18in}\par\noindent\addtocounter{subsection}{1}\setcounter{equation}{0}{\bf\thesubsection.\hspace{5pt}#1}}
\newtheorem{theorem}{Theorem}[section]
\numberwithin{theorem}{section}
\numberwithin{equation}{theorem}
\theoremstyle{definition}
\newtheorem{defn}[theorem]{Definition}
\newtheorem{rem}[theorem]{Remark}
\theoremstyle{plain}
\newtheorem{prop}[theorem]{Proposition}
\newtheorem{thm}[theorem]{Theorem}
\newtheorem{lem}[theorem]{Lemma}
\newtheorem{cor}[theorem]{Corollary}
\newcommand{\ds}{\displaystyle}
\newcommand{\C}{\mathbb C}
\newcommand{\Z}{\mathbb Z}
\newcommand{\N}{\mathbb N}
\newcommand{\Q}{\mathbb Q}
\newcommand{\mf}{\mathfrak}
\newcommand{\Cl}{{\mathcal C}}
\newcommand{\HC}{\mathcal{H}_r^c}
\newcommand{\g}{\mathfrak{g}}
\newcommand{\al}{\alpha}
\newcommand{\ep}{\epsilon}
\newcommand{\la}{\lambda}
\newcommand{\ga}{\gamma}
\newcommand{\La}{\Lambda}
\newcommand{\ov}{\overline}
\newcommand{\ox}{\bar{x}}
\newcommand{\oX}{{\overline{X}}}
\newcommand{\udj}{\underline{j}}
\newcommand{\mc}{\mathcal}
\newcommand{\ro}{{\rm ro}}
\newcommand{\co}{{\rm co}}
\def\mfq{{\mathfrak q(n)}}
\def\mfg{{\mathfrak g}}
\def\mfn{{\mathfrak n}}
\def\mfh{{\mathfrak h}}
\def\mfb{{\mathfrak b}}
\newcommand{\Qnr}{\mathcal{Q}(n,r)}
\newcommand{\Pnr}{U(n,r)}
\newcommand{\qQnr}{\mathcal{Q}_q(n,r)}
\newcommand{\qPnr}{U_q(n,r)}
\newcommand{\ev}[1]{{#1}_{{0}}}
\newcommand{\od}[1]{{#1}_{{1}}}
\newcommand{\End}{{\rm End}}
\newcommand{\ovHD}{{\overline{h}_D}}
\newcommand{\qUq}{{U_q(\mathfrak q(n))}}
\newcommand{\qUZ}{{U_{q,\mc Z}}}
\newcommand{\bi}{{\bar i}}
\newcommand{\bj}{{\bar j}}
\newcommand{\mcA}{{\mathcal{A}}}
\newcommand{\mcZ}{{\mathcal{Z}}}
\newcommand{\Xije}{{X_{i,j}}}
\newcommand{\Xijo}{{\overline{X}_{i,j}}}
\newcommand{\Xjie}{{X_{j,i}}}
\newcommand{\Xjio}{{\overline{X}_{j,i}}}
\newcommand{\Xkle}{{X_{k,l}}}
\newcommand{\Xklo}{{\overline{X}_{k,l}}}
\newcommand{\Xkje}{{X_{k,j}}}
\newcommand{\Xile}{{X_{i,l}}}
\newcommand{\Xkjo}{{\overline{X}_{k,j}}}
\newcommand{\Xilo}{{\overline{X}_{i,l}}}
\newcommand{\Xijem}{{X^{(m)}_{i,j}}}
\newcommand{\Xkles}{{X^{(s)}_{k,l}}}
\newcommand{\Xilet}{{X^{(t)}_{i,l}}}
\def\sA{{\mathcal A}}
\def\fkm{{\mathfrak m}}\def\fku{{\mathfrak u}}
\def\fkl{{\mathfrak l}}
\def\bsh{{\boldsymbol h}}
\def\bsK{{\boldsymbol K}}
\def\bsb{{\boldsymbol b}}
\def\ddim{{\bf dim}}
\def\vep{{\varepsilon}}
\def\sB{{\mathcal B}}
\def\sBq{{{\mathcal B}_q}}
\begin{document}

\title[Presenting queer Schur superalgebras]{Presenting queer Schur superalgebras}
\author[Du and Wan]{Jie Du and Jinkui Wan}
\thanks{Supported by ARC DP-120101436 and NSFC-11101031. The research was carried out while Wan was visiting
the University of New South Wales during the year 2012--2013. The hospitality and support of UNSW are gratefully acknowledged.}

\address{(Du) School of Mathematics,
University of New South Wales,
UNSW Sydney 2052,
Australia.
 }\email{j.du@unsw.edu.au}
\address{(Wan) Department of Mathematics, Beijing Institute of Technology,
Beijing, 100081, P.R. China. } \email{wjk302@gmail.com}

\date{\today}
\begin{abstract} Associated to the two types of finite dimensional simple superalgebras, there are the general linear Lie superalgebra and the queer Lie superalgebra. The universal enveloping algebras of these Lie superalgebras act on the tensor spaces of the natural representations and, thus, define certain finite dimensional quotients, the Schur superalgebras and the queer Schur superalgebra. In this paper, we introduce the quantum analogue of the queer Schur superalgebra and investigate the presentation problem for both the queer Schur superalgebra and its quantum analogue.
\end{abstract}

\subjclass[2010]{Primary: 20G05, 20G43. Secondary: 17B37.}

\maketitle

\section{Introduction}

A superspace $V$ is a vector space over a field endowed with
a $\Z_2$-grading (or a parity structure): $V =\ev V\oplus\od V$, where an element in $\ev V$ is called even, while an element in $\od V$ is called odd.
A superalgebra $\sA$ is a $\Z_2$-graded (associative) algebra with 1 over a field. Thus, the underlying space of $\sA$
is a superspace $\sA = \ev \sA\oplus\od \sA$ and the multiplication satisfies
$\sA_i\sA_j\subseteq \sA_{i+j}$, for $i, j \in \Z_2$. It is known (see, e.g., \cite{BK}) that a finite dimensional simple superalgebra over the complex field $\mathbb C$ is either isomorphic to
the (full) matrix superalgebra $\mc M=M_{m+n}(\C)$ with even part $\mc M_0=\bigl\{{A\,0\choose 0\,B}\mid A\in M_m(\mathbb C),B\in M_n(\C)\bigr\}$
and odd part $\mc M_1=\bigl\{{0\,C\choose D\,0}\mid C\in M_{m,n}(\mathbb C),D\in M_{n,m}(\C)\bigr\}$, or isomorphic to the queer matrix superalgebra $\mc Q=\bigl\{{A\,B\choose B\,A}\mid A,B\in M_n(\mathbb C)\bigr\}$ with even part
$\ev{\mc Q}=\bigl\{{A\,0\choose 0\,A}\mid A\in M_n(\mathbb C)\bigr\}$ and odd part $\od{\mc{Q}}=\bigl\{{0\,B\choose B\,0}\mid B\in M_n(\mathbb C)\bigr\}$.

Associated to a superalgebra $\sA$, there is a Lie superalgebra $\sA^-$  equipped
with the super bracket product (or super commutator) defined by
$$
[x,y]:=xy-(-1)^{\hat{x}\cdot\hat{y}}yx,
$$
where $x,y\in \mcA$ are homogeneous elements and $\hat z=i$ if $z\in\sA_i$. Thus, the two type simple superalgebras $\mc M$ and $\mc Q$ give rise to two Lie superalgebras $\mathfrak{gl}({m|n}):=\mc M^-$, the general linear Lie superalgebra,  and  $\mathfrak q(n):=\mc Q^-$, the queer Lie superalgebra.

If $V$ denotes the natural representation of $\mathfrak{gl}({m|n})$ (resp., $\mathfrak q(n)$), then the tensor product $V^{\otimes r}$ is a representation of the universal enveloping algebra $U(\mathfrak{gl}({m|n}))$ (resp., $U(\mathfrak q(n))$). The image of $U(\mathfrak{gl}({m|n}))$ (resp., $U(\mathfrak q(n))$) in End$(V^{\otimes r})$ is called the {\it Schur superalgebra} (resp. {\it queer Schur superalgebra} or {\it Schur superalgebra of type Q}, following \cite{BK}).


The Schur superalgebras and their representations were introduced and investigated by several authors including Donkin \cite{Do} and Brundan--Kujawa \cite{BKj} almost over ten years ago. Recently, the study of quantum Schur superalgebras has made substantial progress; see \cite{Mi, DR, TK, DGu, DGW}.
In particular, in \cite{TK}, El Turkey and Kujawa provided a presentation of the Schur superalgebras and their quantum analogues,
which generalizes the work of Doty and Giaquinto \cite{DG} for (quantum) Schur algebras.

It is known that the queer Lie superalgebra $\mfq$ differs drastically from the basic classical Lie superalgebras.
For example, the Cartan subalgebra of $\mfq$ is not purely even  and there is no invariant bilinear form on $\mfq$.
On the other hand, $\mfq$ behaves in many aspects as the Lie algebra $\mathfrak{gl}(n)$.
In particular, there exists a beautiful analogue of the Schur-Weyl duality discovered by Sergeev \cite{Se}, often referred as
Sergeev duality. In \cite{Ol}, Olshanski constructed a quantum deformation $U_q(\mfq)$ of the universal enveloping algebra $U(\mfq)$
and established a quantum analog of the Sergeev duality in the generic case.

The queer Schur superalgebra was introduced and studied by Brundan and Kleshchev \cite{BK},
and then they determined the irreducible projective representations of the symmetric group $\mf S_r$ via Sergeev duality.
In this paper, we will introduce the quantum analogue of queer Schur superalgebras and
will follow the works \cite{DG} and \cite{DP} to determine a presentation for the queer Schur superalgebra (see \cite{BGKJW} for a more general setting involving walled Brauer-Clifford superalgebras) and its quantum analogue,
which was stated as an interesting problem in \cite{TK}.
In particular, in the quantum case we also establish the existence of $\mc Z$-form for the quantum superalgebra $U_q(\mfq)$.
This is based on a lengthy but straightforward calculation of the commutation formulas for the divided powers of root vectors.

We organise the paper as follows. We first investigate a presentation of the queer Schur superalgebra in the first three sections. More precisely, we study in \S2 the basics of the queer Lie superalgebra and its universal enveloping superalgebra, and establish the commutation formulas for divided powers of root vectors and a Kostat $\Z$-form in \S3. A presentation for the queer Schur superalgebra  is given in \S4. From \S5 onwards, we investigate the quantum case. We start in \S5 with the Olshanski presentation (via a certain matrix in $\End(V)^{\otimes 2}$ satisfying the quantum Yang-Baxter equation)
and the Drinfeld--Jimbo type presentation for the quantum queer superalgebra and introduce all quantum root vectors. We compute all commutation formulas for these vectors in \S6 and for those with higher order in \S7. A Lusztig type form for the quantum queer superalgebra is introduced and certain quotients are investigated in \S8. Finally, we solve the presentation problem for the quantum queer Schur superalgebra in the last section.

{\it Throughout the paper, let $\Z_2=\{0,1\}$. We will use a two-fold meaning for $\Z_2$. We will regard $\Z_2$ as an abelian group when we use it to describe a superspace. However,  for a matrix or an $n$-tuple with entries in $\Z_2$, we will regard it as a subset of $\Z$.

}

\section{The queer Lie superalgebra $\mfq$ and the associated Schur superalgebra $\Qnr$}\label{queerSchur}
The ground field in this section is the field $\Q$ of rational numbers.
It is known that the general linear Lie superalgebra
$\mathfrak{gl}(n|n)$ consists of matrices of the form
\begin{equation}\label{gmatrix}
\begin{pmatrix}
A&B\\
C&D
\end{pmatrix},
\end{equation}
where $A,B,C,D$ are arbitrary $n\times n$ matrices, and
the rows and columns of \eqref{gmatrix} are labelled by the set
$$
I(n|n)=\{1,2,\ldots, n,-1,-2,\ldots,-n\}.
$$
For $ i,j\in I(n|n)$, denote by $E_{i,j}\in\mathfrak{gl}(n|n)$
the matrix unit with 1 at the $(i,j)$ position and 0 elsewhere. The set $\{E_{i,j}\mid i,j\in I(n|n)\}$ is a basis of $\mathfrak{gl}(n|n)$
and the $\Z_2$-grading on $\mathfrak{gl}(n|n)$ is defined via $\widehat{E}_{i,j}=0$ if $ij>0$ and $\widehat{E}_{i,j}=1$ if $ij<0$.
Then the Lie bracket in $\mathfrak{gl}(n|n)$ is given by
\begin{equation}\label{gl-bracket}
[E_{i,j},E_{k,l}]=\delta_{jk}E_{i,l}-(-1)^{\widehat{E}_{i,j}\cdot \widehat{E}_{k,l}}\delta_{il}E_{k,j}.
\end{equation}
The queer Lie superalgebra, denoted by $\mfg=\mfq$, is the
subalgebra of the general linear Lie superalgebra
$\mathfrak{gl}(n|n)$ consisting of matrices of the form
\begin{align}   \label{qmatrix}
\begin{pmatrix}
A&B\\
B&A
\end{pmatrix},
\end{align}
where $A$ and $B$ are arbitrary $n\times n$ matrices.
The even (resp., odd) part $\ev\mfg$ (resp., $\od\mfg$) consists of
those matrices of the form \eqref{qmatrix} with $B=0$
(resp., $A=0$).
We fix $\mfh$ to be the standard Cartan subalgebra of $\mfg$ consisting of
matrices of the form ~(\ref{qmatrix})~ with
$A, B$ being arbitrary diagonal.
Then the algebra $\ev\mfh$ has a basis $\{h_1,\ldots,h_n\}$ and
$\od\mfh$ has a basis $\{h_{\bar 1},\ldots,h_{\bar n}\}$, where
\begin{equation}\label{cartan}
h_i=E_{i,i}+E_{-i,-i}
, \quad
h_{\bar i}=E_{i,-i}+E_{-i,i}.
\end{equation}
Fix the triangular decomposition
$$
\g=\mfn^-\oplus\mfh\oplus\mfn^+,
$$
where $\mfn^+$ (resp.,  $\mfn^-$) is the  subalgebra of $\mfg$
which consists of matrices of the form ~(\ref{qmatrix})~ with
$A, B$ being arbitrary upper triangular (resp.,
lower triangluar) matrices.
Observe that the even subalgebra $\ev \mfh$ of $\mfh$ can be identified with the standard Cartan subalgebra of $\mathfrak{gl}(n)$ via
the natural isomorphism
\begin{equation}\label{qnev}
\ev{\mfq} \cong \mathfrak{gl}(n).
\end{equation}
Let $\{\epsilon_i \mid i=1, \ldots, n\}$ be the basis for $\ev\mfh^*$ dual to the
standard basis $\{h_i \mid i=1, \ldots, n\}$
for $\ev\mfh$ and we define a bilinear form $(\cdot,\cdot)$ on $\ev\mfh^*$
via
\begin{equation}\label{form}
(\epsilon_i,\epsilon_j):=\epsilon_j(h_i)=\delta_{ij}.
\end{equation}
For $\alpha\in\ev\mfh^*$, let $\mfg_\alpha=\{x\in\mfg\mid [h,x]=\alpha(h)x \text{ for all } h\in\ev\mfh\}$. Then we have the root superspace decomposition
$\mfg=\mfh\oplus\bigoplus_{\alpha\in\Phi}\mfg_\alpha$ with the root system
$$
\Phi=\{\alpha_{i,j}:=\epsilon_i-\epsilon_j|1\le i\not=j\le n\}.
$$
The set of
positive roots corresponding to the Borel subalgebra $\mfb=\mfh\oplus\mfn^+$ is
$$
\Phi^+=\{\alpha_{i,j}\mid1\le i<j\le n\}.
$$
Observe that each root superspace $\mfg_{\al}$ has dimension vector\footnote{The dimension vector of a superspace V is the tuple
$\ddim V =(\dim \ev V,\dim\od V)$.}  $(1,1)$ for $\al\in\Phi$.
Let $\al_i=\al_{i,i+1}$ for $1\leq i\leq n-1$.
Then the root space $\mfg_{\al_i}$ is spanned by $\{e_i,e_{\bi}\}$ with
\begin{equation}\label{simplerootp}
e_i=E_{i,i+1}+E_{-i,-i-1}, \quad
e_{\bar i}=E_{i,-i-1}+E_{-i,i+1},
\end{equation}
while the root space $\mfg_{-\al_i}$ is spanned by $\{f_i,f_{\bi}\}$ with
\begin{equation}\label{simplerootn}
f_i=E_{i+1,i}+E_{-i-1,-i}, \quad
f_{\bar i}=E_{i+1,-i}+E_{-i-1,i}.
\end{equation}
Moreover, $\al_{i,j}=\al_i+\cdots+\al_{j-1}$ for all $1\leq i<j\leq n$.
Let
$${\mc P}:=\oplus^n_{i=1}\Z\ep_i,\quad(\text{resp.}, {\mc P}_{\geq0}=\oplus^n_{i=1}\N\ep_i)$$
be the weight lattice (reps., positive weight lattice) of $\mfq$.

The universal enveloping superalgebra $U=U(\mfq)$ is obtained from the tensor algebra $T(\mfq)$
by factoring out the ideal generated by the elements
$[u,v]-u\otimes v+(-1)^{\hat{u}\cdot\hat{v}}v\otimes u$ for $u\in\mfg_i,v\in\mfg_j$ with $i,j\in\Z_2$,
where $[u,v]$ denotes the Lie bracket of $u,v$ in $\mfq$.
It inherits  a $\Z_2$-grading from $\mfq$.

\begin{prop}[{\cite[Proposition 1.1]{GJKK}, cf.(\cite{LS})}]\label{Uqn}
The universal enveloping superalgebra $U(\mfq)$ is the associative superalgebra over $\Q$
generated by even generators
$ h_i, e_j, f_j,$
and odd generators $h_{\bar i}, e_{\bar j}, f_{\bar j},$
with $1\leq i\leq n$ and $1\leq j\leq n-1$
subject to the following relations:
\begin{itemize}
 \item[(QS1)]
 $[h_i,h_j]=0,\quad [h_i,h_{\bar j}]=0,\quad [h_{\bar i},h_{\bar j}]=\delta_{ij}2h_i$;

\vspace{0.1in}

 \item[(QS2)]
 $[h_i,e_j]=(\ep_i,\alpha_j)e_j,~ [h_i,e_{\bar j}]=(\ep_i,\alpha_j)e_{\bar j},$~
$[h_i,f_j]=-(\ep_i,\alpha_j)f_j,~ [h_i,f_{\bar j}]=-(\ep_i,\alpha_j)f_{\bar j}$;

\vspace{0.1in}

 \item[(QS3)]
 $[h_{\bar i},e_j]=(\ep_i,\alpha_j)e_{\bar j},\quad [h_{\bar i},f_j]=-(\ep_i,\alpha_j)f_{\bar j},$

\noindent  $
 [h_{\bar i},e_{\bar j}]=\left\{
 \begin{array}{ll}
 e_j,\quad \text{if }i=j\text{ or }j+1,\\
 0,\quad\text{ otherwise},
 \end{array}
 \right.
 $
 $
 [h_{\bar i},f_{\bar j}]=\left\{
 \begin{array}{ll}
 f_j,\quad \text{if }i=j\text{ or }j+1,\\
 0,\quad\text{ otherwise};
 \end{array}
 \right.
 $

\vspace{0.1in}

 \item[(QS4)]
 $[e_i,f_j]=\delta_{ij}(h_i-h_{i+1}),\quad [e_{\bar i},f_{\bar j}]=\delta_{ij}(h_i+h_{i+1}),$

\noindent  $[e_{\bar i},f_j]=\delta_{ij}(h_{\bar i}-h_{\overline{i+1}}),\quad [e_{i},f_{\bar j}]=\delta_{ij}(h_{\bar i}-h_{\overline{i+1}})$;

\vspace{0.1in}

 \item[(QS5)]
 $[e_i,e_{\bar j}]=[e_{\bar i},e_{\bar j}]=[f_i,f_{\bar j}]=[f_{\bar i},f_{\bar j}]=0$ for $|i-j|\neq 1$,

\noindent  $[e_i,e_j]=[f_i,f_j]=0$ for $|i-j|>1$,

\noindent $[e_i,e_{i+1}]=[e_{\bar i},e_{\overline{i+1}}],~ [e_i,e_{\overline{i+1}}]=[e_{\bar i},e_{i+1}],$~
  $[f_{i+1},f_i]=[f_{\overline{i+1}},f_{\bar i}],~ [f_{i+1},f_{\bar i}]=[f_{\overline{i+1}},f_{i}]$;

\vspace{0.1in}

  \item[(QS6)]
 $[e_i,[e_i,e_j]]=[e_{\bar i},[e_i,e_j]]=0$,~ $[f_i,[f_i,e_j]]=[f_{\bar i},[f_i,f_j]]=0$ for $|i-j|=1$.
\end{itemize}
\end{prop}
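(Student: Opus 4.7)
The plan is the standard argument for establishing a presentation by generators and relations. Write $\tilde U$ for the associative superalgebra over $\Q$ defined by the stated generators (with their indicated parities) modulo the relations (QS1)--(QS6), and define
$$\pi\colon\tilde U\longrightarrow U(\mfq)$$
by sending each generator to the element of $U(\mfq)$ of the same name, determined by (\ref{cartan}), (\ref{simplerootp}) and (\ref{simplerootn}). The goal is to show that $\pi$ is a superalgebra isomorphism.

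Well-definedness of $\pi$ reduces to checking each of (QS1)--(QS6) inside $U(\mfq)$; this is a direct matrix computation using (\ref{gl-bracket}), with the only care being the sign $(-1)^{\hat u\cdot\hat v}$ whenever two odd matrices are swapped. Surjectivity is obtained by producing every matrix basis element of $\mfq$ inside the image of $\pi$: the iterated brackets
$$[e_i,[e_{i+1},\ldots,[e_{j-2},e_{j-1}]\ldots]]=E_{i,j}+E_{-i,-j},\qquad i<j,$$
supply the even upper-triangular root vectors; substituting a single $e_{\bar k}$ for $e_k$ in the same bracket yields $E_{i,-j}+E_{-i,j}$; and then (QS3) (conjugation by $h_{\bar i}$) splits each sum into its two matrix units. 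The negative-root side is symmetric using the $f$'s, and (\ref{cartan}) covers the Cartan; hence $\pi$ is surjective.

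For injectivity one compares a spanning set of $\tilde U$ with the PBW basis of $U(\mfq)$ supplied by the super PBW theorem (ordered monomials in any homogeneous basis of $\mfq$, with each odd vector appearing to the power at most one). Relations (QS2)--(QS4) yield a triangular factorisation $\tilde U=\tilde U^-\tilde U^0\tilde U^+$ by pushing all Cartan generators to the middle and all $f$-type generators to the left of the $e$-type ones. Inside $\tilde U^+$ one defines higher root vectors by the iterated-bracket formulas above; the quadratic relations (QS5) and the Serre-type relations (QS6), together with a height induction on roots, then rewrite every monomial in the $e_j, e_{\bar j}$'s as an ordered product of these root vectors with each odd one appearing at most once. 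An analogous argument handles $\tilde U^-$, while (QS1) identifies $\tilde U^0$ with a Clifford--polynomial algebra of the expected rank (using $h_{\bar i}^2=h_i$). The resulting spanning set of $\tilde U$ has the same combinatorial shape as the PBW basis, so surjectivity of $\pi$ forces it to be a bijection.

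The principal obstacle is the straightening step inside $\tilde U^\pm$: showing that (QS5)--(QS6), combined with the Cartan actions in (QS2)--(QS3), suffice to place every iterated bracket of the $e_j, e_{\bar j}$'s (or $f_j, f_{\bar j}$'s) into the chosen ordered form and to bound each odd root vector to exponent one. The mixed relations $[e_i,e_{\overline{i+1}}]=[e_{\bar i},e_{i+1}]$ and the odd Serre relation $[e_{\bar i},[e_i,e_j]]=0$ for $|i-j|=1$ require an induction on root height with careful sign bookkeeping. This is where the queer case genuinely departs from the classical $\mathfrak{gl}(n)$ argument, because the Clifford-type relation $[h_{\bar i},h_{\bar i}]=2h_i$ must be tracked through every commutation that produces an odd root vector.
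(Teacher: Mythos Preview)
The paper does not prove this proposition; it is quoted from \cite[Proposition 1.1]{GJKK} (with a further reference to \cite{LS}) and used as input. So there is no ``paper's own proof'' to compare against, and your outline is the standard route one would take to establish such a presentation from scratch.

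Your plan is viable, but one step in the surjectivity argument is wrong as written. You say that after producing $E_{i,j}+E_{-i,-j}$ and $E_{i,-j}+E_{-i,j}$ you will use (QS3) to ``split each sum into its two matrix units''. This cannot be done, and more to the point it need not be done: the individual matrix units $E_{i,j}$ do not belong to $\mfq$. By definition $\mfq$ consists of matrices of the form \eqref{qmatrix}, and the homogeneous basis of $\mfq$ is exactly the set of elements $x_{i,j}=E_{i,j}+E_{-i,-j}$, $\ox_{i,j}=E_{i,-j}+E_{-i,j}$ (see \eqref{root}) together with the Cartan elements $h_i,h_{\bar i}$. So the iterated brackets you write down already produce the full basis of $\mfq$, and surjectivity follows immediately without any further splitting.

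On the injectivity side your outline is correct in spirit, and you are right to flag the straightening inside $\tilde U^{\pm}$ as the real work. One remark: the identity $h_{\bar i}^2=h_i$ you invoke for $\tilde U^0$ is not among (QS1)--(QS6) as associative relations; it must be \emph{derived} in $\tilde U$ from the super-commutator relation $[h_{\bar i},h_{\bar i}]=2h_i$ in (QS1), i.e.\ from $2h_{\bar i}^2=2h_i$. Likewise the constraint that each odd root vector appears with exponent at most one has to be extracted from $[\ox_\alpha,\ox_\alpha]=0$ (a consequence of (QS5) at the simple level, propagated by induction). These are exactly the points where the super signs matter, as you note.
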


Let $U^+$ (resp. $U^0$ and $U^-$) be the subalgebra of $U=U(\mfq)$ generated by the elements
$e_i, e_{\bi}$ (resp. $h_j, h_{\bj}$, and $f_i,f_{\bi}$), where $1\leq i\leq n-1,1\leq j\leq n$.
Then, similar to Lie algebras, we have the Poincar\'{e}-Birkhoff-Witt (PBW) Theorem and triangular decomposition as follows; see  \cite[Theorem 1.32]{CW}, \cite[(1.3)]{GJKK}, \cite[Theorem 2.1]{Ro}.

\begin{prop} \label{PBW-qn} \begin{enumerate}
\item Suppose that $\{z_1,\ldots,z_p\}$ is a homogeneous basis for $\mfq$.
Then the set
$$
\{z_1^{a_1}z_2^{a_2}\cdots z_p^{a_p}\mid a_1,\ldots,a_p\in\N, a_i\in\Z_2\text{ if }z_i\text{ is odd }, 1\leq i\leq p\}
$$
is a basis for $U(\mfq)$.

\item The algebra $U(\mfq)$ has the triangular decomposition
$
U(\mfq)\cong U^-\otimes U^0\otimes U^+.
$
\end{enumerate}
\end{prop}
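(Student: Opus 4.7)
The plan is to treat Proposition~\ref{PBW-qn} as the classical PBW theorem for Lie superalgebras specialized to $\mfq$, and to follow the standard Scheunert/Jacobson proof adapted to the super setting; Part (2) then falls out of Part (1) by choosing a suitable ordering of the basis.

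For Part (1), I would first establish the spanning statement. The defining ideal of $U(\mfq)$ imposes the straightening rules $z_jz_i=(-1)^{\hat z_i\hat z_j}z_iz_j+[z_j,z_i]$ for homogeneous $z_i,z_j\in\mfq$, together with $z_i^2=\tfrac12[z_i,z_i]$ whenever $z_i$ is odd. An induction on the tensor degree and, within fixed degree, on the number of out-of-order inversions, rewrites any monomial in the chosen ordered basis $\{z_1,\ldots,z_p\}$ as a $\Q$-linear combination of standard monomials $z_1^{a_1}\cdots z_p^{a_p}$ with $a_i\in\{0,1\}$ for odd $z_i$; the bracket correction terms have strictly smaller degree, closing the induction.

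The substantive point is linear independence. Equip $U(\mfq)$ with the ascending filtration $U^{\le k}$ induced by tensor degree; the associated graded algebra receives a canonical surjection from the supersymmetric algebra $S(\mfq)=S(\ev\mfq)\otimes\Lambda(\od\mfq)$, and ordered monomials in $U(\mfq)$ will be linearly independent once this surjection is shown to be an isomorphism. My approach would be to construct an action of $U(\mfq)$ on $S(\mfq)$ whose leading term is multiplication: define operators $\rho(z_i)$ on $S(\mfq)$ by specifying their action on ordered monomials via the straightening rules, and verify by a direct calculation that these operators satisfy the defining super-commutator relations, where the only nontrivial check reduces to the super Jacobi identity in $\mfq$. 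Equivalently, one may apply Bergman's diamond lemma: the rewriting system is confluent because the only overlap ambiguity $z_kz_jz_i$ ($k>j>i$) resolves consistently precisely by the super Jacobi identity for triples from $\mfq$. Either route produces a linear section $S(\mfq)\to U(\mfq)$ sending ordered monomials to ordered monomials, proving linear independence.

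For Part (2), I would pick the homogeneous basis of $\mfq$ in Part (1) to be the disjoint union of the homogeneous bases of $\mfn^-$, $\mfh$, $\mfn^+$, ordered so that every $\mfn^-$-vector precedes every $\mfh$-vector, which in turn precedes every $\mfn^+$-vector. A PBW monomial then factors as an $\mfn^-$-monomial times an $\mfh$-monomial times an $\mfn^+$-monomial. Consequently the multiplication map $U^-\otimes U^0\otimes U^+\to U(\mfq)$ carries the tensor product of the PBW bases of the three factors bijectively to the PBW basis of $U(\mfq)$, so it is a linear isomorphism.

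The main obstacle is the linear independence in Part (1), specifically the bookkeeping with signs and the nilpotence of odd generators; once one verifies that the super Jacobi identity is the unique obstruction to confluence of the straightening rules, everything else is formal. Since this PBW theorem is well-established in the references cited (\cite{CW,GJKK,Ro}), one could alternatively simply invoke it.
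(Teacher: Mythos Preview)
Your proposal is correct and in fact more detailed than what the paper provides: the paper does not prove Proposition~\ref{PBW-qn} at all, but simply states it with citations to \cite[Theorem 1.32]{CW}, \cite[(1.3)]{GJKK}, and \cite[Theorem 2.1]{Ro}, treating it as a known instance of the PBW theorem for Lie superalgebras. Your sketch of the standard super-PBW argument (straightening for spanning, diamond-lemma/Jacobi-identity confluence or a representation on $S(\mfq)$ for independence, then ordering the basis compatibly with the triangular decomposition for Part~(2)) is exactly the content of those references, so nothing is missing.
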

Let $V$ be the vector superspace over $\Q$ with dimension vector $\ddim V =(n,n)$ .
Fix a basis $\{v_1,\ldots, v_n\}$ for $\ev V$ and a basis $\{v_{-1},\ldots,v_{-n}\}$ for  $\od V$, respectively.
Then there is a natural action of the algebra $\mathfrak{gl}(n|n)$ on $V$ given by left multiplication,
that is,
\begin{equation}\label{Eij}
E_{i,j}v_k=\delta_{jk}v_i
\end{equation}
for $i,j,k\in I(n|n)$.
The restriction to the algebra $\mfq$ implies that $V$ naturally affords a representation of $U=U(\mfq)$.
As $U(\mfq)$ admits a comultiplication $U(\mfq)\rightarrow U(\mfq)\otimes U(\mfq)$ given on elements of $\mfq$
by $x\mapsto x\otimes 1+1\otimes x$, we have that the $r$-fold tensor product $V^{\otimes r}$ of the natural module $V$
also affords a $U(\mfq)$-module.
Let $\phi_r$ denote the corresponding superalgebra homomorphism:
$$
\phi_r:U(\mfq)\rightarrow \End_\Q(V^{\otimes r}).
$$
Define the {\em queer Schur superalgebra } (also known as Schur superalgebra of type $Q$, cf. \cite{BK}) to be
\begin{equation}
\Qnr=\phi_r(U(\mfq)),\label{Qr1}
\end{equation}
that is, the image of $\phi_r$.
Therefore, $\Qnr$ can be viewed as a quotient of $U(\mfq)$.

Similar to Schur algebras associated to $\mathfrak{gl}(n)$,
there is another way to define the queer Schur superalgebra $\Qnr$
via the known Schur-Sergeev duality as follows.
Denote by $\Cl_r$ the
Clifford superalgebra generated by odd elements $c_1,\ldots,c_r$
subject to the relations
\begin{equation}  \label{Cl}
c_i^2=-1,~c_ic_j=-c_jc_i, \quad 1\leq i\neq j\leq r.
\end{equation}
Denote by $\mf{H}^c_r=\mathcal{C}_r\rtimes \mf S_r$ the so-called {\em Sergeev superalgebra}, which is
generated by the even elements $s_1,\ldots,s_{r-1}$ and the odd
elements $c_1,\ldots,c_r$  subject to \eqref{Cl},
and the additional relations:
\begin{align*}
s_i^2&=1, ~s_is_j=s_js_i,\quad 1\leq i,j\leq r-1,~ |i-j|>1,\\
s_is_{i+1}s_i&=s_{i+1}s_is_{i+1},\quad 1\leq i\leq r-2,\\
s_ic_i&=c_{i+1}s_i,~ s_ic_j=c_js_i,\quad 1\leq i\leq r-1, ~1\leq j\leq r, ~j\neq i,i+1.
\end{align*}
Then by~\cite[Lemma 2]{Se}, we have
a representation $( \psi_r,V^{\otimes r})$ of $\mf H^c_r$ on $V^{\otimes r}$
defined by
\begin{align*}
\psi_r(s_k)(v_{j_1}\otimes  \cdots\otimes v_{j_k}\otimes v_{j_{k+1}}\otimes\cdots \otimes v_{j_r}) =
(-1)^{\hat{v}_{j_k}\cdot\hat{v}_{j_{k+1}}}v_{j_1}\otimes
 \cdots\otimes v_{j_{k+1}}\otimes v_{j_k}\otimes\cdots \otimes v_{j_r},\\
\psi_r(c_l)(v_{j_1}\otimes  \cdots\otimes v_{j_l}\otimes \cdots \otimes v_{j_r}) =
 (-1)^{(\hat{v}_{j_1} +\ldots +\hat{v}_{j_{l-1}})} v_{j_1}\otimes
   \ldots \otimes v_{j_{l-1}} \otimes J_V(v_{j_l}) \otimes\ldots \otimes v_{j_r},
\end{align*}
for all $j_1,\ldots,j_r\in I(n|n), 1\leq k\leq r-1, 1\leq l\leq r$, where $J_V\in\End(V)$ satisfies
$J_V(v_a)=v_{-a}$ and $J_V(v_{-a})=-v_a$ for $1\leq a\leq n$.
Then a classical result \cite[Theorem 3]{Se} of Sergeev says
\begin{equation}
\Qnr=\End_{\mf H^c_r}(V^{\otimes r}),\label{Qr2}
\end{equation}
which was introduced and studied in \cite{BK}.
In particular, we note that $\Qnr$ is naturally a subsuperalgebra of the Schur superalgebra
associated to the Lie superalgebra $\mf{gl}(n|n)$.
\section{Commutation formulas for root vectors and Kostant $\Z$-form}
For $\al_{i,j}=\ep_i-\ep_j\in\Phi$ with $1\leq i\neq j\leq n$, we introduce
the root vectors in $\mfq$ as follows:
\begin{equation}\label{root}
\aligned
x_{i,j}\equiv x_{\al_{i,j}}=E_{i,j}+E_{-i,-j}, \quad
\ox_{i,j}\equiv\ox_{\al_{i,j}}=E_{i,-j}+E_{-i,j}.
\endaligned
\end{equation}
Clearly, $x_{\al}$ is even and $\ox_\al$ is odd for $\al\in\Phi$.
Observe that the even element $x_{\al}$ for $\al\in\Phi$ correspond to the usual root vectors in $\mathfrak{gl}(n)$
under the identification~\eqref{qnev}. Moreover the set
$\{x_{\al},\ox_{\al}\mid\al\in\Phi\}\cup\{h_i,h_{\bar{i}}\mid 1\leq i\leq n\}$
is a homogeneous basis for $\mfq$.

By~\eqref{root} and \eqref{gl-bracket}, a direct calculation gives rise to the following
super commutator formulas.
\begin{lem}\label{root-comm} For $\al_{i,j},\al_{k,l}\in\Phi$ satisfying $\al_{i,j}+\al_{k,l}\in\Phi$, let $\vep=\vep_{i,j;k,l}=\bigg\{\displaystyle{{1,\;\;\;\;\text{ if }j=k;}\atop{-1,\;\text{ if }i=l.}}$ Then we have in $\mathfrak{q}(n)$
\begin{enumerate}
\item $[x_{i,j},x_{k,l}]
=\left\{
\begin{array}{ll}
h_i-h_j,& \text{ if }\al_{i,j}+\al_{k,l}=0;\\
\vep x_\beta,&\text{ if }\beta=\al_{i,j}+\al_{k,l}\in\Phi;\\
0,&\text{ otherwise},
\end{array}
\right.$
\item $[x_{i,j},\ox_{k,l}]=\left\{
\begin{array}{ll}
h_{\bar i}-h_{\bar j},&  \text{ if }\al_{i,j}+\al_{k,l}=0;\\
\vep\ox_{\beta},&\text{ if }\beta=\al_{i,j}+\al_{k,l}\in\Phi;\\
0,&\text{ otherwise},
\end{array}
\right.$
\item $[\ox_{i,j},\ox_{k,l}]
=\left\{
\begin{array}{ll}
h_{i}+h_{ j},&  \text{ if }\al_{i,j}+\al_{k,l}=0;\\
x_{\beta},&\text{ if }\beta=\al_{i,j}+\al_{k,l}\in\Phi;\\
0,&\text{ otherwise},
\end{array}
\right.$
\item $[h_k,x_{i,j}]=(\ep_k,\al_{i,j})x_{i,j},\quad [h_k,\ox_{i,j}]=(\ep_k,\al_{i,j})\ox_{i,j},$\\
$[h_{\bar k},x_{i,j}]=(\ep_k,\al_{i,j})\ox_{i,j},\quad [h_{\bar k},\ox_{i,j}]=|(\ep_k,\al_{i,j})|x_{i,j}.$
\end{enumerate}
\end{lem}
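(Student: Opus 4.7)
The proof is a direct case-by-case computation from the definitions. The plan is to substitute the expressions \eqref{root} and \eqref{cartan} into each super bracket, expand bilinearly, and apply the super bracket formula \eqref{gl-bracket} in $\mathfrak{gl}(n|n)$. Since the parity of $E_{a,b}$ is determined by the sign of the product $ab$, one has that $x_{i,j}$ is a sum of two even matrix units while $\ox_{i,j}$ is a sum of two odd matrix units; this fixes the parity signs $(-1)^{\widehat{E}_{i,j}\cdot\widehat{E}_{k,l}}$ appearing in \eqref{gl-bracket} throughout the calculation.

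The key simplification is that the indices $i,j,k,l$ all lie in $\{1,\ldots,n\}$, i.e.\ are positive. Expanding $[x_{i,j},x_{k,l}]=[E_{i,j}+E_{-i,-j},\,E_{k,l}+E_{-k,-l}]$ produces four terms, but the two cross terms are governed by Kronecker deltas of the form $\delta_{j,-k}$ or $\delta_{i,-l}$ and hence vanish. Collecting the two surviving contributions yields $\delta_{jk}(E_{i,l}+E_{-i,-l})-\delta_{il}(E_{k,j}+E_{-k,-j})=\delta_{jk}\,x_{i,l}-\delta_{il}\,x_{k,j}$, from which the three cases of (1) follow at once: when $\al_{i,j}+\al_{k,l}=0$ one has $i=l$ and $j=k$ and the expression becomes $h_i-h_j$ by \eqref{cartan}; the cases $j=k$ and $i=l$ yield $+x_\beta$ and $-x_\beta$ respectively, matching the sign $\vep$ in the statement; and in the remaining case all deltas vanish.

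Parts (2) and (3) are handled by the same scheme. For (2) the bracket is even--odd, so the sign in \eqref{gl-bracket} is $+1$ and the expansion reduces to $\delta_{jk}\,\ox_{i,l}-\delta_{il}\,\ox_{k,j}$, specializing to $h_{\bar i}-h_{\bar j}$ in the diagonal case. For (3) both factors are odd, so the sign in \eqref{gl-bracket} becomes $-1$, converting the minus between the two surviving terms into a plus; one therefore obtains $\delta_{jk}\,x_{i,l}+\delta_{il}\,x_{k,j}$, which collapses to $h_i+h_j$ in the diagonal case and produces $+x_\beta$ in both off-diagonal subcases, explaining the absence of the sign $\vep$ in (3). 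Part (4) is the most transparent: one computes $[h_k,-]$ and $[h_{\bar k},-]$ on each of $E_{i,j}+E_{-i,-j}$ and $E_{i,-j}+E_{-i,j}$ and reads off the result. The absolute value $|(\ep_k,\al_{i,j})|$ in the last formula of (4) arises because $[h_{\bar k},\ox_{i,j}]$ is an odd--odd super bracket, producing the sum $\delta_{ki}+\delta_{kj}$ rather than a difference.

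The only potential obstacle is bookkeeping: tracking the $\pm 1$ signs in \eqref{gl-bracket} as the parities of the factors change across the four parts, and correctly handling the odd--odd case in which the super bracket is an anticommutator. No conceptual difficulty arises, since each bracket reduces to at most four direct applications of \eqref{gl-bracket} and the restriction to positive indices eliminates the only potential source of spurious contributions.
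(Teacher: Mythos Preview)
Your proposal is correct and follows exactly the approach the paper takes: the paper simply states that the lemma follows from \eqref{root} and \eqref{gl-bracket} by direct calculation, and your write-up supplies precisely that calculation with the appropriate parity bookkeeping. If anything, you give more detail than the paper does, and your explanation of why the cross terms vanish (positivity of the indices) and why the absolute value appears in part (4) (the odd--odd bracket producing $\delta_{ki}+\delta_{kj}$ rather than a difference) is accurate and helpful.
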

By Lemma \ref{root-comm}(3) and (QS1), we have for $\al\in\Phi$ and $1\leq i\leq n$:
\begin{equation}\label{odd-square}
\ox^2_{\al}=0,\quad h^2_{\bi}=h_i.
\end{equation}
For $\al\in \Phi$ and $1\leq i\leq n, k\in\N$, we introduce the following elements:
$$
x_{\al}^{(k)}=\frac{x_\al^k}{k!},\quad \begin{pmatrix}
h_i\\
0
\end{pmatrix}
=1,\,
\text{ and }
\begin{pmatrix}\,
h_i\\
k
\end{pmatrix}
=\frac{h_i(h_i-1)\cdots (h_i-k+1)}{k!}\; (k\geq 1).
$$

It is known that $\mfq$ can naturally be viewed as a subspace of the universal enveloping algebra $U(\mfq)$ and
moreover, for any homogeneous $u,v\in\mfq$, we have $uv-(-1)^{\hat{u}\cdot\hat{v}}vu=[u,v]$ in $U(\mfq)$, where $[u,v]$ means the Lie bracket
of $u,v$ in $\mfq$. Then by Lemma~\ref{root-comm}, we have the following.
\begin{prop}\label{div-root}
Maintain the notation defined in Lemma~\ref{root-comm} and let $m,s\in\N$.
Then the following holds in $U(\mfq)$:
\begin{align*}
(1)\;x^{(m)}_{i,j}x^{(s)}_{k,l}&=\left\{
\begin{array}{ll}
x^{(s)}_{k,l}x^{(m)}_{i,j}+\!\!\displaystyle\sum^{\min(m,s)}_{t=1}\!\!x^{(s-t)}_{k,l}\begin{pmatrix}h_{i}-h_j-m-s+2t\\ t\end{pmatrix}
x^{(m-t)}_{i,j}, &\text{ if }\al_{i,j}+\al_{k,l}=0;\\
x^{(s)}_{k,l}x^{(m)}_{i,j}+\displaystyle\sum^{\min(m,s)}_{t=1}\vep^t x^{(s-t)}_{k,l}x^{(t)}_{\beta}
x^{(m-t)}_{i,j},&\!\!\!\!\!\!\!\!\!\!\!\!\! \text{ if }\beta=\al_{i,j}+\al_{k,l}\in\Phi;\\
x^{(s)}_{k,l}x^{(m)}_{i,j},&\text{ otherwise}.
\end{array}
\right.\\
(2)\;x^{(m)}_{i,j}\ox_{k,l}&=\left\{
\begin{array}{ll}
\ox_{k,l}x^{(m)}_{i,j}+(h_{\bar i}-h_{\bar j})x^{(m-1)}_{i,j}-\ox_{i,j}x^{(m-2)}_{i,j}, &\text{ if }\al_{i,j}+\al_{k,l}=0;\\
\ox_{k,l}x^{(m)}_{i,j}+\vep \ox_{\beta}x^{(m-1)}_{i,j}, &\text{ if }\beta=\al_{i,j}+\al_{k,l}\in\Phi;\\
\ox_{k,l}x^{(m)}_{i,j},&\text{ otherwise}.
\end{array}
\right.\\
(3)\;\;\,\ox_{i,j}\ox_{k,l}&=\left\{
\begin{array}{ll}
-\ox_{k,l}\ox_{i,j}+(h_{i}+h_{j}), &\text{ if }\al_{i,j}+\al_{k,l}=0;\\
-\ox_{k,l}\ox_{i,j}+x_{\beta}, &\text{ if }\beta=\al_{i,j}+\al_{k,l}\in\Phi;\\
-\ox_{k,l}\ox_{i,j},&\text{ otherwise}.
\end{array}
\right.\\
(4)\;\;\,x^{(m)}_{i,j}h_{\bar k}&=h_{\bar k}x^{(m)}_{i,j}-(\ep_k,\al_{i,j})\ov{x}_{i,j}x^{(m-1)}_{i,j},\quad
\ox_{i,j}h_{\bar k}=-h_{\bar k}\ox_{i,j}-|(\ep_k,\al_{i,j})|x_{i,j}.\\
(5)\;\;\,x^{(m)}_{i,j}h_k&=(h_k-m(\ep_k,\al_{i,j}))x^{(m)}_{i,j}, \quad
\ox_{i,j}h_k=(h_k-(\ep_k,\al_{i,j}))\ox_{i,j}.
\end{align*}
\end{prop}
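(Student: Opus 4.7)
The plan is to deduce all five identities by induction on the exponents $m$ and $s$, starting from the single-index commutation formulas in Lemma~\ref{root-comm}, which we first rewrite in $U(\mfq)$ using the fact that the super bracket unfolds as $uv-(-1)^{\hat u\hat v}vu$. Part (3) is immediate: since both factors are odd, $\ox_{i,j}\ox_{k,l}+\ox_{k,l}\ox_{i,j}=[\ox_{i,j},\ox_{k,l}]$, and the three cases match Lemma~\ref{root-comm}(3) verbatim. The second equalities in (4) and (5), which involve a single $\ox_{i,j}$, are also direct translations of Lemma~\ref{root-comm}(4), noting that $h_{\bar k}$ is odd so that $\ox_{i,j}h_{\bar k}+h_{\bar k}\ox_{i,j}=[h_{\bar k},\ox_{i,j}]$.

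The divided-power versions of (4) and (5) follow by induction on $m$. For (5), writing $x^{(m)}_{i,j}=\tfrac{1}{m}x_{i,j}x^{(m-1)}_{i,j}$ and applying the inductive hypothesis yields $\tfrac{1}{m}x_{i,j}(h_k-(m-1)(\ep_k,\al_{i,j}))x^{(m-1)}_{i,j}$, and a further application of the base case $x_{i,j}h_k=(h_k-(\ep_k,\al_{i,j}))x_{i,j}$ produces the desired factor $(h_k-m(\ep_k,\al_{i,j}))$. The same strategy applied to the first formula of (4) creates an additional term $-\tfrac{1}{m}(\ep_k,\al_{i,j})x_{i,j}\ox_{i,j}x^{(m-2)}_{i,j}$; here we exploit that $[x_{i,j},\ox_{i,j}]=0$, a consequence of Lemma~\ref{root-comm}(2) since $2\al_{i,j}$ is neither $0$ nor a root, so this term combines cleanly with the main term to give the single correction $-(\ep_k,\al_{i,j})\ox_{i,j}x^{(m-1)}_{i,j}$.

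With (3)--(5) established, we turn to parts (1) and (2), proved by induction on $m$ (with $s$ fixed, supplemented by a second induction on $s$ in the sum of case~1). When $\al_{i,j}+\al_{k,l}\notin\Phi\cup\{0\}$, Lemma~\ref{root-comm} gives commutativity and the induction is trivial. In the root case $\beta=\al_{i,j}+\al_{k,l}\in\Phi$, one first verifies that $\beta+\al_{i,j}\notin\Phi\cup\{0\}$ by case analysis on the two subcases $k=j$ and $l=i$, so that $x_{i,j}$ (super)commutes with $x_\beta$ and $\ox_\beta$; the induction then produces the sum $\sum_t\vep^t x^{(s-t)}_{k,l}x^{(t)}_\beta x^{(m-t)}_{i,j}$ in (1) and the single correction $\vep\ox_\beta x^{(m-1)}_{i,j}$ in (2).

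The main technical step is the first case $\al_{i,j}+\al_{k,l}=0$, where the commutator yields a Cartan element ($h_i-h_j$ in (1), $h_{\bar i}-h_{\bar j}$ in (2)) that no longer commutes with $x_{i,j}$. For (1), the triple $\{x_{i,j},x_{j,i},h_i-h_j\}$ spans a copy of $\mathfrak{sl}_2$, so the identity is the classical Kostant formula, and the binomial coefficient $\binom{h_i-h_j-m-s+2t}{t}$ emerges from the standard double induction using part~(5) to push $h_i-h_j$ past $x^{(m-t)}_{i,j}$. For (2), moving the odd Cartan element $h_{\bar i}-h_{\bar j}$ past $x_{i,j}$ generates an additional $\ox_{i,j}$ term via part~(4), and the relation $\ox_{i,j}^2=0$ from \eqref{odd-square} truncates this accumulation so that only one correction $-\ox_{i,j}x^{(m-2)}_{i,j}$ survives; this is the step requiring the most careful bookkeeping, but is otherwise a routine calculation.
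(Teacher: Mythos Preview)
Your proposal is correct and follows essentially the same route as the paper: translate Lemma~\ref{root-comm} into associative identities in $U(\mfq)$ and then induct on the exponents. The paper is terser---it invokes the classical $\mathfrak{gl}(n)$ formulas (via the identification~\eqref{qnev}) for part~(1) and writes out the first case of~(2) explicitly using the expansion $x^m_{i,j}\ox_{j,i}=\ox_{j,i}x^m_{i,j}+\sum_{d=0}^{m-1}x^d_{i,j}(h_{\bar i}-h_{\bar j})x^{m-1-d}_{i,j}$, then simplifies with $x_{i,j}(h_{\bar i}-h_{\bar j})=(h_{\bar i}-h_{\bar j})x_{i,j}-2\ox_{i,j}$ and $x_{i,j}\ox_{i,j}=\ox_{i,j}x_{i,j}$. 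One small remark: in that calculation the relation $\ox_{i,j}^2=0$ is never actually invoked; the induction closes using only the commutativity $[x_{i,j},\ox_{i,j}]=0$, so your appeal to \eqref{odd-square} in part~(2) is harmless but unnecessary.
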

\begin{proof}
Since the even root vectors $x_\al$ can be identified with the usual root vectors in $\mathfrak{gl}(n)$
under the identification~\eqref{qnev}, part (1) follows from
the classical case (cf. \cite[(5.11a-c)]{DG}).
Now suppose $\al_{i,j}+\al_{k,l}=0$. Then $i=l,j=k$.
By Lemma \ref{root-comm}(2), the following holds:
\begin{align*}
x^m_{i,j} \ox_{j,i}
=&\ox_{j,i} x^m_{i,j}+\sum^{m-1}_{d=0}x^d_{i,j} \cdot (h_{\bi}-h_{\bj})\cdot x^{m-1-d}_{i,j}\\
=&\ox_{j,i} x^m_{i,j}+\sum^{m-1}_{d=0} \big((h_{\bi}-h_{\bj}) x^d_{i,j}-\sum^{d-1}_{t=0}x^t_{i,j}\cdot 2\ov{x}_{i,j}\cdot x^{d-1-t}_{i,j} \big)x^{m-1-d}_{i,j}\\
=&\ox_{j,i} x^m_{i,j}+\sum^{m-1}_{d=0} \big((h_{\bi}-h_{\bj}) x^d_{i,j}-2d\ov{x}_{i,j} x^{d-1}_{i,j} \big)x^{m-1-d}_{i,j}\\
=&\ox_{j,i} x^m_{i,j}+m(h_{\bi}-h_{\bj}) x^{m-1}_{i,j}-m(m-1)\ov{x}_{i,j} x^{m-2}_{i,j},
\end{align*}
where the second and third equalities are due to the facts
$x_{i,j} (h_{\bi}-h_{\bj})=(h_{\bi}-h_{\bj}) x_{i,j}-2\ox_{i,j}$ and $x_{i,j}\ov{x}_{i,j}=\ov{x}_{i,j} x_{i,j}$ by Lemma \ref{root-comm}.
Hence, the first case of part (2) holds.
Similarly, the other cases  can be verified.
\end{proof}

Define the {\em Kostant $\Z$-form} $U_\Z$ (cf. \cite[Section 4]{BK2}) to be the $\Z$-subalgebra of $U$ generated by
$$
\big\{e_i^{(k)},f_i^{(k)},e_{\bar i},f_{\bar i}\mid 1\leq i\leq n-1, k\in\N\big\}
\bigcup\bigg\{\begin{pmatrix}h_i\\k\end{pmatrix}, h_{\bar i}\,\bigg|\, 1\leq i\leq n, k\in\N\bigg\}.
$$
Denote by $U_\Z^+$ (resp. $U_\Z^-$) the $\Z$-subalgebra of $U$ generated by
$e_i^{(k)},e_{\bi}$ (resp. $f_i^{(k)},f_{\bi}$), where $1\leq i\leq n-1$ and $k\in\N$.
Let $U_\Z^0$ be the $\Z$-subalgebra of $U$ generated by $\begin{pmatrix}h_i\\k\end{pmatrix}, h_{\bi}~ (1\leq i\leq n,k\in\N)$.



For $\bsb\in\N^n$, set $|\bsb|=b_1+\cdots+b_n$ and define
\begin{equation}\label{HB}
{\bsh\choose \bsb}=\prod^n_{i=1}\begin{pmatrix}h_i\\ b_i\end{pmatrix}.
\end{equation}
For $D=(D_1,\ldots,D_n)\in\Z_2^n$, set
$$
\ovHD=h^{D_1}_{\bar 1}\cdots h^{D_n}_{\bar n}.
$$
For the subset $\Z_2$ of $\N$,
let $$M_n(\N|\Z_2):=M_n(\N)\times M_n(\Z_2).$$
For an $n\times n$ matrix $X$, let $X=X^-+X^0+X^+$ be the decomposition of $X$ into lower triangular, diagonal, and upper triangular parts of $X$, and
for each $A=(A_0,A_1)\in M_n(\N|\Z_2)$, let $A^\vep=(A_0^\vep,A_1^\vep)$ for every $\vep\in\{+,0,-\}$ and define
\begin{align}\label{eAfA}
e_{A^+}=\prod_{1\leq i<j\leq n}(x_{i,j}^{(a^0_{ij})}~\ox^{a^1_{ij}}_{{i,j}}),\quad
f_{A^-}=\prod_{1\leq i<j\leq n}(x_{j,i}^{(a^0_{ji})}~\ox^{a^1_{ji}}_{j,i}),
\end{align}
where $A_0=(a^0_{ij}), A_1=(a^1_{ij})$, and the order in the  products are defined as follows (cf. \cite{DP}).
For the $j$th row (reading to the right) $a_{j,j+1},\ldots, a_{jn}$ of $A^+$, put
$$
\pi_j(A^+)=(x^{(a^0_{j,j+1})}_{j,j+1}\ox^{a^1_{j,j+1}}_{j,j+1})\cdots (x^{(a^0_{jn})}_{j,n}\ox^{a^1_{jn}}_{j,n})
$$
and let
\begin{equation}\label{eA+}
e_{A^+}=\pi_{n-1}(A^+)\cdots\pi_1(A^+).
\end{equation}
Similarly for the $j$th column (reading upwards) $a_{nj},\ldots,a_{j+1,j}$ for $A^-$, put
$$
\pi_j(A^-)=(x^{(a^0_{nj})}_{n,j}\ox^{a^1_{nj}}_{n,j})\cdots (x^{(a^0_{j+1,j})}_{j+1,j}\ox^{a^1_{j+1,j}}_{j+1,j})
$$
and let
\begin{equation}\label{fA-}
f_{A^-}=\pi_{1}(A^-)\cdots\pi_{n-1}(A^-).
\end{equation}
Then we have the following.
\begin{prop}\label{intPBW}
\begin{enumerate}
\item As abelian groups, we have
$
U_\Z\cong U_\Z^-\otimes U_\Z^0\otimes U_\Z^+.
$

\item
The superalgebra $U_\Z$ is a free $\Z$-module with basis given by the set
\begin{equation}\label{PBW-UZ}
\big\{\fkm_A:=f_{A^-}{\bsh\choose A_0^0}\ov{h}_{A_1^0} e_{A^+}\mid A\in M_n(\N|\Z_2)\big\}.
\end{equation}
\end{enumerate}
\end{prop}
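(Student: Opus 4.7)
The plan is to prove part (2) directly, after which part (1) will follow by reading off the triangular decomposition from the shape of the monomials $\fkm_A = f_{A^-}\binom{\bsh}{A_0^0}\ov{h}_{A_1^0}e_{A^+}$. The set $\{\fkm_A\}$ is the divided-power analogue of the PBW basis of $U(\mfq)$ over $\Q$ supplied by Proposition~\ref{PBW-qn}, applied to the homogeneous basis $\{x_{i,j}, \ox_{i,j} \mid i\neq j\} \cup \{h_i, h_{\bar i} \mid 1\leq i\leq n\}$ of $\mfq$ with the ordering on root vectors fixed by \eqref{eA+} and \eqref{fA-}. So I first verify that every $\fkm_A$ lies in $U_\Z$, and then deduce from the classical PBW theorem that the $\fkm_A$ are $\Z$-linearly independent and span $U_\Z$.

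To see that $\fkm_A \in U_\Z$, I first check that the divided powers $x_{i,j}^{(m)}$ and the odd elements $\ox_{i,j}$ lie in $U_\Z^\pm$ for every non-simple positive root $\al_{i,j}$. This is done by induction on $j-i$: for $j=i+1$ these are the generators $e_i^{(m)}$ and $e_{\bar i}$ (resp.\ $f_i^{(m)}, f_{\bar i}$); for $j>i+1$, apply Proposition~\ref{div-root}(1) with the pair $\al_{i,i+1}, \al_{i+1,j}$ (which sums to $\al_{i,j}$) and solve for $x_{i,j}^{(m)}$ in terms of $x_{i,i+1}^{(m)} x_{i+1,j}^{(m)}$ and lower-order monomials already known to be integral, and apply Proposition~\ref{div-root}(2) with $m=1$ to obtain $\ox_{i,j}$ as a super-commutator of $x_{i,i+1}$ and $\ox_{i+1,j}$. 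Granting this, each $e_{A^+}$ (resp.\ $f_{A^-}$) is by \eqref{eA+} (resp.\ \eqref{fA-}) a product of such elements in the prescribed order, and $\binom{\bsh}{A_0^0}$ and $\ov{h}_{A_1^0}$ lie in $U_\Z^0$ by definition.

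For $\Z$-linear independence, note that Proposition~\ref{PBW-qn}(1) applied to the above ordered basis produces a $\Q$-basis of $U(\mfq)$ indexed by $M_n(\N|\Z_2)$ (with $\N$-entries for even root vectors and diagonal even entries, and $\Z_2$-entries for odd root vectors since $\ox_{\al}^2 = 0$ and $h_{\bar i}^2 = h_i$). The monomial $\fkm_A$ differs from the PBW monomial indexed by $A$ only by an invertible triangular change of basis in $\Q$ — scalar factors $1/(a^0_{ij})!$ on each divided power, together with the unitriangular change of basis between $\{h_1^{b_1}\cdots h_n^{b_n}\}$ and $\{\binom{\bsh}{\bsb}\}$ in $\Q[h_1,\ldots,h_n]$ — so $\{\fkm_A\}$ remains $\Q$-linearly independent and therefore $\Z$-linearly independent. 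For spanning, reduce an arbitrary product of generators of $U_\Z$ to a $\Z$-combination of the $\fkm_A$ using Proposition~\ref{div-root}: parts (4)--(5) commute diagonal elements past root vectors (producing only integer binomial coefficients in the $h_k$), parts (1)--(3) push $f$-type factors to the left and $e$-type factors to the right of the diagonal part and reorder within $U_\Z^\pm$ into the prescribed order, and the identities $\ox_{\al}^2 = 0$ and $h_{\bar i}^2 = h_i$ collapse odd exponents to $\{0,1\}$ so that the final index set is $M_n(\N|\Z_2)$.

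The main technical obstacle is the last reduction: the reordering inside $U_\Z^+$ and $U_\Z^-$ generates many commutator corrections, and with odd factors present, parts (2) and (4) of Proposition~\ref{div-root} produce extra signed terms involving both $\ox_\beta$ and $h_{\bar k}$. One must install a suitable well-founded induction — on total weight together with a measure of the distance of the current word from the fixed order in \eqref{eA+}--\eqref{fA-} — to guarantee termination, and then track at every step that the correction terms produced by Proposition~\ref{div-root} are genuinely $\Z$-combinations of strictly smaller monomials so that no denominators appear. The argument is routine but bookkeeping-intensive, and is where the care taken in choosing $\{\fkm_A\}$ with the precise ordering of \eqref{eA+} pays off.
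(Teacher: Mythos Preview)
Your proposal is correct and uses the same essential ingredients as the paper's proof: the commutation formulas of Proposition~\ref{div-root} for spanning, the PBW theorem (Proposition~\ref{PBW-qn}) for linear independence, and the identities \eqref{odd-square} to collapse odd exponents. The only difference is organisational: the paper proves part~(1) first, obtaining $U_\Z\subseteq U_\Z^-U_\Z^0U_\Z^+$ directly from Proposition~\ref{div-root}, and then checks spanning separately inside each triangular piece $U_\Z^\pm$, $U_\Z^0$; this avoids having to run the full reordering argument on mixed words that you outline in your final paragraph, so the bookkeeping is lighter. Your explicit verification that $x_{i,j}^{(m)}\in U_\Z$ for non-simple roots (by solving Proposition~\ref{div-root}(1) with $m=s$) is a detail the paper leaves implicit here but spells out later in the quantum case.
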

\begin{proof} Applying the commutation formulas in Proposition~\ref{div-root} yields the inclusion
$U_\Z\subseteq U_\Z^- U_\Z^0 U_\Z^+.$ Thus, the required isomorphism in part (1) follows from
the restriction to $U_\Z$ of the canonical isomorphism $U(\mfq)\rightarrow U^-\otimes U^0\otimes U^+$ in Proposition~\ref{PBW-qn}(2).

Clearly by Propositions~\ref{PBW-qn} and \ref{div-root} (and noting \eqref{odd-square}), the subalgebra
$U^+_\Z$ (resp. $U^-_\Z$) is spanned by the set $\{e_{A^+}\mid A\in M_n(\N|\Z_2), A^-=A^0=0\}$
(resp. $\{f_{A^-}\mid A\in M_n(\N|\Z_2), A^+=A^0=0\}$).
Furthermore, by (QS1) and \eqref{odd-square} (together with a result for $U_\Z({\mathfrak q}(n)_0)$, we have that $U^0_\Z$ is spanned
by the set $\{{\bsh\choose A_0^0}\ov{h}_{A_1^0}\mid A\in M_n(\N|\Z_2), A^+=A^-=0\}$.
Putting together we obtain that $U_{\Z}$ is spanned by the set~\eqref{PBW-UZ}.
Meanwhile by Proposition~\ref{PBW-qn}(1) it is easy to check that the set~\eqref{PBW-UZ}
is linearly independent.
Hence the proposition is proved.
\end{proof}

We define the {\em degree} of the generators $x^{(s)}_{i,j},\ox_{i,j},h_{\bar i}, {h_i\choose s}$ for $U_\Z$ via
\begin{align}\label{eq:degree}
{\rm deg}(x^{(s)}_{i,j})=s|j-i|,\quad {\rm deg}(\ox_{i,j})=|j-i|,\quad{\rm deg}(h_{\bi})=1,\quad
\deg{h_i\choose s}=0,
\end{align}
for $1\leq i\neq j\leq n, s\in\N$.
Then the degree of the element $\fkm_A$ for $A=(A_0,A_1)\in M_n(\N|\Z_2)$ is given by
\begin{equation}\label{degree-mA}
{\rm deg}(\fkm_A)={\rm deg}(A):=(a^1_{11}+\cdots+a^1_{nn})+\sum_{1\leq i\neq j\leq n}(a_{ij}+a_{ji})|j-i|,
\end{equation}
where $A_0=(a^0_{ij}), A_1=(a^1_{ij})$ and $a_{ij}=a^0_{ij}+a^1_{ij}$.
By Proposition~\ref{intPBW}(2), each non-zero element $\fkl$ in $U_\Z$ can be written as a linear combination
of $\fkm_A$ with $A\in M_n(\N|\Z_2)$ and we define ${\rm deg}(\fkl)$ to be the highest degree of
the terms $\fkm_A$ appearing in $\fkl$.
In particular, if $\frak M\subseteq U_\Z$ denotes the set of monomials $\fkm$ in
$x^{(s)}_{i,j},\ox_{i,j},h_{\bar i},  {h_i\choose s}(1\leq i\neq j\leq n, s\in\N)$,
then the degree of a non-zero monomial $\fkm\in\mf M$ is well-defined.
Clearly by Proposition~\ref{div-root}, $U_\Z$ is a filtered algebra with respect to the degree defined in \eqref{eq:degree}
and ${\rm deg}(\fkm_A\fkm_B)\leq {\rm deg}(\fkm_A)+{\rm deg}(\fkm_B)={\rm deg}(A)+{\rm deg}(B)$ for $A,B\in M_n(\N|\Z_2)$.
In particular, given a non-zero monomial $\fkm\in\mf M$ in which $x^{(s)}_{i,j},\ox_{i,j},h_{\bar i}$ appear $a^s_{i,j}, b_{i,j}$ and $c_i$
times, respectively, for $1\leq i\neq j\leq n, s\in\N$, then
\begin{equation}\label{degree-m-ineq}
{\rm deg}(\fkm)\leq \sum_{s}\sum_{1\leq i\neq j\leq n}s|i-j|~a^s_{i,j}~ +\sum_{1\leq i\neq j\leq n}|i-j|~b_{i,j} +\sum^n_{i=1} c_i.
\end{equation}

\begin{rem}\label{degree} Let
$$\mc G=\{x^{(s)}_{i,j},\ox_{i,j},h_{\bar i}\mid s\in\N, 1\leq i\neq j\leq n\}.$$
By \eqref{eq:degree} and \eqref{degree-m-ineq}, every commutator $ab-(-1)^{\hat{a}\cdot\hat{b}}ba$ in Proposition~\ref{div-root}(1)-(4),
where $a,b\in \mc G$
belong to different triangular parts,  has degree strictly less than ${\rm deg}(ab)$.
The fact will be useful below.
\end{rem}
\section{Presenting the queer Schur superalgebra $\Qnr$}
Denote by $\La(n,r)$ the set of compositions of $r$ into $n$ parts, or equivalently we can view $\La(n,r)$
as a subset of ${\mc P}_{\geq0}$ in the following way:
$$
\La(n,r)=\{\la=\la_1\ep_1+\cdots+\la_n\ep_n\in{\mc P}_{\geq0}\mid \la_1+\cdots+\la_n=r\}.
$$
For $\la\in\La(n,r)$, denote by $\ell(\la)$ the number of nonzero parts in $\la$, that is,
$\ell(\la)=\sharp\{i\mid \la_i\neq 0, 1\leq i\leq n\}$.
Recall that $\{v_1,\ldots,v_n\}$ and $\{v_{-1},\ldots,v_{-n}\}$ are bases for $\ev V$ and $\od V$.
For a $r$-tuple $\udj=(j_1,\ldots,j_r)\in I(n|n)^r$, we set $v_{\udj}=v_{j_1}\otimes\cdots\otimes v_{j_r}$ and define
${\rm wt}(\udj)=(\mu_1,\ldots,\mu_n)$ via
\begin{equation}\label{wt}
\mu_i=\sharp\{k\mid~ j_k=\pm i, 1\leq k \leq r\},\text{ for all } 1\leq i\leq n.
\end{equation}
Then ${\rm wt}(\udj)\in\La(n,r)$ and the set $\{v_{\udj}\mid \udj\in I(n|n)^r\}$ is a basis for $V^{\otimes r}$.

\begin{lem} \label{phir-cartan}
Suppose $\udj\in I(n|n)^r$ and ${\rm wt}(\udj)=\mu$.
We have, for $1\leq i\leq n$,
\begin{enumerate}
\item $\phi_r(h_i)(v_{\udj})=\mu_i v_{\udj}$;
\item $\phi_r(h_{\bi})(v_{\udj})=0$ if $\mu_i=0$.
\end{enumerate}
\end{lem}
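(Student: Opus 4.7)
\medskip

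The plan is a direct computation from the definitions, relying on the fact that on generators of $U(\mfq)$ the comultiplication is primitive, $\Delta(x)=x\ot 1+1\ot x$. Iterating this, for any $x\in\mfq$ the operator $\phi_r(x)$ on $V^{\ot r}$ expands as
$$
\phi_r(x)=\sum_{k=1}^{r}\mathrm{id}^{\ot(k-1)}\ot\phi_1(x)\ot\mathrm{id}^{\ot(r-k)},
$$
subject to the usual sign rule $(1\ot x)(v\ot w)=(-1)^{\hat{x}\cdot\hat{v}}v\ot(xw)$ when $x$ is odd. Both parts are then read off from how $h_i$ and $h_{\bi}$ act on the basis $\{v_j\mid j\in I(n|n)\}$ via \eqref{cartan} and \eqref{Eij}.

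For (1), $h_i=E_{i,i}+E_{-i,-i}$ is even so no signs appear. A direct calculation gives $h_i v_j=v_j$ when $j\in\{i,-i\}$ and $h_i v_j=0$ otherwise. Summing over the $r$ tensor slots, $\phi_r(h_i)(v_{\udj})$ equals $v_{\udj}$ multiplied by the number of indices $k$ for which $j_k=\pm i$, and by definition \eqref{wt} of $\mu_i$ this count is exactly $\mu_i$.

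For (2), the same references yield $h_{\bi}v_i=v_{-i}$, $h_{\bi}v_{-i}=v_i$, and $h_{\bi}v_j=0$ whenever $j\notin\{i,-i\}$. Under the hypothesis $\mu_i=0$, no entry of $\udj$ lies in $\{i,-i\}$, so in every summand of $\phi_r(h_{\bi})(v_{\udj})$ the factor $h_{\bi}v_{j_k}$ is already zero. Hence each summand vanishes, independently of the Koszul sign it carries, which proves (2). No genuine obstacle arises here: the lemma is essentially an unpacking of the comultiplication together with the explicit formulas \eqref{cartan}, and its role is to prepare the weight-space decomposition used in the subsequent presentation of $\Qnr$.
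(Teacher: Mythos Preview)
Your proof is correct and follows essentially the same approach as the paper: both compute the action of $h_i$ and $h_{\bi}$ on a single basis vector $v_j$ from \eqref{cartan} and \eqref{Eij}, then expand $\phi_r$ via the primitive comultiplication and read off the result from the definition \eqref{wt} of $\mu_i$. The paper writes out the intermediate formula for $\phi_r(h_{\bi})(v_{\udj})$ explicitly (including the Koszul signs) before specializing to $\mu_i=0$, whereas you observe directly that every summand vanishes in that case; this is a cosmetic difference only.
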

\begin{proof}
Firstly, by the definition of the action of $\mfq$ on $V$, \eqref{cartan} and \eqref{Eij}, we obtain
\begin{align*}
\phi_1(h_i)(v_j)=(E_{i,i}+E_{-i,-i})(v_j)=\delta_{i,|j|}v_j,\quad
\phi_1(h_{\bi})(v_j)=(E_{i,-i}+E_{-i,i})(v_j)=\delta_{i,|j|}v_{-j}.
\end{align*}
for $1\leq i\leq n$ and $j\in I(n|n)$.
Hence, by the definition of the homomorphism $\phi_r$ and the comultiplication on $U(\mfq)$ and noting
$\hat{h}_i=0$ and $\hat{h}_{\bi}=1$, we have
\begin{align*}
\phi_r(h_i)(v_{\udj})
=&\big(\phi_1(h_i)\otimes 1\otimes\cdots\otimes 1+\cdots+
1\otimes\cdots\otimes 1\otimes \phi_1(h_i)\big)(v_{j_1}\otimes\cdots\otimes v_{j_r})\\
=&\sum^r_{k=1}\delta_{i,|j_k|}v_{\udj},\\
\phi_r(h_{\bi})(v_{\udj})=&\big(\phi_1(h_{\bi})\otimes 1\otimes\cdots\otimes 1+\cdots+
1\otimes\cdots\otimes 1\otimes \phi_1(h_{\bi})\big)(v_{j_1}\otimes\cdots\otimes v_{j_r})\\
=&\sum^r_{k=1}(-1)^{\hat{v}_{j_1}+\cdots+\hat{v}_{j_{k-1}}}
\delta_{i,|j_k|}v_{j_1}\otimes\cdots\otimes v_{j_{k-1}}\otimes v_{-j_k}\otimes v_{j_{k+1}}\otimes\cdots\otimes v_{j_r},
\end{align*}
for $1\leq i\leq n$.
Then by \eqref{wt}, the lemma follows.
\end{proof}

Let $I$ be the ideal of the universal enveloping algebra $U=U(\mfq)$ given by
\begin{align}\label{ideal-I}
I=\langle h_1+\cdots+h_n-r,~
h_{\bar 1}(h_1-1)\cdots(h_1-r),\ldots, h_{\bar{n}}(h_n-1)\cdots(h_n-r)\rangle.
\end{align}
Then by \eqref{odd-square} we obtain for $1\leq i\leq n$
\begin{equation}\label{old-Hiproduct}
h_i(h_i-1)\cdots(h_i-r)=h^2_{\bar i}(h_i-1)\cdots(h_i-r)\in I.
\end{equation}
Define
\begin{equation}\label{Unr}
\aligned
\Pnr=U/I, \quad U_\Z(n,r)=U_\Z/I\cap U_\Z\\
U^0(n,r)=U^0/I\cap U^0,\quad U^0_\Z(n,r)=U^0_\Z/I\cap U^0_\Z.
\endaligned
\end{equation}
{\it For notational simplicity, we will denote, by abuse of notation,  the images of $e_i,f_i,h_i$, $x_\al,e_{A^+}$ etc. in $U(n,r)$ by the same letters.}


\begin{prop}\label{surjective}
The homomorphism $\phi_r:U\rightarrow \End_{\Q}(V^{\otimes r})$
satisfies $I\subseteq \ker\phi_r$. Hence there exists a natural surjective
homomorphism
\begin{equation}\label{phibar}
\ov{\phi}_r: \Pnr\twoheadrightarrow \Qnr.
\end{equation}
\end{prop}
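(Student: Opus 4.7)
The plan is to verify that each generator of the ideal $I$ acts as zero on every basis vector $v_{\underline j}$ of $V^{\otimes r}$, so that $I \subseteq \ker \phi_r$; the surjectivity statement is then immediate from the very definition $\mathcal{Q}(n,r) = \phi_r(U(\mathfrak{q}(n)))$ in \eqref{Qr1}.

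For the first generator $h_1 + \cdots + h_n - r$, I would fix an arbitrary basis vector $v_{\underline j}$ with $\mu := \mathrm{wt}(\underline j) \in \La(n,r)$. By Lemma~\ref{phir-cartan}(1), $\phi_r(h_i)(v_{\underline j}) = \mu_i v_{\underline j}$, so summing over $i$ gives $\phi_r(h_1+\cdots+h_n)(v_{\underline j}) = (\mu_1+\cdots+\mu_n) v_{\underline j} = r\, v_{\underline j}$ since weights are compositions of $r$. This immediately yields $\phi_r(h_1+\cdots+h_n-r)=0$.

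For the generators $h_{\bar i}(h_i-1)(h_i-2)\cdots(h_i-r)$, the key observation is that $h_{\bar i}$ commutes with $h_i$ by (QS1), so the expression is unambiguous and $\phi_r$ applied to it factors as $\phi_r(h_{\bar i})\circ \phi_r\bigl((h_i-1)\cdots(h_i-r)\bigr)$. Evaluating on $v_{\underline j}$ and using Lemma~\ref{phir-cartan}(1) again, the second factor gives the scalar $(\mu_i-1)(\mu_i-2)\cdots(\mu_i-r)$. I split into two cases: since $0 \le \mu_i \le r$, either $\mu_i \in \{1,\ldots,r\}$, in which case this scalar is zero, or $\mu_i = 0$, in which case $\phi_r(h_{\bar i})(v_{\underline j})=0$ by Lemma~\ref{phir-cartan}(2). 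Either way the composition annihilates $v_{\underline j}$.

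Combining the two verifications shows $I \subseteq \ker \phi_r$, so $\phi_r$ descends to a well-defined homomorphism $\overline{\phi}_r : U(n,r) = U/I \to \mathrm{End}_{\Q}(V^{\otimes r})$ whose image is precisely $\phi_r(U) = \mathcal{Q}(n,r)$. No step here looks genuinely obstructive; the only care needed is recording the case split on $\mu_i = 0$ versus $\mu_i \ge 1$, which is exactly what the two parts of Lemma~\ref{phir-cartan} were designed to handle.
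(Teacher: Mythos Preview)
Your proof is correct and follows essentially the same approach as the paper's own argument: both verify the two types of generators of $I$ act as zero on an arbitrary basis vector $v_{\underline j}$ by invoking Lemma~\ref{phir-cartan}(1) for the weight computation and splitting on whether $\mu_i=0$ (using Lemma~\ref{phir-cartan}(2)) or $1\le\mu_i\le r$ (killing the scalar factor). The only cosmetic difference is that you explicitly note the commutativity $[h_i,h_{\bar i}]=0$ from (QS1) to justify the factoring, which the paper leaves implicit.
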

\begin{proof}
Fix an arbitrary $\udj\in I(n|n)^r$ and write ${\rm wt}(\udj)=\mu=(\mu_1,\ldots,\mu_n)\in\La(n,r)$.
By Lemma~\ref{phir-cartan}(1), we have
$
\phi_r(h_1+\cdots+h_n)(v_{\udj})=(\mu_1+\cdots+\mu_n)v_{\udj}=rv_{\udj}.
$
This implies
\begin{align}\label{ker-phir-Hi}
\phi_r(h_1+\cdots+h_n-r)=0
\end{align}
since the set $\{v_{\udj}\mid \udj\in I(n|n)^r\}$
is a basis for $V^{\otimes r}$.
On the other hand, again by Lemma~\ref{phir-cartan}(1), we have
\begin{align}
\phi_r(h_{\bar{i}}(h_i-1)\cdots (h_i-r))(v_{\udj})=&(\mu_i-1)\cdots(\mu_i-r)\big(\phi_r(h_{\bar i})\big)(v_{\udj}).\label{phir-HioddHi}
\end{align}
Observe that $0\leq \mu_i\leq r$.
If $\mu_i=0$, then we have $\phi_r(h_{\bar i})(v_{\udj})=0$ by Lemma~\ref{phir-cartan}(2); otherwise,
we have $1\leq \mu_i\leq r$, which implies $(\mu_i-1)\cdots(\mu_i-r)=0$.
Therefore, by \eqref{phir-HioddHi}, we deduce that $\phi_r(h_{\bar{i}}(h_i-1)\cdots (h_i-r))(v_{\udj})=0$.
This means
\begin{align}\label{ker-phir-Hiodd}
\phi_r(h_{\bar{i}}(h_i-1)\cdots (h_i-r))=0
\end{align}
for all $1\leq i\leq n$.
Therefore by~\eqref{ideal-I}, \eqref{ker-phir-Hi} and \eqref{ker-phir-Hiodd},
the ideal $I$ is contained in $\ker\phi_r$ and hence the proposition is verified.
\end{proof}

For $\la\in\La(n,r)$, write
\begin{align*}
1_\la={\bsh\choose\la}\in U_\Z^0(n,r).
\end{align*}

\begin{prop} \label{idempotent}
The following hold in $U_\Z^0(n,r)$:
\begin{enumerate}
\item The set $\{1_\la\mid \la\in\La(n,r)\}$ is a set of pairwise orthogonal
central idempotents in $U_\Z^0(n,r)$ and
$\sum_{\la\in\La(n,r)}1_{\la}=1.
$

\item ${\bsh\choose \bsb}=0$ for any $\bsb\in\N^n$ with $|\bsb|>r$.

\item For $1\leq i\leq n, \la\in\La(n,r)$ and $\bsb\in\N^n$, we have $h_i1_\la=\la_i1_\la$,
\begin{align*}
{\bsh\choose \bsb}1_\la={\la\choose \bsb}1_\la,\text{ and }
{\bsh\choose \bsb}=\sum_{\la\in\La(n,r)}{\la\choose\bsb}1_\la,
\end{align*}
where ${\la\choose \bsb}=\prod^n_{i=1}\begin{pmatrix}\la_i\\ b_i\end{pmatrix}$.

\item Suppose $1\leq i\leq n$ and $\la\in\La(n,r)$. If $\la_i=0$, then
$
h_{\bi}1_{\la}=0.
$

\item The $\Z$-algebra $U^0_\Z(n,r)$ is spanned by the set
$$
\{1_\la \ovHD\mid \la\in\La(n,r),D\in\Z_2^n, D_i\leq\la_i \text{ for }1\leq i\leq n \}.
$$
\end{enumerate}
\end{prop}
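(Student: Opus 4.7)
The plan is to organize the whole proof around a single observation: any polynomial identity in $\Q[h_1,\ldots,h_n]$ that is valid at every point of $\La(n,r)\subset\Q^n$ already lies in $I$, and hence holds in $U_\Z^0(n,r)$. Indeed, set $J_\Q=(h_1+\cdots+h_n-r,\,h_i(h_i-1)\cdots(h_i-r)\colon i=1,\ldots,n)\subset\Q[h_1,\ldots,h_n]$. By \eqref{old-Hiproduct} every generator of $J_\Q$ already sits in $I$, and $J_\Q$ coincides with the vanishing ideal of $\La(n,r)$ because the factors $h_i(h_i-1)\cdots(h_i-r)$ have distinct integer roots, so evaluation yields a reduced-CRT isomorphism $\Q[h_1,\ldots,h_n]/J_\Q\cong\Q^{\La(n,r)}$.

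With this criterion in hand, parts (1), (2) and (3) become pointwise vanishing checks on $\La(n,r)$. For (1), the iterated Chu--Vandermonde identity $\sum_{\la\in\La(n,r)}\prod_i\binom{x_i}{\la_i}=\binom{x_1+\cdots+x_n}{r}$ yields $\sum_\la 1_\la=\binom{r}{r}=1$, while at any $\nu\in\La(n,r)$ the evaluation of $\prod_i\binom{\nu_i}{\la_i}\binom{\nu_i}{\mu_i}$ vanishes unless $\nu\geq\la$ and $\nu\geq\mu$ componentwise, which together with $|\nu|=|\la|=|\mu|=r$ forces $\nu=\la=\mu$; this gives orthogonality. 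Centrality is automatic since $1_\la$ lies in the commutative subring generated by the $h_j$, which centralizes every generator of $U^0$ by (QS1). Part (2) follows because $\prod_i\binom{\nu_i}{b_i}=0$ at every $\nu\in\La(n,r)$ once $|\bsb|>r$. Part (3) similarly uses the vanishing of $(h_i-\la_i)1_\la$ and $({\bsh\choose\bsb}-{\la\choose\bsb})1_\la$ at each point, combined with $1=\sum_\mu 1_\mu$.

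The main obstacle is part (4), where $h_{\bi}$ is odd and the commutative-polynomial trick does not apply directly. Here I would use the defining relation $h_{\bi}(h_i-1)\cdots(h_i-r)\in I$ from \eqref{ideal-I}: dividing by $r!$, which is a unit in $\Q$ and hence permitted inside the $\Q$-ideal $I\subset U$, yields $h_{\bi}\binom{h_i-1}{r}\in I$, so this element is zero in $U_\Z^0(n,r)$. Multiplying on the right by $1_\la$ with $\la_i=0$ and using the polynomial-vanishing criterion to identify $\binom{h_i-1}{r}1_\la$ with $(-1)^r1_\la$ in $U_\Z^0(n,r)$ (the difference vanishes at every $\nu\in\La(n,r)$: trivially if $\nu\neq\la$, while $\binom{-1}{r}=(-1)^r$ at $\nu=\la$), one obtains $(-1)^r h_{\bi}1_\la=0$, whence $h_{\bi}1_\la=0$.

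For part (5), the PBW basis from Proposition~\ref{intPBW} spans $U_\Z^0$ by the monomials ${\bsh\choose\bsb}\overline{h}_D$; expanding ${\bsh\choose\bsb}=\sum_\la{\la\choose\bsb}1_\la$ via part (3), and noting that $1_\la$ commutes with each $h_{\bar j}$ by (QS1), one sees that $U_\Z^0(n,r)$ is spanned by the elements $1_\la\overline{h}_D$. Whenever $D_i=1$ while $\la_i=0$, anticommuting $h_{\bi}$ inside $\overline{h}_D$ (picking up a sign from (QS1)) brings $1_\la h_{\bi}=0$ to the front and kills the monomial by part (4). The surviving $1_\la\overline{h}_D$ are exactly those with $D_i\leq\la_i$ for all $i$, which is the claimed spanning set.
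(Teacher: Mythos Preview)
Your proof is correct and follows essentially the same route as the paper. For parts (1)--(3) the paper simply invokes \cite[Proposition~4.2]{DG} via the identification \eqref{qnev}, whereas you spell out the underlying evaluation isomorphism $\Q[h_1,\ldots,h_n]/J_\Q\cong\Q^{\La(n,r)}$ explicitly; for part (4) the paper computes $(-1)^r r!\,h_{\bi}1_\la=0$ and cancels $r!$ (implicitly using $U_\Z^0(n,r)\hookrightarrow U^0(n,r)$), while your variant of dividing by $r!$ inside the $\Q$-ideal $I$ first is a slightly cleaner way to reach $(-1)^r h_{\bi}1_\la=0$ directly; part (5) is identical.
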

\begin{proof}
By \eqref{ideal-I} and \eqref{old-Hiproduct}, we know the elements $h_1+\cdots+h_n-r$
and $h_1(h_1-1)\cdots(h_1-r),\ldots,h_n(h_n-1)\cdots(h_n-r)$ are contained in the ideal $I$.
Then under the identification~\eqref{qnev}, there is a natural algebra homomorphism from the algebra $T^0$ introduced
in \cite[(4.1)]{DG} for $\mathfrak{gl}(n)$ to $U^0(n,r)$, which sends the element $H_i$ in \cite{DG}
to $h_i$ for $1\leq i\leq n$.  Therefore, parts (1)-(3) follow from \cite[Proposition 4.2]{DG}.

To prove part (4), we observe that the following holds for $\la\in\La(n,r)$ and $1\leq i\leq n$
\begin{align}\label{Hi-idempotent}
(h_i-1)(h_i-2)\cdots (h_i-r)1_\la=(\la_i-1)(\la_i-2)\cdots(\la_i-r)1_\la
\end{align}
by part (3).
Now suppose $\la_i=0$. Then by \eqref{Hi-idempotent} we obtain
$$
(h_i-1)(h_i-2)\cdots (h_i-r)1_\la=(-1)^rr!1_\la.
$$
This implies
$$
(-1)^rr!h_{\bi}1_\la=h_{\bi}(h_i-1)(h_i-2)\cdots (h_i-r)1_\la=0
$$
since $h_{\bi}(h_i-1)(h_i-2)\cdots (h_i-r)\in I\cap U^0$.
Therefore $h_{\bi}1_\la=0$.

Finally, 
the fact that the elements ${\bsh\choose \bsb}\ovHD$ with $\bsb\in\N^n, D\in\Z_2^n$ span $U_\Z^0$ implies that $U_\Z^0(n,r)$ is spanned by the set
$\{ 1_\la \ovHD\mid \la\in\La(n,r),D\in\Z_2^n\}$.
By part (4), we have $ 1_\la \ovHD=0$ if there exists $1\leq i\leq n$ such that $D_i>\la_i$.
Therefore part (5) is proved.
\end{proof}

\begin{prop}\label{root-idem}
Suppose $1\leq i\leq n, \al\in\Phi,\la\in\La(n,r)$.
Then the following commutation formulas hold in $U_\Z(n,r)$:
\begin{align*}
x_\al1_\la&=\left\{
\begin{array}{ll}
1_{\la+\al}x_\al,&\text{ if }\la+\al\in\La(n,r),\\
0,&\text{ otherwise},
\end{array}
\right.
\quad
\ox_{\al}1_\la=\left\{
\begin{array}{ll}
1_{\la+\al}\ox_{\al},&\text{ if }\la+\al\in\La(n,r),\\
0,&\text{ otherwise},
\end{array}
\right.\\
1_\la x_\al&=\left\{
\begin{array}{ll}
x_\al1_{\la-\al},&\text{ if }\la-\al\in\La(n,r),\\
0,&\text{ otherwise},
\end{array}
\right.
\quad
1_\la \ox_{\al}=\left\{
\begin{array}{ll}
\ox_{\al}1_{\la-\al},&\text{ if }\la-\al\in\La(n,r),\\
0,&\text{ otherwise},
\end{array}
\right.\\
h_{\bar i}1_\la&=1_\la h_{\bar i}.
\end{align*}
\end{prop}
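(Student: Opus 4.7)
The plan is to push $x_\al$, $\ox_\al$, or $h_{\bi}$ through the polynomial $1_\la = {\bsh\choose \la}$ using Proposition~\ref{div-root}, and then read off the answer via the idempotent expansion supplied by Proposition~\ref{idempotent}. For the key formula $x_\al\, 1_\la$ with $\al = \al_{i,j}$, I iterate Proposition~\ref{div-root}(5) to get
\[
x_\al\, 1_\la \;=\; P_\la(h)\, x_\al,\qquad P_\la(h) \;:=\; \prod_{k=1}^{n}{h_k-(\ep_k,\al)\choose \la_k},
\]
and then expand $P_\la(h) = \sum_{\mu\in\La(n,r)} P_\la(\mu)\, 1_\mu$ in $U^0_\Z(n,r)$ using Proposition~\ref{idempotent}(1),(3).

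When $\la+\al\in\La(n,r)$ (i.e.\ $\la_j\ge 1$), a short constraint analysis based on $\sum_k \mu_k = r$ and on the non-vanishing of the binomials ${\mu_k-(\ep_k,\al)\choose \la_k}$ shows that $\mu = \la+\al$ is the unique member of $\La(n,r)$ for which $P_\la(\mu)\ne 0$, with $P_\la(\la+\al) = 1$. Hence $P_\la(h) = 1_{\la+\al}$ in $U^0_\Z(n,r)$, which immediately yields $x_\al\, 1_\la = 1_{\la+\al}\, x_\al$.

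When $\la+\al\notin\La(n,r)$ (i.e.\ $\la_j = 0$), the polynomial $P_\la(h)$ no longer collapses to a single idempotent, and this is the delicate part of the argument. I exploit $h_j\, 1_\la = \la_j\,1_\la = 0$ from Proposition~\ref{idempotent}(3) together with $x_\al h_j = (h_j+1)\, x_\al$ from Proposition~\ref{div-root}(5) to deduce
\[
0 \;=\; x_\al(h_j\, 1_\la) \;=\; (x_\al h_j)\, 1_\la \;=\; (h_j+1)\, x_\al\, 1_\la.
\]
Setting $y := x_\al\, 1_\la$, this says $h_j\, y = -y$; applying the relation $h_j(h_j-1)\cdots(h_j-r) = 0$ from \eqref{old-Hiproduct} to $y$ and evaluating the polynomial at $h_j = -1$ forces $(-1)^{r+1}(r+1)!\, y = 0$. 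One then invokes torsion-freeness of $U_\Z(n,r)$ (or equivalently passes first to $U(n,r) = U_\Z(n,r)\otimes_\Z\Q$ and then lifts the identity back to the $\Z$-form, using the PBW-type basis of Proposition~\ref{intPBW}) to conclude $x_\al\, 1_\la = 0$.

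The remaining cases follow the same template. For $\ox_\al\, 1_\la$, Proposition~\ref{div-root}(5) gives $\ox_\al h_k = (h_k-(\ep_k,\al))\ox_\al$, of exactly the same shape as for $x_\al$, so the above two-case argument transfers verbatim with $x_\al$ replaced by $\ox_\al$. The left-multiplication statements $1_\la\, x_\al$ and $1_\la\, \ox_\al$ are handled either by a symmetric analysis (using $1_\la x_\al = x_\al \prod_k {h_k+(\ep_k,\al)\choose \la_k}$) or more efficiently by inserting $1 = \sum_\mu 1_\mu$ into the right-multiplication results already proven. Finally, $h_{\bi}\, 1_\la = 1_\la\, h_{\bi}$ is immediate from (QS1): since $[h_{\bi}, h_k] = 0$ for every $k$, $h_{\bi}$ commutes with every polynomial in the $h_k$'s and in particular with $1_\la = \prod_k {h_k\choose \la_k}$. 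The main obstacle throughout is the ``$(r+1)!$-torsion'' issue in the $\la_j = 0$ case of the root-vector formulas, which the plan side-steps by reducing to the rational form.
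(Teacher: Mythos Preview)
Your argument for the case $\la+\al\in\La(n,r)$ contains a genuine error: the claim that $P_\la(h)=1_{\la+\al}$ in $U^0_\Z(n,r)$ is false. The constraint analysis overlooks that the binomial ${\mu_i-1\choose\la_i}$ does \emph{not} vanish when $\mu_i=0$; indeed ${-1\choose\la_i}=(-1)^{\la_i}$. Concretely, take $n=2$, $r=3$, $\al=\ep_1-\ep_2$, $\la=(1,2)$. Then
\[
P_\la(h)=\binom{h_1-1}{1}\binom{h_2+1}{2},
\]
and evaluating at $\mu=(0,3)$ gives $(-1)\cdot 6=-6\neq 0$, so $P_\la(h)=-6\cdot 1_{(0,3)}+1_{(2,1)}\neq 1_{(2,1)}$ in $U^0_\Z(2,3)$. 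The proposition is still true because $1_{(0,3)}x_\al=0$, but that is precisely one of the ``otherwise'' identities you are trying to establish, so invoking it here would be circular.

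The paper avoids this trap by a small but decisive trick: before commuting, it inserts the harmless factor $\frac{h_i+1}{\la_i+1}$ (which acts as the identity on $1_\la$ since $h_i1_\la=\la_i1_\la$). After commuting $x_\al$ (or $\ox_\al$) to the right, this converts $\frac{h_i}{\la_i+1}{h_i-1\choose\la_i}$ into ${h_i\choose\la_i+1}$, whose argument is now the genuine nonnegative variable $h_i$. One then lands on either ${\bsh\choose\la+\ep_i}$ (total degree $r+1$, hence zero by Proposition~\ref{idempotent}(2)) or, after the Pascal split ${h_j+1\choose\la_j}={h_j\choose\la_j}+{h_j\choose\la_j-1}$, on $1_{\la+\al}$. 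Your torsion argument in the $\la_j=0$ case is correct and is a legitimate alternative there, but you still need a valid treatment of the $\la_j\ge1$ case. One repair consistent with your outline: first prove \emph{both} ``otherwise'' vanishing statements (for $x_\al 1_\la$ and for $1_\mu x_\al$) via your $(r+1)!$-torsion trick, and only then return to the main case, using $1_\mu x_\al=0$ for $\mu_i=0$ to discard the spurious terms in $\sum_\mu P_\la(\mu)1_\mu x_\al$.
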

\begin{proof}
The last equality follows from the relation (QS1).
The proof of the remaining equalities
is parallel to that of \cite[Proposition 4.5]{DG} (see Lemma~\ref{root-comm}). Let us illustrate by checking in detail the second formula.
Suppose $\al=\al_{i,j}=\ep_i-\ep_j$ with $i\neq j$. Then by Propositions \ref{idempotent}(3) and ~\ref{div-root}(5) we obtain
\begin{equation}\label{xla-1la}
\aligned
\ox_\al1_\la=& \ox_\al \frac{h_i+1}{\la_i+1}1_\la
=\ox_\al \frac{h_i+1}{\la_i+1}\begin{pmatrix} h_1\\ \la_1\end{pmatrix}\begin{pmatrix} h_2\\ \la_2\end{pmatrix}\cdots\begin{pmatrix} h_n\\\la_n\end{pmatrix}\\
=&\frac{h_i}{\la_i+1}\Bigg(\begin{pmatrix} h_i-1\\ \la_i\end{pmatrix}\begin{pmatrix} h_j+1\\ \la_j\end{pmatrix}\prod_{l\neq i,j}\begin{pmatrix} h_l\\ \la_l\end{pmatrix}\Bigg)\ox_\al \\
=&\Bigg(\begin{pmatrix} h_i\\ \la_i+1\end{pmatrix}\begin{pmatrix} h_j+1\\ \la_j\end{pmatrix}\prod_{l\neq i,j}\begin{pmatrix} h_l\\ \la_l\end{pmatrix}\Bigg)\ox_\al.
\endaligned
\end{equation}
If $\la+\al_{i,j}\not\in \La(n,r)$, then $\la_j=0$ and \eqref{xla-1la} becomes
$$
\ox_\al1_\la= \Bigg(\begin{pmatrix} h_i\\ \la_i+1\end{pmatrix}\prod_{l\neq i}\begin{pmatrix} h_l\\ \la_l\end{pmatrix}\Bigg)\ox_\al= {\bsh\choose{\la+\ep_i}}\ox_\al=0,
$$
by Proposition~\ref{idempotent}(2). If $\la+\al_{i,j}\in \La(n,r)$, then $\la_j\geq 1$ and $\begin{pmatrix} h_j+1\\ \la_j\end{pmatrix}=\begin{pmatrix} h_j\\ \la_j\end{pmatrix}+\begin{pmatrix} h_j\\ \la_j-1\end{pmatrix}$.
Hence, \eqref{xla-1la} becomes
\begin{align*}
\ox_\al1_\la= &\Bigg(\begin{pmatrix} h_i\\ \la_i+1\end{pmatrix}\begin{pmatrix} h_j\\ \la_j\end{pmatrix}\prod_{l\neq i,j}\begin{pmatrix} h_l\\ \la_l\end{pmatrix}\Bigg)\ox_\al+\Bigg(\begin{pmatrix} h_i\\ \la_i+1\end{pmatrix}\begin{pmatrix} h_j\\ \la_j-1\end{pmatrix}\prod_{l\neq i,j}\begin{pmatrix} h_l\\ \la_l\end{pmatrix}\Bigg)\ox_\al\\
=&{\bsh\choose{\la+\ep_i}}\ox_\al+{\bsh\choose{\la+\al}}\ox_\al=1_{\la+\al}\ox_\al,
\end{align*}
as desired.
\end{proof}

For $A=(A_0,A_1)\in M_n(\N|\Z_2)$ with $A_0=(a^0_{ij}), A_1=(a^1_{ij})$ and $a_{ij}=a^0_{ij}+a^1_{ij}$, define
\begin{equation}\label{degree-content}
\aligned
\ro(A)&:=(\sum^n_j a_{1j},\ldots,\sum^n_ja_{nj}),\\
\co(A)&:=(\sum^n_ja_{j1},\ldots,\sum^n_ja_{jn}),\\
\chi(A)&:=(a_{11}+\sum^n_{j=2}(a_{1j}+a_{j1}),a_{22}+\sum^n_{j=3}(a_{2j}+a_{j2}),\ldots, a_{nn})\\
&=\co(A^0_0)+\co(A^0_1)+\ro(A^+)+\co(A^-).
\endaligned
\end{equation}

Let
$$
M_n(\N|\Z_2)'=\{C\in M_n(\N|\Z_2)\mid C^0_0=0\}.
$$
Similar to \eqref{PBW-UZ}, we define
$$
u_C=f_{C^-}\ov{h}_{C^0_1}e_{C^+}\in U_\Z\quad\text{ and }\quad \fku_{(C,\la)}=f_{C^-}1_\la\ov{h}_{C^0_1}e_{C^+}\in U_\Z(n,r)
$$ for $C\in M_n(\N|\Z_2)'$ and $\la\in\La(n,r)$. Clearly, by definition, the degree function defined in \eqref{degree-mA} satisfies $
{\rm deg}(u_C)={\rm deg}(C).
$

By Proposition~\ref{root-idem}, we have in $U_\Z(n,r)$
\begin{equation}\label{uclan}
\fku_{(C,\la)}=1_{\la'}u_C=u_C1_{\la''} \text{ if } \fku_{(C,\la)}\neq 0,
\end{equation}
where $\la'=\la+\ro(C^-)-\co(C^-)$ and $\la''=\la-\ro(C^+)+\co(C^+)$.
Recall that $\mf M$ consists of the monomials in $x^{(s)}_{i,j},\ox_{i,j},h_{\bar i},  {h_i\choose s}(1\leq i\neq j\leq n, s\in\N)$.

\begin{lem}\label{express-m} $(1)$
Let $0\neq \fkm\in\mf M$. Then, in $U_\Z(n,r)$,  $\fkm$ can be written as a linear combination of $\fku_{(C,\la)}$
for $C\in M_n(\N|\Z_2)', \la\in\La(n,r)$ such that ${\rm deg}(C)\leq {\rm deg}(\fkm)$.

$(2)$ Suppose $C\in M_n(\N|\Z_2)'$. Then  we have in $U_\Z(n,r)$
$$
f_{C^-}\ov{h}_{C^0_1}e_{C^+}=\pm e_{C^+}\ov{h}_{C^0_1}f_{C^-}+\sum \zeta^{C}_{(G,\la)} \fku_{(G,\la)},
$$
for some $\zeta^{C}_{(G,\la)}\in\Z$, where the summation is over $G\in M_n(\N|\Z_2)'$ and $\la\in\La(n,r)$
such that ${\rm deg}(G)<{\rm deg}(C)$.
\end{lem}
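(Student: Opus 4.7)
The plan for both parts is a degree-based argument, anchored in Remark~\ref{degree}: every cross-triangular supercommutator among the generators has degree strictly less than the ambient product. This lets us treat $U_\Z(n,r)$ as being supercommutative modulo a strictly lower filtration step.

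For part~(1), I use Proposition~\ref{intPBW}(2) together with the filtered structure of $U_\Z$ (which follows from Proposition~\ref{div-root} and is compatible with the degree defined in~\eqref{eq:degree}): any monomial $\fkm\in\mf M$ can be written in $U_\Z$ as a $\Z$-linear combination of basis elements $\fkm_A=f_{A^-}\binom{\bsh}{A^0_0}\ov{h}_{A^0_1}e_{A^+}$ with $\deg(A)\leq\deg(\fkm)$. Passing to the quotient $U_\Z(n,r)$ and applying Proposition~\ref{idempotent}(3), I rewrite $\binom{\bsh}{A^0_0}=\sum_\la\binom{\la}{A^0_0}1_\la$, a degree-zero substitution. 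Proposition~\ref{root-idem} tells us that $1_\la$ commutes with $\ov{h}_{A^0_1}$, so each resulting term takes the shape $\fku_{(C,\la)}$ with $C^-=A^-$, $C^+=A^+$, $C^0_1=A^0_1$, and $C^0_0=0$. A direct computation from~\eqref{degree-mA} gives $\deg(C)=\deg(A)\leq\deg(\fkm)$, completing part~(1).

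For part~(2), I rearrange $f_{C^-}\ov{h}_{C^0_1}e_{C^+}$ into $\pm e_{C^+}\ov{h}_{C^0_1}f_{C^-}$ modulo lower-degree terms via three rounds of supercommutation. First, commute $e_{C^+}$ past $\ov{h}_{C^0_1}$ factor by factor using Proposition~\ref{div-root}(4)--(5); each swap produces the supercommuted product plus a strictly lower-degree correction. Next, commute $e_{C^+}$ past $f_{C^-}$ factor by factor using Proposition~\ref{div-root}(1)--(3); by Remark~\ref{degree} every commutator appearing here strictly drops the degree. Finally, commute $\ov{h}_{C^0_1}$ past $f_{C^-}$ using (4)--(5) again to restore $\ov{h}_{C^0_1}$ to the middle. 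The accumulated sign $\pm$ records the parities of the odd factors $\ox_{i,j}$ occurring in $e_{C^+}, f_{C^-}$, and of $\ov{h}_{C^0_1}$. Every lower-degree correction term is itself a monomial in $\mf M$ of degree strictly less than $\deg(C)$, and part~(1) expands each one as a $\Z$-linear combination of $\fku_{(G,\la)}$ with $\deg(G)<\deg(C)$.

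The principal obstacle is bookkeeping: one must verify that each elementary swap genuinely drops the filtration (Remark~\ref{degree} secures this for each cross-triangular pair) and that the many elementary supersigns combine coherently into a single overall $\pm$. Once Remark~\ref{degree} is applied uniformly, the corrections are absorbed without further combinatorial effort, and the remaining work is just a careful parity count over the odd factors involved.
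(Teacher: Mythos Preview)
Your proof is correct and follows essentially the same approach as the paper's: for part~(1) you expand $\fkm$ in the PBW basis of $U_\Z$ via Proposition~\ref{intPBW}(2), then replace $\binom{\bsh}{A^0_0}$ by $\sum_\la\binom{\la}{A^0_0}1_\la$ using Proposition~\ref{idempotent}(3) to obtain the required $\fku_{(C,\la)}$'s; for part~(2) you successively apply the cross-triangular commutation formulas (Proposition~\ref{div-root}) and invoke Remark~\ref{degree} to see that all correction terms have strictly lower degree, then feed those corrections back through part~(1). Your write-up is in fact more explicit than the paper's about the three rounds of commutation and the origin of the global sign, which is helpful; the only minor over-citation is that Proposition~\ref{div-root}(5) is not actually needed in part~(2), since $\ov{h}_{C^0_1}$ involves only the odd $h_{\bar k}$'s and hence only~(4) applies there.
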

\begin{proof}
By Proposition \ref{intPBW}, we see that in $U_\Z$ the monomial $\fkm$ can be written as
\begin{equation}\label{express-m-UZ}
\fkm=\sum_{A\in M_{n}(\N|\Z_2)}\xi^{\fkm}_A ~\fkm_A=\sum_{A\in M_{n}(\N|\Z_2)}\xi^{\fkm}_A ~f_{A^-}{\bsh\choose A_0^0}\ov{h}_{A_1^0} e_{A^+}
\end{equation}
where $\xi^{\fkm}_{A}\in \Z$ and
 $\xi^{\fkm}_{A}=0$ unless ${\rm deg}(\fkm_A)\leq {\rm deg }(\fkm)$.
Since
${\bsh\choose A_0^0}=\sum_{\la\in\La(n,r)} {\la\choose A_0^0}1_{\la}$ in $U_\Z(n,r)$ by Proposition~\ref{idempotent},
we have
$$
\fkm_A=\sum_{\la\in\La(n,r)} {\la\choose A_0^0}f_{A^-}1_{\la}\ov{h}_{A_1^0} e_{A^+}
=\sum_{\la\in\La(n,r)} {\la\choose A_0^0} \fku_{(A',\la)},
$$
where $A'\in M_n(\N|\Z_2)'$ with $(A')^+=A^+,(A')^-=A^-, (A')^0_1=A^0_1$ for each $A\in M_{n}(\N|\Z_2)$.
Clearly ${\rm deg}(A')={\rm deg}(A)={\rm deg}(\fkm_A)$.
This together with \eqref{express-m-UZ} proves (1).

By applying a sequence of commutation formulas for generators from different triangular parts,
we have in $U_\Z$
$$
f_{C^-}\ov{h}_{C^0_1}e_{C^+}=\pm e_{C^+}\ov{h}_{C^0_1}f_{C^-}+g,
$$
where $g$ is a linear combination of monomials $\fkm$ in the generators for $U_\Z$ and, by Remark~\ref{degree}, ${\rm deg}(\fkm)<{\rm deg}(e_{C^+}\ov{h}_{C^0_1}f_{C^-})$. Now part (2) follows from part (1).
\end{proof}

Given two elements $\la=\sum^n_i\la_i\ep_i,\mu=\sum^n_i\mu_i\ep_i\in{\mc P}_+$, we define
\begin{equation}\label{orderprec}
\la\preceq\mu \Leftrightarrow \la_i\leq \mu_i, \text{ for }1\leq i\leq n.
\end{equation}
Set
\begin{align*}
\sB=\{\fku_{(C,\la)}\mid
C\in M_n(\N|\Z_2)',
\la\in\La(n,r),\chi(C)\preceq \la\}.
\end{align*}
Let $M_n(\N|\Z_2)_r:=\{A=(A_0,A_1)=((a^0_{ij}), (a^1_{ij}))\in M_n(\N|\Z_2)\mid \sum_{i,j}(a^0_{ij}+a^1_{ij})=r\}$.
Then, one can also check the following holds
\begin{align*}
\sB=\{\fku_A:=f_{A^-}1_{\chi(A)} \bar h_{A_1^0} e_{A^+}\mid A\in M_n(\N|\Z_2)_r\}.
\end{align*}

\begin{prop}\label{Y-span}
The algebra $U_\Z(n,r)$ is spanned by the set $\sB$.
\end{prop}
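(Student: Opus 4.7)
The plan is induction on $\deg(C)$, using Lemma \ref{express-m} as the engine and the idempotent relations of Propositions \ref{idempotent} and \ref{root-idem} for the weight bookkeeping. By Lemma \ref{express-m}(1), every element of $U_\Z(n,r)$ is already a $\Z$-linear combination of elements of the form $\fku_{(C,\la)}$ with $C\in M_n(\N|\Z_2)'$ and $\la\in\La(n,r)$, without any constraint on $\chi(C)$ versus $\la$. So it suffices to show each such $\fku_{(C,\la)}$ lies in the $\Z$-span of $\sB$. The base case $\deg(C)=0$ forces $C=0$, and then $\fku_{(0,\la)}=1_\la\in\sB$ since $\chi(0)=0\preceq\la$.

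For the inductive step I assume the claim for all $C'$ with $\deg(C')<\deg(C)$. If $\chi(C)\preceq\la$ then $\fku_{(C,\la)}\in\sB$ by definition, so assume otherwise and pick $i_0$ with $\la_{i_0}<\chi(C)_{i_0}=(C^0_1)_{i_0 i_0}+\ro(C^+)_{i_0}+\co(C^-)_{i_0}$. If $\fku_{(C,\la)}=0$ then the element is trivially in the span of $\sB$; otherwise, using \eqref{uclan} to write $\fku_{(C,\la)}=1_{\la'}u_C$ with $\la'=\la+\ro(C^-)-\co(C^-)$, and then applying Lemma \ref{express-m}(2), I would obtain
$$
\fku_{(C,\la)}=\pm 1_{\la'}e_{C^+}\ov{h}_{C^0_1}f_{C^-}+1_{\la'}g,
$$
where $g$ is a $\Z$-linear combination of $\fku_{(G,\mu)}$ with $\deg(G)<\deg(C)$. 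The second summand, after applying Lemma \ref{express-m}(1) to $1_{\la'}g$ and invoking the inductive hypothesis, lies in the $\Z$-span of $\sB$.

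For the leading ``swapped'' summand, I would push the idempotent $1_{\la'}$ rightward through $e_{C^+}$, $\ov{h}_{C^0_1}$, and $f_{C^-}$ in succession, using Propositions \ref{idempotent} and \ref{root-idem} to reveal when the deficit at $i_0$ forces an intermediate weight outside $\La(n,r)$ (annihilating by Proposition \ref{root-idem}) or an occurrence of $h_{\bar i_0}$ adjacent to an idempotent $1_\mu$ with $\mu_{i_0}=0$ (annihilating by Proposition \ref{idempotent}(4)). The main obstacle is that the swapped product need not vanish outright in every case where $\chi(C)\not\preceq\la$, as a direct computation in small rank (for instance with $n=3$) already shows; one must combine partial absorptions of $1_{\la'}$ into the leading term with the commutators of Proposition \ref{div-root} and with specific summands coming from $g$ to produce cancellations that genuinely lower the degree. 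Carrying out this weight bookkeeping uniformly in $(C,\la)$, so that the deficit $\chi(C)\not\preceq\la$ always translates into a strict reduction of degree and the inductive hypothesis can be applied, is the central technical point of the argument.
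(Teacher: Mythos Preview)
Your inductive framework is right, and you have correctly diagnosed the difficulty: when you swap \emph{all} of $C$ via Lemma~\ref{express-m}(2), the leading term $1_{\la'}e_{C^+}\ov h_{C^0_1}f_{C^-}$ need not vanish. (Your $n=3$ intuition is sound: take $r=4$, $\la=(2,1,1)$, and $C$ with $c^0_{12}=c^0_{23}=c^0_{21}=c^0_{32}=1$; then $i_0=2$ is the unique bad index, yet pushing $1_{\la'}=1_{(1,1,2)}$ through $e_{C^+}f_{C^-}$ never leaves $\La(n,r)$.) Where you get stuck is in proposing that this surviving term be cancelled against pieces of $g$; that route is not the one the paper takes, and you have not shown it can be carried out.

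The missing idea is a \emph{partial} swap. Choose $i$ to be the \emph{largest} index with $\la_i<\chi_i(C)$, and let $G$ be the bottom-right $(n-i+1)\times(n-i+1)$ corner of $C$. Because of the specific ordering in \eqref{eA+}--\eqref{fA-}, one can factor $e_{C^+}=e_{G^+}\,\fkm_1$, $f_{C^-}=\fkm'_1\,f_{G^-}$, $\ov h_{C^0_1}=\ov h'\,\ov h_{G^0_1}$, and then apply Lemma~\ref{express-m}(2) \emph{only to $f_{G^-}\ov h_{G^0_1}e_{G^+}$}. The resulting lower-degree terms are absorbed by induction exactly as you describe. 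The point is that now the swapped leading term $\fkm'_1\ov h'\,1_{\la'}e_{G^+}\ov h_{G^0_1}f_{G^-}\,\fkm_1$ (with $\la'=\la+\ro(G^-)-\co(G^-)$) vanishes outright: since $i$ is the minimal index appearing in $G$, the $i$-th coordinate of the weight obtained after pushing $1_{\la'}$ through $e_{G^+}$ is exactly $\la_i-\chi_i(C)+c^1_{ii}$, which is strictly less than $c^1_{ii}\le 1$. Hence either this weight is negative (so $1_{\la'}e_{G^+}=0$) or it is $0$ with $c^1_{ii}=1$ (so the adjacent factor $h_{\bar i}$ kills it by Proposition~\ref{idempotent}(4)). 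No cancellation argument is needed.
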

\begin{proof}
Let $U'_\Z(n,r)$ be the $\Z$-submodule of $U_\Z(n,r)$ spanned by $\sB$.
Clearly by Proposition~\ref{intPBW} and \eqref{Unr},
the algebra $U_\Z(n,r)$ is spanned by monomials $\fkm\in\mf M$.
Then by Lemma~\ref{express-m} and Proposition~\ref{root-idem}, it suffices to show that
$\fku_{(C,\la)}\in U'_\Z(n,r)$ for all $C\in M_n(\N|\Z_2)'$ and $\la\in\La(n,r)$.
Now fix a $C\in M_n(\N|\Z_2)'$ and $\la\in\La(n,r)$.
Write $C_0=(c^0_{ij}), C_1=(c^1_{ij})$ and let $c_{ij}=c^0_{ij}+c^1_{ij}$.
If $\chi(C)\preceq\la$, then $\fku_{(C,\la)}\in\sB$ and we are done.
Now assume $\chi(C)=(\chi_1(C),\ldots,\chi_n(C))\not\preceq\la$.
Then by \eqref{orderprec}, there exists $1\leq i\leq n$ such that $\la_i<\chi_i(C)$.
We proceed on ${\rm deg}(C)$. If ${\rm deg}(C)=0$, then the result holds as $\fku_{(C,\la)}=1_{\la}\in\sB$.
Now assume that ${\rm deg}(C)\geq 1$ and let $i$ be the biggest $i$ such that $\la_i<\chi_i(C)$.
Let $G$ be the submatrix $((c_{k,l}^0)|(c_{k,l}^1))_{i\leq k,l\leq n}$ of $C$ at the bottom right corner. Then by \eqref{eA+} and \eqref{fA-} we can
write $e_{C^+}=e_{G^+}\cdot\fkm_1$,
$f_{C^-}=\fkm'_1\cdot f_{G^-}$ and $\ov{h}_{C^0_1}=\ov{h}'\cdot\ov{h}_{G^0_1}$, where $\ov{h}'=\prod^{i-1}_{j=1}h^{c^1_{jj}}_{\ov{j}}$.
Observe that $f_{G^-}1_{\la}\ov{h}'=\pm\ov{h}'f_{G^-}1_{\la}$ by Proposition~\ref{div-root}(4) and Proposition~\ref{root-idem}.
Then we have
$$
\fku_{(C,\la)}=f_{C^-}1_{\la}\ov{h}_{C^0_1} e_{C^+}=(\fkm'_1f_{G^-})\cdot(1_{\la}\ov{h}'\cdot\ov{h}_{G^0_1})\cdot (e_{G^+}\fkm_1)=\pm\fkm'_1\ov{h}'\cdot(f_{G^-}1_{\la}\ov{h}_{G^0_1} e_{G^+})\cdot\fkm_1.
$$
We can assume $f_{G^-}1_{\la}\neq0$. Otherwise, we are done.
Then by \eqref{uclan} we have $f_{G^-}1_{\la}=1_{\la'}f_{G^-}$ and hence
$$
\fku_{(C,\la)}
=\pm\fkm'_1\ov{h}'1_{\la'}\big(f_{G^-}\ov{h}_{G^0_1}e_{G^+}\big)\fkm_1,
$$
where $\la'=\la+\ro(G^-)-\co(G^-)$.
Then applying Lemma~\ref{express-m}(2) to $f_{G^-}\ov{h}_{G^0_1}e_{G^+}$, we have
\begin{align*}
\fku_{(C,\la)}
=&\pm\fkm'_1\ov{h}'1_{\la'}\big(\pm e_{G^+} \ov{h}_{G^0_1}f_{G^-}+\sum\zeta^{G}_{(J,\ga)}\fku_{(J,\ga)})\fkm_1\\
=&\pm\fkm'_1\ov{h}'1_{\la'}e_{G^+} \ov{h}_{G^0_1}f_{G^-}\fkm_1\pm\sum\zeta^{G}_{(J,\ga)}\fkm'_11_{\la'}\ov{h}'\fku_{(J,\ga)}\fkm_1,
\end{align*}
where $\zeta^{G}_{(J,\ga)}=0$ unless ${\rm deg}(J)<{\rm deg}(G)$.
Observe that ${\rm deg}(G)+{\rm deg}(\fkm'_1\ov{h}')+{\rm deg}(\fkm_1)={\rm deg}(C)$.
Note that by \eqref{uclan}, we can apply Lemma~\ref{express-m} to each term $\fkm'_11_{\la'}\ov{h}'\fku_{(J,\ga)}\fkm_1$ to obtain the following:
\begin{align*}
\fku_{(C,\la)}=\pm\fkm'_1\ov{h}'1_{\la'}e_{G^+} \ov{h}_{G^0_1}f_{G^-}\fkm_1+\fkl,
\end{align*}
where $\fkl$ is a linear combination of $\fku_{(C',\mu)}$ with ${\rm deg}(C')<{\rm deg}(C)$.
By induction, $\fkl\in U'_\Z(n,r)$. Meanwhile we claim that $\fkm'_1\ov{h}'1_{\la'}e_{G^+} \ov{h}_{G^0_1}f_{G^-}\fkm_1=0$.
Indeed, we can assume $1_{\la'}e_{G^+}\neq 0$\footnote{In the non super case, we have $1_{\la'}e_{G^+}=0$ as $\la_i'<\chi_i(G^+)$.
However, this inequality may not be true in the super case as the $i$-th entry of $C_1^0$ may not be zero.}. Otherwise we are done.
Then
$
1_{\la'}e_{G^+}=e_{G^+}1_{\la''}
$ by \eqref{uclan},
where $\la''=\la'-\ro(G^+)+\co(G^+)=\la+\ro(G^-)-\co(G^-)-\ro(G^+)+\co(G^+)$. Moreover $\la''\in\La(n,r)$.
Since $G$ is the submatrix of $C$ consisting of the last $n-i+1$ rows and columns, we have
$$\la''_i=\la_i-(\sum^n_{j=i+1}c_{ji})-(\sum^n_{j=i+1}c_{ij})=\la_i-(\chi_i(C)-c^1_{ii})=\la_i-\chi_i(C)+c^1_{ii}$$
by \eqref{degree-content}.
This means $\la''_i<c^1_{ii}$ since $\chi_i(C)>\la_i$, forcing $c_{i,i}^1=1$ and $\la_i''=0$ since $\la''\in\La(n,r)$. Hence, $1_{\la''}\ov{h}_{G^0_1}=0$ by Proposition~\ref{idempotent}(4), proving the claim.
In conclusion, $\fku_{(C,\la)}\in U_\Z'(n,r)$.
\end{proof}

We remark that the proof above follows \cite{DP}. However, it is possible to modify the proof of \cite{DG} to give an alternative proof.

\begin{theorem}\label{presentQr}
The homomorphism $\ov{\phi}_r : U(n,r)\rightarrow \Qnr$ in \eqref{phibar} is an isomorphism.
In other words, the Schur superalgebra $\Qnr$ is the associative superalgebra
generated by even generators
$ h_i, e_j, f_j,$
and odd generators $h_{\bar i}, e_{\bar j}, f_{\bar j},$
with $1\leq i\leq n$ and $1\leq j\leq n-1$
subject to the relations {\rm (QS1)-(QS6)} together with the following extra relations:
\begin{itemize}

  \item[(QS7)] $h_1+h_2+\cdots+h_n=r$;


  \item[(QS8)]$h_{\bi}(h_i-1)\cdots(h_i-r)=0$ for $1\leq i\leq n$.
\end{itemize}
\end{theorem}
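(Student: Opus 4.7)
The plan is to reduce Theorem~\ref{presentQr} to a linear independence statement about the spanning set $\sB$ of Proposition~\ref{Y-span}. Surjectivity of $\ov\phi_r$ is given by Proposition~\ref{surjective}, and Proposition~\ref{Y-span} yields $\dim_\Q U(n,r)\leq |\sB|=|M_n(\N|\Z_2)_r|$. Hence the isomorphism assertion will follow once I show that
\[
\{\ov\phi_r(\fku_A)\mid A\in M_n(\N|\Z_2)_r\}\text{ is linearly independent in }\End_\Q(V^{\otimes r}),
\]
because then the chain $\dim_\Q U(n,r)\geq\dim_\Q\Qnr\geq|\sB|\geq\dim_\Q U(n,r)$ forces every inequality to be an equality and $\ov\phi_r$ to be an isomorphism.

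For the linear independence I would establish triangularity of the action on the standard basis $\{v_{\udj}\mid\udj\in I(n|n)^r\}$ of $V^{\otimes r}$, in the spirit of the arguments of \cite{DG} and \cite{DP}. Concretely, to each $A=(A_0,A_1)\in M_n(\N|\Z_2)_r$ attach a ``column'' weight vector $v_A^{\co}\in V^{\otimes r}$ of weight $\co(A)$, obtained by reading down the $j$-th column of $A$ and recording $a^0_{ij}$ copies of $v_j$ together with $a^1_{ij}$ copies of $v_{-j}$ placed consistently with the orderings of \eqref{eA+}-\eqref{fA-}. Using \eqref{Eij} and expanding $x_{i,j}=E_{i,j}+E_{-i,-j}$ and $\ox_{i,j}=E_{i,-j}+E_{-i,j}$, together with Lemma~\ref{phir-cartan} to handle the idempotent $1_{\chi(A)}$ and the $\ov h_{A_1^0}$ factor, a direct computation should give
\[
\ov\phi_r(\fku_A)(v_A^{\co})=\pm v_A^{\ro}+(\text{lower order terms}),
\]
where $v_A^{\ro}$ has weight $\ro(A)$ and ``lower order'' is taken in a lexicographic order on $I(n|n)^r$ refining the partial order by weight. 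Upper-triangularity of the resulting transition matrix then forces linear independence.

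The principal obstacle is the sign and ordering bookkeeping in the super setting: the odd root vectors satisfy $\ox_\al^2=0$ by \eqref{odd-square}, the generators $h_{\bi}$ behave as square roots of $h_i$ via (QS1), and each crossing of an odd factor with another odd tensor slot produces a Koszul sign. The choice of $v_A^{\co}$ must therefore be arranged so that every odd factor in $\fku_A$ lands on its own designated tensor slot; otherwise the putative leading term is annihilated and triangularity fails. Once this bookkeeping is carried out, injectivity of $\ov\phi_r$ is established, and the explicit presentation stated in the theorem is then immediate from the definitions \eqref{ideal-I} and \eqref{Unr}: the ideal $I$ is generated precisely by $h_1+\cdots+h_n-r$ together with $h_{\bi}(h_i-1)\cdots(h_i-r)$, $1\le i\le n$, which are exactly the extra relations (QS7) and (QS8).
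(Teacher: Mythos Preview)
Your overall strategy is sound, but you are doing far more work than necessary and the hardest step in your plan is only sketched, not proved. The paper's own argument is a one-line dimension count: from \cite[\S4]{BK} one already knows $\dim_\Q\Qnr=|M_n(\N|\Z_2)_r|$ (this is \eqref{dimQnrX}), and combining that with the surjectivity of $\ov\phi_r$ (Proposition~\ref{surjective}) and the spanning set $\sB$ of Proposition~\ref{Y-span} immediately forces
\[
\dim U(n,r)\le |\sB|=|M_n(\N|\Z_2)_r|=\dim\Qnr\le \dim U(n,r),
\]
so every inequality is an equality and $\ov\phi_r$ is an isomorphism. No linear-independence computation inside $\End_\Q(V^{\otimes r})$ is needed.

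By contrast, your route replaces the known dimension formula by a direct triangularity argument for $\{\ov\phi_r(\fku_A)\}$. This is a legitimate alternative and would make the proof self-contained relative to \cite{BK}, but it is genuinely harder: you must specify a total order on $I(n|n)^r$ compatible with both the weight grading and the row/column reading of $A$, exhibit $v_A^{\co}$ explicitly, and then verify that the leading term survives all the super sign cancellations coming from $\ox_\al$ and $h_{\bi}$. You acknowledge this as ``the principal obstacle'' and write only that the computation ``should give'' the desired form; as it stands this is a plan, not a proof. In particular, you have not explained why distinct $A,B\in M_n(\N|\Z_2)_r$ produce distinct leading pairs $(v_A^{\co},v_A^{\ro})$, nor why the lower-order terms are genuinely strictly lower in your chosen order when odd factors are present. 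Until that bookkeeping is carried out in full, the injectivity claim is unproved. If you want to keep your approach, that is where the real work lies; otherwise, simply cite the dimension formula \eqref{dimQnrX} as the paper does.
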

\begin{proof}
By~\cite[\S 4]{BK}, we know that the dimension of the algebra $\Qnr$
is equal to the number of monomials of total degree $r$ in the free supercommutative algebra in $n^2$ even
variables and $n^2$ odd variables.
Thus,
\begin{equation}\label{dimQnrX}
\dim\Qnr=|M_n(\N|\Z_2)_r|.
\end{equation} Hence, by~\eqref{dimQnrX},
Proposition~\ref{surjective} and Proposition~\ref{Y-span} we have
$$
\dim U(n,r)\leq |\sB|=\dim \Qnr\leq \dim U(n,r),
$$
which implies $\dim U(n,r)=|\sB|=\dim \Qnr$. This forces that the surjective homomorphism $\ov{\phi}_r$ is an isomorphism.
\end{proof}

\begin{cor}
The set
$$
\sB=\big\{\fku_A\mid
A\in M_n(\N|\Z_2)_r\big\}
$$
is a $\Z$-basis for $U_\Z(n,r)$. In particular, the set $\{1_\la\ovHD\mid \la\in\La(n,r),D\in\Z_2^n, D_i\leq \la_i,1\leq i\leq n\}$
is  $\Z$-basis for $U^0_\Z(n,r)$.
\end{cor}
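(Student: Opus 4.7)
The strategy is to deduce the corollary as the natural $\Z$-refinement of Theorem \ref{presentQr}. Proposition \ref{Y-span} already asserts that $\sB$ spans $U_\Z(n,r)$ over $\Z$, so only $\Z$-linear independence remains to be shown. The proof of Theorem \ref{presentQr} gave the chain
$$\dim_\Q U(n,r)\le|\sB|\le\dim_\Q \Qnr\le\dim_\Q U(n,r),$$
forcing equality throughout; in particular $\sB$ is a $\Q$-basis of $U(n,r)$.

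Next I would observe that the natural projection $U_\Z(n,r)=U_\Z/(I\cap U_\Z)\to U(n,r)=U/I$ has kernel $\{u\in U_\Z\mid u\in I\}/(I\cap U_\Z)=0$, hence is injective, so $U_\Z(n,r)$ embeds in the $\Q$-vector space $U(n,r)$. Any $\Z$-linear relation $\sum_A c_A\fku_A=0$ in $U_\Z(n,r)$ then pushes forward to the same relation in $U(n,r)$; since $\sB$ is a $\Q$-basis there, every $c_A$ vanishes. Combined with the spanning, this establishes the first assertion.

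For the second assertion I would isolate inside $\sB$ the subset $\sB^0$ consisting of those $\fku_A$ with $A^+=A^-=0$. A direct check from the definitions shows that for such an $A$ one has $\fku_A=1_{\chi(A)}\ovHD$ with $\la:=\chi(A)=A^0_0+A^0_1$ and $D:=A^0_1$, and that the condition $A\in M_n(\N|\Z_2)_r$ combined with $A^0_0\in\N^n$ is exactly equivalent to $\la\in\La(n,r)$ together with $D_i\le\la_i$ for every $i$. Thus $\sB^0$ is the indicated set; it lies in $U^0_\Z(n,r)$ and spans it by Proposition \ref{idempotent}(5), while its $\Z$-linear independence is inherited from that of $\sB$. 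I do not foresee a genuine obstacle: the hard work (matching $|\sB|$ with $\dim\Qnr$ and the PBW-style spanning) has already been done in the preceding sections, and the only remaining ingredient is the essentially tautological injectivity of $U_\Z(n,r)\hookrightarrow U(n,r)$, which is what allows one to upgrade the $\Q$-basis statement to a $\Z$-basis statement.
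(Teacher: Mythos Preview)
Your proposal is correct and is exactly the argument the paper has in mind: the corollary is stated without proof immediately after Theorem~\ref{presentQr}, and the equalities $\dim U(n,r)=|\sB|=\dim\Qnr$ obtained there, together with the spanning from Proposition~\ref{Y-span} and the tautological injection $U_\Z(n,r)\hookrightarrow U(n,r)$, are precisely what is needed. Your identification of the diagonal part $\sB^0$ with the set $\{1_\la\ovHD\}$ via $\la=\chi(A)=A_0^0+A_1^0$, $D=A_1^0$ is also correct and matches Proposition~\ref{idempotent}(5).
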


\begin{cor}\label{dim formulas} We have the following dimension formulas:
\begin{enumerate}
\item $
\dim_\Q \Qnr=\sum^r_{k=0}\begin{pmatrix}n^2+k-1\\ k\end{pmatrix}\begin{pmatrix}n^2\\ r-k\end{pmatrix};
$
\item $\dim_\Q \mc{Q}^0(n,r)=\sum_{\la\in\La(n,r)}2^{\ell(\la)}.
$
\end{enumerate}
\end{cor}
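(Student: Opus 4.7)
The plan is to reduce both formulas to elementary combinatorics using the bases described in the preceding corollary. Since $\Qnr\cong U(n,r)$ by Theorem~\ref{presentQr} and $U_\Z(n,r)$ is the $\Z$-form with $\sB$ as a $\Z$-basis, we have $\dim_\Q\Qnr=|\sB|=|M_n(\N|\Z_2)_r|$; similarly $\dim_\Q\mc Q^0(n,r)$ equals the cardinality of the $\Z$-basis $\{1_\la\ovHD\mid\la\in\La(n,r),D\in\Z_2^n,D_i\leq\la_i\}$ of $U^0_\Z(n,r)$.

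For part (1), I would stratify $M_n(\N|\Z_2)_r$ by the total weight $k=\sum_{i,j}a^0_{ij}$ carried by the even matrix $A_0$, so that the odd matrix $A_1\in M_n(\Z_2)$ carries weight $r-k$. The count of $n\times n$ matrices in $M_n(\N)$ whose entries sum to $k$ equals the number of weak compositions of $k$ into $n^2$ parts, which is $\binom{n^2+k-1}{k}$. The count of $n\times n$ matrices in $M_n(\Z_2)$ whose entries sum to $r-k$ is simply $\binom{n^2}{r-k}$, since such a matrix amounts to choosing the $r-k$ positions occupied by a $1$. Summing over $0\leq k\leq r$ yields the stated formula.

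For part (2), I would fix $\la\in\La(n,r)$ and count the admissible odd tuples $D=(D_1,\ldots,D_n)\in\Z_2^n$ with $D_i\leq\la_i$. Whenever $\la_i\geq 1$ there are two choices for $D_i$, and whenever $\la_i=0$ the inequality forces $D_i=0$. Hence the number of admissible $D$ for a fixed $\la$ is exactly $2^{\ell(\la)}$, and summing over $\la$ gives the second formula.

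There is no serious obstacle here; the heavy lifting was done in establishing the basis $\sB$ and the basis of $U^0_\Z(n,r)$. The only point that requires a little care is the base change from $\Z$ to $\Q$, which is immediate because both $U_\Z(n,r)$ and $U^0_\Z(n,r)$ are free $\Z$-modules on the bases just described.
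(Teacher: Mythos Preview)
Your proposal is correct and is exactly the argument the paper has in mind: the paper states this corollary without proof immediately after establishing the bases $\sB$ and $\{1_\la\ovHD\}$, leaving the elementary stratify-and-count step (weak compositions for $A_0$, subsets for $A_1$, and $2^{\ell(\la)}$ choices for $D$) to the reader. Your write-up supplies precisely those details.
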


Using a similar argument in the proof of \cite[Theorem 2.4]{DG}, we obtain the following.
\begin{thm}\label{presentQr-idemp}
The queer Schur superalgebra $\Qnr$ is the unitary associative superalgebra generated by
the even elements $1_\la, e_j, f_j $ and odd elements
$h_{\bi}, e_{\bj}, f_{\bj}$ for $\la\in\La(n,r),1\leq i\leq n, 1\leq j\leq n-1$
subject to {\rm (QS3)} and {\rm (QS5)-(QS6)} as well as the following relations:
\begin{enumerate}
\item[(QS1${}^\prime$)]
 $1_\la1_\mu=\delta_{\la,\mu}1_\la,~ \sum_{\la\in\La(n,r)}1_\la=1$,~
 $h_{\bi}1_\la=1_\la h_{\bi},$~
 $h_{\bar i}h_{\bar j}+h_{\bar j}h_{\bar i}=\delta_{ij}\sum_{\la\in\La(n,r)}2\la_i1_\la$,

\noindent $h_{\bi}1_\la=0$  if $\la_i=0$;

\vspace{0.1in}

 \item[(QS2${}^\prime$)]
$
e_j1_\la=\left\{
\begin{array}{ll}
1_{\la+\al_j}e_j,&\text{ if }\la+\al_j\in\La(n,r),\\
0,&\text{ otherwise},
\end{array}
\right.
$
$
e_{\bj}1_\la=\left\{
\begin{array}{ll}
1_{\la+\al_j}e_{\bj},&\text{ if }\la+\al_j\in\La(n,r),\\
0,&\text{ otherwise},
\end{array}
\right.
$
$
f_j1_\la=\left\{
\begin{array}{ll}
1_{\la-\al_j}f_j,&\text{ if }\la-\al_j\in\La(n,r),\\
0,&\text{ otherwise},
\end{array}
\right.
$
$
f_{\bj}1_\la=\left\{
\begin{array}{ll}
1_{\la-\al_j}f_{\bj},&\text{ if }\la-\al_j\in\La(n,r),\\
0,&\text{ otherwise},
\end{array}
\right.
$
$
1_\la e_j=\left\{
\begin{array}{ll}
e_j1_{\la-\al_j},&\text{ if }\la-\al_j\in\La(n,r),\\
0,&\text{ otherwise},
\end{array}
\right.
$
$
1_\la e_{\bj}=\left\{
\begin{array}{ll}
e_{\bj}1_{\la-\al_j},&\text{ if }\la-\al_j\in\La(n,r),\\
0,&\text{ otherwise},
\end{array}
\right.
$
$
1_\la f_j=\left\{
\begin{array}{ll}
f_j1_{\la+\al_j},&\text{ if }\la+\al_j\in\La(n,r),\\
0,&\text{ otherwise},
\end{array}
\right.
$
$
1_\la f_{\bj}=\left\{
\begin{array}{ll}
f_{\bj}1_{\la+\al_j},&\text{ if }\la+\al_j\in\La(n,r),\\
0,&\text{ otherwise};
\end{array}
\right.
$

\vspace{0.1in}

 \item[(QS4${}^\prime$)]
 $e_if_j-f_je_i=\delta_{ij}\sum_{\la\in\La(n,r)}(\la_i-\la_{i+1})1_\la$,\quad
 $e_{\bi}f_{\bj}+f_{\bj}e_{\bi}=\delta_{ij}\sum_{\la\in\La(n,r)}(\la_i+\la_{i+1})1_\la$;

\noindent $e_if_{\bj}-f_{\bj}e_i=\delta_{ij}(h_{\bi}-h_{\ov{i+1}})$,\quad
 $e_{\bi}f_j-f_je_{\bi}=\delta_{ij}(h_{\bi}-h_{\ov{i+1}})$.
\end{enumerate}
\end{thm}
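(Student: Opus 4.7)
The plan is to follow the strategy of Doty--Giaquinto \cite{DG} and show that the idempotent presentation defines a superalgebra isomorphic to $U(n,r)$; combined with Theorem \ref{presentQr}, this yields the desired isomorphism with $\Qnr$. Let $T$ denote the abstract superalgebra defined by the generators and relations in the statement. The strategy is to construct mutually inverse superalgebra homomorphisms $\Phi\colon U(n,r)\to T$ and $\Psi\colon T\to U(n,r)$.

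For $\Phi$, I would send $h_i\mapsto\sum_{\la\in\La(n,r)}\la_i 1_\la$ and let the remaining generators $h_{\bi},e_j,f_j,e_{\bj},f_{\bj}$ map to the identically named elements of $T$. The defining relations (QS1)--(QS8) must then be verified in $T$. Relations (QS5) and (QS6) are immediate, and (QS1)--(QS4) follow from (QS1${}^\prime$), (QS2${}^\prime$), (QS3), (QS4${}^\prime$) by routine manipulations using orthogonality of the $1_\la$'s; for instance (QS4) becomes $e_if_j-f_je_i=\delta_{ij}\sum_\la(\la_i-\la_{i+1})1_\la=\delta_{ij}(\Phi(h_i)-\Phi(h_{i+1}))$. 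Relation (QS7) reduces to $\sum_\la(\sum_i\la_i)\,1_\la=r\cdot 1$, which holds since $|\la|=r$ for every $\la\in\La(n,r)$. For (QS8) one has
\begin{equation*}
\Phi\bigl(h_{\bi}(h_i-1)\cdots(h_i-r)\bigr)=h_{\bi}\sum_{\la\in\La(n,r)}\prod_{k=1}^r(\la_i-k)\,1_\la,
\end{equation*}
and each summand vanishes because either $1\le\la_i\le r$ (killing the product) or $\la_i=0$ (killing $h_{\bi}1_\la$ via (QS1${}^\prime$)).

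For $\Psi$, I would send $1_\la\mapsto{\bsh\choose\la}$ and all other generators to themselves. The defining relations of $T$ then need to be checked in $U(n,r)$: (QS1${}^\prime$) is Proposition \ref{idempotent}(1)(4), (QS2${}^\prime$) is Proposition \ref{root-idem}, (QS3), (QS5), (QS6) are unchanged, and (QS4${}^\prime$) follows from (QS4) by rewriting $h_i\pm h_{i+1}=\sum_\la(\la_i\pm\la_{i+1})1_\la$ using Proposition \ref{idempotent}(3).

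Finally, verify that the compositions are identity maps. On generators other than $h_i$ and $1_\la$ both compositions are trivially the identity; for $h_i\in U(n,r)$ we have $\Psi\Phi(h_i)=\sum_\la\la_i{\bsh\choose\la}=\sum_\la\la_i 1_\la=h_i$ by Proposition \ref{idempotent}(3). The only nontrivial check is $\Phi\Psi(1_\la)=1_\la$. Since $\Phi(h_i)=\sum_\mu\mu_i 1_\mu$ is a linear combination of orthogonal idempotents summing to $1$, any polynomial in $\Phi(h_i)$ decomposes diagonally, giving
\begin{equation*}
\Phi\Bigl({\bsh\choose\la}\Bigr)=\prod_{i=1}^n\sum_{\mu\in\La(n,r)}\binom{\mu_i}{\la_i}1_\mu=\sum_{\mu\in\La(n,r)}{\mu\choose\la}\,1_\mu.
\end{equation*}
The crux is then the combinatorial identity ${\mu\choose\la}=\delta_{\la,\mu}$ for $\la,\mu\in\La(n,r)$: if $\binom{\mu_i}{\la_i}\ne 0$ for all $i$ then $\mu_i\ge\la_i$ for every $i$, and summing yields $r=|\mu|\ge|\la|=r$, forcing $\mu=\la$. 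This identity, which is the only step that genuinely exploits the Schur-algebra constraint $|\la|=r$ (as opposed to the enveloping-algebra setting), is the main substantive point; the rest is bookkeeping with the commutation formulas and idempotent decomposition already established in Propositions~\ref{idempotent} and~\ref{root-idem}.
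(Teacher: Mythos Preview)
Your proposal is correct and follows exactly the approach the paper indicates: the paper simply states that the result is obtained ``using a similar argument in the proof of \cite[Theorem 2.4]{DG}'', and you have carried out precisely that argument, constructing mutually inverse homomorphisms between $U(n,r)$ and the abstract algebra $T$ on the idempotent generators. The verifications you sketch (well-definedness via Propositions~\ref{idempotent} and~\ref{root-idem}, and the key combinatorial identity ${\mu\choose\la}=\delta_{\la,\mu}$ for $\la,\mu\in\La(n,r)$) are the standard ingredients of the Doty--Giaquinto method.
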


\section{The quantum queer superalgebra $\qUq$ and its root vectors}
The quantum queer superalgebra is more subtle than the enveloping algebra. In the next four sections, we will investigate the quantum root vectors and their commutation formulas. We then generalize Theorems \ref{presentQr} and \ref{presentQr-idemp} to quantum queer Schur algebras in the last section.

In \cite{Ol}, Olshanski introduced the quantum deformation $U_q=\qUq$ of the universal enveloping algebra
$U(\mfq)$ of $\mfq$ as follows.
Let the symbol $\{\cdots\}$, where the dots standard for some inequalities, equal 1 if all these inequalities are satisfied
and 0 otherwise.
For $i,j,k\in I(n|n)$, put $\varphi(i,j)=\delta_{|i|,|j|}{\rm sgn}(j),$
\begin{align*}
p(i,j)=\left\{
\begin{array}{ll}
0,&\text{ if }ij>0,\\
1,&\text{ if }ij<0,
\end{array}
\right. ~
\;\;\text{ and }\;\;
\theta(i,j,k)={\rm sgn}({\rm sgn}(i)+{\rm sgn}(j)+{\rm sgn}(k)),
\end{align*}
where ${\rm sgn}(a)=1$ if $a>0$ and ${\rm sgn}(a)=-1$ if $a<0$ for an arbitrary nonzero integer $a$.
\begin{defn} The quantum queer superalgebra $\qUq$ is the associative superalgebra over $\Q(q)$ generated by $L_{i,j}$ for $i, j\in I(n|n)$
with $i\leq j$
subject to the following relations
\begin{equation}\label{Ol-reln}
\aligned
&L_{i,i}L_{-i,-i}=L_{-i,-i}L_{i,i}=1,\\
&(-1)^{p(i,j)p(k,l)}q^{\varphi(j,l)}L_{i,j}L_{k,l}
+\{k\leq j<l\}\theta(i,j,k)(q-q^{-1})L_{i,l}L_{k,j}\\
&\qquad +\{i\leq-l<j\leq -k\}\theta(-i,-j,k)(q-q^{-1})L_{i,-l}L_{k,-j}\\
&=q^{\varphi(i,k)}L_{k,l}L_{i,j}+\{k<i\leq l\}\theta(i,j,k)(q-q^{-1})L_{i,l}L_{k,j}\\
&\qquad +\{-l\leq i<-k\leq j\}\theta(-i,-j,k)(q-q^{-1})L_{-i,l}L_{-k,j}.
\endaligned
\end{equation}
The $\Z_2$-grading on $\qUq$ is defined via $\hat{L}_{i,j}=p(i,j)$ for $i,j\in I(n|n)$
with $i\leq j$.
\end{defn}

Following \cite[Remark 7.3]{Ol}, we introduce the following set of generators of $\qUq$:
\begin{equation}\label{q-generator}
\aligned
K_i:=&L_{i,i},\quad K_i^{-1}:=L_{-i,-i},\quad K_{\bi}:=-\frac{1}{q-q^{-1}}L_{-i,i},\\
E_j:=&-\frac{1}{q-q^{-1}}K_{j+1}L_{-j-1,-j},\quad E_{\bj}:=-\frac{1}{q-q^{-1}}K_{j+1}L_{-j-1,j},\\
F_j:=&\frac{1}{q-q^{-1}}L_{j,j+1}K_{j+1}^{-1},\quad F_{\bj}:=-\frac{1}{q-q^{-1}}L_{-j,j+1}K_{j+1}^{-1}.
\endaligned
\end{equation}
for $1\leq i\leq n$ and $1\leq j\leq n-1$.
Then the algebra $\qUq$ can be defined via a quantum analogue of the relations (QS1)-(QS6)
using~~\eqref{Ol-reln} as follows.

\begin{prop}[{\cite[Theorem 2.1]{GJKK} cf. (\cite{GJKKK})}]\label{presentqUq}
The quantum superalgebra $\qUq$ is isomorphic to the unital associative superalgebra over $\Q(q)$ generated
by even generators $K^{\pm1}_i, E_j, F_j$ and odd generators $K_{\bi},E_{\bj},F_{\bj},$ for $1\leq i\leq n, 1\leq j\leq n-1$,
satisfying the following relations:
\begin{itemize}
 \item[(QQ1)]
 $K_iK^{-1}_i=1=K^{-1}_iK_i$,~
 $K_iK_j=K_jK_i,~ K_iK_{\bar j}=K_{\bj}K_i,$

\noindent $K_{\bar i}K_{\bar j}+K_{\bar j}K_{\bar i}=\ds2\delta_{ij}\frac{K_i^2-K_i^{-2}}{q^2-q^{-2}}$;

\vspace{0.1in}

 \item[(QQ2)]
 $K_iE_j=q^{(\ep_i,\alpha_j)}E_jK_i,\quad K_iE_{\bar j}=q^{(\ep_i,\alpha_j)}E_{\bar j}K_i,$

\noindent  $K_iF_j=q^{-(\ep_i,\alpha_j)}F_jK_i,\quad K_iF_{\bar j}=q^{-(\ep_i,\alpha_j)}F_{\bar j}K_i$;

\vspace{0.1in}

 \item[(QQ3)]
 $K_{\bi}E_i-qE_iK_{\bi}=E_{\bi}K_i^{-1},\quad qK_{\bi}E_{i-1}-E_{i-1}K_{\bi}=-K^{-1}_iE_{\ov{i-1}},$

 \noindent  $K_{\bi}F_i-qF_iK_{\bi}=-F_{\bi}K_i,\quad qK_{\bi}F_{i-1}-F_{i-1}K_{\bi}=K_iF_{\ov{i-1}},$

 \noindent  $K_{\bi}E_{\bi}+qE_{\bi}K_{\bi}=E_iK_i^{-1},\quad qK_{\bi}E_{\ov{i-1}}+E_{\ov{i-1}}K_{\bi}=K^{-1}_iE_{i-1},$

 \noindent  $K_{\bi}F_{\bi}+qF_{\bi}K_{\bi}=F_iK_i,\quad qK_{\bi}F_{\ov{i-1}}+F_{\ov{i-1}}K_{\bi}=K_iF_{i-1},$

\noindent  $K_{\bi}E_j-E_jK_{\bi}=K_{\bi}F_j-F_jK_{\bi}=K_{\bi}E_{\bj}+E_{\bj}K_{\bi}=K_{\bi}F_{\bj}+F_{\bj}K_{\bi}=0$ for $j\neq i, i-1$;

\vspace{0.1in}

 \item[(QQ4)]
 $E_iF_j-F_jE_i=\ds\delta_{ij}\frac{K_iK^{-1}_{i+1}-K^{-1}_{i}K_{i+1}}{q-q^{-1}}$,

 \noindent $E_{\bi}F_{\bj}+F_{\bj}E_{\bi}=\ds\delta_{ij}\Big(\frac{K_iK_{i+1}-K^{-1}_{i}K^{-1}_{i+1}}{q-q^{-1}}+(q-q^{-1})K_{\bi}K_{\ov{i+1}}\Big)$,



\noindent  $E_iF_{\bj}-F_{\bj}E_i=\ds\delta_{ij}(K^{-1}_{i+1}K_{\bi}-K_{\ov{i+1}}K^{-1}_i)$,


\noindent  $E_{\bi}F_j-F_jE_{\bi}=\delta_{ij}(K_{i+1}K_{\bi}-K_{\ov{i+1}}K_i)$;


\vspace{0.1in}

 \item[(QQ5)]
  $E^2_{\bi}=\ds-\frac{q-q^{-1}}{q+q^{-1}}E^2_{i},\quad F^2_{\bi}=\ds \frac{q-q^{-1}}{q+q^{-1}}F^2_{i}$,

 \noindent $E_iE_{\bar j}-E_{\bar j}E_i=F_iF_{\bar j}-F_{\bar j}F_i=0$
 for  $|i-j|\neq 1$,

\noindent  $E_iE_j-E_jE_i=F_iF_j-F_jF_i=E_{\bi}E_{\bj}+E_{\bj}E_{\bi}=F_{\bi}F_{\bj}+F_{\bj}F_{\bi}=0$
for  $|i-j|>1$,

\noindent $E_iE_{i+1}-qE_{i+1}E_i=E_{\bi}E_{\ov{i+1}}+qE_{\ov{i+1}}E_{\bi},$
 $E_iE_{\ov{i+1}}-qE_{\ov{i+1}}E_i=E_{\bi}E_{i+1}-qE_{i+1}E_{\bi},$

\noindent  $qF_{i+1}F_i-F_iF_{i+1}=qF_{\ov{i+1}}F_{\bar i}+F_{\bar i}F_{\ov{i+1}},$
 $qF_{\ov{i+1}}F_i-F_iF_{\ov{i+1}}=qF_{i+1}F_{\bi}-F_{\bi}F_{i+1}$;

 \vspace{0.1in}

  \item[(QQ6)]
 $E_i^2E_{j}-(q+q^{-1})E_iE_{j}E_i+E_{j}E^2_i=0$,
 $F_i^2F_j-(q+q^{-1})F_iF_jF_i+F_jF^2_i=0$,

\noindent $E_i^2E_{\ov{j}}-(q+q^{-1})E_iE_{\ov{j}}E_i+E_{\ov{j}}E^2_i=0$,
$F_i^2F_{\ov{j}}-(q+q^{-1})F_iF_{\ov{j}}F_i+F_{\ov{j}}F^2_i=0$,

 \noindent where $|i-j|=1$.
\end{itemize}
\end{prop}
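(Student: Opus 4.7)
The plan is to construct a surjective homomorphism $\Psi$ from the abstract unital associative superalgebra $\tilde{U}_q$ defined by generators $K_i^{\pm1}, E_j, F_j, K_{\bi}, E_{\bj}, F_{\bj}$ and relations (QQ1)--(QQ6) into the Olshanski algebra $\qUq$, by sending each generator to the expression on the right-hand side of \eqref{q-generator}, and then to show that $\Psi$ is bijective.

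The first and most labor-intensive step is to verify that $\Psi$ is well-defined, i.e., that the images satisfy (QQ1)--(QQ6). Each relation is checked by substituting \eqref{q-generator} and reducing via the Olshanski relations \eqref{Ol-reln}. The diagonal relations (QQ1) and the commutations between $K_i$ and the root generators (QQ2) fall out from the special cases of \eqref{Ol-reln} where both indicator symbols $\{\cdots\}$ vanish. The mixed relations (QQ3), the commutator relations (QQ4), and the non-standard quantum Serre relations (QQ5)--(QQ6) require the full force of \eqref{Ol-reln}, including the $(q-q^{-1})$ correction terms controlled by $\{i\leq\cdots<\cdots\leq j\}$ and $\theta(i,j,k)$. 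I would organise these verifications by splitting into sub-cases according to the relative position of indices so that at most one of the two indicator conditions is active at a time.

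Step 2 is surjectivity: one must exhibit every generator $L_{i,j}$ of $\qUq$ as a polynomial in the image of $\Psi$. The simple (nearest-neighbour) cases are given by inverting \eqref{q-generator}, and the higher-order $L_{i,j}$ with $|i-j|>1$ are obtained as iterated $q$-commutators of simple ones by a Lusztig-style recursion, analogous to the quantum root vectors to be constructed later in \S6. Step 3 is injectivity, which I would establish by comparing PBW-type bases on both sides. On the $\tilde{U}_q$ side, a triangular decomposition $\tilde{U}_q^- \otimes \tilde{U}_q^0 \otimes \tilde{U}_q^+$ together with the quantum root vectors produces an explicit spanning set of ordered monomials, and the image under $\Psi$ recovers the known PBW basis of $\qUq$ established by Olshanski. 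Alternatively, one may specialise at $q=1$ and invoke Proposition~\ref{Uqn} together with a standard flatness argument over $\Q[q,q^{-1}]$ to conclude injectivity in the generic case.

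The principal obstacle is the verification step, and within it the mixed relations (QQ3) and the $E_{\bi}F_{\bj}$-type relation in (QQ4). Both sides of these relations involve products of up to four Olshanski generators once \eqref{q-generator} is substituted, and the indicator terms from \eqref{Ol-reln} contribute asymmetrically depending on whether one is reordering $L_{-i,i}L_{-i-1,-i}$, $L_{-i,i}L_{i,i+1}$ or their variants. The sign bookkeeping induced by $p(i,j)$, $\varphi(j,l)$ and $\theta(i,j,k)$ must be handled with care, especially when the parities of the two factors differ. The rest of the argument, while lengthy, is largely mechanical once this first step is carried out.
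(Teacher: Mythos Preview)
The paper does not prove this proposition itself. It is attributed to \cite[Theorem~2.1]{GJKK}, and Remark~\ref{twogenerators} explains the only gap: the generators $K_i^{\pm1}, K_{\bi}, E_j, F_j, E_{\bj}, F_{\bj}$ used here differ from the generators $q^{\pm k_i}, k_{\bi}, e_j, f_j, e_{\bj}, f_{\bj}$ of \cite{GJKK} by the substitution \eqref{old generators}, and the relations (QQ1)--(QQ6) are simply the relations of \cite{GJKK} rewritten through that change of variables. So the paper's argument is a one-line reduction to an external source.

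Your proposal instead outlines a self-contained proof directly from Olshanski's presentation \eqref{Ol-reln}: verify (QQ1)--(QQ6) for the elements \eqref{q-generator}, show the $L_{i,j}$ lie in the image via iterated $q$-commutators, and deduce injectivity from a PBW comparison. This is a legitimate route and is presumably close to what \cite{GJKK} actually does; it buys independence from that reference at the cost of the long verification you describe. The paper's route is shorter because the heavy lifting has already been done in \cite{GJKK}, and only the translation through \eqref{old generators} remains. If you want to align with the paper, the exercise reduces to checking that the substitution $E_j = K_{j+1}E'_j$, $F_j = F'_jK_{j+1}^{-1}$, etc., carries the relations of \cite[Theorem~2.1]{GJKK} to (QQ1)--(QQ6); this is a purely algebraic rewriting and avoids touching \eqref{Ol-reln} altogether.
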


\begin{rem}\label{twogenerators}
The generators in \eqref{q-generator} are different from those in \cite[Theorem 2.1]{GJKK}.
Actually, setting
\begin{equation}\label{old generators}
E'_j=K_{j+1}^{-1}E_j,\quad E'_{\bj}=K_{j+1}^{-1}E_{\bj},\quad F'_j=F_jK_{j+1},\quad F'_{\bj}=F_{\bj}K_{j+1}
\end{equation}
for $1\leq j\leq n-1$,
then by~\cite[Remark 1.2]{GJKKK} the elements $K_i^{\pm1}, K_{\bi}, E'_j, F'_j, E'_{\bj}, F'_{\bj}$ can be identified with $q^{\pm k_i}, k_{\bi},
e_j, f_j, e_{\bj}, f_{\bj}$ in \cite[Theorem 2.1]{GJKK}, respectively.
The relations in Proposition~\ref{presentqUq} are obtained by rewriting the whole defining relations
among $q^{\pm k_i}, k_{\bi},
e_j, f_j, e_{\bj}, f_{\bj}$ given by \cite[Theorem 2.1]{GJKK} in terms of the elements $K_i^{\pm1}, K_{\bi}, E_j, F_j, E_{\bj}, F_{\bj}$ by using \eqref{old generators}.
\end{rem}

For convenience, let us write down some of the relations
in terms of the generators $E'_j, F'_j, E'_{\bj}, F'_{\bj}$ as follows, which will be useful later on.
\begin{lem}[{\cite[Theorem 2.1]{GJKK}}]\label{old-reln}
The following holds in $U_q(\mathfrak{q}(n))$: for $1\leq k\leq n-2, 1\leq a\leq n$ and $1\leq i,j\leq n-1$ with $|i-j|>1$,
\begin{align*}
&E'_{k+1}E'_{\bar{k}}-E'_{\bar{k}}E'_{k+1}=E'_{\ov{k+1}}E'_{k}-E'_{k}E'_{\ov{k+1}}, \quad
F'_{k+1}F'_{\bar{k}}-F'_{\bar{k}}F'_{k+1}=F'_{\ov{k+1}}F'_{k}-F'_{k}F'_{\ov{k+1}}, \\
&K_aE'_i=q^{(\ep_a,\al_i)}E'_iK_a,~ K_aF'_i=q^{-(\ep_a,\al_i)}F'_iK_a,~
K_aE'_{\bi}=q^{(\ep_a,\al_i)}E'_{\bi}K_a,~ K_aF'_{\bi}=q^{-(\ep_a,\al_i)}F'_{\bi}K_a, \\
&E'_iE'_j-E'_jE'_i= F'_iF'_j-F'_jF'_i=0.
\end{align*}
\end{lem}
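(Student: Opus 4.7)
The plan is to deduce each relation directly from the defining relations (QQ1)--(QQ6) in Proposition~\ref{presentqUq} via the substitution~\eqref{old generators}. The only computational tool needed is the rewrite rule $E_iK_a^{-1}=q^{(\ep_a,\al_i)}K_a^{-1}E_i$ (and the analogous rewrites for $E_{\bi}$, $F_i$, $F_{\bi}$), which is a direct rearrangement of (QQ2); combined with commutativity of the $K_a$'s from (QQ1), this lets us move all toral factors to one side at the cost of an explicit power of $q$.

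The $K_a$-commutation identities are immediate: for example,
\[
K_aE'_i=K_aK_{i+1}^{-1}E_i=K_{i+1}^{-1}K_aE_i=q^{(\ep_a,\al_i)}K_{i+1}^{-1}E_iK_a=q^{(\ep_a,\al_i)}E'_iK_a,
\]
and the three variants are identical in structure. For the commutativity $E'_iE'_j=E'_jE'_i$ with $|i-j|>1$, one notes $(\ep_{j+1},\al_i)=\delta_{j+1,i}-\delta_{j+1,i+1}=0$ (since $|i-j|>1$ excludes both $j=i-1$ and $j=i$) and likewise $(\ep_{i+1},\al_j)=0$. Hence $E_i$ commutes with $K_{j+1}^{-1}$ and $E_j$ with $K_{i+1}^{-1}$, so the primed identity reduces to $E_iE_j=E_jE_i$ from (QQ5). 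The $F$-case proceeds identically.

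For the two ``mixed'' quadratic relations, one starts from the (QQ5) identity $E_iE_{\ov{i+1}}-qE_{\ov{i+1}}E_i=E_{\bi}E_{i+1}-qE_{i+1}E_{\bi}$, specialized at $i=k$ and rearranged as
\[
qE_{k+1}E_{\bar k}-E_{\bar k}E_{k+1}=qE_{\ov{k+1}}E_k-E_kE_{\ov{k+1}}.
\]
Left multiplying by $K_{k+2}^{-1}K_{k+1}^{-1}$ and then moving one $K_{k+1}^{-1}$ past the appropriate $E$-factor on each term (using $E_{k+1}K_{k+1}^{-1}=qK_{k+1}^{-1}E_{k+1}$ and $E_{\ov{k+1}}K_{k+1}^{-1}=qK_{k+1}^{-1}E_{\ov{k+1}}$, while $E_{\bar k}$ and $E_k$ commute with $K_{k+2}^{-1}$ because $(\ep_{k+2},\al_k)=0$) converts the two sides precisely into $E'_{k+1}E'_{\bar k}-E'_{\bar k}E'_{k+1}$ and $E'_{\ov{k+1}}E'_k-E'_kE'_{\ov{k+1}}$, respectively; the explicit $q$'s in the (QQ5) relation exactly cancel against those produced by the $K$-moves. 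The $F$-version is obtained by the same procedure applied to the companion (QQ5) relation $qF_{\ov{i+1}}F_i-F_iF_{\ov{i+1}}=qF_{i+1}F_{\bi}-F_{\bi}F_{i+1}$, with the $q$-powers now generated by $K_{k+1}F_{k+1}=q^{-1}F_{k+1}K_{k+1}$ and $K_{k+1}F_{\ov{k+1}}=q^{-1}F_{\ov{k+1}}K_{k+1}$.

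The only real bookkeeping issue is ensuring that the $q$-powers accumulated by successive $K$-moves exactly cancel the explicit $q$'s appearing in (QQ5). Since $E_i$ and $E_{\bi}$ carry the same $\ev\mfh$-weight $\al_i$ (and likewise for the $F$'s) by (QQ2), no parity-dependent case split arises beyond what is already encoded in (QQ2), so the even and odd parts of the computation run in strict parallel.
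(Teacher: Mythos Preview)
Your proposal is correct. The paper does not give its own proof of this lemma; it is simply stated as a citation of \cite[Theorem~2.1]{GJKK}, since by Remark~\ref{twogenerators} the primed generators $E'_j,F'_j,E'_{\bj},F'_{\bj}$ are precisely the generators used in \cite{GJKK}, so the relations listed here are among the defining relations there. Your approach---recovering these relations from (QQ1)--(QQ6) via the substitution~\eqref{old generators}---is exactly the inverse of the rewriting process described in Remark~\ref{twogenerators}, and your bookkeeping of the $q$-powers (in particular that $(\ep_{k+2},\al_k)=0$ so $K_{k+2}^{\pm1}$ commutes with the index-$k$ generators, while moving $K_{k+1}^{\pm1}$ past an index-$(k+1)$ generator produces the $q^{\pm1}$ needed to absorb the explicit $q$'s in (QQ5)) is accurate.
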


By~\cite[\S 4]{Ol} (cf. \cite[(2.9)]{GJKK}), the comultiplication $\Delta:\qUq\rightarrow\qUq\otimes\qUq$ is defined by
\begin{equation}\label{comult}
\Delta(L_{i,j})=\sum^j_{k=i}L_{i,k}\otimes L_{k,j},
\end{equation}
for $i,j\in I(n|n)$ with $i\leq j$.
Then by \eqref{q-generator} we have
\begin{equation}\label{q-comult}
\aligned
\Delta(K_i)&=K_i\otimes K_i,\\
\Delta(E_j)&=1\otimes E_j+E_j\otimes K_j^{-1}K_{j+1},\quad
\Delta(F_j)=K_jK_{j+1}^{-1}\otimes F_j+F_j\otimes 1.
\endaligned
\end{equation}
for $1\leq i\leq n$ and $1\leq j\leq n-1$.

By Proposition~\ref{presentqUq}, it is routine to check that there is an anti-involution $\Omega:\qUq\rightarrow\qUq$ given by
\begin{equation}\label{Omega}
\aligned
\Omega(q)&=q^{-1},\quad \Omega(K_i)=K_i^{-1},\quad \Omega(K_{\bi})=K_{\bi},\\
\Omega(E_j)&=F_j,\quad \Omega(F_j)=E_j, \quad \Omega(E_{\bj})=F_{\bj},\quad \Omega(F_{\bj})=E_{\bj}.
\endaligned
\end{equation}
for $1\leq i\leq n$ and $1\leq j\leq n-1$.

As for Lie algebras, we have the following PBW Theorem for $\qUq$ due to Olshanski.

\begin{prop}[{\cite[Theorem 6.2]{Ol}}]\label{Ol-PBW}
Fix an order on $L_{i,j}$ for $i,j\in I(n|n)$ with $i<j$. Then the set
\begin{equation}\label{PBW-basis-L}
\Big\{L_{1,1}^{m_1}\cdots L_{n,n}^{m_n}\prod_{i<j\in I(n|n)}L_{i,j}^{m_{ij}}\mid m_1,\ldots,m_n\in\Z, m_{ij}\in\N, m_{ij}\in\Z_2 \text{ if }\hat{L}_{i,j}=1\Big\},
\end{equation}
is a $\Q(q)$-basis of the algebra $\qUq$.
\end{prop}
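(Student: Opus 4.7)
The plan is to proceed along the familiar two-step route for PBW theorems: first show that the ordered monomials in \eqref{PBW-basis-L} span $\qUq$, and then establish linear independence by specialization at $q=1$.

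For the spanning step, I would set up a straightening procedure based on the quadratic relations \eqref{Ol-reln}. Fix the chosen total order on the generators $L_{i,j}$ and define the length $\ell(m)$ of a monomial $m$ to be its total degree in the non-diagonal generators. Within a fixed length, introduce a secondary measure, for instance the lexicographic index (in the chosen order) of the first out-of-order adjacent pair read from the left. The relations \eqref{Ol-reln} then read as rewriting rules of the shape
\[
L_{i,j}L_{k,l}=q^{\varphi(i,k)-\varphi(j,l)}L_{k,l}L_{i,j}+(q-q^{-1})\bigl(\alpha\,L_{i,l}L_{k,j}+\beta\,L_{i,-l}L_{k,-j}+\cdots\bigr),
\]
where the correction terms have the same length and involve pairs of indices that are strictly ``closer together'' in the chosen order, hence smaller with respect to the secondary measure. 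By Noetherian induction on the lexicographic pair (length, secondary measure), every monomial rewrites as a $\Q(q)$-linear combination of the ordered monomials in \eqref{PBW-basis-L}. The identities $L_{i,i}L_{-i,-i}=1$ absorb $L_{-i,-i}$ into $L_{i,i}^{-1}$, yielding integer exponents $m_i\in\Z$; and for each odd generator (those with $\hat L_{i,j}=1$), the instance of \eqref{Ol-reln} with $(k,l)=(i,j)$ reduces $L_{i,j}^2$ to a sum of strictly shorter monomials, which justifies the restriction $m_{ij}\in\Z_2$.

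For linear independence, I would use the classical limit. Let $\mathcal{A}=\Q[q,q^{-1}]_{(q-1)}$ and let $U_\mathcal{A}$ be the $\mathcal{A}$-subalgebra of $\qUq$ generated by $K_i^{\pm1}$, $K_{\bi}$ and the Chevalley-type generators $E_j,F_j,E_{\bj},F_{\bj}$ of \eqref{q-generator}. The change of variables \eqref{q-generator} is invertible over $\Q(q)$, so any ordered monomial in the $L_{i,j}$ is a $\Q(q)$-multiple of an ordered monomial in the Chevalley generators. Under a suitable specialization $q\to 1$, $K_i\to 1$, one recovers $\Uq$ as $U_\mathcal{A}/(q-1)U_\mathcal{A}$, and these ordered monomials reduce to the classical PBW basis provided by Proposition \ref{PBW-qn}. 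Any nontrivial $\Q(q)$-linear dependence among the monomials could be rescaled to lie in $U_\mathcal{A}$ with coefficients not all vanishing at $q=1$, which would contradict the classical PBW theorem.

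The hardest part is the straightening. Unlike the Drinfeld--Jimbo presentation, the Olshanski relations \eqref{Ol-reln} bundle several quadratic corrections into a single identity, including mixed terms such as $L_{i,-l}L_{k,-j}$ arising from the queer indexing of rows and columns by $I(n|n)$. Designing the total order on $I(n|n)\times I(n|n)$ and a complexity measure so that every correction term appearing on the right-hand side of \eqref{Ol-reln} is genuinely smaller than the left-hand side is the delicate point; once that bookkeeping is secured, the spanning argument and the specialization argument combine to give the result.
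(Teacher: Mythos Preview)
The paper does not supply its own proof of this proposition: it is quoted from \cite[Theorem 6.2]{Ol}, and the only additional comment (in the remark immediately following) is that Olshanski's argument is insensitive to the particular ordering chosen on the $L_{i,j}$. So there is no in-paper argument to compare your sketch against directly.

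Your two-step strategy---straighten via the quadratic relations to prove spanning, then specialize at $q=1$ to prove independence---is the standard shape for a PBW theorem of this type, and is consistent with what the remark says about Olshanski's proof. Two points deserve attention if you want to turn the sketch into a proof.

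For spanning, you have correctly located the crux. The correction terms on the right of \eqref{Ol-reln} include cross terms such as $L_{i,-l}L_{k,-j}$ and $L_{-i,l}L_{-k,j}$ in which column or row indices have their signs flipped; a naive length-plus-inversions complexity will not see these as smaller, so the termination argument needs a filtration genuinely tailored to the structure of $I(n|n)$. This is precisely what Olshanski's proof supplies, and you would need to reconstruct that filtration rather than assert its existence.

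For linear independence, the specialization idea is sound but your formulation glosses over a normalization issue: by Lemma~\ref{XL} the non-diagonal $L_{i,j}$ equal $(q-q^{-1})$ times a root vector (up to a $K$-factor), so they vanish at $q=1$. One must therefore pass to the rescaled generators $\frac{1}{q-q^{-1}}L_{i,j}$, equivalently the $X_{i,j},\overline X_{i,j}$, before specializing. With that adjustment, each ordered $L$-monomial equals a nonzero $\Q(q)$-scalar times an ordered $X$-monomial multiplied by a monomial in the $K_a^{\pm1}$ (combine Lemma~\ref{XL} with \eqref{KL}; this is exactly the computation \eqref{twobasis}), and those specialize to the classical PBW monomials of Proposition~\ref{PBW-qn}. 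Your clearing-denominators argument then goes through.
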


\begin{rem} In \cite{Ol}, a particular order on the elements $L_{i,j}$ for $i,j\in I(n|n)$ with $i<j$ was chosen
to prove the PBW Theorem. Actually, it is easy to check that the arguments in the proof of \cite[Theorem 6.2]{Ol}
do not depend on the choice of the order.

\end{rem}

For $\al_{i,j}=\ep_i-\ep_j\in\Phi$ with $1\leq i\neq j\leq n$, we introduce inductively
the root vectors as follows.
For $1\leq i\leq n-1$, we set
$$
X_{i,i+1}=E_i,\quad X_{i+1,i}=F_i, \quad \oX_{i,i+1}=E_{\bi},\quad  \oX_{i+1,i}=F_{\bi}.
$$
For $|j-i|>1$, we define
\begin{equation}\label{q-root}
\aligned
X_{\al_{i,j}}\equiv \Xije&:=\left\{
\begin{array}{ll}
X_{i,k}X_{k,j}-q X_{k,j}X_{i,k},&\text{ if }i<j,\\
X_{i,k} X_{k,j}-q^{-1} X_{k,j}X_{i,k},&\text{ if }i>j,
\end{array}
\right.\\
\oX_{\al_{i,j}}\equiv\Xijo &:=\left\{
\begin{array}{ll}
X_{i,k} \oX_{k,j}-q \oX_{k,j}X_{i,k},&\text{ if }i<j,\\
\oX_{i,k} X_{k,j}-q^{-1}X_{k,j} \oX_{i,k},&\text{ if }i>j,
\end{array}
\right.
\endaligned
\end{equation}
where $k$ is strictly between $i$ and $j$.
It is straightforward to check that $X_{i,j},\oX_{i,j}$ are independent of the choice of $k$.
Clearly $X_{i,j}$ are even elements while $\oX_{i,j}$ are odd elements.
By~\eqref{q-generator} and (QQ2) in Proposition~\ref{presentqUq}, one can check that
for $\al\in\Phi, 1\leq i\leq n$
\begin{align}\label{KX}
K_iX_{\al}K_i^{-1}=q^{(\ep_i,\al)}X_{\al},\quad
K_i\oX_{\al}K_i^{-1}=q^{(\ep_i,\al)}\oX_{\al}.
\end{align}
Therefore the elements $X_{\al}$ and $\oX_{\al}$ can be viewed as the quantum analog of the root vectors
$x_{\al}$ and $\ox_{\al}$ introduced in~\eqref{root}.
Meanwhile, by~\eqref{Omega}, we obtain
\begin{equation}\label{Omega-root}
\aligned
\Omega(\Xije)=\Xjie,\quad
\Omega(\Xijo)=\Xjio.
\endaligned
\end{equation}
for $1\leq i\neq j\leq n$.

By~\eqref{q-generator} and~\eqref{Ol-reln}, a direct calculation shows that
\begin{align}
K_aL_{i,j}=\left\{
\begin{array}{ll}
q^{-1}L_{i,j}K_a,&\text{ if }|i|=a,\\
qL_{i,j}K_a,&\text{ if }|j|=a,\\
L_{i,j}K_a,&\text{ otherwise},
\end{array}
\right.
K_a^{-1}L_{i,j}=\left\{
\begin{array}{ll}
q L_{i,j}K_a^{-1},&\text{ if }|i|=a,\\
q^{-1}L_{i,j}K_a^{-1},&\text{ if }|j|=a,\\
L_{i,j}K_a^{-1},&\text{ otherwise}.
\end{array}
\right.\label{KL}
\end{align}
for $1\leq a\leq n$ and $i,j\in I(n|n)$ with $i\leq j$ and $|i|\neq |j|$.

\begin{lem}\label{XL}
The following holds for $1\leq i<j\leq n$:
\begin{align*}
\Xije&=
\ds \frac{-1}{q-q^{-1}}K_jL_{-j,-i},\quad
\Xijo=
\ds\frac{-1}{q-q^{-1}}K_jL_{-j,i},\\
\Xjie&=
\ds \frac{1}{q-q^{-1}}L_{i,j}K_j^{-1}, \quad
\Xjio=
\ds\frac{-1}{q-q^{-1}}L_{-i,j}K_j^{-1}.
\end{align*}
\end{lem}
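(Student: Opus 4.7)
The proof is by induction on $j-i$. The base case $j-i=1$ is immediate: unpacking the definitions before \eqref{q-root} we have $X_{i,i+1}=E_i$, $\oX_{i,i+1}=E_{\bar i}$, $X_{i+1,i}=F_i$, $\oX_{i+1,i}=F_{\bar i}$, and each of these is precisely what \eqref{q-generator} gives (with $j=i+1$) on the right-hand sides of the four claimed formulas.

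For the inductive step, fix $i<j$ with $j-i\geq 2$ and pick $k$ with $i<k<j$. From the recursion $X_{i,j}=X_{i,k}X_{k,j}-qX_{k,j}X_{i,k}$ and the inductive hypothesis, we obtain
$$X_{i,j}=\frac{1}{(q-q^{-1})^2}\bigl[K_kL_{-k,-i}K_jL_{-j,-k}-qK_jL_{-j,-k}K_kL_{-k,-i}\bigr].$$
Applying \eqref{KL} to shuffle $K_j,K_k$ to the right (carefully tracking powers of $q^{\pm1}$) turns this into a scalar multiple of $K_jK_k\bigl(L_{-k,-i}L_{-j,-k}-L_{-j,-k}L_{-k,-i}\bigr)$. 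Now I would specialize the Olshanski relation \eqref{Ol-reln} to the indices $(i',j',k',l')=(-j,-k,-k,-i)$: since $\varphi(-k,-i)=\varphi(-j,-k)=0$ and $p(-j,-k)=p(-k,-i)=0$, only the indicator $\{k'\leq j'<l'\}$ is nonzero, yielding the single correction $\theta(-j,-k,-k)(q-q^{-1})L_{-j,-i}L_{-k,-k}=-(q-q^{-1})L_{-j,-i}K_k^{-1}$. Hence
$$L_{-k,-i}L_{-j,-k}-L_{-j,-k}L_{-k,-i}=-(q-q^{-1})L_{-j,-i}K_k^{-1}.$$
Substituting this back and using $K_kL_{-j,-i}K_k^{-1}=L_{-j,-i}$ and $K_jL_{-j,-i}=q^{-1}L_{-j,-i}K_j$ from \eqref{KL} collapses the expression to $\tfrac{-1}{q-q^{-1}}K_jL_{-j,-i}$, as required.

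The remaining three formulas follow by essentially the same procedure. For $X_{j,i}$, use the $i>j$ branch of \eqref{q-root} (with factor $q^{-1}$) and the Olshanski relation applied to $L_{k,j}L_{i,k}$. For the odd vectors $\oX_{i,j}$ and $\oX_{j,i}$, the recursion mixes one even and one odd factor, so after substitution one encounters $L$'s whose row/column indices have opposite signs; the appropriate instance of \eqref{Ol-reln} then produces the desired $L_{-j,i}$ or $L_{-i,j}$. The main obstacle throughout is pure bookkeeping: correctly identifying which of the indicator brackets $\{\cdots\}$ in \eqref{Ol-reln} are nonzero for the specialized indices, tracking the signs $\theta(i',j',k')$, and (in the odd cases) handling the parity factors $(-1)^{p(i',j')p(k',l')}$ that arise when either $L$ is odd. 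Independence of the choice of $k$ in \eqref{q-root} was already noted in the excerpt, so the formulas depend only on $i$ and $j$.
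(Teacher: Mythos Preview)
Your proof is correct, and it takes a genuinely different route from the paper's. The paper first invokes external formulas from \cite[(2.4)]{GJKK} expressing $L_{-j,-i}$, $L_{-j,i}$, $L_{i,j}$, $L_{-i,j}$ as iterated $\mathrm{ad}$-products in the primed generators $E'_a,F'_a,E'_{\bar a},F'_{\bar a}$; it then uses Lemma~\ref{old-reln} to massage these into one-step recursions such as $L_{-j,-i}=E_iL_{-j,-i-1}-qL_{-j,-i-1}E_i$, and only then runs the induction on $j-i$ with the specific choice $k=i+1$. Your argument bypasses the GJKK formulas and the primed generators entirely: you extract the needed commutator $L_{-k,-i}L_{-j,-k}-L_{-j,-k}L_{-k,-i}$ (and its analogues) directly from the Olshanski relation~\eqref{Ol-reln}, which is self-contained within the paper's own set-up and works for an arbitrary intermediate $k$. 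The trade-off is that the paper's route yields the clean recursions \eqref{Lji-ind}--\eqref{L-ind} as byproducts and connects explicitly to the GJKK presentation, whereas your approach is more elementary but requires, in the odd cases, tracking several indicator brackets in \eqref{Ol-reln} whose contributions partially cancel (e.g.\ for $\oX_{i,j}$ two $L_{-k,k}L_{-j,-i}$ terms appear on opposite sides and drop out). One minor remark: the identity $K_jL_{-j,-i}=q^{-1}L_{-j,-i}K_j$ you cite is correct but not actually needed in the collapse you describe; $K_kL_{-j,-i}K_k^{-1}=L_{-j,-i}$ alone suffices.
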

\begin{proof}
Suppose $1\leq i<j\leq n$.
Recall the generators~\eqref{old generators} used in~\cite[Theorem 2.1]{GJKK}.
Then by~\cite[(2.4)]{GJKK} and Remark~\ref{twogenerators}, we have
\begin{equation}\label{generator-L}
\aligned
L_{-j,-i}=&
(-1)^{j-i}(q-q^{-1})\Big(\prod^{j-1}_{a=i+1}K_{a}\Big)\cdot\prod^{j-1}_{a=i+1}{\rm ad}E'_{a}(E'_i),\\
L_{-j,i}=&
(-1)^{j-i}(q-q^{-1})\Big(\prod^{j-1}_{a=i+1}K_{a}\Big)\cdot\prod^{j-1}_{a=i+1}{\rm ad}E'_{a}(E'_{\bi}),\\
L_{i,j}=&
(q-q^{-1})\Big(\prod^{j-1}_{a=i+1}K^{-1}_{a}\Big)\cdot\prod^{j-1}_{a=i+1}{\rm ad}F'_{a}(F'_i),\\
L_{-i,j}=&
-(q-q^{-1})\Big(\prod^{j-1}_{a=i+1}K^{-1}_{a}\Big)\cdot\prod^{j-1}_{a=i+1}{\rm ad}F'_{a}(F'_{\bi}),
\endaligned
\end{equation}
where ${\rm ad}G_a(G_i):=G_aG_i-G_iG_a, \prod_{a=i+1}^{j-1}{\rm ad}G_a(G_i):={\rm ad}G_{j-1}\cdots{\rm ad}G_{i+1}(G_i)$
if $j\geq i+2$
and $\prod_{a=i+1}^{j-1}{\rm ad}G_a(G_i)=G_i$ if $j=i+1$, for $G_i=E'_i,E'_{\bi},F'_i,F'_{\bi}$ and $G_a=E'_a,F'_a$ with $i+1\leq a\leq j-1$.
The first formula in~\eqref{generator-L} and Lemma~\ref{old-reln} imply
$$\aligned
L_{-j,-i}=&(-1)^{j-i}(q-q^{-1})\Big(\prod^{j-1}_{a=i+1}K_{a}\Big)\cdot\prod^{j-1}_{a=i+2}{\rm ad}E'_{a}(-{\rm ad}E'_i(E'_{i+1}))\\
=&(-1)^{j-i-1}(q-q^{-1})\Big(\prod^{j-1}_{a=i+1}K_{a}\Big)\cdot{\rm ad}E'_i\Big(\prod^{j-1}_{a=i+2}{\rm ad}E'_{a}(E'_{i+1})\Big)\\
=&K_{i+1}\cdot{\rm ad}E'_i(L_{-j,-i-1}),\\
\endaligned$$
then, by \eqref{KL} and \eqref{old generators},
\begin{equation}\label{Lji-ind}
\aligned
L_{-j,-i}=&K_{i+1}E_i'L_{-j,-i-1}-K_{i+1}L_{-j,-i-1}E_i'\\
=& K_{i+1}E_i'L_{-j,-i-1}-qL_{-j,-i-1}K_{i+1}E_i'\\
=& E_iL_{-j,-i-1}-qL_{-j,-i-1}E_i.
\endaligned
\end{equation}
Similarly, by \eqref{generator-L} and the corresponding equalities in Lemma~\ref{old-reln}, one can prove
\begin{equation}\label{L-ind}
\aligned
L_{-j,i}=& E_iL_{-j,i+1}-qL_{-j,i+1}E_i,\\
L_{i,j}=& L_{i+1,j}F_i-q^{-1}F_iL_{i+1,j},\quad
L_{-i,j}= L_{-i-1,j}F_i-q^{-1}F_iL_{-i-1,j}.
\endaligned
\end{equation}

We now prove the first formula in the lemma by induction on $j-i$.
Indeed, if $j=i+1$, then
$$
\Xije=E_{j-1}=\frac{-1}{q-q^{-1}}K_{j}L_{-j,-j+1}
$$
by~\eqref{q-generator}.
Now assume $j\geq i+2$, then by induction and~\eqref{q-root} we have
\begin{align*}
\Xije&=X_{i,i+1}X_{i+1,j}-qX_{i+1,j}X_{i,i+1}\\
&=E_i \frac{-1}{q-q^{-1}}K_{j}L_{-j,-i-1}-q\frac{-1}{q-q^{-1}}K_{j}L_{-j,-i-1} E_i\\
&=\frac{-1}{q-q^{-1}}K_{j}(E_iL_{-j,-i-1}-qL_{-j,-i-1} E_i)\quad(\text{by \eqref{KL} as $j\geq i+2$})\\
&=\frac{-1}{q-q^{-1}}K_{j}L_{-j,-i}\quad (\text{by \eqref{Lji-ind}}),
\end{align*}
as desired. By a parallel argument, we can prove the remaining three formulas.
\end{proof}
Fix an order in $\Phi^+$, or equivalently in the set $\{(i,j)\mid 1\leq i<j\leq n\}$.
Recall that $M_n(\N|\Z_2)'$ is the set consisting of $C\in M_n(\N|\Z_2)$ such that $C^0_0=0$.
Given $C=(C_0,C_1)\in M_n(\N|\Z_2)'$,
we can introduce the elements
\begin{align*}
X_{C^+}=\prod_{1\leq i<j\leq n}\big(X_{i,j}^{c^0_{ij}}~\oX_{i,j}^{c^1_{ij}}\big), \quad
X_{C^-}=\prod_{1\leq i<j\leq n}\big(X_{j,i}^{c^0_{ji}}~\oX_{j,i}^{c^1_{ji}}\big),\quad
\ov{K}_{C^0_1}=K^{c^1_{11}}_{\bar 1}\cdots K^{c^1_{nn}}_{\bar n},
\end{align*}
where $C_0=(c^0_{ij}), C_1=(c^1_{ij})$.
For $\sigma=(\sigma_1,\ldots,\sigma_n)\in\Z^n$, let
$
K_{\sigma}=K_1^{\sigma_1}\cdots K_n^{\sigma_n}.
$

\begin{prop}\label{PBW-X}
The set
\begin{align}
\big\{X_{C^-}K_{\sigma}\ov{K}_{C^0_1} X_{C^+}
\mid C\in M_n(\N|\Z_2)',  \sigma \in\Z^n \big\}\label{PBW-basis-X}
\end{align}
is a $\Q(q)$-basis of $\qUq$. In particular, if $U_q(\ev{\mfq})$ denotes the subalgebra generated by $E_1,\ldots, E_{n-1}, F_1,\ldots, F_{n-1}, K^{\pm1}_1,\ldots, K^{\pm1}_n$, then
\begin{equation}\label{qUqev}
U_q(\ev{\mfq})\cong U_q(\mathfrak{gl}(n)),
\end{equation}
which is a Hopf algebra isomorphism.
\end{prop}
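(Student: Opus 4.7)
The plan is to translate the Olshanski PBW basis from Proposition \ref{Ol-PBW} into the new set of generators $X_{i,j}, \oX_{i,j}, K_i^{\pm 1}, K_{\bar i}$. By Lemma \ref{XL} together with the defining identities \eqref{q-generator}, every off-diagonal generator $L_{i,j}$ with $|i| \neq |j|$ is, up to a nonzero scalar in $\Q(q)$ and a single factor $K_{|j|}^{\pm 1}$, one of the root vectors $X_{a,b}$ or $\oX_{a,b}$; the diagonal generators $L_{\pm i,\pm i}$ are just $K_i^{\pm 1}$, and $L_{-i,i}$ is a scalar multiple of $K_{\bar i}$. Conversely, each new generator is, up to a scalar and a power of some $K_a$, one of the $L$-generators. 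Hence, passing from an Olshanski monomial to a monomial in the new generators only involves scalar adjustments and moving $K$-factors past $L$-factors via the rule \eqref{KL} (and past $X$-factors via \eqref{KX}).

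First I would prove spanning. Start with an arbitrary ordered monomial from the basis \eqref{PBW-basis-L}. Using Lemma \ref{XL} and \eqref{q-generator}, substitute each $L_{i,j}$ by the corresponding scalar multiple of $X_{a,b}$ or $\oX_{a,b}$ (times a single $K$-factor), and each $L_{-i,i}$ by the corresponding multiple of $K_{\bar i}$. Then use \eqref{KL} and \eqref{KX} to sweep all $K_a^{\pm 1}$ from the $X$-part into the diagonal middle block, where they accumulate into a single $K_\sigma$. The order among $X$-factors, and among $\oX$-factors, may need reshuffling using the commutation relations to be computed in \S6; for the present proposition one only needs that, after this rewriting, the ambient ordered monomial in the new generators is a $\Q(q)$-linear combination of elements of \eqref{PBW-basis-X}.

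For linear independence, I would set up the change-of-basis between \eqref{PBW-basis-X} and a suitably ordered version of \eqref{PBW-basis-L}: by the freedom to choose the order in Proposition \ref{Ol-PBW}, arrange the $L_{i,j}$ in an order that matches the prescribed order on $X_{C^-}, \ov K_{C_1^0}, X_{C^+}$. The substitution given by Lemma \ref{XL} and \eqref{q-generator} is then visibly invertible on each ``exponent tuple'', so the change of basis is upper-triangular with nonzero diagonal with respect to an evident ordering of multi-indices (refined by the $\ev{\mfh}^*$-weight of each monomial, which is preserved by the substitution). Thus \eqref{PBW-basis-X} is $\Q(q)$-linearly independent, and together with spanning it is a basis.

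For the parenthetical identification $U_q(\ev{\mfq}) \cong U_q(\mathfrak{gl}(n))$, observe that the restrictions of (QQ1), (QQ2), (QQ4), (QQ5), (QQ6) to the even generators $E_i, F_i, K_i^{\pm 1}$ are precisely the Drinfeld--Jimbo relations for $U_q(\mathfrak{gl}(n))$, so the assignment on generators extends to a surjective algebra homomorphism $U_q(\mathfrak{gl}(n)) \twoheadrightarrow U_q(\ev{\mfq})$. The $C_1=0$ specialisation of \eqref{PBW-basis-X} yields a PBW basis of $U_q(\ev{\mfq})$ of exactly the shape of the standard PBW basis of $U_q(\mathfrak{gl}(n))$, forcing injectivity. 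Finally, the comultiplication formulas in \eqref{q-comult} on $E_i, F_i, K_i^{\pm 1}$ coincide with those of $U_q(\mathfrak{gl}(n))$, so the isomorphism is one of Hopf algebras. The main technical obstacle is bookkeeping: making sure that the order chosen on $\{X_{i,j},\oX_{i,j}\}$ in the definition of $X_{C^\pm}$ can be matched to an admissible order on the $L_{i,j}$ in \eqref{PBW-basis-L}, so that the triangular substitution is really carried out without hidden cross-terms.
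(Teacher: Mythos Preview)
Your approach is correct in outline but you are making the argument harder than it needs to be, and in one place you lean on results that have not yet been proved.

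The paper's proof does not separate spanning and linear independence, and it never invokes the commutation formulas of \S6. Instead it computes directly (equation \eqref{twobasis}) that each element $X_{C^-}K_\sigma\ov K_{C_1^0}X_{C^+}$ is a \emph{nonzero scalar multiple} of a single Olshanski monomial in the $L_{i,j}$, not merely a triangular combination. The point is that after substituting each $X_{i,j},\oX_{i,j},K_{\bar i}$ by the corresponding $L$-generator times a power of some $K_a$ via Lemma~\ref{XL} and \eqref{q-generator}, the only commutation needed is moving the stray $K_a^{\pm1}$ factors to the left using \eqref{KL}, which only produces powers of $q$. No $X$-$X$ or $X$-$\oX$ reshuffling ever occurs, so the map $(C,\sigma)\mapsto(\text{exponent tuple of the }L\text{-monomial})$ is a genuine bijection, and one appeals to Proposition~\ref{Ol-PBW} with the order on the $L_{i,j}$ chosen to match the product in \eqref{twobasis}. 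Your ``upper-triangular with nonzero diagonal'' is thus actually diagonal; once you see this, the separate spanning argument becomes redundant.

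Your spanning argument as written appeals to the commutation relations ``to be computed in \S6''. Those lemmas are proved after Proposition~\ref{PBW-X}, so this is a forward reference you should avoid (even though there is no logical circularity, since the \S6 proofs rest only on \eqref{Ol-reln} and Lemma~\ref{XL}). The paper's direct scalar-bijection argument sidesteps this entirely. Your treatment of the isomorphism $U_q(\ev\mfq)\cong U_q(\mathfrak{gl}(n))$ matches the paper's.
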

\begin{proof}
Suppose $ C\in M_n(\N|\Z_2)'$ and $C_0=(c^0_{ij}), C_1=(c^1_{ij})$.
By Lemma~\ref{XL} and \eqref{KL}, the following holds
\begin{equation}\label{twobasis}
\aligned
X_{C^-}K_{\sigma}\ov{K}_{C^0_1} X_{C^+}
=&g(C,\sigma) L_{1,1}^{m_1}\cdots L_{n,n}^{m_n}\cdot\prod_{1\leq i<j\leq n}\big( L_{i,j}^{c^0_{ji}}~
L_{-i,j}^{c^1_{ji}} \big)\cdot\\
&L^{c^1_{11}}_{-1,1}\cdots L_{-n,n}^{c^1_{nn}}\cdot \prod_{1\leq i<j\leq n}\big(L_{-j,-i}^{c^0_{ij}}~
L_{-j,i}^{c^1_{ij}}\big),
\endaligned
\end{equation}
where $g(C,\sigma)\in\Q(q)$ is nonzero, $m_j=\sigma_j+\sum^j_{i=1}(c^0_{ij}+c^1_{ij}-c^0_{ji}-c^1_{ji})$
for $1\leq j\leq n$, and the product is taken with to the fixed order on $\{(i,j)\mid 1\leq i<j\leq n\}$.
This implies, up to nonzero scalars, the set \eqref{PBW-basis-X} actually coincides with
the set \eqref{PBW-basis-L} where the order on $L_{i,j}$ for $i,j\in I(n|n)$ with $i<j$ is taken to be compatible
with the product on the right hand side of \eqref{twobasis}.
Then by Proposition~\ref{Ol-PBW}, the first assertion is verified.
For the second assertion,  we observe that the relations in Proposition~\ref{presentqUq} involving $E_i, F_i, K_j$
for $1\leq i\leq n-1$ and $1\leq j\leq n$ are the same as the standard relations for $U_q(\mathfrak{gl}(n))$. This gives a homomorphism from  $U_q(\mathfrak{gl}(n))$ to $U_q(\ev{\mfq})$ which is an isomorphism by the first assertion. The last assertion is clear.
\end{proof}
\begin{rem}
Under the identification~\eqref{qUqev}, the elements $X_{\al}$ and $X_{-\al}$
up to scalar multiplication coincides with the root vectors defined in \cite[(3.1)]{DG}.
More precisely, let $X'_\al=q^{i-j+1}X_\al$ and $X'_{-\al}=q^{j-i-1}X_{-\al}$ for $\al=\ep_i-\ep_j\in\Phi^+$.
Then $X'_{\al}$ and $X'_{-\al}$ correspond to the root vectors $\mathrm{X}_{\al}$ and $\mathrm{X}_{-\al}$ in \cite[(3.1)]{DG} (cf. \cite{Xi}).
\end{rem}

Let $U_q^0$ be the subalgebra of $\qUq$ generated by $K^{\pm1}_i,K_{\bi}$ for
$1\leq i\leq n$, and let $U_q^+$ (resp. $U_q^-$) be the subalgebra of $\qUq$ generated by the elements $E_j,E_{\bj}$ (resp. $F_j,F_{\bj}$)
for $1\leq j\leq n-1$. We have reproduced the following.

\begin{prop}[{\cite[Theorem 2.3]{GJKK}}]\label{GJKK-PBW}
There is a $\Q(q)$-linear isomorphism
$$
\qUq\cong U_q^{-}\otimes U_q^0\otimes U_q^+.
$$
\end{prop}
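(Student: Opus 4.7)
The plan is to deduce the triangular decomposition directly from the PBW basis established in Proposition~\ref{PBW-X}. Consider the multiplication map
$$
m:U_q^{-}\otimes U_q^0\otimes U_q^+\longrightarrow \qUq,\qquad a\otimes b\otimes c\longmapsto abc.
$$
I would show $m$ is a $\Q(q)$-linear isomorphism, which is all the statement requires (it is only asked to be a linear isomorphism, not an algebra isomorphism).

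First I would identify bases of the three subalgebras. For $U_q^0$, relation (QQ1) gives $K_iK_j=K_jK_i$, $K_iK_{\bar j}=K_{\bar j}K_i$, and $K_{\bar i}K_{\bar j}+K_{\bar j}K_{\bar i}=2\delta_{ij}(K_i^2-K_i^{-2})/(q^2-q^{-2})$; in particular $K_{\bar i}^2=(K_i^2-K_i^{-2})/(q^2-q^{-2})\in U_q^0$ so odd generators can be reduced to degree $\leq 1$ in each $K_{\bar i}$, yielding that $\{K_\sigma\ov{K}_D\mid\sigma\in\Z^n,\,D\in\Z_2^n\}$ spans $U_q^0$. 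For $U_q^+$, the inductive definition \eqref{q-root} shows $X_{i,j},\oX_{i,j}\in U_q^+$ for all $i<j$; hence $\{X_{C^+}\mid C\in M_n(\N|\Z_2)', C^-=C^0=0\}$ lies in $U_q^+$. Symmetrically for $U_q^-$ via $\Omega$ (cf.~\eqref{Omega-root}).

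Next I would show $m$ is surjective. Using relations (QQ2)--(QQ5) together with Lemma~\ref{XL} and~\eqref{KL}--\eqref{KX}, any product of generators can be straightened so all $F,F_{\bar{}}$ appear on the left, all $K,K_{\bar{}}$ in the middle, and all $E,E_{\bar{}}$ on the right, at the cost of correction terms of lower ``straightening length''. An induction on this length shows $\qUq=U_q^-\cdot U_q^0\cdot U_q^+$, i.e.\ $m$ is surjective. In fact, combined with Proposition~\ref{PBW-X}, one can give a cleaner argument: the image of $\{X_{C^-}\otimes K_\sigma\ov{K}_{C_1^0}\otimes X_{C^+}\}$ under $m$ is exactly the PBW basis \eqref{PBW-basis-X} of $\qUq$, so $m$ is surjective.

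For injectivity, I would observe that the displayed PBW basis in \eqref{PBW-basis-X} forces the spanning sets identified above to be actual bases: indeed, any nontrivial linear relation among $\{X_{C^+}\}$ inside $U_q^+$ would, upon multiplying by $K_\sigma\ov{K}_{C_1^0}$ and $X_{C^-}$, contradict the linear independence in Proposition~\ref{PBW-X}, and similarly for $U_q^0$ and $U_q^-$. Thus $\{X_{C^-}\otimes K_\sigma\ov{K}_{C_1^0}\otimes X_{C^+}\}$ is a $\Q(q)$-basis of the tensor product, mapping bijectively onto the PBW basis of $\qUq$; so $m$ is an isomorphism.

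The only real obstacle is verifying that the straightening/reduction process (used to establish the spanning properties of $U_q^\pm$ and $U_q^0$) actually terminates in the claimed form. However, this is subsumed by invoking Proposition~\ref{PBW-X}: once one knows the triple products $X_{C^-}K_\sigma\ov{K}_{C_1^0}X_{C^+}$ are linearly independent and span $\qUq$, both the bases of the factors and the bijectivity of $m$ follow by the standard argument above, so no delicate commutator computation is needed beyond what was already carried out to prove Proposition~\ref{PBW-X}.
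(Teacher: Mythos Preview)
Your approach---deducing the triangular decomposition from the PBW basis of Proposition~\ref{PBW-X} via the multiplication map---is exactly what the paper does: it simply writes ``We have reproduced the following'' after Proposition~\ref{PBW-X} and states the result (citing \cite{GJKK}) without further argument. So you are on the intended track, and in fact supply more detail than the paper.

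There is, however, one point where your final paragraph overstates the situation. To get injectivity of $m$ you need $\{X_{C^+}\}$ to \emph{span} $U_q^+$ (and likewise for $U_q^-$), not merely to be linearly independent; only then does $\{X_{C^-}\otimes K_\sigma\ov K_{C_1^0}\otimes X_{C^+}\}$ form a basis of the tensor product. You correctly extract linear independence from Proposition~\ref{PBW-X}, but spanning of $U_q^+$ does \emph{not} follow from the statement of Proposition~\ref{PBW-X} alone: knowing that the triple products span $\qUq$ does not force the $\{X_{C^+}\}$ to span the subalgebra $U_q^+$ (one can build toy counterexamples to such an inference). What is needed is straightening of words in $E_j,E_{\bar j}$ into ordered monomials in positive root vectors---exactly the content of Lemmas~\ref{q-ppee}, \ref{q-ppeo}, \ref{q-ppoo}, or equivalently the $L$-relations \eqref{Ol-reln} restricted to the positive generators, which is what underlies Olshanski's proof of Proposition~\ref{Ol-PBW}. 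So the spanning is subsumed by the \emph{proof} of Proposition~\ref{PBW-X} (via the $L$-commutation relations), not by its statement; your assertion that ``no delicate commutator computation is needed beyond what was already carried out'' is correct, but the commutator computations really are being used here, not bypassed.
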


\section{Commutation formulas for quantum root vectors}
We divide the commutation formulas into four groups which will be discussed in four cases below.
Each of the first three cases consists of two lemmas, dealing with the case of two positive (or negative) roots and the case of one positive and one negative roots. The three cases are divided according to whether the pair of root vectors are even-even, even-odd, or odd-odd. Moreover, by the anti-automorphism $\Omega$ given in \eqref{Omega-root}, it suffices to look at the commutation formulas of positive root vector $X_{i,j},\Xijo$ $(i<j)$ with others.

\vspace{.3cm}
\noindent
{\bf Case 1}---Commutation formulas for two even-even root vectors $\Xije\Xkle$ ($i<j,k\neq l$). 
\begin{lem}\label{q-ppee}
The following holds  for $1\leq i,j,k,l\leq n$ satisfying $i<j, k<l$:
$$
\Xije\Xkle=\left\{
\begin{array}{ll}
\Xkle\Xije,& (i<j<k<l\text{ or }i<k<l<j),\\
q^{-1}\Xkle\Xije,&(i<k<j=l\text{ or }i=k<j<l),\\
q\Xkle\Xije+\Xile,&(i<k=j<l),\\
\Xkle\Xije-(q-q^{-1})\Xkje\Xile,&(i<k<j<l).
\end{array}
\right.
$$


\end{lem}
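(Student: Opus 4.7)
The plan is to handle the four cases separately, using the inductive definition \eqref{q-root} together with the Serre-type relations (QQ5)--(QQ6). The simplest case is Case 3, i.e.\ $i<k=j<l$: applying \eqref{q-root} with intermediate index $j$ gives $X_{i,l}=\Xije X_{j,l}-q X_{j,l}\Xije$, which rearranges immediately into the desired identity $\Xije X_{j,l}=q X_{j,l}\Xije + X_{i,l}$. This case will be taken as a base in the rest of the argument.

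For the remaining three cases, I would exploit the fact that every root vector in sight is even, so by Proposition~\ref{PBW-X} the entire statement lives inside the Hopf subalgebra $U_q(\ev\mfq)\cong U_q(\mathfrak{gl}(n))$. Under this identification the $X_\al$ agree, up to the $q^{i-j+1}$ normalization recorded in the remark after Proposition~\ref{PBW-X}, with the standard $U_q(\mathfrak{gl}(n))$-root vectors studied by Doty--Giaquinto \cite{DG} (see also \cite{Xi}). The required commutation formulas are then the classical ones for $U_q(\mathfrak{gl}(n))$; one only needs to check that the chosen normalization reproduces the exact scalars $q^{\pm 1}$ and $(q-q^{-1})$ stated here.

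For a self-contained proof one can instead run a direct induction on the total width $(j-i)+(l-k)$. In each case, pick an intermediate index $m$ and expand either $\Xije$ or $\Xkle$ by \eqref{q-root}, then move the other factor past the two pieces by the inductive hypothesis, invoking Case 3 whenever adjacent root vectors are encountered. The base cases involve only simple root vectors $E_i$, for which the relations follow from (QQ5) and (QQ6). The main obstacle will be Case 4, the overlapping configuration $i<k<j<l$: expanding $\Xkle=X_{k,j}X_{j,l}-qX_{j,l}X_{k,j}$ and pushing $\Xije$ through produces terms of type Case 2, Case 3, and Case 2 again, so the three inductions are genuinely intertwined, and a careful bookkeeping of the $(q-q^{-1})$-contributions is needed to arrive precisely at the correction term $-(q-q^{-1})X_{k,j}X_{i,l}$.
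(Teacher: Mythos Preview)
Your approach is correct but takes a genuinely different route from the paper's proof.

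The paper does not argue inductively from the recursive definition \eqref{q-root} at all. Instead it translates everything back to Olshanski's $L$-generators via Lemma~\ref{XL}, uses the defining relation~\eqref{Ol-reln} to commute $L_{-j,-i}L_{-l,-k}$ directly (this is a one-line case-by-case computation in the $L$'s), and then converts back to $X$'s using \eqref{KL}. This is uniform and mechanical: no induction, no Serre relations, and it is exactly the same technique used throughout \S6 for the even-odd and odd-odd lemmas that follow. Its cost is that one needs the $L$-presentation and Lemma~\ref{XL} as infrastructure.

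Your route stays entirely inside the Drinfeld--Jimbo presentation. The appeal to $U_q(\mathfrak{gl}(n))$ via Proposition~\ref{PBW-X} is perfectly legitimate and, with the normalization in the remark after that proposition, the identities reduce literally to those in \cite{Xi} or \cite[\S3]{DG}. Your self-contained induction also works; for instance, in Case~4 expanding $\Xkle=X_{k,j}X_{j,l}-qX_{j,l}X_{k,j}$ and pushing $\Xije$ through gives $\Xkle\Xije+q^{-1}\Xkje\Xile-q\Xile\Xkje$, and then the nested instance $\Xile\Xkje=\Xkje\Xile$ collapses the last two terms to $-(q-q^{-1})\Xkje\Xile$. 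One caveat on the induction scheme: the auxiliary product $\Xile\Xkje$ that appears has the \emph{same} total width $(l-i)+(j-k)=(j-i)+(l-k)$ as the original, so a pure induction on width does not close; you need to order the cases (prove Cases~1--3 first, then Case~4 using them) rather than treat all four simultaneously. With that adjustment the argument goes through.
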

\begin{proof}
Suppose $1\leq i,j,k,l\leq n$ and $i<j, k<l$.
Using the formula~\eqref{Ol-reln}, a straightforward calculation shows that
$$
L_{-j,-i}L_{-l,-k}=\left\{
\begin{array}{ll}
L_{-l,-k}L_{-j,-i},&(i<j<k<l\text{ or }i<k<l<j),\\
q^{-1}L_{-l,-k}L_{-j,-i},&(i<k<j=l\text{ or }i=k<j<l),\\
L_{-l,-k}L_{-j,-i}-(q-q^{-1})L_{-j,-j}L_{-l,-i},&(i<k=j<l),\\
L_{-l,-k}L_{-j,-i}-(q-q^{-1})L_{-j,-k}L_{-l,-i},&(i<k<j<l).\\
\end{array}
\right.
$$
Then the lemma is proved case-by-case using Lemma~\ref{XL}.
Let us illustrate by checking in detail the case when $i<k=j<l$.
In this case, by \eqref{q-generator}, Lemma~\ref{XL} and the above formula,  we have
\begin{align*}
\Xije\Xkle
=&\frac{-1}{q-q^{-1}}K_jL_{-j,-i}\cdot \frac{-1}{q-q^{-1}}K_lL_{-l,-k}=\frac{1}{(q-q^{-1})^2}K_jK_lL_{-j,-i}L_{-l,-k}\\
=&\frac{1}{(q-q^{-1})^2}K_jK_l\big(L_{-l,-k}L_{-j,-i}-(q-q^{-1})L_{-j,-j}L_{-l,-i}\big)\\
=&\frac{1}{(q-q^{-1})^2}K_lL_{-l,-k}\cdot qK_jL_{-j,-i}-\frac{1}{q-q^{-1}}K_jL_{-j,-j}K_lL_{-l,-i}\\
=&q\Xkle\Xije+\Xile,
\end{align*}
where the second equality and fourth equality are due to \eqref{KL}.
The remaining cases can be verified similarly and we skip the detail.
\end{proof}
Observe that we can derive another set of commutation formulas for
$\Xije$ and $\Xkle$ by solving for $\Xkle\Xije$
in Lemma~\ref{q-ppee} and then interchanging $(i,j)$ and $(k,l)$.
Namely, we have
\begin{equation}\label{q-ppee-extra}
\Xije\Xkle=\left\{
\begin{array}{ll}
\Xkle\Xije,& (k<l<i<j\text{ or }k<i<j<l),\\
q\Xkle\Xije,& (k<i<l=j\text{ or }k=i<l<j),\\
q^{-1}\Xkle\Xije-q^{-1}\Xkje ,& (k<i=l<j),\\
\Xkle\Xije+(q-q^{-1})\Xkje\Xile,& (k<i<l<j).
\end{array}
\right.
\end{equation}
This together with Lemma~\ref{q-ppee} gives a complete commutation formulas for even positive root vectors
$\Xije$ and $\Xkle$ with $i<j,k<l$.

\begin{lem}\label{q-pnee}
Assume that $1\leq i,j,k,l\leq n$ satisfy $i<j, k>l$.
Then
\begin{align*}
\Xije\Xkle&=\left\{
\begin{array}{ll}
\Xkle\Xije+\ds\frac{K_iK_j^{-1}-K_i^{-1}K_j}{q-q^{-1}},&(i=l,j=k),\\
\Xkle\Xije,&(i<j\leq l<k\text{ or }i<l<k<j) ,\\
\Xkle\Xije-K_l^{-1}K_j\Xkje,&(i=l<j<k),\\
\Xkle\Xije+K_l^{-1}K_j\Xile,&(i<l<j=k),\\
\Xkle\Xije+(q-q^{-1})K_l^{-1}K_j\Xkje\Xile,&(i<l<j<k),\\
\end{array}
\right.
\end{align*}
\end{lem}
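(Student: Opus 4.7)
The plan is to follow the strategy of the proof of Lemma \ref{q-ppee}: first establish a single master commutation identity between $L_{-j,-i}$ and $L_{l,k}$ by specializing Olshanski's defining relation \eqref{Ol-reln}, and then translate that identity into root vectors via Lemma \ref{XL} together with the scalar commutations \eqref{KL} and \eqref{KX}.

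Concretely, I would apply \eqref{Ol-reln} with the four indices $(-j,-i,l,k)$ placed in the positions $(i,j,k,l)$ of the formula; the required orderings $-j\leq -i$ and $l\leq k$ are automatic. Because $-j,-i$ share a sign and so do $l,k$, the parity prefactor $(-1)^{p(-j,-i)p(l,k)}$ equals $1$, and the exponents reduce to $\varphi(-i,k)=\delta_{i,k}$ and $\varphi(-j,l)=\delta_{j,l}$. Two of the four indicator conditions in \eqref{Ol-reln} demand that a positive integer be dominated by a negative one and are therefore vacuous; the remaining two collapse to $\chi_1:=[l\leq i<k\leq j]$ and $\chi_2:=[i\leq l<j\leq k]$. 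This produces the master identity
\[
q^{\delta_{i,k}}L_{-j,-i}L_{l,k}+\chi_1(q-q^{-1})L_{-j,-k}L_{l,i}
=q^{\delta_{j,l}}L_{l,k}L_{-j,-i}+\chi_2(q-q^{-1})L_{j,k}L_{-l,-i},
\]
from which each of the five configurations in the statement can be read off by testing which of $\chi_1,\chi_2,\delta_{i,k},\delta_{j,l}$ are active.

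The second step is the case-by-case translation. In the two disjoint configurations $i<j\leq l<k$ and $i<l<k<j$ all four quantities vanish, so $L_{-j,-i}L_{l,k}=L_{l,k}L_{-j,-i}$ and $\Xije\Xkle=\Xkle\Xije$ is immediate. In the case $i=l,j=k$ both indicators are active while both deltas vanish; the two residual $L$-products collapse to pure $K$-monomials, and after conjugating by the outer factors $K_j$ and $K_k^{-1}$ via \eqref{KL} and invoking commutativity of the $K$'s, the combination telescopes to $(K_iK_j^{-1}-K_i^{-1}K_j)/(q-q^{-1})$. The remaining cases (c), (d), (e) each fire exactly one indicator; after specializing the surviving residual term and invoking the conversions $L_{j,k}K_k^{-1}=(q-q^{-1})\Xkje$ and $L_{-l,-i}=-(q-q^{-1})K_l^{-1}\Xile$ (with the appropriate index substitutions), the stated expressions $-K_l^{-1}K_j\Xkje$, $+K_l^{-1}K_j\Xile$, and $(q-q^{-1})K_l^{-1}K_j\Xkje\Xile$ emerge.

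The main obstacle is purely clerical: each passage of a $K$-factor past an $L$-generator (via \eqref{KL}) or a quantum root vector (via \eqref{KX}) produces a compensating $q^{\pm 1}$, and one must verify that these telescope cleanly in every case to reproduce the stated integer powers. Here \eqref{KX} considerably simplifies the bookkeeping in cases (c)-(e) because the relevant root weights are orthogonal to the $K$-indices one needs to commute past them. No essentially new idea beyond those of Lemma \ref{q-ppee} is needed, and the anti-involution $\Omega$ of \eqref{Omega-root} may be invoked to reduce the labour by pairing off symmetric configurations.
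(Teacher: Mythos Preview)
Your approach is essentially the paper's: the paper also specializes \eqref{Ol-reln} to obtain the commutation of $L_{-j,-i}$ with $L_{l,k}$ (listing the results case-by-case rather than as a single master identity with indicators) and then translates each case into root vectors via Lemma~\ref{XL} and \eqref{KL}. One small slip to fix: in the boundary subcase $i<j=l<k$ the quantity $\delta_{j,l}$ does \emph{not} vanish, so $L_{-j,-i}L_{l,k}=qL_{l,k}L_{-j,-i}$ rather than a plain commutation; the extra $q$ is then absorbed when $K_j$ is moved past $L_{l,k}=L_{j,k}$ via \eqref{KL}, and the paper handles this subcase separately at the $L$ level before merging it into the second line of the lemma.
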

\begin{proof}
Suppose $1\leq i,j,k,l\leq n$ and $i<j, k>l$.
By ~\eqref{Ol-reln}, it is easy to check that
$$
L_{-j,-i}L_{l,k}=L_{l,k}L_{-j,-i}, \text{ if } i<j<l<k \text{ or }i<l<k<j.
$$
Then in this situation,  by Lemma~\ref{XL} and \eqref{KL}, we have $\Xije\Xkle=\Xkle\Xije$.
Similarly, in other cases, again by the formula~\eqref{Ol-reln}, we obtain
$$
L_{-j,-i}L_{l,k}=\left\{
\begin{array}{ll}
L_{l,k}L_{-j,-i}+(q-q^{-1})(L_{j,j}L_{-i,-i}-L_{-j,-j}L_{i,i}),~  &(i=l,j=k),\\
L_{l,k}L_{-j,-i}+(q-q^{-1})L_{j,k}L_{-i,-i},~ &(i=l<j<k),\\
qL_{l,k}L_{-j,-i},~ &(i<l=j<k),\\
L_{l,k}L_{-j,-i}+(q-q^{-1})L_{j,j}L_{-l,-i},~ &(i<l<j=k),\\
L_{l,k}L_{-j,-i}+(q-q^{-1})L_{j,k}L_{-l,-i},~ &(i<l<j<k).
\end{array}
\right.
$$
As before, the lemma is proved case-by-case using Lemma~\ref{XL}.
Let us illustrate by checking in detail the case when $i<l<j=k$.
In this case, by \eqref{q-generator}, Lemma~\ref{XL} and the above formulas we have
\begin{align*}
\Xije\Xkle
=&\frac{-1}{q-q^{-1}}K_jL_{-j,-i}\cdot\frac{1}{q-q^{-1}}L_{l,k}K_k^{-1}\\
=&\frac{-1}{(q-q^{-1})^2}K_j\big(L_{l,k}L_{-j,-i}+(q-q^{-1})L_{j,j}L_{-l,-i}\big)K_k^{-1}\\
=&\frac{-1}{(q-q^{-1})^2}L_{l,k}K_jq\cdot q^{-1}K_k^{-1}L_{-j,-i}-\frac{1}{q-q^{-1}}K_jL_{j,j}K_k^{-1}L_{-l,-i}\\
=&\frac{1}{q-q^{-1}}L_{l,k}K_k^{-1}\cdot\frac{-1}{q-q^{-1}}K_jL_{-j,-i}+K_jK_l^{-1}\frac{-1}{q-q^{-1}}K_lL_{-l,-i}\\
=&\Xkle\Xije+K_jK^{-1}_{l}\Xile,
\end{align*}
where the third equality is due to \eqref{KL} and the assumption $i<l<j=k$.
The remaining cases can be verified similarly and we skip the detail.
\end{proof}

Observe that applying the anti-automorphism $\Omega$ to the formulas in Lemma~\ref{q-pnee} and interchanging $(i,j)$ and $(k,l)$,
one can obtain another set of commutation formulas for $\Xije$ and $\Xkle$ as follows:
$$
\Xije\Xkle=\left\{
\begin{array}{ll}
\Xkle\Xije,&  (l<k\leq i<j\text{ or }l<i<j<k) ,\\
\Xkle\Xije-\Xkje K_iK_k^{-1},& (l=i<k<j),\\
\Xkle\Xije+\Xile K_iK^{-1}_k,& (l<i<k=j),\\
\Xkle\Xije-(q-q^{-1})\Xkje\Xile K_iK^{-1}_k,& (l<i<k<j).
\end{array}
\right.
$$

This together with Lemma~\ref{q-pnee} gives a complete commutation formulas for even positive root vectors $\Xije$ and
the even negative root vectors $\Xkle$ with $i<j,k>l$.

\vspace{.3cm}
\noindent
{\bf Case 2}---Commutation formulas for two even-odd root vectors $\Xije\Xklo$ ($i<j,k\neq l$).
\begin{lem}\label{q-ppeo}
Suppose that $1\leq i,j,k,l\leq n$ with $i<j$ and $k<l$.
\begin{enumerate}
\item If $i=k,j=l$ or $i<j<k<l\text{ or }k<l<i<j \text{ or }k<i<j<l$, then
$$
\Xije\Xklo=\Xklo\Xije.
$$
\item In other cases, the following formulas hold:
$$
\Xije\Xklo=\left\{
\begin{array}{ll}
\Xklo\Xije+(q-q^{-1})(\Xkjo\Xile-\Xkje\Xilo),& (i<k<l<j), \\
q^{-1}\Xklo\Xije,&  (i=k<j<l), \\
q\Xklo\Xije+\Xilo,&  (i<k=j<l), \\
q\Xklo\Xije-(q-q^{-1})\Xkje\Xilo,&  (i<k<j=l), \\
\Xklo\Xije-(q-q^{-1})\Xkje\Xilo,&  (i<k<j<l), \\
q^{-1}\Xklo\Xije+q^{-1}(q-q^{-1})\Xkjo\Xile,& (k=i<l<j),\\
q^{-1}\Xklo\Xije-q^{-1}\Xkjo,&  (k<i=l<j),\\
q\Xklo\Xije,&  (k<i<l=j),\\
\Xklo\Xije+(q-q^{-1})\Xkjo\Xile,&  (k<i<l<j).
\end{array}
\right.
$$
\end{enumerate}
\end{lem}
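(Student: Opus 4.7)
The plan is to mimic the strategy used for Lemmas~\ref{q-ppee} and \ref{q-pnee}, namely to translate each product $\Xije \Xklo$ into a product of Olshanski generators $L_{-j,-i} L_{-l,k}$ via Lemma~\ref{XL}, apply the defining relation~\eqref{Ol-reln} to commute them, and then translate back. Concretely, using Lemma~\ref{XL} we write
\[
\Xije \Xklo \;=\; \frac{1}{(q-q^{-1})^2}\, K_j K_l \, L_{-j,-i}\, L_{-l,k},
\]
so the whole problem reduces to determining $L_{-j,-i} L_{-l,k}$ modulo known terms, for each combinatorial configuration of the indices $i<j$ and $k<l$.

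First I would list all the configurations of the quadruple $(i,j,k,l)$ up to the constraints $i<j$ and $k<l$; these are exactly the thirteen cases appearing in the statement (four "commuting" configurations in part~(1) and nine "non-commuting" configurations in part~(2)). For each, I apply~\eqref{Ol-reln} with the sign factor $(-1)^{p(i,j)p(k,l)}=+1$ (since $X_{i,j}$ is even and $\bar X_{k,l}$ is odd, $L_{-j,-i}$ is even and $L_{-l,k}$ is odd, but the relevant indicator only depends on indices) and read off which of the indicator symbols $\{k\leq j<l\}$, $\{k<i\leq l\}$, $\{i\leq-l<j\leq-k\}$, $\{-l\leq i<-k\leq j\}$ are active together with the corresponding $\theta$-signs. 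The four cases in part~(1) are precisely those for which every indicator vanishes, so $L_{-j,-i}L_{-l,k} = q^{\pm\varphi} L_{-l,k}L_{-j,-i}$ with the $q$-powers exactly cancelling when $K_jK_l$ is moved past using~\eqref{KL}; this yields $\Xije\Xklo = \Xklo\Xije$. The nine cases in part~(2) each produce one or two additional terms on the right-hand side, which are rewritten as products of two generators $L_{*,*}L_{*,*}$ and then converted back into $X$ and $\bar X$ via Lemma~\ref{XL}.

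A representative calculation is the case $i<k=j<l$: the active indicator is $\{k\leq j<l\}$ with $\theta(-j,-i,k)=1$, giving
\[
L_{-j,-i}L_{-l,k} \;=\; q^{-1} L_{-l,k}L_{-j,-i} \;+\; q^{-1}(q-q^{-1})\,L_{-j,j}\,L_{-l,-i}\cdot(\text{sign}),
\]
after which substituting Lemma~\ref{XL} and moving $K_j,K_l$ past the new factors by \eqref{KL} yields the claimed $\Xije\Xklo=q\Xklo\Xije+\Xilo$. The remaining cases are handled identically; the sign of the auxiliary term is produced by $\theta$ and the extra $K$-factors reorganise into the stated $q$-powers.

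The main technical obstacle, as in the previous two lemmas, is clerical rather than conceptual: there are many subcases and each involves several $K_a$-factors that have to be commuted past $L$-generators using~\eqref{KL}, where the resulting $q$-shift depends sensitively on whether $a\in\{i,j,k,l\}$. To keep the bookkeeping under control I would tabulate, for each case, (i) the active indicators in~\eqref{Ol-reln}; (ii) the $\theta$-signs; (iii) the $K$-exponents accumulated in moving $K_jK_l$ into position; and (iv) the resulting re-expression via Lemma~\ref{XL}. After that, each case is a one-line verification, and the lemma follows. The odd relations $\oX^2 = 0$ (implicit in odd $L$-generators squaring trivially) are not needed here since the two root vectors in play carry distinct root data in every case of part~(2).
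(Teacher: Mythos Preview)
Your overall strategy is exactly the paper's: translate to $L$-generators via Lemma~\ref{XL}, apply~\eqref{Ol-reln}, then translate back using~\eqref{KL}. That is fine.

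However, your representative calculation for $i<k=j<l$ is incorrect in a way that suggests you are misreading~\eqref{Ol-reln}. The indices appearing in the indicator brackets and in $\theta(\cdot,\cdot,\cdot)$ are the \emph{subscripts of the $L$-generators}, not your root labels $i,j,k,l$; here those subscripts are $(i_1,j_1)=(-j,-i)$ and $(k_1,l_1)=(-l,k)$. In the case $i<k=j<l$ one has $\varphi(j_1,l_1)=\delta_{i,k}=0$ and $\varphi(i_1,k_1)=-\delta_{j,l}=0$, so there is no $q^{-1}$ prefactor on either side. Moreover several indicator brackets are simultaneously active (both $\{-l\le -i<k\}$ on the left and $\{-l<-j\le k\}$ on the right fire, with $\theta(-j,-i,-l)=-1$), and their contributions partially cancel. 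After the cancellation one gets
\[
L_{-j,-i}L_{-l,k}=L_{-l,k}L_{-j,-i}-(q-q^{-1})L_{-j,-j}L_{-l,i},
\]
not your $q^{-1}L_{-l,k}L_{-j,-i}+q^{-1}(q-q^{-1})L_{-j,j}L_{-l,-i}$. (Note in particular $L_{-j,-j}$, not $L_{-j,j}$.) If you redo the other cases with the same misreading you will get wrong answers, so fix this before tabulating.

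Two minor points: your displayed identity $\Xije\Xklo=\frac{1}{(q-q^{-1})^2}K_jK_lL_{-j,-i}L_{-l,k}$ already hides a $q$-shift whenever $l\in\{i,j\}$, coming from commuting $K_l$ past $L_{-j,-i}$ via~\eqref{KL}; and your closing remark that $\oX^2=0$ is false in the quantum setting (Lemma~\ref{q-ppoo} gives $\oX_{i,j}^2=-\frac{q-q^{-1}}{q+q^{-1}}X_{i,j}^2$), though indeed this plays no role here.
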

\begin{proof}
Let $i,j,k,l\in\{1,2,\ldots,n\}$ satisfy $i<j$ and $k<l$.
As before by ~\eqref{Ol-reln}, one can check that if $i=k,j=l$ or $i<j<k<l\text{ or }k<l<i<j \text{ or }k<i<j<l$, then
$L_{-j,-i}L_{-l,k}=L_{-l,k}L_{-j,-i}$. Then part (1) is proved by Lemma~\ref{XL} and \eqref{KL}.
To prove part (2), again by~\eqref{Ol-reln} we obtain
$$
L_{-j,-i}L_{-l,k}=\left\{
\begin{array}{ll}
L_{-l,k}L_{-j,-i}+(q-q^{-1})(L_{-j,k}L_{-l,-i}-L_{-j,-k}L_{-l,i}),\quad &(i<k<l<j), \\
q^{-1}L_{-l,k}L_{-j,-i},\quad &(i=k<j<l), \\
L_{-l,k}L_{-j,-i}-(q-q^{-1})L_{-j,-j}L_{-l,i},\quad &(i<k=j<l), \\
qL_{-l,k}L_{-j,-i}-(q-q^{-1})L_{-j,-k}L_{-l,i},\quad &(i<k<j=l), \\
L_{-l,k}L_{-j,-i}-(q-q^{-1})L_{-j,-k}L_{-l,i},\quad &(i<k<j<l), \\
q^{-1}L_{-l,k}L_{-j,-i}+q^{-1}(q-q^{-1})L_{-j,k}L_{-l,-i},\quad &(k=i<l<j),\\
L_{-l,k}L_{-j,-i}+(q-q^{-1})L_{-j,k}L_{-i,-i},\quad &(k<i=l<j),\\
qL_{-l,k}L_{-j,-i},\quad & (k<i<l=j),\\
L_{-l,k}L_{-j,-i}+(q-q^{-1})L_{-j,k}L_{-l,-i},\quad &(k<i<l<j). \\
\end{array}
\right.
$$
As before, part (2) of the lemma is proved case-by-case using Lemma~\ref{XL}.
Let us illustrate by checking in detail the case when $k=i<l<j$.
In this case, by the above formula and Lemma~\ref{XL} we have
\begin{align*}
\Xije\Xklo
=&\frac{-1}{q-q^{-1}}K_jL_{-j,-i}\cdot\frac{-1}{q-q^{-1}}K_lL_{-l,k}=\frac{1}{(q-q^{-1})^2}K_jK_lL_{-j,-i}L_{-l,k}\\
=&\frac{1}{(q-q^{-1})^2}K_jK_l\big(q^{-1}L_{-l,k}L_{-j,-i}+q^{-1}(q-q^{-1})L_{-j,k}L_{-l,-i}\big)\\
=&\frac{1}{(q-q^{-1})^2}\big(q^{-1}K_lL_{-l,k}K_jL_{-j,-i}+q^{-1}(q-q^{-1})K_jL_{-j,k}K_lL_{-l,-i}\big)\\
=&q^{-1}\Xklo\Xije+q^{-1}(q-q^{-1})\Xkjo\Xile,
\end{align*}
where the second and fourth equalities are due to \eqref{KL}.
The remaining cases can be verified similarly and we skip the detail.
\end{proof}

\begin{lem}\label{q-pneo}
Suppose that $1\leq i,j,k,l\leq n$ with $i<j$ and $k>l$.
Then
\begin{enumerate}
\item If $i<j\leq l<k\text{ or }l<k\leq i<j \text{ or }l<i<j<k$, then
$$
\Xije\Xklo=\Xklo\Xije.
$$

\item In other cases, the following formula holds:
$$
\ds\Xije\Xklo=\left\{
\begin{array}{ll}
\Xklo\Xije-(K_{\bj}K_i^{-1}-K_j^{-1}K_{\bi}),&(i=l,j=k),\\
\Xklo\Xije+q(q-q^{-1})K_l^{-1}K_k^{-1}(\Xkjo\Xile-\Xkje\Xilo),&(i<l<k<j), \\
\Xklo\Xije-K_i^{-1}K_j\Xkjo ,&(i=l<j<k),\\
\Xklo\Xije+(q-q^{-1})K_l^{-1}K_{\bj}\Xile+K_l^{-1}K_j^{-1}\Xilo,&(i<l<j=k), \\
\Xklo\Xije+(q-q^{-1})K_l^{-1}K_j\Xkjo\Xile,&(i<l<j<k),\\
\Xklo\Xije-\Xkjo K_i^{-1}K_k^{-1}-(q-q^{-1})\Xkje K_{\bi}K_k^{-1},&(l=i<k<j),\\
\Xklo\Xije+\Xilo K_iK_k^{-1},&(l<i<k=j),\\
\Xklo\Xije-(q-q^{-1})K_iK_k^{-1}\Xkje\Xilo,&(l<i<k<j).
\end{array}
\right.
$$
\end{enumerate}
\end{lem}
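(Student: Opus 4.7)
The proof follows the template of Lemmas \ref{q-pnee} and \ref{q-ppeo}. Using Lemma \ref{XL}, for $i<j$ and $l<k$ we may write
\[
X_{i,j} = \frac{-1}{q-q^{-1}}\,K_j\,L_{-j,-i}, \qquad \overline{X}_{k,l} = \frac{-1}{q-q^{-1}}\,L_{-l,k}\,K_k^{-1},
\]
so that
\[
X_{i,j}\,\overline{X}_{k,l} = \frac{1}{(q-q^{-1})^2}\,K_j\,L_{-j,-i}\,L_{-l,k}\,K_k^{-1}.
\]
The plan is to compute $L_{-j,-i}L_{-l,k}$ via the Olshanski relation \eqref{Ol-reln} case by case according to the relative ordering of $i,j,l,k$, then re-assemble the output as root vectors by reversing Lemma \ref{XL} and absorbing the outstanding $K$-factors using the $q^{\pm 1}$-weight rule \eqref{KL}.

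The three orderings in part (1), namely $i<j\leq l<k$, $l<k\leq i<j$, and $l<i<j<k$, are exactly the cases in which every $\{\cdots\}$-indicator in \eqref{Ol-reln} vanishes and $\varphi(-j,-l)=\varphi(-i,k)=0$, so that $L_{-j,-i}$ and $L_{-l,k}$ commute outright; shuttling $K_j$ and $K_k^{-1}$ back through the product using \eqref{KL} then yields $X_{i,j}\overline{X}_{k,l}=\overline{X}_{k,l}X_{i,j}$ with no correction.

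For part (2), each of the eight configurations triggers specific $\theta$ and $\{\cdots\}$-indicator terms in \eqref{Ol-reln}, producing a correction that is a linear combination of products $L_{\pm a,\pm b}L_{\pm c,\pm d}$ with coefficients in $\{\pm 1,\pm(q-q^{-1})\}$. Translating each such $L$-monomial back to an $X$ or $\overline{X}$ via Lemma \ref{XL} and accounting for the $K$-weights via \eqref{KL} yields the stated formulas. The only conceptual subtlety lies in the diagonal case $i=l$, $j=k$: here the correction features $L_{-j,j}L_{-i,-i}$ and $L_{-i,i}L_{j,j}$, which cannot be packaged as root vectors but must be rewritten using $K_{\bar a}=-\tfrac{1}{q-q^{-1}}L_{-a,a}$ from \eqref{q-generator}, producing the Cartan term $-(K_{\bar j}K_i^{-1}-K_j^{-1}K_{\bar i})$.

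The principal obstacle is bookkeeping rather than insight: the $K_j$ on the left and $K_k^{-1}$ on the right must be pushed past the correction monomials via \eqref{KL}, and the resulting $q^{\pm 1}$ factors must combine with the $(q-q^{-1})^{-2}$ scalar and the $\pm\tfrac{1}{q-q^{-1}}$ normalizations from Lemma \ref{XL} to produce the clean coefficients displayed in the lemma. Once one carries out the substitution carefully in a representative case — say $i<l<j=k$, exactly parallel to the detailed calculation shown in Lemma \ref{q-pnee} — the remaining seven cases are entirely analogous, and applying the anti-automorphism $\Omega$ to the output of Lemma \ref{q-ppeo} offers a consistency check for several of them.
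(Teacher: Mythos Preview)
Your approach is exactly the paper's: compute $L_{-j,-i}L_{-l,k}$ from \eqref{Ol-reln} case by case, then translate back via Lemma~\ref{XL} and \eqref{KL}. The paper even illustrates with the case $i<l<k<j$ in the same spirit you propose for $i<l<j=k$.

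There is, however, one inaccuracy in your justification of part~(1). You assert that in the three listed orderings ``every $\{\cdots\}$-indicator in \eqref{Ol-reln} vanishes and $\varphi(-j,-l)=\varphi(-i,k)=0$, so that $L_{-j,-i}$ and $L_{-l,k}$ commute outright.'' This fails at the boundary cases $j=l$ and $k=i$ included in the first two orderings. For instance, when $i<j=l<k$ one has $\varphi(-j,-l)=-1$ and the indicator $\{-l\le -i<k\}$ fires; the relation then gives $L_{-j,-i}L_{-l,k}=qL_{-l,k}L_{-j,-i}$, not plain commutation (and dually $q^{-1}$ when $k=i$). The final statement $\Xije\Xklo=\Xklo\Xije$ is nevertheless correct, because the stray $q^{\pm1}$ is precisely cancelled when you shuttle $K_j$ past $L_{-l,k}$ (respectively $K_k^{-1}$ past $L_{-j,-i}$) via \eqref{KL}. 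The paper records these two boundary $L$-formulas explicitly before invoking \eqref{KL} and Lemma~\ref{XL}; you should do the same rather than claim the $L$'s commute on the nose.
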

\begin{proof}
Assume $1\leq i<j\leq n, 1\leq l<k\leq n$.  By~\eqref{Ol-reln} one can prove
$$
L_{-j,-i}L_{-l,k}=\left\{
\begin{array}{ll}
L_{-l,k}L_{-j,-i}, &\text{ if }i<j<l<k \text{ or }l<k<i<j \text{ or }l<i<j<k,\\
qL_{-l,k}L_{-j,-i},\quad &\text{ if }i<l=j<k, \\
q^{-1}L_{-l,k}L_{-j,-i},\quad &\text{ if }l<i=k<j.
\end{array}
\right.
$$
Then part (1) can be proved by \eqref{KL} and Lemma~\ref{XL}.
Otherwise, again by~\eqref{Ol-reln} it is straightforward to check that the following holds
$$
L_{-j,-i}L_{-l,k}=\left\{
\begin{array}{ll}
L_{-l,k}L_{-j,-i}+(q-q^{-1})(L_{-j,j}L_{-i,-i}-L_{-j,-j}L_{-i,i}),\quad & (i=l,j=k),\\
L_{-l,k}L_{-j,-i}+(q-q^{-1})(L_{-j,k}L_{-l,-i}-L_{-j,-k}L_{-l,i}),\quad &(i<l<k<j), \\
L_{-l,k}L_{-j,-i}+(q-q^{-1})L_{-j,k}L_{-i,-i},\quad &(i=l<j<k), \\
L_{-l,k}L_{-j,-i}+(q-q^{-1})(L_{-j,j}L_{-l,-i}-L_{-j,-j}L_{-l,i}),\quad &(i<l<j=k), \\
L_{-l,k}L_{-j,-i}+(q-q^{-1})L_{-j,k}L_{-l,-i},\quad &(i<l<j<k), \\
L_{-l,k}L_{-j,-i}+(q-q^{-1})(L_{-j,k}L_{-i,-i}-L_{-j,-k}L_{-i,i}),\quad &(l=i<k<j),\\
L_{-l,k}L_{-j,-i}-(q-q^{-1})L_{-j,-j}L_{-l,i},\quad &(l<i<k=j),\\
L_{-l,k}L_{-j,-i}-(q-q^{-1})L_{-j,-k}L_{-l,i},\quad &(l<i<k<j). \\
\end{array}
\right.
$$
This together with Lemma~\ref{XL} and \eqref{KL} gives rise to part (2).
Let us explain in detail the case when $i<l<k<j$.
In this case, by the above formula and Lemma~\ref{XL} we have
\begin{align*}
\Xije\Xklo
=&\frac{-1}{q-q^{-1}}K_jL_{-j,-i}\cdot \frac{-1}{q-q^{-1}}L_{-l,k}K_k^{-1}\\
=&\frac{1}{(q-q^{-1})^2}K_j\big(L_{-l,k}L_{-j,-i}+(q-q^{-1})(L_{-j,k}L_{-l,-i}-L_{-j,-k}L_{-l,i})\big)K_k^{-1}\\
=&\frac{1}{(q-q^{-1})^2}L_{-l,k}K_k^{-1}K_jL_{-j,-i}\\
&+\frac{1}{q-q^{-1}}qK_k^{-1}K_l^{-1}(K_jL_{-j,k}K_lL_{-l,-i}-K_jL_{-j,-k}K_lL_{-l,i})\\
=&\Xklo\Xije+q(q-q^{-1})K_l^{-1}K_k^{-1}(\Xkjo\Xile-\Xkje\Xilo),
\end{align*}
where the third and fourth equalities are due to \eqref{KL}.
\end{proof}

\vspace{.3cm}
\noindent
{\bf Case 3}---Commutation formulas for two odd-odd root vectors $\Xijo\Xklo$ ($i< j$, $k\neq l$). 
\begin{lem}\label{q-ppoo}
Let $i,j,k,l\in\{1,2,\ldots,n\}$ satisfy $i<j$ and $k<l$.
Then we have
$$
\displaystyle\Xijo\Xklo=\left\{
\begin{array}{ll}
\ds-\frac{q-q^{-1}}{q+q^{-1}}X_{i,j}^2,& (i=k,j=l),\\
-\Xklo\Xijo,&(i<j<k<l),\\
-\Xklo\Xijo-(q-q^{-1})(\Xkjo\Xilo+\Xkje\Xile),&(i<k<l<j),\\
-q\Xklo\Xijo-q(q-q^{-1})\Xkje\Xile,&(i=k<j<l),\\
-q\Xklo\Xijo+\Xile,& (i<k=j<l),\\
-q\Xklo\Xijo-(q-q^{-1})\Xkje\Xile,&(i<k<j=l),\\
-\Xklo\Xijo-(q-q^{-1})\Xkje\Xile,&(i<k<j<l).
\end{array}
\right.
$$
\end{lem}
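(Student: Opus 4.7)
The plan is to follow the same template used in Lemmas~\ref{q-ppee}--\ref{q-pneo}. The role of the positive even root vector $\Xije$ will now be played by the positive odd root vector $\Xijo$, so by Lemma~\ref{XL} we rewrite
\[
\Xijo=\frac{-1}{q-q^{-1}}K_j L_{-j,i},\qquad \Xklo=\frac{-1}{q-q^{-1}}K_l L_{-l,k},
\]
and reduce everything to a product $L_{-j,i}L_{-l,k}$ inside $\qUq$. First I would run the defining relation~\eqref{Ol-reln} on the ordered pair of generators $(L_{-j,i},L_{-l,k})$ and record, case by case according to the relative position of $(i,j)$ and $(k,l)$, the expansion of $L_{-j,i}L_{-l,k}$ as a $\Q(q)$-linear combination of monomials of the form $L_{-l,k}L_{-j,i}$ and possible correction terms involving products like $L_{-j,k}L_{-l,i}$, $L_{-j,-k}L_{-l,-i}$, $L_{-j,j}L_{-l,-i}$, $L_{-j,-j}L_{-l,i}$, $L_{-j,j}L_{-i,i}$, etc. The $\theta$-factor and the $\{k\le j<l\}$, $\{-l\le i<-k\le j\}$ indicators in~\eqref{Ol-reln} isolate which correction terms appear.

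Then I would translate each monomial back via Lemma~\ref{XL}: for example $L_{-j,k}L_{-l,i}$ recombines (after inserting the appropriate $K_aK_a^{-1}$ and moving $K$'s across with \eqref{KL}) into a scalar multiple of $\Xkje\Xile$, while $L_{-j,-k}L_{-l,-i}$ becomes a multiple of $\Xkjo\Xilo$, and $L_{-j,j}L_{-l,-i}$ contributes the $\Xile$ coming from the diagonal $L_{-j,j}$. The only subtle case is $i=k,j=l$: there $\Xijo\Xijo$ turns into $(q-q^{-1})^{-2} K_j^2 L_{-j,i}^2$, so applying~\eqref{Ol-reln} with $(i,j)=(k,l)$ gives an equation that expresses $L_{-j,i}^2$ entirely in terms of $L_{-j,-i}^2$; using Lemma~\ref{XL} again and collecting the factor $(q-q^{-1})/(q+q^{-1})$ produces the stated identity $\Xijo^2=-\frac{q-q^{-1}}{q+q^{-1}}X_{i,j}^2$. (Equivalently, one can simply invoke the relation $E_{\bi}^2=-\frac{q-q^{-1}}{q+q^{-1}}E_i^2$ from (QQ5) together with the inductive definition~\eqref{q-root} of the root vectors.)

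All other cases, such as $i<k<l<j$ or $i<k=j<l$, are obtained by exactly the same mechanical pattern as in the proof of Lemma~\ref{q-ppeo}: expand via~\eqref{Ol-reln}, move all $K$-factors to the left using \eqref{KL} (keeping track of the $q^{\pm1}$ powers produced when a $K_a$ crosses an $L_{\cdot,\cdot}$ with $|{\cdot}|=a$), and regroup pairs $K_b L_{\cdot,\cdot}$ back into $\Xeo_{\cdot,\cdot}$'s. The sign $-1$ in front of $\Xklo\Xijo$ throughout the statement simply reflects the fact that both $\Xijo$ and $\Xklo$ are odd, so the leading term of the super-commutator is $-\Xklo\Xijo$ rather than $+\Xklo\Xijo$; this sign enters automatically because the two $L$'s one commutes past each other also satisfy $p(-j,i)p(-l,k)=1$ and the prefactor $(-1)^{p(i,j)p(k,l)}$ in~\eqref{Ol-reln} is $-1$.

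The main obstacle I anticipate is purely bookkeeping: in the cases $i<k<l<j$ and $i<k<j<l$ the product $L_{-j,i}L_{-l,k}$ acquires two distinct correction terms, and after multiplying by $K_jK_l$ and moving $K$'s one must be careful with the order of $\Xkjo, \Xilo, \Xkje, \Xile$ (several of these factors are odd) so that the eventual signs and the coefficient $q-q^{-1}$ match the statement. No new idea beyond those already used in Lemmas~\ref{q-ppee}--\ref{q-pneo} is required; it is a matter of doing the seven cases carefully and verifying the scalar and the sign in each.
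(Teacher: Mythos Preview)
Your approach is exactly the one the paper takes: compute $L_{-j,i}L_{-l,k}$ from \eqref{Ol-reln} case by case, then translate back via Lemma~\ref{XL} and \eqref{KL}. One caution on the bookkeeping you flag as the main obstacle: in Lemma~\ref{XL} it is $L_{-j,-k}$ that yields the \emph{even} $\Xkje$ and $L_{-j,k}$ the \emph{odd} $\Xkjo$, so your two illustrative identifications are swapped (and in the $i<k=j<l$ case the diagonal correction is $L_{-j,-j}=K_j^{-1}$, not $L_{-j,j}$).
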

\begin{proof}
Suppose that $i,j,k,l\in\{1,2,\ldots,n\}$ satisfy $i<j$ and $k<l$.
As before by~\eqref{Ol-reln}, we get
$$
L_{-j,i}L_{-l,k}=\left\{
\begin{array}{ll}
\ds-\frac{q-q^{-1}}{q+q^{-1}}L_{-j,-i}^2,\quad &(i=k,j=l),\\
-L_{-l,k}L_{-j,i},\quad &(i<j<k<l),\\
-L_{-l,k}L_{-j,i}-(q-q^{-1})(L_{-j,k}L_{-l,i}+L_{-j,-k}L_{-l,-i}),\quad &(i<k<l<j),\\
-q L_{-l,k}L_{-j,i}-q(q-q^{-1})L_{-j,-k}L_{-l,-i},\quad &(i=k<j<l),\\
- L_{-l,k}L_{-j,i}-(q-q^{-1})L_{-j,-j}L_{-l,-i},\quad &(i<k=j<l),\\
-q L_{-l,k}L_{-j,i}-(q-q^{-1})L_{-j,-k}L_{-l,-i},\quad &(i<k<j=l),\\
- L_{-l,k}L_{-j,i}-(q-q^{-1})L_{-j,-k}L_{-l,-i},\quad &(i<k<j<l).
\end{array}
\right.
$$
Then the lemma is proved case-by-case as before.
We will illustrate by checking in detail the case when $i<k=j<l$.
In this case, by the above formulas, \eqref{q-generator} and Lemma~\ref{XL} we have
\begin{align*}
\Xijo\Xklo
=&\frac{-1}{q-q^{-1}}K_jL_{-j,i}\cdot \frac{-1}{q-q^{-1}}K_lL_{-l,k}\\
=&\frac{1}{(q-q^{-1})^2}K_jK_lL_{-j,i}L_{-l,k}\\
=&\frac{1}{(q-q^{-1})^2}K_jK_l\big(-L_{-l,k}L_{-j,i}-(q-q^{-1})L_{-j,-j}L_{-l,-i}\big)\\
=&-\frac{1}{(q-q^{-1})^2}qK_lL_{-l,k}K_jL_{-j,i}-\frac{1}{q-q^{-1}}K_jL_{-j,-j}K_lL_{-l,-i}\\
=&-q\Xklo\Xijo+\Xile,
\end{align*}
where the second and fourth equalities are due to \eqref{KL}.
The remaining cases can be verified similarly, and we omit the detail.
\end{proof}

As before, by solving for $\oX_{k,l}\oX_{i,j}$
in Lemma~\ref{q-ppoo} and then interchanging $(i,j)$ and $(k,l)$, we obtain
$$
\displaystyle\Xijo\Xklo=\left\{
\begin{array}{ll}
-\Xklo\Xijo, &(k<l<i<j),\\
-\Xklo\Xijo+(q-q^{-1})(\Xkjo\Xilo-\Xkje\Xile),&(k<i<j<l),\\
-q^{-1}\Xklo\Xijo-q^{-1}(q-q^{-1})\Xkje\Xile,&(k=i<l<j),\\
-q^{-1}\Xklo\Xijo+q^{-1}\Xkje,&(k<i=l<j),\\
-q^{-1}\Xklo\Xijo-(q-q^{-1})\Xkje\Xile,&(k<i<l=j),\\
-\Xklo\Xijo-(q-q^{-1})\Xkje\Xile,&(k<i<l<j).
\end{array}
\right.
$$
This together with Lemma~\ref{q-ppoo} gives a complete commutation formula for two odd positive root vectors.
\begin{lem}\label{q-pnoo}
The following holds for $i,j,k,l\in\{1,2,\ldots,n\}$ satisfying $i<j$ and $k>l$:
\begin{enumerate}
\item If $i=l,j=k$, then
$$
\Xijo\Xjio=\ds-\Xjio\Xijo+\frac{K_iK_j-K_i^{-1}K_j^{-1}}{q-q^{-1}}
+(q-q^{-1})K_{\bi}K_{\bj}.
$$


\item In other cases, the following formulas hold:
$$
\Xijo\Xklo=\left\{
\begin{array}{ll}
-\Xklo\Xijo,&(i<j\leq l<k),\\
-\Xklo\Xijo-q(q-q^{-1})K_l^{-1}K_k^{-1}(\Xkjo\Xilo+\Xkje\Xile),&(i<l<k<j),\\
-\Xklo\Xijo+q^{-1}\Xkje K_iK_j-q^{-1}(q-q^{-1})\Xkjo K_{\bi}K_j,&(i=l<j<k),\\
-\Xklo\Xijo-(q-q^{-1})K_l^{-1}K_{\bj}\Xilo+K_l^{-1}K_j^{-1}\Xile,&(i<l<j=k),\\
-\Xklo\Xijo-(q-q^{-1})K_l^{-1}K_j\Xkjo\Xilo,&(i<l<j<k).
\end{array}
\right.
$$
\end{enumerate}
\end{lem}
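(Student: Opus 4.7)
My plan is to follow the same template that was used to prove Lemmas~\ref{q-ppee}--\ref{q-ppoo}: translate the statement about root vectors $\Xijo,\Xklo$ into a statement about the $L$-generators, apply the Olshanski defining relations \eqref{Ol-reln} to swap the two $L$'s, then translate the resulting expression back via Lemma~\ref{XL} and \eqref{KL}. Concretely, for $i<j$ and $k>l$, Lemma~\ref{XL} gives
\[
\Xijo=\frac{-1}{q-q^{-1}}K_jL_{-j,i},\qquad \Xklo=\frac{-1}{q-q^{-1}}L_{-l,k}K_k^{-1},
\]
so the product $\Xijo\Xklo$ is, up to a scalar and $K$-factors which commute according to \eqref{KL}, the product $L_{-j,i}L_{-l,k}$. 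Hence the whole lemma reduces to computing $L_{-j,i}L_{-l,k}$ case by case from \eqref{Ol-reln}.

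The first step is to enumerate the positions of the indices $i<j$ and $l<k$ relative to each other; the six cases listed in the statement (plus the three trivial commuting cases absorbed into part (1)) are exactly the orbit types under this comparison, and each one picks up different correction terms from the indicator symbols $\{k\leq j<l\}$ and $\{i\leq -l<j\leq -k\}$ etc.\ in \eqref{Ol-reln}. For each configuration I would read off the identity for $L_{-j,i}L_{-l,k}$, for instance in the diagonal case $i=l$, $j=k$ one obtains a contribution proportional to $L_{-j,j}L_{-i,i}$ which, via $K_{\bi}=-\tfrac{1}{q-q^{-1}}L_{-i,i}$, is exactly the source of the $(q-q^{-1})K_{\bi}K_{\bj}$ term in part (1); the remaining even terms $L_{-j,-j}L_{-i,-i}$ and $L_{i,i}L_{j,j}$ assemble into $\tfrac{K_iK_j-K_i^{-1}K_j^{-1}}{q-q^{-1}}$. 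In the off-diagonal cases the correction $L_{-j,\cdot}L_{\cdot,i}$ or $L_{-j,-\cdot}L_{\cdot,i}$ turns into $\Xkjo\Xilo$, $\Xkje\Xile$, $\Xkje$, or $\Xilo$ after reassembling via Lemma~\ref{XL}, and the extra $K$-powers produced while pulling $K_j$ past $L_{-l,k}$ and $K_k^{-1}$ to the outside via \eqref{KL} are precisely what produce the prefactors $K_l^{-1}K_k^{-1}$, $K_l^{-1}K_j$, $K_iK_j$, etc.\ recorded in the statement.

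The hardest step, as in the previous four lemmas, is purely bookkeeping: keeping track of signs, of which $q$-power is introduced each time a $K$ is commuted past an $L$ of the opposite sign pattern via \eqref{KL}, and of the correct identification of the mixed Cartan combinations (e.g.\ $L_{-j,-j}$ contributes $K_j^{-2}$ times a scalar, while $L_{-j,j}$ contributes $K_{\bj}$). I would carry out one representative even--odd case (say $i<l<j<k$) in full detail to exhibit the bookkeeping, essentially mirroring the illustrative calculation done at the end of the proof of Lemma~\ref{q-ppoo}, and leave the remaining cases as parallel calculations. Part (1) of the lemma covers the configurations where \eqref{Ol-reln} gives $L_{-j,i}L_{-l,k}=\pm q^{\epsilon}L_{-l,k}L_{-j,i}$ with no correction, and these translate directly to $-\Xklo\Xijo$ after the two scalar factors $\frac{-1}{q-q^{-1}}$ multiply together with the $K$-shuffling; part (2) handles the configurations where correction terms appear, with the diagonal case $i=l$, $j=k$ contributing the Cartan right-hand side written explicitly. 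The verification that the Jacobi-like identity in the diagonal case yields $(q-q^{-1})K_{\bi}K_{\bj}$ rather than a pure $K$-combination is the main conceptual check and uses crucially the definition of $K_{\bi}$ in \eqref{q-generator}.
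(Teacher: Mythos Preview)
Your approach is the same as the paper's: compute $L_{-j,i}L_{-l,k}$ from the Olshanski relations \eqref{Ol-reln} case by case, then translate back to root vectors via Lemma~\ref{XL} and \eqref{KL}. One organizational slip: near the end you say ``Part (1) of the lemma covers the configurations where \eqref{Ol-reln} gives $L_{-j,i}L_{-l,k}=\pm q^{\epsilon}L_{-l,k}L_{-j,i}$ with no correction,'' but in the statement Part~(1) is precisely the diagonal case $i=l,\,j=k$ with the Cartan correction terms (which you described correctly earlier), while the correction-free case $i<j\le l<k$ sits inside Part~(2). Also, $L_{-j,-j}=K_j^{-1}$, not $K_j^{-2}$ times a scalar. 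Neither of these affects the method.
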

\begin{proof}Suppose that $i,j,k,l\in\{1,2,\ldots,n\}$ satisfy $i<j$ and $k>l$.
If $i=l,j=k$, then by~\eqref{Ol-reln} we have
\begin{equation}\label{Lpn-oo1}
L_{-j,i}L_{-i,j}=-L_{-i,j}L_{-j,i}+(q-q^{-1})(L_{j,j}L_{i,i}-L_{-j,-j}L_{-i,-i})-(q-q^{-1})L_{-j,j}L_{-i,i}.
\end{equation}
Otherwise still by~\eqref{Ol-reln}, we get
\begin{align*}
L_{-j,i}L_{-l,k}=\left\{
\begin{array}{ll}
-L_{-l,k}L_{-j,i},\quad &(i<j<l<k),\\
-L_{-l,k}L_{-j,i}-(q-q^{-1})(L_{-j,k}L_{-l,i}+L_{-j,-k}L_{-l,-i}),\quad &(i<l<k<j),\\
-L_{-l,k}L_{-j,i}+(q-q^{-1})(L_{j,k}L_{i,i}-L_{-j,k}L_{-i,i}),\quad &(i=l<j<k),\\
-qL_{-l,k}L_{-j,i},\quad &(i<l=j<k),\\
-L_{-l,k}L_{-j,i}-(q-q^{-1})(L_{-j,j}L_{-l,i}+L_{-j,-j}L_{-l,-i}),\quad &(i<l<j=k),\\
-L_{-l,k}L_{-j,i}-(q-q^{-1})L_{-j,k}L_{-l,i},\quad &(i<l<j<k).
\end{array}
\right.
\end{align*}
Then as before the lemma is proved using Lemma~\ref{XL}.
We leave the detail to the reader.
\end{proof}

As before, by applying the anti-automorphism $\Omega$ given in \eqref{Omega} and \eqref{Omega-root} to the formulas in
Lemma~\ref{q-pnoo}(2) and interchanging $(i,j)$ and $(k,l)$, the following holds:
$$
\Xijo\Xklo=\left\{
\begin{array}{ll}
-\Xklo\Xijo,&(l<k\leq i<j),\\
-\Xklo\Xijo+q^{-1}(q-q^{-1})(\Xkje\Xile-\Xkjo\Xilo)K_iK_j,&(l<i<j<k),\\
-\Xklo\Xijo-(q-q^{-1})\Xkjo K_{\bi}K_k^{-1}+\Xkje K^{-1}_iK_k^{-1},&(l=i<k<j),\\
-\Xklo\Xijo+q^{-1}K_iK_j \Xile-q^{-1}(q-q^{-1})K_iK_{\bj}\Xilo,&(l<i<k=j),\\
-\Xklo\Xijo-(q-q^{-1})\Xkjo\Xilo K_iK_k^{-1},&(l<i<k<j).
\end{array}
\right.
$$
This together with Lemma~\ref{q-pnoo} gives a complete commutation formula between odd positive root vectors and
odd negative root vectors.

\vspace{.3cm}
\noindent
{\bf Case 4}---Commutation formulas between $\Xije$ and $K_{\bar a}$ and between $\Xijo$  and $K_{\bar a}$ where $1\leq i<j\leq n$ and $1\leq a\leq n$.
\begin{lem}\label{q-XKbar}
Suppose $i,j,a\in\{1,2,\ldots,n\}$ satisfy $i<j$.
Then
\begin{align*}
\Xije K_{\bar {a}}=&\left\{
\begin{array}{ll}
K_{\bar {a}}\Xije,&(a<i \text{ or }a>j),\\
q^{-1}K_{\bar {i}}\Xije-q^{-1}\Xijo K_i^{-1},&(a=i),\\
qK_{\bar {j}}\Xije+q\Xijo K_j^{-1}, &(a=j),\\
K_{\bar{a}}\Xije+q(q-q^{-1})(\oX_{a,j}X_{i,a}-X_{a,j}\oX_{i,a})K_a^{-1},
&(i<a<j),
\end{array}
\right.\\
\Xijo K_{\bar {a}}=&\left\{
\begin{array}{ll}
-K_{\bar {a}}\Xijo,&(a<i \text{ or }a>j),\\
-q^{-1}K_{\bar {i}}\Xijo+q^{-1}\Xije K_i^{-1},&(a=i),\\
-qK_{\bar {j}}\Xijo+q\Xije K_j^{-1}, &(a=j),\\
-K_{\bar{a}}\Xijo-q(q-q^{-1})(\oX_{a,j}\oX_{i,a}+X_{a,j}X_{i,a})K_a^{-1},
&(i<a<j).
\end{array}
\right.
\end{align*}
\begin{proof}
Let $i,j,a\in\{1,2,\ldots,n\}$ and suppose $i<j$.
Then by~\eqref{Ol-reln}, one can deduce the following commutation formula:
\begin{align*}
L_{-j,-i} L_{-a,a}=&\left\{
\begin{array}{ll}
L_{-a,a}L_{-j,-i},&(a<i \text{ or }a>j),\\
q^{-1}L_{-i,i}L_{-j,-i}+q^{-1}(q-q^{-1})L_{-j,i} L_{-i,-i},&(a=i),\\
qL_{-j,j}L_{-j,-i}-q(q-q^{-1})L_{-j,i} L_{-j,-j}, &(a=j),\\
L_{-a,a}L_{-j,-i}+(q-q^{-1})(L_{-j,a}L_{-a,-i}-L_{-j,-a}L_{-a,i}),&(i<a<j),
\end{array}
\right.\\
L_{-j,i} L_{-a,a}=&\left\{
\begin{array}{ll}
-L_{-a,a}L_{-j,i},&(a<i \text{ or }a>j),\\
-q^{-1}L_{-i,i}L_{-j,i}-q^{-1}(q-q^{-1})L_{-j,-i} L_{-i,-i},&(a=i),\\
-qL_{-j,j}L_{-j,i}-q(q-q^{-1})L_{-j,-i} L_{-j,-j}, &(a=j),\\
-L_{-a,a}L_{-j,i}-(q-q^{-1})(L_{-j,a}L_{-a,i}+L_{-j,-a}L_{-a,-i}),&(i<a<j).
\end{array}
\right.
\end{align*}
This together with Lemma~\ref{XL} and \eqref{q-generator}
proves the lemma as before.
\end{proof}
\end{lem}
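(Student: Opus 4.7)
The plan is to mimic exactly the strategy used in Lemmas \ref{q-ppee}--\ref{q-pnoo}: translate the commutator between a quantum root vector and $K_{\bar a}$ into a commutator among the Olshanski generators $L_{i,j}$, apply the defining relations \eqref{Ol-reln}, and then translate back using Lemma~\ref{XL} together with \eqref{q-generator} and \eqref{KL}.

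More concretely, from \eqref{q-generator} we have $K_{\bar a}=-\tfrac{1}{q-q^{-1}}L_{-a,a}$, and from Lemma~\ref{XL}, $X_{i,j}=-\tfrac{1}{q-q^{-1}}K_j L_{-j,-i}$ and $\overline{X}_{i,j}=-\tfrac{1}{q-q^{-1}}K_j L_{-j,i}$ for $i<j$. Consequently, up to the scalar $\tfrac{1}{(q-q^{-1})^2}$ and an initial $K_j$ on the left, the commutation formulas for $X_{i,j}K_{\bar a}$ and $\overline{X}_{i,j}K_{\bar a}$ reduce to computing $L_{-j,-i}L_{-a,a}$ and $L_{-j,i}L_{-a,a}$, respectively. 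The first step is therefore to enumerate the four regimes $a<i$ or $a>j$, $a=i$, $a=j$, and $i<a<j$, plug the appropriate values of $\varphi$, $p$, and $\theta$ into \eqref{Ol-reln}, and record the resulting identities exactly as displayed inside the proof of Lemma~\ref{q-XKbar}. The two remaining boundary regimes $a=i$ and $a=j$ both produce a genuine quadratic correction involving $L_{-j,i}$ or $L_{-j,-i}$ paired with a diagonal $L_{\pm i,\pm i}$ (or $L_{\pm j,\pm j}$), while the interior regime $i<a<j$ produces a two-term correction of the shape $L_{-j,a}L_{-a,\pm i}-L_{-j,-a}L_{-a,\mp i}$ (with signs differing according to whether we start with $L_{-j,-i}$ or $L_{-j,i}$).

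Next, I would translate each $L$-identity back to the $X$/$\overline{X}$ language. On the left, $L_{-j,-i}L_{-a,a}$ contributes $(q-q^{-1})^2 K_j^{-1}K_a^{-1}\,X_{i,j}K_{\bar a}$ (and similarly on the odd side). On the right, one uses Lemma~\ref{XL} to recognise each product $L_{-a,a}L_{-j,-i}$, $L_{-j,a}L_{-a,\pm i}$, etc.\ as (a scalar multiple of) $K_{\bar a}X_{i,j}$, $\overline{X}_{a,j}X_{i,a}$, $X_{a,j}\overline{X}_{i,a}$ etc., after using \eqref{KL} to commute $K_j$ past the relevant $L$'s; this commutation produces precisely the powers of $q$ and the factors $K_a^{-1}$, $K_i^{-1}$, $K_j^{-1}$ appearing in the statement. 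The anomalous boundary cases $a=i$ and $a=j$ give the terms $\overline{X}_{i,j}K_i^{-1}$ and $\overline{X}_{i,j}K_j^{-1}$ (resp.\ $X_{i,j}K_i^{-1}$ and $X_{i,j}K_j^{-1}$ on the odd side) via the identities $L_{-i,-i}=K_i^{-1}$ and $L_{-j,-j}=K_j^{-1}$.

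Rather than present all sixteen sub-cases, I would treat one representative case in full (for instance $i<a<j$ with $X_{i,j}$, which is the only case exhibiting the two-term $\overline{X}_{a,j}X_{i,a}-X_{a,j}\overline{X}_{i,a}$ structure and therefore the most informative), following verbatim the sample computations in the proofs of Lemmas~\ref{q-pneo} and \ref{q-pnoo}. The remaining cases are entirely analogous and I would simply indicate that they proceed by the same three-step recipe---apply \eqref{Ol-reln}, commute $K_j$ past the $L$'s using \eqref{KL}, convert back via Lemma~\ref{XL}---leaving the mechanical verification to the reader.

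The main obstacle is not conceptual but bookkeeping: the relation \eqref{Ol-reln} has four $\{\cdots\}$ indicator terms, and each of them contributes (or fails to contribute) depending on subtle orderings among $\pm i,\pm j,\pm a$; moreover one must track the sign $\theta(\cdot,\cdot,\cdot)$, which flips when we pass from $L_{-j,-i}$ to $L_{-j,i}$ and accounts for several of the minus signs appearing on the odd side. Apart from this combinatorial care, no new ideas beyond those used in Cases~1--3 are needed.
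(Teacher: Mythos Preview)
Your proposal is correct and follows exactly the paper's approach: compute the $L$-commutators $L_{-j,-i}L_{-a,a}$ and $L_{-j,i}L_{-a,a}$ from \eqref{Ol-reln}, then convert back via Lemma~\ref{XL}, \eqref{q-generator} and \eqref{KL}. One small slip: in your translation of the left-hand side you write $(q-q^{-1})^2 K_j^{-1}K_a^{-1}\,X_{i,j}K_{\bar a}$, but since $K_{\bar a}=-\tfrac{1}{q-q^{-1}}L_{-a,a}$ carries no $K_a$ prefactor, the correct identity is $L_{-j,-i}L_{-a,a}=(q-q^{-1})^2 K_j^{-1}X_{i,j}K_{\bar a}$ with no $K_a^{-1}$.
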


\section{ Quantum Commutation formulas of higher order}

We now derive the commutation formulas for higher order quantum root vectors. We only need to consider the cases corresponding to Cases 1, 2, and 4 in \S6. This is because $\overline{X}_{i,j}^2\in
U_q(\ev\mfq)$ (see Lemma~\ref{q-ppoo}) for $1\leq i\neq j\leq n$. We need some preparation.

For $m\geq 1$, let
$$
[m]!=[m] [m-1]\cdots[1], \quad \text{ where } [m]=\ds\frac{q^m-q^{-m}}{q-q^{-1}}.
$$
We also use the convention $[0]=[0]!=1.$ For $c\in \Z,t\geq 1$,  set
$$
\begin{bmatrix}
c\\m
\end{bmatrix}
=\frac{[c][c-1]\cdots[c-m+1]}{[m]!},\quad
\begin{bmatrix}
c\\0
\end{bmatrix}
=1.
$$
Generally, for an element $Z$ in an associative $\Q(q)$-algebra $\mc R$ and $m\in\N$, let
$$
Z^{(m)}=\frac{Z^m}{[m]!}.
$$
If $Z$ is invertible, define, for $t\geq 1$ and $c\in\Z$,

\begin{equation}\label{Kt}
\aligned
\begin{bmatrix}
Z;c\\ t
\end{bmatrix}
&=\prod^t_{s=1}\frac{Zq^{c-s+1}-Z^{-1}q^{-c+s-1}}{q^s-q^{-s}},
\quad \text{and }\;\begin{bmatrix}
Z;c\\ 0
\end{bmatrix}=1,
\endaligned
\end{equation}

\begin{lem}[{cf. \cite[Lemma 1.6]{Lu}}]\label{basic-lemma1}
Let $\mc R$ be an associative algebra over $\Q(q)$ and let $X,Y,Z\in \mc R$.
Then the following holds for any positive integers $m,s$.
\begin{enumerate}

\item If $X,Y,Z$ satisfy
$
XY=YX+Z,\quad XZ=q^{-2}ZX,\quad ZY=q^{-2}YZ,
$
then
\begin{align*}
X^{(m)}Y^{(s)}
=\sum_{t=0}^{\min(m,s)}q^{-(m+s)t+\frac{t(3t+1)}{2}}
Y^{(s-t)}Z^{(t)}X^{(m-t)}.
\end{align*}


\item
If $X,Y,Z$ satisfy
$
XY=qYX+Z,\quad XZ=q^{-1}ZX,\quad ZY=q^{-1}YZ,
$
then
\begin{align*}
X^{(m)}Y^{(s)}
=\sum_{t=0}^{\min(m,s)}q^{(m-t)(s-t)}
Y^{(s-t)}Z^{(t)}X^{(m-t)}.
\end{align*}

\end{enumerate}
\end{lem}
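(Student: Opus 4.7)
The plan is a double induction on $(m, s)$ whose combinatorial heart is a single $q$-Pascal-type identity. I discuss part (1) in detail; part (2) follows by the same scheme with minor exponent adjustments.

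First I would handle the base case $m=1$. Iterating $XY = YX + Z$ and pushing each $Z$ past every $Y$ via $ZY = q^{-2}YZ$ collapses a geometric sum into $q^{-(s-1)}[s]$, giving $XY^{(s)} = Y^{(s)}X + q^{-(s-1)}Y^{(s-1)}Z$, which matches the claimed formula at $m=1$.

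For the inductive step I would write $X^{(m)} = \frac{1}{[m]}\,X\,X^{(m-1)}$ and apply $X$ to each summand $Y^{(s-t)}Z^{(t)}X^{(m-1-t)}$ from the inductive hypothesis. Using the $m=1$ formula together with $XZ^{(t)} = q^{-2t}Z^{(t)}X$ and the obvious relations $Z\cdot Z^{(t)} = [t+1]Z^{(t+1)}$ and $X\cdot X^{(m-1-t)} = [m-t]X^{(m-t)}$, each old summand splits into at most two new ones of the target shape $Y^{(s-t)}Z^{(t)}X^{(m-t)}$. After collecting and reindexing, the total coefficient of this summand becomes
\begin{equation*}
\frac{q^{-(m+s)t + t(3t+1)/2}}{[m]}\,\bigl(q^{-t}[m-t] + q^{m-t}[t]\bigr),
\end{equation*}
and the $q$-Pascal-type identity
\begin{equation*}
q^{-t}[m-t] + q^{m-t}[t] \;=\; [m],
\end{equation*}
which is a one-line consequence of $[a] = (q^a - q^{-a})/(q - q^{-1})$, reduces this to exactly $q^{-(m+s)t + t(3t+1)/2}$, completing the induction.

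For part (2), the same induction with $XY = qYX + Z$ and $XZ = q^{-1}ZX$, $ZY = q^{-1}YZ$ produces the base case $XY^{(s)} = q^s Y^{(s)}X + Y^{(s-1)}Z$ and, in the inductive step, exactly the same bracket identity above; the only difference is that the accumulated prefactor is $q^{(m-t)(s-t)}$ rather than $q^{-(m+s)t + t(3t+1)/2}$. The main obstacle, if any, is pure bookkeeping of exponents: three independent sources of powers of $q$ (passing $X$ through $Y$, passing $X$ through $Z$, and the factorial denominators) must be tracked so that everything collapses after invoking the single identity above. No deeper idea is required, which is why the paper can simply attribute the statement to Lusztig's Lemma~1.6.
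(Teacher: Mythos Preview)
Your proposal is correct and follows essentially the same scheme as the paper's proof; the only difference is that you induct on $m$ (with base case $XY^{(s)}=Y^{(s)}X+q^{-(s-1)}Y^{(s-1)}Z$) while the paper inducts on $s$ (with base case $X^{(m)}Y=YX^{(m)}+q^{-m+1}ZX^{(m-1)}$). In both cases the inductive step splits each summand into two and collapses via the same $q$-Pascal identity $q^{-t}[N-t]+q^{N-t}[t]=[N]$, so the two arguments are mirror images of one another.
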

\begin{proof}

Suppose that $X,Y,Z$ satisfy the relations $
XY=YX+Z,\quad XZ=q^{-2}ZX,\quad ZY=q^{-2}YZ,
$ and $m,s$ are positive integers.
If $m\geq1$, then  we have
\begin{equation}\label{*}
\aligned
X^mY&=YX^m+\sum^{m-1}_{t=0}X^tZX^{m-1-t}=YX^m+\sum^{m-1}_{t=0}q^{-2t}ZX^tX^{m-1-t}\\
&=YX^m+\frac{1-q^{-2m}}{1-q^{-2}}ZX^{m-1}=YX^m+q^{-m+1}[m]ZX^{m-1}
\endaligned
\end{equation}
and hence
$
X^{(m)}Y=YX^{(m)}+q^{-m+1}Z X^{(m-1)}.
$
This proves the lemma in the case $s=1$.
By induction on $s$,
\begin{align*}
X^{(m)}Y^{(s+1)}
=&X^{(m)}Y^{(s)}\frac{Y}{[s+1]}=\frac{1}{[s+1]}\sum_{t=0}^{\min(m,s)}q^{-(m+s)t+\frac{t(3t+1)}{2}}
Y^{(s-t)}Z^{(t)}X^{(m-t)}Y\\
=&\frac{1}{[s+1]}\sum_{t=0}^{\min(m,s)}q^{-(m+s)t+\frac{t(3t+1)}{2}}
Y^{(s-t)}Z^{(t)}\Big(YX^{(m-t)}+q^{-m+t+1}ZX^{(m-t-1)}\Big)\\
=&\frac{1}{[s+1]}\sum_{t=0}^{\min(m,s)}q^{-(m+s)t+\frac{t(3t+1)}{2}}
Y^{(s-t)}Yq^{-2t}Z^{(t)}X^{(m-t)}\\
&+\frac{1}{[s+1]}\sum_{t=0}^{\min(m,s)}q^{-(m+s)t+\frac{t(3t+1)}{2}}
Y^{(s-t)}Z^{(t+1)}q^{-m+t+1}[t+1]X^{(m-t-1)}\\
=&\frac{1}{[s+1]}\sum_{t=0}^{\min(m,s)}q^{-(m+s)t+\frac{t(3t+1)}{2}}
Y^{(s+1-t)}q^{-2t}[s+1-t]Z^{(t)}X^{(m-t)}\\
&+\frac{1}{[s+1]}\sum_{t=1}^{\min(m,s)+1}q^{-(m+s)(t-1)+\frac{(t-1)(3t-2)}{2}}
Y^{(s+1-t)}Z^{(t)}q^{-m+t}[t]X^{(m-t)}\\
=&\sum_{t=0}^{\min(m,s+1)}q^{-(m+s+1)t+\frac{t(3t+1)}{2}}
Y^{(s+1-t)}Z^{(t)}X^{(m-t)},
\end{align*}
where the last equality is due to the fact that $q^{-t}[s+1-t]+q^{s+1-t}[t]=[s+1]$.
This proves part (1) of the lemma.
Similarly, it is easy to prove part (2).
\end{proof}

By a proof similar to \eqref{*}, 
we have a special case of Lemma~\ref{basic-lemma1}(2) with the condition $ZY=q^{-1}YZ$ dropped.

\begin{cor}\label{basic-cor}
Let $\mc R$ be an associative algebra over $\Q(q)$ and let $X,Y,Z\in \mc R$.
Suppose $m$ is a positive integer.
If $X,Y,Z$ satisfy
$
XY=qYX-Z,~ XZ=q^{-1}ZX.
$
Then
\begin{align*}
X^{(m)}Y
=q^mYX^{(m)}-ZX^{(m-1)}.
\end{align*}

\end{cor}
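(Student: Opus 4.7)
The plan is to prove the identity by induction on $m$. The base case $m=1$ is exactly the hypothesis $XY=qYX-Z$, since $X^{(1)}=X$. Compared with Lemma~\ref{basic-lemma1}(2), the $Z^{(t)}$ factors never appear in powers higher than one here, so we do not need the commutation relation $ZY=q^{-1}YZ$; the right side has only a single $Z$ term.

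For the inductive step, assume $X^{(m)}Y = q^m Y X^{(m)} - Z X^{(m-1)}$. I would compute
\[
X^{(m+1)}Y = \frac{1}{[m+1]}\, X \cdot X^{(m)}Y = \frac{1}{[m+1]}\bigl(q^m X\cdot Y X^{(m)} - X\cdot Z X^{(m-1)}\bigr),
\]
then move $X$ past $Y$ using $XY=qYX-Z$ and past $Z$ using $XZ=q^{-1}ZX$. This produces $q^{m+1} Y\cdot X X^{(m)} = q^{m+1}[m+1]\, Y X^{(m+1)}$ for the leading term, together with two correction terms both of the form $(\text{scalar})\,Z X^{(m)}$: one from the $-Z$ produced by $XY$, and one from commuting $X$ past $Z$ and absorbing $X\cdot X^{(m-1)}=[m]\,X^{(m)}$.

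The only real step to check is that these two scalar corrections combine to give exactly $-Z X^{(m)}$ in the final formula, i.e.\ that
\[
\frac{q^m + q^{-1}[m]}{[m+1]} = 1.
\]
This is the quantum Pascal-type identity $q^m + q^{-1}[m]=[m+1]$, which follows by expanding $[m]=(q^m-q^{-m})/(q-q^{-1})$ and simplifying. Once this is verified, the inductive step yields $X^{(m+1)}Y = q^{m+1} Y X^{(m+1)} - Z X^{(m)}$, completing the proof.

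The main (and really only) obstacle is purely bookkeeping: ensuring the scalars match and that no additional $Z$-terms appear. Because $Z$ occurs at most to the first power throughout the argument, we never need to iterate $XZ=q^{-1}ZX$ more than once per step, and the proof is significantly shorter than that of Lemma~\ref{basic-lemma1}(2).
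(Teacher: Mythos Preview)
Your proof is correct. The paper's proof proceeds slightly differently in organization---it computes $X^mY$ directly by expanding $X^mY=q^mYX^m-\sum_{t=0}^{m-1}q^tX^{m-1-t}ZX^{t}$ and then summing the resulting geometric series (exactly as in the derivation of \eqref{*})---but your induction on $m$ is an equally valid and essentially equivalent route, with the identity $q^m+q^{-1}[m]=[m+1]$ playing the same role as the geometric-sum evaluation in the paper's argument.
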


\begin{lem}\label{basic-lemma2}
Let $\mc R$ be an associative algebra over $\Q(q)$.
Suppose that the elements $X,Y,I,J,H\in \mc R$ satisfy
$$
XY=YX+H-I,~ XH=q^2HX-J,~ XI=q^{-2}IX+q^{-2}J,~ XJ=JX.
$$
Then the following holds for any positive integer $m$:
$$
X^{(m)}Y
=YX^{(m)}+q^{m-1}HX^{(m-1)}
-q^{-m+1}IX^{(m-1)}
-q^{-1}JX^{(m-2)}.
$$
\end{lem}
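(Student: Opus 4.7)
The plan is to prove the identity by induction on $m$. The base case $m=1$ is precisely the first hypothesis $XY = YX + H - I$, once we agree that $X^{(-1)} = 0$; alternatively, one can verify $m=2$ directly by computing $X^2 Y$ via two applications of $XY = YX + H - I$ together with $XH = q^2 HX - J$ and $XI = q^{-2}IX + q^{-2}J$, and then dividing by $[2]$. The identity $(1+q^{-2})/[2] = q^{-1}$ yields the needed coefficient of $J$ in the $m=2$ case, which is a good sanity check before attempting the induction.

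For the inductive step, I would write
$$
X^{(m+1)}Y \;=\; \tfrac{1}{[m+1]}\, X\cdot\bigl(X^{(m)}Y\bigr)
$$
and substitute the inductive hypothesis for $X^{(m)}Y$. This reduces the problem to computing the four products $X\cdot YX^{(m)}$, $X\cdot HX^{(m-1)}$, $X\cdot IX^{(m-1)}$, and $X\cdot JX^{(m-2)}$ by pushing the leading $X$ past $Y$, $H$, $I$, $J$ respectively using the four given commutation relations, together with the elementary identity $X\cdot X^{(k)} = [k+1]\,X^{(k+1)}$. This produces terms of four shapes: $YX^{(m+1)}$, $HX^{(m)}$, $IX^{(m)}$, and $JX^{(m-1)}$.

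Collecting the coefficients, one finds that the coefficient of $HX^{(m)}$ is $1 + q^{m+1}[m]$, the coefficient of $IX^{(m)}$ is $-(1 + q^{-m-1}[m])$, and the coefficient of $JX^{(m-1)}$ is $-(q^{m-1} + q^{-m-1} + q^{-1}[m-1])$, the last being the sum of three contributions: $-J$ coming from $XH$, $+q^{-2}J$ coming from $XI$, and the inductive-hypothesis term $JX^{(m-2)}$ multiplied by $X$ using $XJ=JX$ and $X\cdot X^{(m-2)} = [m-1]\,X^{(m-1)}$. Using the standard $q$-number identities $1 + q^{m+1}[m] = q^m[m+1]$, $1 + q^{-m-1}[m] = q^{-m}[m+1]$, and $q^{m-1} + q^{-m-1} + q^{-1}[m-1] = q^{-1}[m+1]$, each of which is a direct computation from $[k] = (q^k-q^{-k})/(q-q^{-1})$, the common factor $[m+1]$ cancels and one obtains exactly the desired formula with $m$ replaced by $m+1$.

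The main obstacle is the bookkeeping for the coefficient of $JX^{(m-1)}$, since $J$ enters from three different sources with different $q$-powers and must combine cleanly into $-q^{-1}[m+1]$; the other two simplifications ($H$ and $I$ terms) are essentially Pascal-type identities that already appear in Lemma~\ref{basic-lemma1}. No new ideas beyond the commutation relations themselves are needed, so the proof is a controlled, if slightly delicate, calculation.
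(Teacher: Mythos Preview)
Your induction argument is correct, and the coefficient identities you state all check out. However, the paper takes a different route: rather than inducting on $m$, it first establishes the closed forms
\[
X^tH = q^{2t}HX^t - [t]q^{t-1}JX^{t-1},\qquad X^tI = q^{-2t}IX^t + [t]q^{-t-1}JX^{t-1}
\]
for all $t\ge 0$ (each proven by a short telescoping sum using $XJ=JX$), and then unfolds $X^mY$ directly as $YX^m + \sum_{t=0}^{m-1}X^t(H-I)X^{m-1-t}$. Substituting the two closed forms and summing the resulting geometric and $[t]$-weighted series gives the answer in one shot. Your approach trades this two-stage computation for a single induction that repeatedly reuses the four basic relations; the price is having to verify three $q$-identities at the end, whereas the paper's version front-loads the work into the $X^tH$ and $X^tI$ formulas but then needs only the standard sums $\sum_t q^{\pm 2t}$ and $\sum_t [t]q^{\pm(t-1)}$. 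Both arguments are elementary and of comparable length; yours is perhaps more mechanical, while the paper's makes the structure (that $H$ and $I$ contribute symmetrically under $q\leftrightarrow q^{-1}$, with $J$ as a correction) slightly more visible.
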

\begin{proof}
Let $m$ be a positive integer.
Observe that the following holds for $0\leq t\leq m-1$:
\begin{align*}
X^tH&=q^{2t}HX^{t}-\sum^{t-1}_{s=0}X^s\cdot J\cdot(q^2X)^{t-1-s}=q^{2t}HX^{t}-[t]q^{t-1}JX^{t-1},\\
X^tI&=q^{-2t}IX^{t}+\sum^{t-1}_{s=0}X^s\cdot q^{-2}J\cdot (q^{-2}X)^{t-1-s}=q^{-2t}IX^{t}+[t]q^{-t-1}JX^{t-1}.
\end{align*}
Then one can deduce that
\begin{align*}
X^mY
=&YX^m+\sum^{m-1}_{t=0}X^t(H-I)X^{m-1-t}
=YX^m+\sum^{m-1}_{t=0}X^tHX^{m-1-t}-\sum^{m-1}_{t=0}X^tIX^{m-1-t}\\
=&YX^m+\sum^{m-1}_{t=0}\Big(q^{2t}HX^{m-1}-[t]q^{t-1}JX^{m-2}-q^{-2t}IX^{m-1}-[t]q^{-t-1}JX^{m-2}\Big)\\
=&YX^m+q^{m-1}[m]HX^{m-1}-q^{-m+1}[m]IX^{m-1}
-q^{-1}[m][m-1]JX^{m-2}.
\end{align*}
Hence the lemma is verified.
\end{proof}

 We now apply the formulas in the previous lemmas to derive the commutation formulas of higher order. First, we deal with the even-even  case for commuting $\Xijem\Xkles$.
\begin{prop}\label{q-div-ppee}
Assume that $i,j,k,l\in\{1,2,\ldots,n\}$ satisfy $i<j$ and $k<l$.
Then the following holds for positive integers $m,s$.
\begin{enumerate}
\item If $i<j<k<l$ or $i<k<l<j$, then
\begin{align*}
\Xijem\Xkles=\Xkles\Xijem.
\end{align*}

\item If $i<k<j=l\text{ or }i=k<j<l$, then
\begin{align*}
\Xijem\Xkles=q^{-ms}\Xkles\Xijem.
\end{align*}

\item If $i<k=j<l$, then
\begin{align*}
\Xijem\Xkles
=&q^{ms}\Xkles\Xijem+\sum_{t=1}^{\min(m,s)}q^{(m-t)(s-t)}
X_{k,l}^{(s-t)}X^{(t)}_{i,l} X_{i,j}^{(m-t)}.
\end{align*}

\item If $i<k<j<l$, then
\begin{align*}
\Xijem\Xkles
=&\Xkles\Xijem\notag\\
&+\sum_{t=1}^{\min(m,s)}(-1)^t[t]!(q-q^{-1})^tq^{-(m+s)t+\frac{t(3t+1)}{2}}
X_{k,l}^{(s-t)}X^{(t)}_{k,j}\Xilet X_{i,j}^{(m-t)}.
\end{align*}
\end{enumerate}
\end{prop}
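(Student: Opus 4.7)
The four cases in the proposition correspond exactly to the four subcases of the base commutation Lemma~\ref{q-ppee}, so in each case the plan is to lift the $m=s=1$ formula to arbitrary divided powers by invoking the appropriate part of Lemma~\ref{basic-lemma1}.

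Cases (1) and (2) I expect to handle directly. In Case (1), $X_{i,j}$ and $X_{k,l}$ commute in $U_q$ by Lemma~\ref{q-ppee}, so their divided powers commute. In Case (2) the relation $X_{i,j}X_{k,l}=q^{-1}X_{k,l}X_{i,j}$ iterates to $X_{i,j}^m X_{k,l}^s = q^{-ms} X_{k,l}^s X_{i,j}^m$ by an easy double induction on $(m,s)$; dividing through by $[m]!\,[s]!$ yields the stated formula.

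For Case (3), with $i<k=j<l$, I would set $X=X_{i,j}$, $Y=X_{k,l}$ and $Z=X_{i,l}$. The base case of Lemma~\ref{q-ppee} gives $XY=qYX+Z$. Applying Lemma~\ref{q-ppee} again to the pair $X_{i,j},X_{i,l}$ (subcase $i=k'<j<l'$) yields $XZ=q^{-1}ZX$, and applying it to $X_{i,l},X_{j,l}$ (subcase $i'<k'<j'=l'$, using $k=j$) yields $ZY=q^{-1}YZ$. Thus the hypotheses of Lemma~\ref{basic-lemma1}(2) are met, and its conclusion $X^{(m)}Y^{(s)}=\sum_{t=0}^{\min(m,s)} q^{(m-t)(s-t)}\,Y^{(s-t)}Z^{(t)}X^{(m-t)}$ is exactly the stated formula, with the $t=0$ term producing the $q^{ms}$ coefficient.

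For Case (4), with $i<k<j<l$, the crucial preparatory observation is that $X_{k,j}$ and $X_{i,l}$ commute: indeed, Lemma~\ref{q-ppee} applied to $(X_{i,l},X_{k,j})$ falls under the nested subcase $i<k<l<j$ (after relabeling), so $X_{i,l}X_{k,j}=X_{k,j}X_{i,l}$. Setting $W=X_{k,j}X_{i,l}$ and $Z=-(q-q^{-1})W$, the base case reads $XY=YX+Z$ with $X=X_{i,j}$, $Y=X_{k,l}$. Four further applications of Lemma~\ref{q-ppee}, to $(X_{i,j},X_{k,j})$, $(X_{i,j},X_{i,l})$, $(X_{k,j},X_{k,l})$ and $(X_{i,l},X_{k,l})$, show that both $X_{i,j}$ and $X_{k,l}$ individually $q^{-1}$-commute with both $X_{k,j}$ and $X_{i,l}$; multiplying these gives $XW=q^{-2}WX$ and $WY=q^{-2}YW$, hence $XZ=q^{-2}ZX$ and $ZY=q^{-2}YZ$. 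Lemma~\ref{basic-lemma1}(1) then applies. Because $X_{k,j}$ and $X_{i,l}$ commute, $W^t=[t]!^2\,X_{k,j}^{(t)}X_{i,l}^{(t)}$ and hence $Z^{(t)}=(-1)^t(q-q^{-1})^t[t]!\,X_{k,j}^{(t)}X_{i,l}^{(t)}$; substituting into the conclusion of Lemma~\ref{basic-lemma1}(1) recovers the stated formula. I expect the main obstacle to be precisely this bookkeeping in Case (4): one must track several simultaneous $q$-commutations and verify that the factors combine to give $q^{-2}$ rather than $q^{-1}$ on both sides of $W$, which is exactly what routes us into Lemma~\ref{basic-lemma1}(1) rather than part~(2).
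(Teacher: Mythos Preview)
Your proposal is correct and follows essentially the same approach as the paper: in each case you identify the triple $(X,Y,Z)$ satisfying the hypotheses of the relevant part of Lemma~\ref{basic-lemma1} via repeated applications of Lemma~\ref{q-ppee}, exactly as the paper does. Your writeup is in fact more explicit than the paper's, which simply asserts (without verification) that the required $q^{-2}$-commutations hold in Case~(4) and leaves the computation of $Z^{(t)}$ implicit; your added detail on why $X_{k,j}$ and $X_{i,l}$ commute and how $Z^{(t)}$ unpacks is welcome.
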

\begin{proof}
Let $i,j,k,l\in\{1,2,\ldots,n\}$.
Clearly, parts (1) and (2) follow from
the first two formulas in~Lemma~\ref{q-ppee}, respectively.

Assume that $i<k=j<l$ and let
$
X=\Xije,~ Y=\Xkle,~ Z=\Xile.
$
Then by Lemma~\ref{q-ppee}, we have
$
XY=qYX+Z,~ XZ=q^{-1}ZX,~ ZY=q^{-1}YZ.
$
Thus by Lemma~\ref{basic-lemma1}(2), one can deduce that part (3) is proved.

Finally, if $i<k<j<l$, then by Lemma~\ref{q-ppee}
the elements $X=\Xije,~ Y=\Xkle,~ Z=-(q-q^{-1})\Xkje\Xile$
satisfy $XY=YX+Z, XZ=q^{-2}ZX,~ ZY=q^{-2}YZ$ and hence part (4) of the proposition follows
from Lemma~\ref{basic-lemma1}(1).
\end{proof}

\begin{rem} \label{nnee}
As before,
we can obtain another set of formulas for the commutation of two even divided powers $X^{(m)}_{i,j}$ and $X^{(s)}_{k,l}$
by solving for $X^{(s)}_{k,l}X^{(m)}_{i,j}$ in Proposition~\ref{q-div-ppee} and then interchanging $(i,j)$ and $(k,l)$.
These newly obtained formulas together with Proposition~\ref{q-div-ppee} exhaust
the possibilities for a commutation of two even divided powers $X^{(m)}_{i,j}$ and $X^{(s)}_{k,l}$
associated to positive root vectors.
\end{rem}

\begin{prop}\label{q-div-pnee}
Assume that $i,j,k,l\in\{1,2,\ldots,n\}$ satisfy $i<j$ and $k>l$.
Let $m,s$ be positive integers.
Then
\begin{enumerate}
\item If $i=l$ and $j=k$, then
\begin{align*}
\Xijem\Xkles
=&\Xkles\Xijem+\sum^{\min(m,s)}_{t=1}X_{k,l}^{(s-t)}
\begin{bmatrix}
K_iK_j^{-1};2t-m-s\\\quad t
\end{bmatrix}
 X^{(m-t)}_{i,j}.
\end{align*}

\item If $i<j\leq l<k$ or $i<l<k<j$, then
\begin{align*}
\Xijem\Xkles=\Xkles\Xijem.
\end{align*}

\item If $i=l<j<k$, then
\begin{align*}
\Xijem\Xkles
=&\Xkles\Xijem+\sum^{\min(m,s)}_{t=1}(-1)^{t}q^{(m+s-t-1)t}
X^{(s-t)}_{k,l}K_i^{-t}K_j^tX^{(t)}_{k,j}X^{(m-t)}_{i,j}.
\end{align*}

\item If $i<l<j=k$, then
\begin{align*}
\Xijem\Xkles
=&\Xkles\Xijem+\sum^{\min(m,s)}_{t=1}q^{(m+s-2t)t}
X^{(s-t)}_{k,l}K_l^{-t}K_j^{t}X^{(t)}_{i,l}X^{(m-t)}_{i,j}.
\end{align*}

\item If $i<l<j<k$, then
\begin{align*}
\Xijem\Xkles
=&\Xkles\Xijem+\sum^{\min(m,s)}_{t=1}q^{(m+s)t-\frac{t(3t+1)}{2}}(q-q^{-1})^t[t]!
X^{(s-t)}_{k,l}K_l^{-t}K_j^tX^{(t)}_{k,j}X^{(t)}_{i,l}X^{(m-t)}_{i,j}.
\end{align*}

\end{enumerate}
\end{prop}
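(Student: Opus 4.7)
I would prove Proposition~\ref{q-div-pnee} case by case in parallel with Proposition~\ref{q-div-ppee}, using Lemma~\ref{q-pnee} for the length-one input and Lemmas~\ref{basic-lemma1}, \ref{basic-lemma2} (or sign-flipped analogues) as the amplification tool.  Case~(2) is immediate since the correction term vanishes.  For cases~(3), (4), (5), set $X=\Xije$, $Y=\Xkle$ and let $Z$ be the error term read off from Lemma~\ref{q-pnee}.  A direct calculation using only the $K$-commutation \eqref{KX}, Lemma~\ref{q-ppee} and Lemma~\ref{q-pnee} shows that in every one of these cases
\[
XZ=q^{2}ZX,\qquad ZY=q^{2}YZ,
\]
i.e.\ the hypotheses of Lemma~\ref{basic-lemma1}(1) hold with $q^{-2}$ replaced by $q^{2}$ throughout.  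The induction that proves Lemma~\ref{basic-lemma1}(1) is formally invariant under $q\mapsto q^{-1}$, so it yields
\[
X^{(m)}Y^{(s)}=\sum_{t=0}^{\min(m,s)}q^{(m+s)t-\frac{t(3t+1)}{2}}\,Y^{(s-t)}Z^{(t)}X^{(m-t)}.
\]

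The stated formulas for cases (3)--(5) then emerge by expanding $Z^{(t)}$.  From \eqref{KX}, the root vector in $Z$ either $q$- or $q^{-1}$-commutes with the Cartan prefactor $K_\bullet^{-1}K_j$, so one has $(K_\bullet^{-1}K_jX_\bullet)^t=q^{\pm t(t-1)/2}K_\bullet^{-t}K_j^tX_\bullet^t$, and $Z^{(t)}$ simplifies accordingly; the two $q$-exponents combine to give $(-1)^tq^{(m+s-t-1)t}$ in case~(3) and $q^{(m+s-2t)t}$ in case~(4).  For case~(5), one further checks (via Lemma~\ref{q-pnee}) that $X_{k,j}$ and $X_{i,l}$ commute and that their product commutes with $K_l^{-1}K_j$; hence $Z^{(t)}=(q-q^{-1})^t[t]!\,K_l^{-t}K_j^tX_{k,j}^{(t)}X_{i,l}^{(t)}$, yielding the stated coefficient $q^{(m+s)t-t(3t+1)/2}(q-q^{-1})^t[t]!$.

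Case~(1) is the essentially new situation.  Lemma~\ref{q-pnee}(1) gives the quantum $\mathfrak{sl}_2$-type commutator
\[
\Xije\Xjie-\Xjie\Xije=\frac{K_iK_j^{-1}-K_i^{-1}K_j}{q-q^{-1}},
\]
while \eqref{KX} gives $(K_iK_j^{-1})\Xije=q^{2}\Xije(K_iK_j^{-1})$ and $(K_iK_j^{-1})\Xjie=q^{-2}\Xjie(K_iK_j^{-1})$.  Thus $\Xije$, $\Xjie$ and $\tilde K:=K_iK_j^{-1}$ generate a copy of Lusztig's rank-one algebra, and the claim is exactly Lusztig's classical divided-power identity for $U_q(\mathfrak{sl}_2)$, proved by induction on $s$ starting from a single-step formula in the spirit of Lemma~\ref{basic-lemma2} and invoking the standard $q$-Pascal identities satisfied by the bracket $\begin{bmatrix}\tilde K;c\\ t\end{bmatrix}$ defined in \eqref{Kt}.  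The main obstacle will be the bookkeeping for this induction: each pass of $\Xjie$ across the bracket shifts the parameter $c$ by $2$, and closing the recursion requires two identities for the bracket (one for moving it across $\Xjie$, one for recombining the $(\tilde K-\tilde K^{-1})$-contribution), both straightforward from the product \eqref{Kt} but needing careful matching to the sign conventions in force here.  Cases~(3)--(5), by contrast, are routine once the sign-flipped analogue of Lemma~\ref{basic-lemma1}(1) is in hand.
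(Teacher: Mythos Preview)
Your proposal is correct and follows essentially the same route as the paper: part~(2) is immediate from Lemma~\ref{q-pnee}; for parts~(3)--(5) you set $X=\Xije$, $Y=\Xkle$, read off $Z$ from Lemma~\ref{q-pnee}, verify $XZ=q^{2}ZX$ and $ZY=q^{2}YZ$ via \eqref{KX} and Lemma~\ref{q-ppee} (the paper uses $\Omega$ applied to Lemma~\ref{q-ppee}, together with \eqref{q-ppee-extra}, for the $ZY$ relation), and then apply the $q\mapsto q^{-1}$ version of Lemma~\ref{basic-lemma1}(1) followed by the explicit expansion of $Z^{(t)}$; for part~(1) both you and the paper reduce to the standard $U_q(\mathfrak{sl}_2)$ divided-power identity proved by induction on $s$ using the shift rules for $\begin{bmatrix}K_iK_j^{-1};c\\ t\end{bmatrix}$.
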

\begin{proof}
It is easy to see that part (2) follows from Lemma~\ref{q-pnee}.

If $i=l<j<k$,
then by ~\eqref{KX}, Lemmas~\ref{q-pnee} and \ref{q-ppee},
the elements $X=\Xije, Y=\Xkle, Z=-K_i^{-1}K_j\Xkje $ satisfy $XY=YX+Z,XZ=q^2ZX$.
Furthermore, by Lemma~\ref{q-ppee}, we have $X_{l,k}X_{j,k}=q^{-1}X_{j,k}X_{l,k}$
and hence by applying $\Omega$ in \eqref{Omega} we obtain $X_{k,j}X_{k,l}=qX_{k,l}X_{k,j}$.
This leads to $ZY=q^2YZ$ by \eqref{KX}.
Meanwhile again by \eqref{KX} we have $\Xkje K_i^{-1}K_j=qK_i^{-1}K_j\Xkje$,
which implies $Z^t=(-1)^tq^{\frac{t(t-1)}{2}}K_i^{-t}K_j^tX^t_{k,j}$.
Therefore, by Lemma~\ref{basic-lemma1}(1) with $q^{-1}$ being replaced by $q$, part (3) holds.
Similarly, the formulas in parts (4) and (5) can be checked by Lemma~\ref{q-pnee}, \eqref{KX} and ~\eqref{q-ppee-extra}
and Lemma~\ref{basic-lemma1}(1).



Now it remains to prove part (1).
Assume $l=i,k=j$.
By~\eqref{KX} and~\eqref{Kt}, we have
$$
\Xije\cdot
\begin{bmatrix}
K_iK_j^{-1};c\\~t
\end{bmatrix}
=\begin{bmatrix}
K_iK_j^{-1};c-2\\ t
\end{bmatrix}\cdot\Xije,\quad
\Xjie\cdot\begin{bmatrix}
K_iK_j^{-1};c\\~t
\end{bmatrix}
=\begin{bmatrix}
K_iK_j^{-1};c+2\\ t
\end{bmatrix}\cdot\Xjie.
$$
Then by Lemma~\ref{q-pnee}, part (1) of the proposition can be directly checked by induction on $s$, which is similar to the classical case.
\end{proof}

\begin{rem} \label{pnee}
By applying the anti-automorphism to the formulas in Proposition~~\ref{q-div-pnee}, we can obtain
another set of commutation formulas for the divided powers $\Xijem$ and $\Xkles$.
This together with Proposition~\ref{q-div-pnee} exhaust the possibilities for a commutation relation between $\Xijem$ and $\Xkles$.
\end{rem}

Next, we derive the commutation formulas for $\Xijem\Xklo$.
\begin{prop}\label{q-div-ppeo}
Assume that $i,j,k,l\in\{1,2,\ldots,n\}$ satisfy $i<j$ and $k<l$.
Let $m$ be a positive integer.
Then
\begin{enumerate}
\item If $i=k,j=l$, or $i<j<k<l$ or $k<l<i<j$ or $k<i<j<l$,
then
$$
\Xijem\Xklo=\Xklo\Xijem.
$$

\item If $i<k<l<j$, then
\begin{align}
\Xijem\Xklo
=&\Xklo\Xijem+q^{m-1}(q-q^{-1})\Xkjo\Xile X^{(m-1)}_{i,j}\notag\\
&-q^{-m+1}(q-q^{-1})\Xkje\Xilo X^{(m-1)}_{i,j}\notag\\
&-q^{-1}(q-q^{-1})^2\Xkje\Xijo\Xile X^{(m-2)}_{i,j}.\notag
\end{align}

\item If $i=k<j<l$, then
\begin{align*}
\Xijem\Xklo=q^{-m}\Xklo\Xijem.
\end{align*}

\item If $i<k=j<l$, then
\begin{align*}
\Xijem\Xklo=q^{m}\Xklo\Xijem+\Xilo X^{(m-1)}_{i,j}.
\end{align*}

\item If $i<k<j=l$, then
\begin{align*}
\Xijem\Xklo=q^m\Xklo\Xijem-(q-q^{-1})\Xkje \Xilo X^{(m-1)}_{i,j}.
\end{align*}

\item If $i<k<j<l$,
then
\begin{align*}
\Xijem\Xklo=\Xklo\Xijem-q^{-m+1}(q-q^{-1})\Xkje\Xilo X^{(m-1)}_{i,j}.
\end{align*}

\item If $k=i<l<j$, then
\begin{align*}
\Xijem\Xklo=q^{-m}\Xklo\Xijem+q^{-1}(q-q^{-1})\Xkjo \Xile X^{(m-1)}_{i,j}.
\end{align*}

\item If $k<i=l<j$,
then
\begin{align*}
\Xijem\Xklo=q^{-m}\Xklo\Xijem-q^{-1}\Xkjo  X^{(m-1)}_{i,j}.
\end{align*}

\item If $k<i<l=j$,
then
\begin{align*}
\Xijem\Xklo=q^m\Xklo\Xijem.
\end{align*}

\item If $k<i<l<j$, then
\begin{align*}
\Xijem\Xklo=\Xklo\Xijem+q^{m-1}(q-q^{-1})\Xkjo \Xile X^{(m-1)}_{i,j}.
\end{align*}

\end{enumerate}
\end{prop}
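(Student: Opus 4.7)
The plan is to prove each of the ten subcases of Proposition \ref{q-div-ppeo} by induction on $m$, with base case $m=1$ given by the corresponding clause of Lemma \ref{q-ppeo}. The inductive step in every case proceeds by writing $X_{i,j}^{(m)}=\frac{1}{[m]}X_{i,j}X_{i,j}^{(m-1)}$, applying the inductive hypothesis to $X_{i,j}^{(m-1)}\overline{X}_{k,l}$, and then pushing the leading $X_{i,j}$ through each resulting term using the first-order commutation formulas collected in Lemmas \ref{q-ppee}, \ref{q-ppeo} and \ref{q-ppoo}. The scalar coefficients then collapse through $q$-arithmetic identities, the most frequently used being $q^{m-1}+q^{-1}[m-1]=[m]$ and its rescalings.

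For the three $q$-commutative cases (1), (3), and (9), the induction is immediate. Cases (4), (5), (6), (7), (8), and (10) each contain one extra term in their base formula, and the induction is a short calculation; when the hypotheses fit, I apply Lemma \ref{basic-lemma1}(2) or Corollary \ref{basic-cor}, but in some instances (for example case (5), where $\overline{X}_{i,j}\overline{X}_{k,j}$ does not simply $q$-commute but produces an additional $X_{k,j}X_{i,j}$ term via Lemma \ref{q-ppoo}) I plan to carry out the induction directly, relying on the identity $q^{m-1}+q^{-1}[m-1]=[m]$ to absorb the unwanted cross-terms.

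The main obstacle is case (2), where $i<k<l<j$ and three independent extra terms with coefficients $q^{m-1}$, $-q^{-m+1}$, and $-q^{-1}$ appear on the right-hand side. Here I plan to invoke Lemma \ref{basic-lemma2} with the quadruple
\[
X=X_{i,j},\quad H=(q-q^{-1})\overline{X}_{k,j}X_{i,l},\quad I=(q-q^{-1})X_{k,j}\overline{X}_{i,l},\quad J=(q-q^{-1})^2X_{k,j}\overline{X}_{i,j}X_{i,l},
\]
and then verify the four hypotheses $XY=YX+H-I$, $XH=q^2HX-J$, $XI=q^{-2}IX+q^{-2}J$, and $XJ=JX$. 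The first is exactly Lemma \ref{q-ppeo}(2) for $i<k<l<j$; the remaining three each follow from two successive commutations. The crucial point---and the one most likely to trip up a casual calculation---is that because $l<j$ in this case, the relevant commutation of $X_{i,j}$ and $X_{i,l}$ is $X_{i,j}X_{i,l}=qX_{i,l}X_{i,j}$, read off from \eqref{q-ppee-extra} in the subcase $k=i<l<j$, rather than the $q^{-1}$-version that would apply if $j<l$. This sign is precisely what produces the factor $q^2$ in $XH=q^2HX-J$, and hence the leading coefficient $q^{m-1}$ in the final formula. Once the four hypotheses are verified, Lemma \ref{basic-lemma2} yields case (2) directly, completing the proof.
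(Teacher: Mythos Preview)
Your proposal is correct and follows essentially the same route as the paper. In particular, for the key case~(2) you invoke Lemma~\ref{basic-lemma2} with exactly the same choices of $X,H,I,J$ as the paper, and you correctly flag that the relation $X_{i,j}X_{i,l}=qX_{i,l}X_{i,j}$ (from~\eqref{q-ppee-extra} in the subcase $k=i<l<j$) is what drives the $q^{2}$ in $XH=q^{2}HX-J$. One small remark on case~(5): the paper handles it via Corollary~\ref{basic-cor}, whose hypotheses only require $XZ=q^{-1}ZX$ (not any relation between $Z$ and $Y$), so the odd--odd product $\overline X_{i,j}\overline X_{k,j}$ you worry about never actually enters; your direct induction works too, but is unnecessary.
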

\begin{proof}
Clearly, the equalities in parts (1), (3) and (9) can be checked directly
by taking advantage of the corresponding formulas in Lemma~\ref{q-ppeo}, respectively.

If $i<k<l<j$, we set
\begin{align*}
X=&\Xije,~ Y=\Xklo,~ H=(q-q^{-1})\Xkjo\Xile,~ I=(q-q^{-1})\Xkje\Xilo,\\
J=&(q-q^{-1})^2\Xkje\Xijo\Xile.
\end{align*}
Then firstly, by the formula in the first case in Lemma~\ref{q-ppeo}(2) we have $XY=YX+H-I$.
By the formula in the fourth case in Lemma~\ref{q-ppeo}(2) and \eqref{q-ppee-extra} we see that
$
XH=q^2HX-J.
$
By the formula in the sixth case in Lemma~\ref{q-ppeo}(2) and Lemma~\ref{q-ppee}, one can check that
$
XI=q^{-2}IX+q^{-2}J.
$
Furthermore, by Lemma~\ref{q-ppee}, \eqref{q-ppee-extra}~ and~ Lemma~\ref{q-ppeo}(1)
we obtain
$
XJ=JX.
$
Putting together, part (2) is proved by using Lemma~\ref{basic-lemma2}.

If $i<k=j<l$, then by the formulas in the second and third cases in Lemma~\ref{q-ppeo} the elements
$
X=\Xije,\quad Y=\Xklo,\quad Z=-\Xilo
$
satisfy $XY=qYX-Z, XZ=q^{-1}ZX$.
Hence part (4) is verified by Corollary~\ref{basic-cor}.

If $i<k<j=l$, then  the elements
$
X=\Xije,~ Y=\Xklo,~ Z=(q-q^{-1})\Xkje\Xilo
$
satisfy $XY=qYX-Z,~ XZ=q^{-1}ZX$ by the formula in the fourth case in Lemmas~\ref{q-ppeo}(2), ~\ref{q-ppeo}(1) and ~\ref{q-ppee}.
Therefore part (5) follows from Corollary~\ref{basic-cor}.

Similarly, the remaining parts can be proved case-by case using Lemmas~\ref{q-ppeo}, \ref{q-ppee} and \ref{basic-lemma1}, and Corollary~\ref{basic-cor}.
To save some space, we omit the detail.




\end{proof}


\begin{prop}\label{q-div-pneo}
Assume that $i,j,k,l\in\{1,2,\ldots,n\}$ satisfy $i<j$ and $k>l$.
Let $m$ be a positive integer.
Then
\begin{enumerate}
\item If $i<j\leq l<k$ or $l<k\leq i<j$ or $l<i<j<k$,
then
\begin{align*}
\Xijem \Xklo
=\Xklo\Xijem.
\end{align*}

\item If $i=l$ and $j=k$, then
\begin{align}
\Xijem \Xjio
=&\Xjio\Xijem-q^{m-1}K_{\bj}K_i^{-1}X^{(m-1)}_{i,j}\notag\\
&+q^{-m+1}K_j^{-1}K_{\bi}X^{(m-1)}_{i,j}
-\Xijo K_j^{-1}K_i^{-1}X^{(m-2)}_{i,j}.\notag
\end{align}

\item If $i<l<k<j$,
then
\begin{align*}
\Xijem \Xklo
=&\Xklo\Xijem+q^{m}(q-q^{-1})K_l^{-1}K_k^{-1}\Xkjo\Xile X^{(m-1)}_{i,j}\notag\\
&-q^{-m+2}(q-q^{-1})K_l^{-1}K_k^{-1}\Xkje\Xilo X^{(m-1)}_{i,j}\notag\\
&-(q-q^{-1})^2K_l^{-1}K_k^{-1}\Xkje\Xijo\Xile X^{(m-2)}_{i,j}.
\end{align*}

\item If $i=l<j<k$, then
\begin{align*}
\Xijem \Xklo
=&\Xklo\Xijem-q^{m-1}K_i^{-1}K_j\Xkjo  X^{(m-1)}_{i,j}.
\end{align*}

\item If $i<l<j=k$, then
\begin{align*}
\Xijem \Xklo
=&\Xklo\Xijem+q^{m-1}(q-q^{-1})K_l^{-1}K_{\bj}\Xile X^{(m-1)}_{i,j}\notag\\
&+q^{-m+1}K_l^{-1}K_j^{-1}\Xilo X^{(m-1)}_{i,j}\notag\\
&+q^{-1}(q-q^{-1})K_l^{-1}K_j^{-1}\Xijo \Xile X^{(m-2)}_{i,j}.
\end{align*}

\item If $i<l<j<k$,
then
\begin{align*}
\Xijem \Xklo
=\Xklo\Xijem+q^{m-1}(q-q^{-1})K_l^{-1}K_j\Xkjo\Xile X^{(m-1)}_{i,j}
\end{align*}

\item If $l=i<k<j$, then
\begin{align*}
\Xijem \Xklo
=&\Xklo\Xijem-q^{m-1}\Xkjo K_i^{-1}K_k^{-1} X^{(m-1)}_{i,j}\notag\\
&-q^{-m+1}(q-q^{-1})\Xkje K_{\bi}K_k^{-1} X^{(m-1)}_{i,j}\notag\\
&+q^{-1}(q-q^{-1})\Xkje\Xijo K_i^{-1}K_k^{-1} X^{(m-2)}_{i,j}.
\end{align*}

\item If $l<i<k=j$, then
\begin{align*}
\Xijem \Xklo
=\Xklo\Xijem+q^{-m+1}\Xilo K_iK_j^{-1} X^{(m-1)}_{i,j}.
\end{align*}

\item If $l<i<k<j$, then
\begin{align*}
\Xijem \Xklo
=\Xklo\Xijem-q^{-m+1}(q-q^{-1})K_iK_k^{-1}\Xkje \Xilo X^{(m-1)}_{i,j}.
\end{align*}

\end{enumerate}
\end{prop}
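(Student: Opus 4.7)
The plan is to mimic the proof of the parallel Proposition~\ref{q-div-ppeo} case-by-case: start from the degree-one commutation formulas in Lemma~\ref{q-pneo} and promote each of them to the divided-power identity for $X^{(m)}_{i,j}\overline{X}_{k,l}$ by invoking Lemma~\ref{basic-lemma1}, Corollary~\ref{basic-cor}, or Lemma~\ref{basic-lemma2}, depending on the shape of the base relation. Part (1) is immediate, since Lemma~\ref{q-pneo}(1) already asserts commutation in the cases $i<j\le l<k$, $l<k\le i<j$, and $l<i<j<k$.

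For the short one-correction-term cases, namely (4), (6), (8), and (9), the base formula from Lemma~\ref{q-pneo}(2) takes the shape $XY = q^{\pm 1}YX + Z$ after packaging the $K$-factor into $Z$. Verifying $XZ = q^{\mp 1}ZX$ by means of \eqref{KX} together with Lemma~\ref{q-ppee} and its dual~\eqref{q-ppee-extra}, Corollary~\ref{basic-cor} (or Lemma~\ref{basic-lemma1}(2) after suitable sign adjustment) will deliver the $q^{\pm m}YX^{(m)}_{i,j}$ leading term together with the single $X^{(m-1)}_{i,j}$ correction.

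Cases (2), (3), (5), and (7) are the substantive ones: each has three or four terms and calls for Lemma~\ref{basic-lemma2}. The plan is to read off from Lemma~\ref{q-pneo}(2) a decomposition $XY = YX + H - I$ of the base commutator, with $H$ and $I$ having opposite weights under the adjoint action of $X = X_{i,j}$, and to choose $J$ so that $XH = q^{2}HX - J$ and $XI = q^{-2}IX + q^{-2}J$. For instance, in case (2) I would take $Y = \overline{X}_{j,i}$, $H = -K_{\bar j}K_i^{-1}$, $I = -K_j^{-1}K_{\bar i}$, and $J = q\overline{X}_{i,j}K_j^{-1}K_i^{-1}$; the inductive output of Lemma~\ref{basic-lemma2} then matches the four-term formula in (2) term-by-term. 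Analogous choices based on the corresponding lines of Lemma~\ref{q-pneo}(2) should handle (3), (5), and (7).

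The main obstacle will be verifying the three auxiliary identities $XH = q^{2}HX - J$, $XI = q^{-2}IX + q^{-2}J$, and $XJ = JX$ required by Lemma~\ref{basic-lemma2}. Each of these forces one to push $X_{i,j}$ past a product containing $K_{\bar a}$ and mixed positive and negative root vectors; the cleanest route is to expand via Lemma~\ref{q-XKbar} (which produces both a $K^{-1}$-tail and an $\overline{X}$-tail) and then recombine using Lemmas~\ref{q-ppee}, \ref{q-ppeo}, and \ref{q-ppoo} in combination with the Cartan action~\eqref{KX}. The resulting bookkeeping is tedious but mechanical; once completed, Lemma~\ref{basic-lemma2} closes each case.
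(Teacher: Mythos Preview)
Your overall strategy matches the paper's exactly: case-by-case promotion of the degree-one formulas in Lemma~\ref{q-pneo} via Lemma~\ref{basic-lemma2} for the multi-term cases and an elementary identity for the one-term cases. Your choices of $H,I,J$ in case~(2) are precisely those the paper uses, and the paper confirms that cases~(3), (5), and~(7) also go through Lemma~\ref{basic-lemma2} with the obvious packaging.

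There is, however, a concrete slip in your handling of the short cases~(4), (6), (8), and~(9). You write that the base relation from Lemma~\ref{q-pneo}(2) has the shape $XY=q^{\pm1}YX+Z$ and that Corollary~\ref{basic-cor} will produce a leading term $q^{\pm m}YX^{(m)}_{i,j}$. But inspection of Lemma~\ref{q-pneo}(2) shows that in each of these four cases the coefficient of $\Xklo\Xije$ is~$1$, not $q^{\pm1}$; and correspondingly the target formulas in the statement all have leading term $\Xklo\Xijem$ with coefficient~$1$. The relevant structure is $XY=YX+Z$ together with $XZ=q^{\pm2}ZX$, and the right tool is not Corollary~\ref{basic-cor} but the simpler identity established in~\eqref{*} (and its $q\leftrightarrow q^{-1}$ variant), which gives $X^{(m)}Y=YX^{(m)}+q^{\mp(m-1)}ZX^{(m-1)}$. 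Once you make this correction the argument goes through without further change.
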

\begin{proof}
Clearly, part (1)
follows directly from Lemma~\ref{q-pneo}(1).

If $i=l,j=k$, we set
\begin{align*}
X=&\Xije,~ Y=\Xjio,~ H=-K_{\bj}K_i^{-1},~ I=-K_j^{-1}K_{\bi},\\
J=&q\Xijo K_j^{-1}K_i^{-1}=K_j^{-1}\Xijo K_i^{-1}.
\end{align*}
Then we have $XY=YX+H-I, XH=q^2HX-J, XI=q^{-2}IX+q^{-2}J$, $XJ=JX$ by ~\eqref{KX} and Lemmas~\ref{q-pneo}(1), \ref{q-ppeo}(1) and~\ref{q-XKbar}.
Hence part (2) follows from Lemma~\ref{basic-lemma2}.

If  $i<l<k<j$,
we set
\begin{align*}
X=&\Xije,\quad Y=\Xklo,\quad H=q(q-q^{-1})K_l^{-1}K_k^{-1}\Xkjo\Xile,\\
I=&q(q-q^{-1})K_l^{-1}K_k^{-1}\Xkje\Xilo,\quad
J=q(q-q^{-1})^2K_l^{-1}K_k^{-1}\Xkje\Xijo\Xile.
\end{align*}
Then clearly by Lemma~\ref{q-pneo}(2), we have
$XY=YX+H-I. $
By the formula in the fourth case in Lemma~\ref{q-ppeo}(2) and \eqref{q-ppee-extra}, one can check $XH=q^2HX-J$.
Meanwhile by the formulas in the sixth case in Lemma~\ref{q-ppeo}(2) and~Lemma~\ref{q-ppee}, we see $XI=q^{-2}IX+q^{-2}J$.
Finally, by Lemma~\ref{q-ppeo}(1), \eqref{q-ppee-extra} and Lemma~\ref{q-ppee}, $XJ=JX$ holds.
Therefore part (3) follows from Lemma~\ref{basic-lemma2}.

If $i=l<j<k$, then by the formula in the third case in Lemma~\ref{q-pneo}(2), Lemma~\ref{q-pneo}(1) and~\eqref{KX}
the following elements
$
X=\Xije,~Y=\Xklo,~Z=-K_i^{-1}K_j\Xkjo
$
satisfy $XY=YX+Z,~ XZ=q^2ZX$ and hence part (4) can be verified by a proof similar to \eqref{*} with $q^{-1}$ being replaced by $q$.

If $i<l<j=k$,
we set
\begin{align*}
X=&\Xije,\quad Y=\Xklo,\quad H=(q-q^{-1})K_l^{-1}K_{\bj}\Xile ,\quad I=-K_l^{-1}K_j^{-1}\Xilo\\
J=&-(q-q^{-1})K_l^{-1}K_j^{-1}\Xijo \Xile.
\end{align*}
Then by the formula in fourth case in Lemma~\ref{q-pneo}(2) we see $XY=YX+H-I.$
By~\eqref{q-ppee-extra}, \eqref{KX} and Lemma~\ref{q-XKbar}, we get $XH=q^2HX-J$ and
by the formula in the sixth case in Lemma~\ref{q-ppeo}(2) and \eqref{KX}, we obtain $XI=q^{-2}IX+q^{-2}J$.
Finally, by Lemma~\ref{q-ppeo}(1), \eqref{KX} and \eqref{q-ppee-extra} one can check that $XJ=JX$ holds.
Putting together, we obtain that part (5) follows from Lemma~\ref{basic-lemma2}.

If $i<l<j<k$, then by the formula in the fifth case in Lemma~\ref{q-pneo}(2), Lemma~\ref{q-pneo}(1),
and the equalities \eqref{KX} and~\eqref{q-ppee-extra} the  elements
$
X=\Xije,~ Y=\Xklo,~ Z=(q-q^{-1})K_l^{-1}K_j\Xkjo\Xile
$
satisfy  $XY=YX+Z,~ XZ=q^2ZX$ and hence part (6) follows from a proof similar to \eqref{*} with $q^{-1}$ being replaced by $q$.

If $l=i<k<j$, we let
\begin{align*}
X=&\Xije,\quad Y=\Xklo,\quad H=-\Xkjo K_i^{-1}K_k^{-1},\quad I=(q-q^{-1})\Xkje K_{\bi}K_k^{-1}\\
J=&-(q-q^{-1}) \Xkje\Xijo K_i^{-1}K_k^{-1}.
\end{align*}
By the formula in the sixth case in Lemma~\ref{q-pneo}(2), we have $XY=YX+H-I.$
Meanwhile, by the formula in the fourth case in Lemma~\ref{q-ppeo}(2) and~\eqref{KX} one can check $XH=q^2HX-J$ and
by Lemma~\ref{q-ppee} and Lemma~\ref{q-XKbar} we obtain $XI=q^{-2}IX+q^{-2}J$.
Finally, by Lemma~\ref{q-ppeo}(1), Lemma~\ref{q-ppee} and \eqref{KX}, we see $XJ=JX$.
Hence part (7) follows from Lemma~\ref{basic-lemma2}.

If $l<i<k=j$, then by the formula in the seventh case in Lemma~\ref{q-pneo}(2), ~\eqref{KX} and Lemma~\ref{q-pneo}(1) we find that the elements
$
X=\Xije,~ Y=\Xklo,~ Z=\Xilo K_iK_j^{-1}
$
satisfy $XY=YX+Z,~ XZ=q^{-2}ZX$. Then by a proof similar to \eqref{*}, part (8) holds.
Similarly, the last part can be verified by Lemma~\ref{q-pneo}(2), Lemma~\ref{q-pneo}(1), \eqref{KX} and Lemma~\ref{q-ppee}.


\end{proof}

Finally, it remains to derive the commutation formulas between $\Xijem$ and $K_{\bar a}$.
\begin{prop}\label{q-div-XKbar}The following holds for  $i,j,a\in\{1,2,\ldots, n\}$ satisfying $i<j$:
\begin{enumerate}
\item If $a<i$ or $a>j$, then
$$
\Xijem K_{\bar a}=K_{\bar a}\Xijem.
$$

\item If $a=i$, then
$$
\Xijem K_{\bar i}=q^{-m}K_{\bar i}\Xijem-K_i^{-1}\Xijo X_{i,j}^{(m-1)}.
$$

\item If $a=j$, then
$$
\Xijem K_{\bar j}=q^{m}K_{\bar j}\Xijem+K_j^{-1}\Xijo X_{i,j}^{(m-1)}.
$$

\item If $i<a<j$, then
\begin{align*}
\Xijem K_{\bar a}=
&K_{\bar a}\Xijem+q^{m}(q-q^{-1})\oX_{a,j}X_{i,a}K_a^{-1}X_{i,j}^{(m-1)}-q^{-m+2}(q-q^{-1})X_{a,j}X_{i,a}K_a^{-1}X_{i,j}^{(m-1)}\\
&-(q-q^{-1})^2X_{a,j}\oX_{i,j}X_{i,a}K_a^{-1}X_{i,j}^{(m-2)}.
\end{align*}
\end{enumerate}
\end{prop}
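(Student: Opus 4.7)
The plan is, in each case, to take Lemma \ref{q-XKbar} as the $m=1$ base and lift to arbitrary $m$ via the appropriate result from Section 7 (Lemma \ref{basic-lemma1}, Corollary \ref{basic-cor}, or Lemma \ref{basic-lemma2}), after rewriting scalars using \eqref{KX}.

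Part (1) is immediate by induction on $m$ from $X_{i,j}K_{\bar a}=K_{\bar a}X_{i,j}$. For parts (2) and (3), I first rewrite the base case using \eqref{KX} as $X_{i,j}K_{\bar i} = q^{-1}K_{\bar i}X_{i,j} - K_i^{-1}\oX_{i,j}$ and $X_{i,j}K_{\bar j} = qK_{\bar j}X_{i,j} + K_j^{-1}\oX_{i,j}$, respectively (absorbing the $q^{\pm 1}$ into the $K$-factor). Setting $X=X_{i,j}$, $Y=K_{\bar i}$ (resp.\ $K_{\bar j}$) and $Z=K_i^{-1}\oX_{i,j}$ (resp.\ $-K_j^{-1}\oX_{i,j}$), the commutativity $X_{i,j}\oX_{i,j} = \oX_{i,j}X_{i,j}$ from Lemma \ref{q-ppeo}(1) combined with \eqref{KX} yields $XZ = qZX$ (resp.\ $XZ = q^{-1}ZX$). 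Corollary \ref{basic-cor} (for part (3)) and its $q\leftrightarrow q^{-1}$ variant (for part (2)) then deliver the desired formulas.

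For part (4), I set $X = X_{i,j}$, $Y = K_{\bar a}$, and
\begin{align*}
H &= q(q-q^{-1})\oX_{a,j}X_{i,a}K_a^{-1}, \\
I &= q(q-q^{-1})X_{a,j}\oX_{i,a}K_a^{-1}, \\
J &= q(q-q^{-1})^2\, X_{a,j}\oX_{i,j}X_{i,a}K_a^{-1}.
\end{align*}
The relation $XY = YX + H - I$ is precisely the $i<a<j$ case of Lemma \ref{q-XKbar}, so Lemma \ref{basic-lemma2} will deliver the stated identity once I verify $XH = q^2 HX - J$, $XI = q^{-2}IX + q^{-2}J$, and $XJ = JX$. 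Each of these is checked by commuting $X_{i,j}$ across the factors of $H$, $I$, $J$ one at a time, using Lemma \ref{q-ppeo} for $X_{i,j}\oX_{a,j}$ and $X_{i,j}\oX_{i,a}$, Lemma \ref{q-ppee} together with \eqref{q-ppee-extra} for $X_{i,j}X_{a,j}$ and $X_{i,j}X_{i,a}$, and \eqref{KX} to pull $K_a^{-1}$ through $X_{i,j}$ (trivially, since $(\ep_a,\al_{i,j})=0$ when $i<a<j$).

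The main obstacle will be the verification of $XH = q^2 HX - J$ and $XI = q^{-2}IX + q^{-2}J$: commuting $X_{i,j}$ past the first factor of $H$ or $I$ produces a leading term together with a correction proportional to $X_{a,j}\oX_{i,j}X_{i,a}K_a^{-1}$, while the subsequent commutation with the second factor shifts the leading piece by an additional $q$-power. Only after careful tracking of these $q$-powers do the two residues collect into exactly $-J$ (respectively $+q^{-2}J$). Once these three identities are in hand, Lemma \ref{basic-lemma2} yields part (4) without further work.
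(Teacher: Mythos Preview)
Your proposal is correct and follows essentially the same route as the paper: part (4) is handled by Lemma~\ref{basic-lemma2} with exactly the choices of $H$, $I$, $J$ you wrote down, the verifications $XH=q^2HX-J$, $XI=q^{-2}IX+q^{-2}J$, $XJ=JX$ being checked via the fourth and sixth cases of Lemma~\ref{q-ppeo}(2), Lemma~\ref{q-ppee}, \eqref{q-ppee-extra}, and \eqref{KX}; parts (2) and (3) via Corollary~\ref{basic-cor} (and its $q\leftrightarrow q^{-1}$ twin), and part (1) trivially. The paper's proof only writes out part (4) and dispatches the rest with ``similarly'', so your more explicit roadmap for parts (2)--(3) is a welcome elaboration, not a departure.
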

\begin{proof}
As before, the proposition can be verified case-by-case using Lemma~\ref{q-XKbar}, Lemma~\ref{q-ppee}, \eqref{q-ppee-extra},
Lemma~\ref{q-ppeo}, Corollary~\ref{basic-cor} and  Lemma~\ref{basic-lemma2}.
We will check the case when $i<a<j$.
In this case, we set
\begin{align*}
X=&\Xije, ~ Y=K_{\bar a},~ H=q(q-q^{-1})\oX_{a,j}X_{i,a}K_a^{-1},~ I=q(q-q^{-1})X_{a,j}\oX_{i,a}K_a^{-1},\\
J=&q(q-q^{-1})^2X_{a,j}\oX_{i,j}X_{i,a}K_a^{-1}.
\end{align*}
Then by Lemma~\ref{q-XKbar}, we have $XY=YX+H-I$. Moreover, by the formula in the fourth case in Lemma~\ref{q-ppeo}(2) and \eqref{q-ppee-extra} we have
$XH=q^2HX-J$. Meanwhile, by the formula in the sixth case in Lemma~\ref{q-ppeo}(2) and Lemma~\ref{q-ppee}, one can deduce that
$XI=q^{-2}IX+q^{-2}J$. Finally, by Lemma~\ref{q-ppeo}(1), Lemma~\ref{q-ppee} and \eqref{q-ppee-extra} we obtain $XJ=JX$.
Putting together, part (4) is proved by Lemma~\ref{basic-lemma2}.
Similarly, the other parts can be verified.
\end{proof}

\begin{rem}\label{npeo} As before, by applying the anti-automorphism $\Omega$ to the formulas in Propositions~ \ref{q-div-ppeo},~\ref{q-div-pneo}~and~\ref{q-div-XKbar},
we will obtain a complete set of commutation formulas involving an even divided power $X^{(m)}_{i,j}$
and  an odd root vector $\oX_{k,l}$ or the element $K_{\bar{a}}$ for $i>j, k\neq l, 1\leq a\leq n$.
\end{rem}

\section{Lusztig type form for $\qUq$}
We now present two applications of the commutation formulas.
The first application is the existence of an integral form (or Lusztig form) for
$\qUq$. Recall from \eqref{Kt} the notation $\begin{bmatrix}
K_i\\t
\end{bmatrix}=\begin{bmatrix}
K_i;0\\t
\end{bmatrix}$.

Let $\mcZ=\Z[q,q^{-1}]$. Define $\qUZ$ to be the $\mcZ$-subsuperalgebra of $\qUq$ generated by
\begin{align*}
K_i^{\pm1},~
\begin{bmatrix}
K_i\\t
\end{bmatrix},~ E_j^{(m)},~ F^{(m)}_j,~ K_{\bi},~ E_{\bj},~ F_{\bj}\quad (1\leq i\leq n, 1\leq j\leq n-1,t,m\in\N).
\end{align*}
By Proposition~\ref{q-div-ppee}(3),  we obtain
$$
X^{(m)}_{i,j}=X^{(m)}_{i,k}X^{(m)}_{k,j}-q^{m^2}X^{(m)}_{k,j}X^{(m)}_{i,k}-\sum^{m-1}_{t=1}q^{(m-t)^2}X^{(m-t)}_{k,j}X^{(t)}_{i,j}X^{(m-t)}_{i,k}
$$
for $m\geq 1$ and $1\leq i<j\leq n$ such that $|j-i|>1$, where $k$ is strictly between $i$ and $j$.
Hence by induction on $|j-i|$ and $m$, we obtain $X^{(m)}_{i,j}\in U_{q,\mcZ}$ for $1\leq i<j\leq n$ as the classical case (cf. \cite[Proposition 2.17]{Lu}).
Then using \eqref{Omega-root}, one can deduce that $X^{(m)}_{i,j}\in U_{q,\mcZ}$ for $1\leq j<i\leq n$.
Then we will also consider the set of generators involving $K_i^{\pm1},~
\begin{bmatrix}
K_i\\t
\end{bmatrix} (1\leq i\leq n, t\in\N)$ and the set (cf. Remark~\ref{degree}):
\begin{equation}
\mc G_q=\{ \Xijem,~ \Xijo,~ K_{\bi}\mid 1\leq i\neq j\leq n,m\in\N\}
\end{equation}
Let $U^0_{q,\mcZ}$ be the  $\mcZ$-subsuperalgebra of $\qUq$ generated by $K_i^{\pm1},~
\begin{bmatrix}
K_i\\t
\end{bmatrix},
K_{\bi}
$ with $1\leq i\leq n, t\in\N$.
Denote by $U^+_{q,\mcZ}$ (resp. $U^-_{q,\mcZ}$) the $\mcZ$-subsuperalgebra of $\qUq$ generated by
$E^{(m)}_j, E_{\bj}$  (resp. by $F^{(m)}_j, F_{\bj}$) with $1\leq j\leq n-1, m\in\N$.

\begin{lem}\label{Kbarsq}
For $1\leq i\leq n$, we have
$$
K_{\bi}^2=q^{-1}K_i \begin{bmatrix}
K_i\\1
\end{bmatrix}-q^{-1}(q-q^{-1}) \begin{bmatrix}
K_i\\2
\end{bmatrix}.
$$
\end{lem}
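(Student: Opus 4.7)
The plan is to compute both sides directly and match them. From relation (QQ1) of Proposition~\ref{presentqUq} applied with $i=j$, we have
\begin{equation*}
2K_{\bi}^2 \;=\; K_{\bi}K_{\bi}+K_{\bi}K_{\bi} \;=\; 2\,\frac{K_i^{2}-K_i^{-2}}{q^{2}-q^{-2}},
\end{equation*}
so $K_{\bi}^{2}=(K_i^{2}-K_i^{-2})/(q^{2}-q^{-2})$. Hence the task is reduced to verifying the scalar identity
\begin{equation*}
\frac{K_i^{2}-K_i^{-2}}{q^{2}-q^{-2}} \;=\; q^{-1}K_i\begin{bmatrix}K_i\\1\end{bmatrix}-q^{-1}(q-q^{-1})\begin{bmatrix}K_i\\2\end{bmatrix}
\end{equation*}
inside the commutative subalgebra generated by $K_i^{\pm1}$.

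Next I would simply expand the right-hand side using the definition \eqref{Kt}:
\begin{equation*}
\begin{bmatrix}K_i\\1\end{bmatrix}=\frac{K_i-K_i^{-1}}{q-q^{-1}},\qquad
\begin{bmatrix}K_i\\2\end{bmatrix}=\frac{(K_i-K_i^{-1})(q^{-1}K_i-qK_i^{-1})}{(q-q^{-1})(q^{2}-q^{-2})}.
\end{equation*}
Then $q^{-1}K_i\begin{bmatrix}K_i\\1\end{bmatrix}=(q^{-1}K_i^{2}-q^{-1})/(q-q^{-1})$, and, after expanding the product $(K_i-K_i^{-1})(q^{-1}K_i-qK_i^{-1})=q^{-1}K_i^{2}+qK_i^{-2}-(q+q^{-1})$, the term $q^{-1}(q-q^{-1})\begin{bmatrix}K_i\\2\end{bmatrix}$ simplifies to $(q^{-2}K_i^{2}+K_i^{-2}-1-q^{-2})/(q^{2}-q^{-2})$. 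Putting both terms over the common denominator $q^{2}-q^{-2}$ (multiplying the first by $q+q^{-1}$) and subtracting, the $K_i^{-2}$ and constant terms collapse and the $K_i^{2}$ coefficient reduces to $1$, yielding exactly $(K_i^{2}-K_i^{-2})/(q^{2}-q^{-2})$, as required.

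There is no real obstacle here: the only subtlety is keeping track of the two different denominators $q-q^{-1}$ and $q^{2}-q^{-2}$ when combining the quantum binomials, which is handled cleanly by the identity $q^{2}-q^{-2}=(q+q^{-1})(q-q^{-1})$. The lemma will be used later to reduce $K_{\bi}^{2}$ into the $\mathcal{Z}$-subalgebra $U^{0}_{q,\mathcal{Z}}$ generated by the $K_i^{\pm1}$ and the quantum binomials, which is the reason it is recorded at this point.
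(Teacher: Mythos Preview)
Your proof is correct and follows essentially the same approach as the paper: both compute the right-hand side directly from the definition of the quantum binomials and match it against $K_{\bi}^2=(K_i^2-K_i^{-2})/(q^2-q^{-2})$ obtained from (QQ1). The only cosmetic difference is that the paper computes the right-hand side first and then invokes (QQ1), while you invoke (QQ1) first; the algebraic content is identical.
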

\begin{proof}
By~\eqref{Kt}, we have
$$
\begin{bmatrix}
K_i\\1
\end{bmatrix}=\frac{K_i-K_i^{-1}}{q-q^{-1}},\quad
\begin{bmatrix}
K_i\\2
\end{bmatrix}
=\frac{K_i-K_i^{-1}}{q-q^{-1}}\cdot\frac{K_iq^{-1}-K_i^{-1}q}{q^2-q^{-2}}.
$$
Then a direct calculation shows that
$$
q^{-1}K_i \begin{bmatrix}
K_i\\1
\end{bmatrix}-q^{-1}(q-q^{-1})\begin{bmatrix}
K_i\\2
\end{bmatrix}
=\frac{K_i^2-K_i^{-2}}{q^2-q^{-2}}.
$$
Hence the lemma is proved using the relation (QQ1) in Proposition~\ref{presentqUq}.
\end{proof}

Observe that by Lemma~\ref{q-ppoo} and applying the anti-automorphism~\eqref{Omega}, we have
\begin{equation}\label{Xoddsq}
\aligned
\oX^2_{i,j}&=-\frac{q-q^{-1}}{q+q^{-1}}X_{i,j}^2=-(q-q^{-1})X_{i,j}^{(2)},\\
\oX^2_{j,i}&=\frac{q-q^{-1}}{q+q^{-1}}X_{j,i}^2=(q-q^{-1})X_{j,i}^{(2)},
\endaligned
\end{equation}
for $1\leq i<j\leq n$.

Similar to \eqref{eAfA}, we introduce the following elements: for $A\in M_n(\N|\Z_2)$,
\begin{align}\label{EAFA}
E_{A^+}=\prod_{1\leq i<j\leq n}\big(X_{i,j}^{(a^0_{ij})}\oX^{a^1_{ij}}_{{i,j}}\big)\,\text{ and }\,
F_{A^-}=\prod_{1\leq i<j\leq n}\big(X_{j,i}^{(a^0_{ji})}\oX^{a^1_{ji}}_{j,i}\big),
\end{align}
where $A_0=(a^0_{ij}), A_1=(a^1_{ij})$, and the order in products is the same as those in \eqref{eA+} and \eqref{fA-}, respectively.

Recall that for $\sigma\in\Z^n$ and $A\in M_n(\N|\Z_2)$, we have $K_{\sigma}=K_1^{\sigma_1}\cdots K_n^{\sigma_n}$ and $\ov{K}_{A^0_1}=K_{\bar{1}}^{a^1_{11}}\cdots K_{\bar{n}}^{a^1_{nn}}$, where $A_1=(a^1_{ij})$.
For $\bsb=(b_1,\ldots,b_n)\in\N^n$, define
$$
\begin{bmatrix}\bsK\\ \bsb\end{bmatrix}=\prod^n_{i=1}\begin{bmatrix}K_i\\ b_i\end{bmatrix}.
$$

\begin{prop}\label{q-intPBW} \begin{enumerate}
\item There is a $\mcZ$-linear isomorphism
$
\qUZ\cong U^-_{q,\mcZ}\otimes U^0_{q,\mcZ}\otimes U^+_{q,\mcZ}.
$

\item The following set
$$
\Bigg\{\fkm^q_{A,\tau}:=F_{A^-} \big(\prod^n_{i=1}K^{\tau_i}_i\big)\begin{bmatrix}\bsK\\ A^0_0\end{bmatrix}\ov{K}_{A^0_1}E_{A^+}\mid
A\in M_n(\N|\Z_2), \tau=(\tau_1,\ldots,\tau_n)\in\Z_2^n, \Bigg\}
$$
is a $\mcZ$-basis for $U_{q,\mcZ}$.
\end{enumerate}
\end{prop}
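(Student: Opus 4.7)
The plan is to mirror the proof of Proposition 3.6, but using the quantum commutation formulas of Sections 6 and 7 in place of Proposition 3.3, together with the PBW theorem of Olshanski (Proposition 5.5, reformulated in Proposition 5.7).

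For part (1), I will first establish the inclusion $U_{q,\mcZ} \subseteq U^-_{q,\mcZ}\cdot U^0_{q,\mcZ}\cdot U^+_{q,\mcZ}$. This reduces to showing that for any two generators $a,b \in \mc G_q \cup \{K_i^{\pm1}, \begin{bmatrix} K_i\\ t\end{bmatrix}\}$ lying in different triangular parts, the product $ab$ lies in the span of monomials $c\cdot d\cdot e$ with $c\in U^-_{q,\mcZ}$, $d\in U^0_{q,\mcZ}$, $e\in U^+_{q,\mcZ}$ over $\mcZ$. The commutation formulas needed are exactly those assembled in Propositions 7.1--7.5 (and Remarks 7.2, 7.4, 7.6), together with Lemma 6.7 and relation (QQ2) to commute the toral factors $K_i^{\pm1}$ and $\begin{bmatrix} K_i\\ t\end{bmatrix}$ past divided-power root vectors. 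A crucial point is that every coefficient appearing in those formulas lies in $\mcZ$ (e.g.\ Gaussian binomials $\begin{bmatrix} K_iK_j^{-1};c\\ t\end{bmatrix}$ belong to $U^0_{q,\mcZ}$ by the standard identities in \cite{Lu}). The reverse inclusion is obvious, and once equality holds, the desired tensor-product decomposition follows by restricting the $\Q(q)$-linear isomorphism $\qUq \cong U_q^- \otimes U_q^0 \otimes U_q^+$ of Proposition 5.7 to $\qUZ$.

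For part (2), I will first show that $\{F_{A^-}\mid A\in M_n(\N|\Z_2),\ A^+=A^0=0\}$ spans $U^-_{q,\mcZ}$ (and the symmetric statement for $U^+_{q,\mcZ}$). This uses the quantum commutation formulas in Section 7 restricted to a single triangular part (Propositions 7.1 and 7.3) to reorder products, together with formula \eqref{Xoddsq} to reduce any $\oX_{i,j}^2$ to a scalar multiple of $X^{(2)}_{i,j}$, which lies in $U^-_{q,\mcZ}$ (resp.\ $U^+_{q,\mcZ}$) by the inductive argument in the first paragraph of Section 8. Next, I will show that $U^0_{q,\mcZ}$ is spanned over $\mcZ$ by $\{K_\sigma\begin{bmatrix}\bsK\\ \bsb\end{bmatrix}\ov{K}_{C^0_1}\mid \sigma\in\Z^n,\ \bsb\in\N^n,\ C^0_1\in M_n(\Z_2)\text{ diagonal}\}$. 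Here the relations (QQ1) and Lemma 8.1 are used: (QQ1) gives $K_{\bi}K_{\bj}+K_{\bj}K_{\bi} \in U^0_{q,\mcZ}$, and Lemma 8.1 shows $K_{\bi}^2$ is an $\mcZ$-linear combination of $K_i\begin{bmatrix}K_i\\1\end{bmatrix}$ and $\begin{bmatrix}K_i\\2\end{bmatrix}$, so powers of $K_{\bi}$ higher than one can be eliminated. Combining these three spanning results with part (1) shows that the set $\{\fkm^q_{A,\tau}\}$ spans $U_{q,\mcZ}$.

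Linear independence is then inherited from the $\Q(q)$-linear independence of the PBW-type basis of $\qUq$. Indeed, by a direct computation analogous to \eqref{twobasis} in Proposition 5.7, each $\fkm^q_{A,\tau}$ can be rewritten (up to a nonzero scalar in $\Q(q)$, plus lower-order corrections that can be accounted for by an obvious triangularity on the total degree of $A$) in terms of Olshanski's PBW basis \eqref{PBW-basis-L}, and distinct $(A,\tau)$ yield distinct leading monomials. Hence $\{\fkm^q_{A,\tau}\}$ is $\Q(q)$-linearly independent and therefore $\mcZ$-linearly independent in $U_{q,\mcZ}$.

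The main obstacle I anticipate is the bookkeeping in the reduction to the triangular form in part (1): the quantum commutation formulas in Section 6--7 produce many extra terms, some involving products of root vectors of intermediate heights together with $K$-factors, and one must verify that a suitable notion of filtration (analogous to Remark 3.5, but now graded also by the powers of $q$ appearing in the leading Gaussian binomials) strictly decreases under the rewriting procedure. Once this filtration argument is set up, the induction goes through exactly as in the proof of Proposition 3.6.
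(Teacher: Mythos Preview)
Your proposal follows essentially the same route as the paper: establish the triangular decomposition by straightening monomials using the commutation formulas of \S\S6--7, show spanning of each piece $U^\pm_{q,\mcZ}$ and $U^0_{q,\mcZ}$ separately, and deduce linear independence from the $\Q(q)$-PBW basis (Proposition~5.7). A few remarks on where your write-up diverges from the paper's proof.

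First, the paper argues linear independence piece by piece: it shows directly that $\{E_{A^+}\}$, $\{F_{A^-}\}$, and the $U^0$ monomials are each linearly independent (all by immediate comparison with Proposition~5.7), then invokes the tensor decomposition of part~(1). Your alternative --- expressing each $\fkm^q_{A,\tau}$ in the PBW basis and appealing to triangularity --- also works, but the triangularity is \emph{not} on ``the total degree of $A$''. The subtle point is in the $U^0$ factor: expanding $\begin{bmatrix}K_i\\ b_i\end{bmatrix}$ produces a $\Q(q)$-combination of $K_i^{b_i}, K_i^{b_i-2},\dots,K_i^{-b_i}$, and recovering $(b_i,\tau_i)$ from the resulting exponents requires tracking both the maximal and minimal $K_i$-degree, not a single total degree. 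The paper sidesteps this by citing the classical result that $\{K_i^{\tau_i}\begin{bmatrix}K_i\\ b_i\end{bmatrix}\mid b_i\in\N,\ \tau_i\in\Z_2\}$ is already a $\mcZ$-basis of the Cartan part of $U_{q,\mcZ}(\mathfrak{gl}(n))$; this also handles the reduction from your spanning set with $\sigma\in\Z^n$ to the claimed basis with $\tau\in\Z_2^n$, which you do not address.

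Second, the filtration concern you raise at the end is not needed for part~(1). The commutation formulas of \S7 (and Remarks~7.2, 7.4, 7.6) already have all coefficients in $\mcZ$, and the straightening terminates for the usual finiteness reasons (exactly as in Lusztig's argument for $U_q(\mathfrak{gl}(n))$, cf.\ \cite[Proposition~1.13]{Lu}, which the paper cites). The twisted degree of Remark~8.3 is introduced only later, for the finer analysis in Lemma~8.6 and Proposition~8.7.
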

\begin{proof}
By an argument parallel to the proof of Proposition~\ref{intPBW}(1), one can check that
part (1) of the proposition holds.

By Lemma~\ref{PBW-X}, the set $\{E_{A^+}\mid A\in M_n(\N|\Z_2), 0=A^-=A^0\}$ is linearly independent.
Meanwhile by Lemma~\ref{q-ppoo}, \eqref{Xoddsq}, and Propositions~\ref{q-div-ppee} and ~\ref{q-div-ppeo}, we obtain that
$U^+_{q,\mcZ}$ is spanned by the set $\{E_{A^+}\mid A\in M_n(\N|\Z_2), 0=A^-=A^0\}$ using a proof similar to \cite[Proposition 1.13]{Lu}.
Hence $\{E_{A^+}\mid A\in M_n(\N|\Z_2), 0=A^-=A^0\}$ a $\mcZ$-basis for $U^+_{q,\mcZ}$.
Similarly, one can show that the set $\{F_{A^-}\mid A\in M_n(\N|\Z_2), 0=A^+=A^0\}$ is a $\mcZ$-basis for $U^-_{q,\mcZ}$.
Finally  by \eqref{qUqev} the subalgebra $U^0_{q,\mcZ}(\ev{\mfq})$ of $U^0_{q,\mcZ}$ generated by $K_i^{\pm}, \begin{bmatrix}K_i\\ t\end{bmatrix}$ with $1\leq i\leq n$ and $t\in\N$ can be identified with the corresponding algebra $U^0_{q,\mcZ}(\mathfrak{gl}(n))$
associated to $\mathfrak{gl}(n)$. Then it follows from the classical case that $U^0_{q,\mcZ}(\ev{\mfq})$
has a $\mcZ$-basis given by $\{\big(\prod^n_{i=1}K^{\tau_i}_i\big)\begin{bmatrix}\bsK\\ \bsb\end{bmatrix}\mid \bsb\in\N^n, \tau_i\in\Z_2, 1\leq i\leq n\}$.
This together with Lemma~\ref{Kbarsq} and the relation (QQ1) in Proposition~\ref{presentqUq} we obtain that
$U^0_{q,\mcZ}$ is spanned by the set
$\{\big(\prod^n_{i=1}K^{\tau_i}_i\big)\begin{bmatrix}\bsK\\ \bsb\end{bmatrix}\ov{K}_{D}\mid \bsb\in\N^n, D\in\Z_2^n, \tau_i\in\Z_2, 1\leq i\leq n\}$.
Again by Lemma~\ref{PBW-X}, this set is linearly independent and hence it is a $\mcZ$-basis for $U^0_{q,\mcZ}$.
Putting all together, part (2)  follows from part (1).
\end{proof}

\begin{rem}\label{twisted degree}
Observe the commutation formula in Lemma~\ref{q-pnoo}(1). If we use a degree function similarly defined as in \eqref{degree-mA}, then $[E_\bi,F_\bi]$ is a linear combination of monomials of degree
not strictly less. In order to obtain the analog of Lemma~\ref{express-m}
in quantum case, we introduce the following {\it twisted degree} function:
\begin{align}
{\rm deg}'(X^{(s)}_{i,j})=2s|j-i|,\; {\rm deg}'(\oX_{i,j})=2|j-i|,\;{\rm deg}'(K_{\bar i})=1,\text{ and }\deg'(K_i)=0 \label{q-degree}
\end{align}
for $1\leq i\neq j\leq n,  s\in\N$.
Then we have
\begin{equation}\label{q-degree-content}
{\rm deg}'(\fkm^q_{A,\tau})={\rm deg}'(A):=(a^1_{11}+\cdots+a^1_{nn})+\sum_{1\leq i\neq j\leq n}2(a_{ij}+a_{ji})|j-i|,
\end{equation}
where $\tau\in\Z_2^n$ and $A=(A_0,A_1)\in M_n(\N|\Z_2)$ with $A_0=(a^0_{ij}),A_1=(a^1_{ij})$ and $a_{ij}=a^0_{ij}+a^1_{ij}$.
Let $\mf M_q\subseteq U_q$ be the set of monomial $\fkm_q$ in
$X^{(s)}_{i,j},\oX_{i,j},K_{\bar i},$ $ K_i^{\pm1}, \begin{bmatrix}K_i\\ s\end{bmatrix}$ for $1\leq i\neq j\leq n,  s\in\N$.
Then similar to non-quantum case, the degree of a non-zero monomial $\fkm_q\in\mf M_q$ is defined accordingly.
With $\deg'$, we see that commuting the product of two generators $x,y\in \mc G_q$ from different triangular parts yields a linear combination of certain monomials with smaller twisted degree. See Lemma~\ref{q-pnoo}, Propositions~\ref{q-div-pnee} and \ref{q-div-pneo} for $x,y$ in different $\pm$-part, noting
the following holds for $i<j,k>l$ satisfying $i\leq l<j$ or $l\leq i<k$:
\begin{align*}
2<2|j-i|+2|k-l|,~ &\text{ if }l=i,k=j,\\
2|k-j|+2|l-i|+1<2|j-i|+2|k-l|,~ &\text{ if }l\neq i\text{ or }k\neq j,
\end{align*}
and see
Propositions \ref{q-XKbar} and  \ref{q-div-XKbar} for $x,y$ in different 0-part and $\pm$-part.
\end{rem}

The next application is a spanning set of a certain quotient superalgebra of $\qUZ$.
Similar to the non-quantum situation, let $I_q$ be the ideal of the quantum superalgebra $\qUq$ given by
\begin{align}\label{ideal-Iq}
I_q=\langle K_1\cdots K_n-q^r,  (K_i-1)(K_i-q)\cdots (K_i-q^{r}), K_{\bi}(K_i-q)\cdots (K_i-q^{r})\,\forall i\rangle.
\end{align}
Define
\begin{equation}\label{Pq}
\aligned
\qPnr=U_q/I_q,\quad U_q(n,r)_{\mcZ}=U_{q,\mcZ}/I_q\cap U_{q,\mcZ},\\
U^0_q(n,r)=U^0_q/I_q\cap U_q^0,\quad U^0_q(n,r)_{\mcZ}=U^0_{q,\mcZ}/I_q\cap U^0_{q,\mcZ}.
\endaligned
\end{equation}
Clearly, $U_q(n,r)$ is a homomorphic image of $U_q(\mathfrak{q}(n))$.

{\it As before,
 we also denote by abuse of notation the image of $K_i,E_j,F_j$ etc. in $U_q(n,r)$
by the same letters.} We will write, for $\la\in\La(n,r)$,
$$
1_\la=\begin{bmatrix}\bsK\\\la\end{bmatrix}\in U^0_q(n,r)_\mcZ.
$$

\begin{prop}\label{q-property}
The following holds in $U^0_q(n,r)_\mcZ$:
\begin{enumerate}
\item The set $\{1_\la\mid \la\in\La(n,r)\}$ is a set of pairwise orthogonal central idempotents in $U^0_q(n,r)_\mcZ$ satisfying $1=\sum_{\la\in\La(n,r)}1_\la.$

\item $\begin{bmatrix}\bsK\\ \bsb\end{bmatrix}=0$ for $\bsb\in\N^n$ with $|\bsb|>r$.

\item For $1\leq i\leq n, \la\in\La(n,r)$ and $\bsb\in\N^n$, we have $K_i^{\pm}1_\la=q^{\pm\la_i}1_\la$,
\begin{align*}
 \begin{bmatrix}K_i;c\\t\end{bmatrix}1_\la=\begin{bmatrix}\la_i+c\\t\end{bmatrix}1_\la,~
\begin{bmatrix}\bsK\\ \bsb\end{bmatrix}1_\la=\begin{bmatrix}\la\\ \bsb\end{bmatrix}1_\la, \text{ and }
\begin{bmatrix}\bsK\\ \bsb\end{bmatrix}=\sum_{\la\in\La(n,r)}\begin{bmatrix}\la\\ \bsb\end{bmatrix}1_\la,
\end{align*}
where $\ds\begin{bmatrix}\la\\ \bsb\end{bmatrix}=\prod^n_{i=1}\begin{bmatrix}\la_i\\ b_i\end{bmatrix}$.
\item Suppose $1\leq i\leq n$ and $\la\in\La(n,r)$. If $\la_i=0$, then
$
K_{\bar i}1_\la=0.
$

\item The $\mcZ$-algebra $U^0_q(n,r)_\mcZ$ is spanned by
$$
\{1_\la \ov{K}_D\mid \la\in\La(n,r), D\in\Z_2^n, D_i\leq \la_i, 1\leq i\leq n\}.
$$
\end{enumerate}
\end{prop}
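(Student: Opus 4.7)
My plan is to parallel the non-quantum argument of Proposition~\ref{idempotent}, substituting the known results for the quantum Schur algebra associated to $\mathfrak{gl}(n)$ (due to Du--Giaquinto) for the classical ones of \cite{DG}.

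Parts (1)--(3) involve only the ``even'' generators $K_i^{\pm1}$ and $\begin{bmatrix}\bsK\\ \bsb\end{bmatrix}$, all lying in the subalgebra $U_q^0(\ev\mfq)$. Under the Hopf algebra isomorphism $U_q(\ev\mfq)\cong U_q(\mathfrak{gl}(n))$ of Proposition~\ref{PBW-X}, the generators $K_1\cdots K_n-q^r$ and $(K_i-1)(K_i-q)\cdots(K_i-q^r)$ of $I_q$ lying in $U_q^0(\ev\mfq)$ correspond precisely to the defining relations of the even part of the quantum Schur algebra for $\mathfrak{gl}(n)$. Hence (1)--(3) reduce to the corresponding statements proved by Du--Giaquinto in the quantum $\mathfrak{gl}(n)$ case.

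For part (4), I would apply the defining relation $K_{\bi}(K_i-q)(K_i-q^2)\cdots(K_i-q^r)\in I_q$ to $1_\la$ under the hypothesis $\la_i=0$. Part (3) gives $K_i1_\la=q^{\la_i}1_\la=1_\la$, hence
\[
(K_i-q)(K_i-q^2)\cdots(K_i-q^r)\,1_\la=(1-q)(1-q^2)\cdots(1-q^r)\,1_\la.
\]
Since (QQ1) forces $K_{\bi}$ to commute with every $K_j$ and therefore with $1_\la$, left multiplication by $K_{\bi}$ yields
\[
(1-q)(1-q^2)\cdots(1-q^r)\,K_{\bi}\,1_\la=0.
\]
Working first in $\qPnr$ over $\Q(q)$, the leading scalar is invertible and gives $K_{\bi}1_\la=0$; the same vanishing then descends to $U_q^0(n,r)_\mcZ$ via its natural map to $\qPnr$.

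Part (5) I would derive by combining (4) with Proposition~\ref{q-intPBW}(2), which supplies a $\mcZ$-spanning set of $U_{q,\mcZ}^0$ of the form $\bigl(\prod_i K_i^{\tau_i}\bigr)\begin{bmatrix}\bsK\\ \bsb\end{bmatrix}\ov{K}_D$. Modulo $I_q$, part (3) rewrites each such element as a $\mcZ$-linear combination of $1_\la\ov{K}_D$ indexed by $\la\in\La(n,r)$, and (4) together with the commutativity of the $K_{\bar j}$'s with $1_\la$ kills $1_\la\ov{K}_D$ whenever some $D_i=1$ while $\la_i=0$, leaving only the asserted spanning set. The main subtlety will be the descent in (4) from $\qPnr$ to the integral form $U_q^0(n,r)_\mcZ$: strictly speaking this requires the natural map $U_q^0(n,r)_\mcZ\to\qPnr$ to be injective (no $\mcZ$-torsion). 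The cleanest route is to establish the generic vanishing first and to extract the integral statement only afterwards, once part (5) has been coupled with an honest $\mcZ$-basis for $U_q^0(n,r)_\mcZ$ obtained from the quantum queer Schur algebra analog of the corollary following Theorem~\ref{presentQr}.
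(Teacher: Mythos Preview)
Your proposal is correct and follows essentially the same route as the paper: parts (1)--(3) are reduced to \cite[Propositions~8.2--8.3]{DG} via the identification \eqref{qUqev}, part (4) is proved by exactly the multiplication argument you describe, and part (5) comes from the spanning set for $U^0_{q,\mcZ}$ (the paper cites (QQ1) and Lemma~\ref{Kbarsq} rather than Proposition~\ref{q-intPBW}(2), but these amount to the same thing) combined with (3) and (4).

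One unnecessary detour: your worry about the descent from $\qPnr$ to $U^0_q(n,r)_\mcZ$ in part (4) is not needed. By definition $U^0_q(n,r)_\mcZ = U^0_{q,\mcZ}/(I_q\cap U^0_{q,\mcZ})$, and the kernel of the composite $U^0_{q,\mcZ}\hookrightarrow U_q\twoheadrightarrow U_q/I_q=\qPnr$ is exactly $I_q\cap U^0_{q,\mcZ}$; so the natural map $U^0_q(n,r)_\mcZ\to\qPnr$ is injective automatically. Since $(1-q)\cdots(1-q^r)$ is a unit in $\Q(q)$, the vanishing $K_{\bi}1_\la=0$ holds in $\qPnr$ and hence in the $\mcZ$-form, with no appeal to a later $\mcZ$-basis. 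The paper simply writes ``Therefore $K_{\bi}1_\la=0$'' at this step, implicitly using this embedding.
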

\begin{proof}
By the identification~\eqref{qUqev},  since the elements $K_1K_2\cdots K_n-q^r$ and $(K_i-1)(K_i-q)\cdots (K_i-q^r), ~1\leq i\leq n$
are contained in the ideal $I_q$, we observe that there is a natural algebra homomorphism from the algebra $\mathbf{T}^0$
introduced in \cite[(8.1)]{DG} to $ U^0_q(n,r)$, which sends the generator $K_i$ in \cite{DG} to our generator $K_i$ for $1\leq i\leq n$.
Hence parts (1)-(3) follow directly from \cite[Propositions 8.2-8.3]{DG}.

To prove part (4), firstly we observe that the following holds
\begin{align*}
(K_i-q)\cdots(K_i-q^r)1_\la=(q^{\la_i}-q)\cdots(q^{\la_i}-q^r)1_\la
\end{align*}
for $1\leq i\leq n$ by part (3) of the proposition.
Now assume $\la_i=0$. Then we have
$$
(K_i-q)\cdots(K_i-q^r)1_\la=(1-q)\cdots(1-q^r)1_\la.
$$
Hence
$$
(1-q)\cdots(1-q^r)K_{\bi}1_\la=K_{\bi}(K_i-q)\cdots(K_i-q^r)1_\la=0,
$$
which is due to the fact  $K_{\bi}(K_i-q)\cdots(K_i-q^r)\in I_q\cap U_{q,\mc Z}^0$.
Therefore $K_{\bi}1_\la=0$.

Finally, by part (3), the relation~(QQ1) and Lemma~\ref{Kbarsq}, we know the algebra $U^0_q(n,r)_\mcZ$ is spanned by the set
$\{\ov{K}_{D}1_\la=1_\la \ov{K}_{D}\mid\la\in\La(n,r), D\in\Z_2^n\}$.
By part (4), we have $\ov{K}_{D}1_\la=0$ if there exists $1\leq i\leq n$ such that $D_i=1$ and $\la_i=0$.
Hence part (5) holds.
\end{proof}

\begin{prop}\label{q-root-idem}
Suppose $1\leq i\leq n,\al\in\Phi$ and $\la\in\La(n,r)$.
Then we have in $U_q(n,r)_\mcZ$:
\begin{align*}
X_\al1_\la&=\left\{
\begin{array}{ll}
1_{\la+\al}X_\al,&\text{ if }\la+\al\in\La(n,r),\\
0,&\text{ otherwise},
\end{array}
\right.
\quad
\oX_{\al}1_\la=\left\{
\begin{array}{ll}
1_{\la+\al}\oX_{\al},&\text{ if }\la+\al\in\La(n,r),\\
0,&\text{ otherwise},
\end{array}
\right.\\
1_\la X_\al&=\left\{
\begin{array}{ll}
X_\al1_{\la-\al},&\text{ if }\la-\al\in\La(n,r),\\
0,&\text{ otherwise},
\end{array}
\right.
\quad
1_\la \oX_{\al}=\left\{
\begin{array}{ll}
\oX_{\al}1_{\la-\al},&\text{ if }\la-\al\in\La(n,r),\\
0,&\text{ otherwise},
\end{array}
\right.\\
K_{\bar i}1_\la&=1_\la K_{\bar i}.
\end{align*}
\end{prop}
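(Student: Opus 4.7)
The proof should parallel the classical case (Proposition~\ref{root-idem}), with the quantum commutation \eqref{KX} playing the role of Proposition~\ref{div-root}(5). I would handle the five formulas in two groups.

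First, the last equality $K_{\bi}1_\la=1_\la K_{\bi}$ is immediate from (QQ1) in Proposition~\ref{presentqUq}: since $K_i K_{\bj}=K_{\bj}K_i$ for all $i,j$, the odd Cartan element $K_{\bi}$ commutes with every $K_j^{\pm 1}$, hence with every $\begin{bmatrix}K_j\\ b_j\end{bmatrix}$, and therefore with $1_\la=\begin{bmatrix}\bsK\\ \la\end{bmatrix}$.

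Second, the four root-vector/idempotent formulas are handled uniformly. Fix $\al=\al_{i,j}=\ep_i-\ep_j$ with $i\neq j$. By \eqref{KX}, both $X_\al$ and $\oX_\al$ satisfy $K_k X=q^{(\ep_k,\al)}XK_k$, so they commute with $K_k^{\pm 1}$ for $k\neq i,j$, while the only nontrivial conjugations occur past $K_i$ and $K_j$. Using the definition \eqref{Kt} and a direct induction on the power $t$, one derives the $q$-binomial shift
\[
X\begin{bmatrix}K_k\\ \la_k\end{bmatrix}=\begin{bmatrix}K_k;-(\ep_k,\al)\\ \la_k\end{bmatrix}X\qquad (X\in\{X_\al,\oX_\al\}).
\]
Applying this for every $k$ and mimicking the manipulation \eqref{xla-1la} of the non-quantum proof, one obtains
\[
X_\al\,1_\la=\Bigg(\begin{bmatrix}K_i;-1\\ \la_i\end{bmatrix}\begin{bmatrix}K_j;+1\\ \la_j\end{bmatrix}\prod_{k\neq i,j}\begin{bmatrix}K_k\\ \la_k\end{bmatrix}\Bigg)X_\al,
\]
and the same identity with $X_\al$ replaced by $\oX_\al$. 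The remaining parallel formulas $1_\la X_\al$, $1_\la\oX_\al$ are then obtained either by an analogous calculation (moving $1_\la$ through the other side) or by applying the anti-involution $\Omega$ of \eqref{Omega}, using \eqref{Omega-root}.

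The main obstacle is converting the product of shifted $q$-binomials on the right back into a single idempotent $1_{\la+\al}$, and showing it vanishes exactly when $\la+\al\notin\La(n,r)$. This is the quantum analog of the two-case analysis at the end of the proof of Proposition~\ref{root-idem}: using a Pascal-type identity $\begin{bmatrix}K_j;+1\\ \la_j\end{bmatrix}=\begin{bmatrix}K_j\\ \la_j\end{bmatrix}+q^{\la_j-1}\begin{bmatrix}K_j\\ \la_j-1\end{bmatrix}$ together with $\begin{bmatrix}K_i;-1\\ \la_i\end{bmatrix}=\begin{bmatrix}K_i\\ \la_i+1\end{bmatrix}\cdot(\text{scalar in }q^{\la_i})$, the product regroups as a $\Q(q)$-linear combination of $\begin{bmatrix}\bsK\\ \la+\ep_i\end{bmatrix}$ and $\begin{bmatrix}\bsK\\ \la+\al\end{bmatrix}$. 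When $\la_j\geq 1$ both survive, and evaluating via Proposition~\ref{q-property}(3) produces $1_{\la+\al}$; when $\la_j=0$, the second term is absent and the first vanishes by Proposition~\ref{q-property}(2) since $|\la+\ep_i|=r+1>r$. This is exactly the argument carried out in \cite[Proposition~8.5]{DG}, and we simply transport it here, with the minor addition that the same argument applies verbatim to $\oX_\al$ because its $K_k$-commutation weights agree with those of $X_\al$.
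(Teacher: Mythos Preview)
Your approach is the same as the paper's: the paper simply says ``the last equality follows from (QQ1); the proof of the remaining formulas is parallel to that of Proposition~\ref{root-idem}'' and omits the details, and you have correctly identified that parallel --- using \eqref{KX} in place of Proposition~\ref{div-root}(5), Proposition~\ref{q-property}(2)--(3) in place of Proposition~\ref{idempotent}(2)--(3), and the argument of \cite[Proposition~8.5]{DG}.

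Two minor technical inaccuracies in your sketch, neither of which undermines the strategy. First, the Pascal-type identity you wrote, $\begin{bmatrix}K_j;+1\\ \la_j\end{bmatrix}=\begin{bmatrix}K_j\\ \la_j\end{bmatrix}+q^{\la_j-1}\begin{bmatrix}K_j\\ \la_j-1\end{bmatrix}$, is not correct as an operator identity: the coefficient of the second term must carry a factor of $K_j^{-1}$ (check $\la_j=1$). Second, the claim $\begin{bmatrix}K_i;-1\\ \la_i\end{bmatrix}=\begin{bmatrix}K_i\\ \la_i+1\end{bmatrix}\cdot(\text{scalar})$ is likewise false as operators. The clean way to handle both --- exactly as in the proof of Proposition~\ref{root-idem} and in \cite[Proposition~8.5]{DG} --- is to \emph{first} insert the factor $\frac{1}{[\la_i+1]}\begin{bmatrix}K_i;1\\1\end{bmatrix}$ next to $1_\la$ (which acts as $1$ by Proposition~\ref{q-property}(3)) and \emph{then} commute $X_\al$ (or $\oX_\al$) through; the resulting product telescopes directly into $\begin{bmatrix}K_i\\ \la_i+1\end{bmatrix}$ without any Pascal step. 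With that correction your outline is complete.
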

\begin{proof}
The last equality follows from the relation (QQ1).
The proof of the remaining formulas
is parallel to that of Proposition~\ref{root-idem}.
We skip the detail here.
\end{proof}

As in the non-quantum case, set
$$
u^q_C=F_{C^-}\ov{K}_{C^0_1}E_{C^+}\in \qUZ\,\text{ and }\, \fku^q_{(C,\la)}=F_{C^-}1_\la\ov{K}_{C^0_1}E_{C^+}\in U_q(n,r)_\mcZ
$$ for $C\in M_n(\N|\Z_2)'$ and $\la\in\La(n,r)$.
Then, by \eqref{q-degree-content} we have
$
{\rm deg}'(u^q_C)={\rm deg}'(C).
$
By Proposition~\ref{q-root-idem}, we have in $U_q(n,r)_\mcZ$
\begin{equation}\label{q-uclan}
\fku^q_{(C,\la)}=1_{\la'}u^q_C=u^q_C1_{\la''} \text{ if } \fku^q_{(C,\la)}\neq 0,
\end{equation}
where $\la'=\la+\ro(C^-)-\co(C^-)$ and $\la''=\la-\ro(C^+)+\co(C^+)$.

\begin{lem}\label{express-mq} $(1)$
Let $0\neq\fkm_q\in\mf M_q$. Then, in $U_q(n,r)_\mcZ$,  $\fkm_q$ can be written as a linear combination of $\fku^q_{(C,\la)}$
for $C\in M_n(\N|\Z_2)', \la\in\La(n,r)$ such that ${\rm deg}'(C)\leq {\rm deg}'(\fkm_q)$.

$(2)$ Suppose $C\in M_n(\N|\Z_2)'$. Then  we have in $U_q(n,r)_\mcZ$
$$
F_{C^-}\ov{K}_{C^0_1}E_{C^+}=\pm E_{C^+}\ov{K}_{C^0_1}F_{C^-}+\sum \eta^{C}_{(G,\la)} \fku^q_{(G,\la)},
$$
for some $\eta^{C}_{(G,\la)}\in\mc Z$, the summation is over $G\in M_n(\N|\Z_2)'$ and $\la\in\La(n,r)$
such that ${\rm deg}'(G)<{\rm deg}'(C)$.
\end{lem}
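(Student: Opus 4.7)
The plan is to follow the structure of the proof of Lemma~\ref{express-m}, using the twisted degree $\deg'$ from Remark~\ref{twisted degree} in place of $\deg$, and substituting Propositions~\ref{q-intPBW}, \ref{q-property}, and \ref{q-root-idem} for their non-quantum counterparts.

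For part~(1), I would first invoke Proposition~\ref{q-intPBW}(2) to expand $\fkm_q$ in $\qUZ$ as a $\mcZ$-linear combination
\[
\fkm_q = \sum_{(A,\tau)}\xi^{\fkm_q}_{(A,\tau)}\,F_{A^-}\Bigl(\prod_{i=1}^n K_i^{\tau_i}\Bigr)\begin{bmatrix}\bsK\\ A^0_0\end{bmatrix}\ov{K}_{A^0_1}E_{A^+}.
\]
The key point to record is that the straightening producing this expansion is carried out by the commutation formulas of Sections~6--7, each of which (by Remark~\ref{twisted degree}) rewrites a product of generators as a $\mcZ$-combination of monomials of twisted degree no larger than the original. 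Hence $\xi^{\fkm_q}_{(A,\tau)} = 0$ unless $\deg'(A)\leq\deg'(\fkm_q)$. Passing to $U_q(n,r)_\mcZ$, Proposition~\ref{q-property}(3) lets me write $\begin{bmatrix}\bsK\\ A^0_0\end{bmatrix} = \sum_{\la\in\La(n,r)}\begin{bmatrix}\la\\ A^0_0\end{bmatrix}1_\la$, and the relations $K_i^{\tau_i}1_\la = q^{\tau_i\la_i}1_\la$ then absorb the $K_i^{\tau_i}$ factors into scalars. This rewrites each $\fkm^q_{A,\tau}$ as a $\mcZ$-linear combination of elements $\fku^q_{(A',\la)}$, where $A' \in M_n(\N|\Z_2)'$ is obtained from $A$ by zeroing out $A^0_0$ (so that $\deg'(A') = \deg'(A)$), proving~(1).

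For part~(2), the plan mirrors Lemma~\ref{express-m}(2): iteratively apply the commutation formulas of Sections~6--7 in $\qUZ$ to move $E_{C^+}$ past $\ov{K}_{C^0_1}$ and $F_{C^-}$. Each swap of two generators from distinct triangular parts yields the transposed product (up to a sign $\pm$ from super-commutativity) plus corrections of strictly smaller twisted degree, so the net outcome is
\[
F_{C^-}\ov{K}_{C^0_1}E_{C^+} = \pm\,E_{C^+}\ov{K}_{C^0_1}F_{C^-} + g_q \quad\text{in } \qUZ,
\]
where $g_q$ is a $\mcZ$-linear combination of monomials in $\mf M_q$ of twisted degree strictly below $\deg'(C)$. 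Applying part~(1) to each such monomial, upon passing to $U_q(n,r)_\mcZ$, delivers the required expansion with $\deg'(G) < \deg'(C)$.

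The main concern is verifying that every relevant commutation genuinely preserves or decreases the twisted degree; without this the induction collapses. The delicate case is Lemma~\ref{q-pnoo}(1), where the super-commutator of $\oX_{i,j}$ with $\oX_{j,i}$ produces a term $(q-q^{-1})K_{\bi}K_{\bj}$ of twisted degree $2$. The coefficients in \eqref{q-degree} were calibrated precisely so that $\deg'(\oX_{i,j}\oX_{j,i}) = 4|j-i|\geq 4 > 2$; the analogous checks for the mixed-triangular commutations in Propositions~\ref{q-div-pnee}, \ref{q-div-pneo}, and \ref{q-div-XKbar} are collected in Remark~\ref{twisted degree}. Once these checks are in hand, the remainder is routine bookkeeping.
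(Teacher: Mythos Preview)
Your proposal is correct and follows essentially the same route as the paper: the paper's own proof simply invokes Remark~\ref{twisted degree} (that commuting generators from distinct triangular parts strictly lowers $\deg'$) together with Proposition~\ref{q-intPBW}, and declares that the argument of Lemma~\ref{express-m} carries over verbatim. You have spelled out exactly those steps, including the passage through Proposition~\ref{q-property}(3) to replace $\begin{bmatrix}\bsK\\ A_0^0\end{bmatrix}$ and $K_i^{\tau_i}$ by idempotents and scalars, and you have correctly isolated the one genuinely new point---that $\deg'$ was designed so that the $(q-q^{-1})K_{\bi}K_{\bj}$ term in Lemma~\ref{q-pnoo}(1) has strictly smaller twisted degree than $\oX_{i,j}\oX_{j,i}$.
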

\begin{proof}
As observed in Remark \ref{twisted degree}, commuting two elements $x,y\in\mc G_q$ from different triangular parts strictly decreases the twisted degree.
Thus, with Proposition~\ref{q-intPBW} at hand, the arguments in the proof of Lemma~\ref{express-m} are applicable here.
We leave the detail to the reader.
\end{proof}

Set
$$
\sBq=\{\fku^q_{(C,\la)}\mid
C\in M_n(\N|\Z_2)',
\la\in\La(n,r), \chi(C)\preceq \la\}.
$$
Then similar to the non-quantum case, the following holds:
\begin{equation}\label{Bq}
\sBq=\{\fku^q_A:=F_{A^-}1_{\chi(A)}\ov{K}_{A^0_1}E_{A^+}\mid
A\in M_n(\N|\Z_2)_r\}.
\end{equation}

\begin{prop}\label{Yq-span}
The algebra $U_q(n,r)_\mcZ$ is spanned by the set $\sBq$.
\end{prop}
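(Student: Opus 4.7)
The plan is to mirror the argument used in the proof of Proposition~\ref{Y-span}, replacing each ingredient with its quantum counterpart: Proposition~\ref{intPBW} by Proposition~\ref{q-intPBW}, Lemma~\ref{express-m} by Lemma~\ref{express-mq}, Proposition~\ref{root-idem} by Proposition~\ref{q-root-idem}, and Proposition~\ref{idempotent}(4) by Proposition~\ref{q-property}(4). The classical degree function is replaced by the twisted degree $\deg'$ introduced in Remark~\ref{twisted degree}, which was designed precisely so that commuting two elements of $\mc G_q$ from different triangular parts strictly decreases the degree.

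Let $U_q'(n,r)_\mcZ$ denote the $\mcZ$-submodule of $U_q(n,r)_\mcZ$ spanned by $\sBq$. Using Proposition~\ref{q-intPBW} together with \eqref{Pq}, one sees that $U_q(n,r)_\mcZ$ is spanned by the monomials $\fkm_q\in\mf M_q$. By Lemma~\ref{express-mq}(1) and Proposition~\ref{q-root-idem}, it then suffices to show that $\fku^q_{(C,\la)}\in U_q'(n,r)_\mcZ$ for every $C\in M_n(\N|\Z_2)'$ and $\la\in\La(n,r)$. Fix such a pair $(C,\la)$; if $\chi(C)\preceq\la$ then $\fku^q_{(C,\la)}\in\sBq$ by \eqref{Bq} and we are done, so assume $\chi(C)\not\preceq\la$ and proceed by induction on $\deg'(C)$. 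The base case $\deg'(C)=0$ gives $\fku^q_{(C,\la)}=1_{\la}\in\sBq$.

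For the inductive step, choose the largest index $i$ with $\la_i<\chi_i(C)$ and let $G$ be the submatrix $((c^0_{k,l})|(c^1_{k,l}))_{i\leq k,l\leq n}$ sitting in the bottom right corner of $C$. Writing $E_{C^+}=E_{G^+}\cdot\fkm_1$, $F_{C^-}=\fkm_1'\cdot F_{G^-}$, and $\ov{K}_{C_1^0}=\ov{K}'\cdot\ov{K}_{G_1^0}$ where $\ov{K}'=\prod_{j=1}^{i-1}K_{\bar j}^{c_{jj}^1}$, and using that $F_{G^-}1_\la$ commutes with $\ov{K}'$ up to sign (by Propositions~\ref{q-div-XKbar} and \ref{q-root-idem}), one obtains
\begin{equation*}
\fku^q_{(C,\la)}=\pm\fkm_1'\ov{K}'\cdot\big(F_{G^-}1_\la\ov{K}_{G_1^0}E_{G^+}\big)\cdot\fkm_1.
\end{equation*}
Assuming $F_{G^-}1_\la\neq0$ (otherwise we are done), \eqref{q-uclan} gives $F_{G^-}1_\la=1_{\la'}F_{G^-}$ with $\la'=\la+\ro(G^-)-\co(G^-)$; applying Lemma~\ref{express-mq}(2) to $F_{G^-}\ov{K}_{G_1^0}E_{G^+}$ yields
\begin{equation*}
\fku^q_{(C,\la)}=\pm\fkm_1'\ov{K}'1_{\la'}E_{G^+}\ov{K}_{G_1^0}F_{G^-}\fkm_1+\fkl,
\end{equation*}
where $\fkl$ is a linear combination of terms $\fkm_1'\ov{K}'1_{\la'}\fku^q_{(J,\ga)}\fkm_1$ with $\deg'(J)<\deg'(G)$. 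Each such term, after applying Lemma~\ref{express-mq}(1) again and noting $\deg'(G)+\deg'(\fkm_1'\ov{K}')+\deg'(\fkm_1)=\deg'(C)$, is a linear combination of $\fku^q_{(C',\mu)}$ with $\deg'(C')<\deg'(C)$, hence lies in $U_q'(n,r)_\mcZ$ by induction.

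It remains to show the leading term $\fkm_1'\ov{K}'1_{\la'}E_{G^+}\ov{K}_{G_1^0}F_{G^-}\fkm_1$ is zero. Assuming $1_{\la'}E_{G^+}\neq 0$, by \eqref{q-uclan} we have $1_{\la'}E_{G^+}=E_{G^+}1_{\la''}$ with $\la''=\la+\ro(G^-)-\co(G^-)-\ro(G^+)+\co(G^+)\in\La(n,r)$. Since $G$ consists of the last $n-i+1$ rows and columns of $C$, the $i$th entry of $\la''$ computes to $\la_i''=\la_i-(\chi_i(C)-c^1_{ii})$ by \eqref{degree-content}. The inequality $\la_i<\chi_i(C)$ therefore forces $\la_i''<c^1_{ii}$, and since $c^1_{ii}\in\{0,1\}$, this gives $c^1_{ii}=1$ and $\la_i''=0$. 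By Proposition~\ref{q-property}(4), $1_{\la''}\ov{K}_{G_1^0}=0$, which kills the leading term. The main obstacle is purely bookkeeping: verifying that the quantum commutation formulas in Section~6 genuinely give the twisted-degree filtration asserted in Remark~\ref{twisted degree}, which has already been secured, so the induction proceeds cleanly.
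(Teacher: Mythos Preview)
Your proposal is correct and follows essentially the same approach as the paper: the paper's own proof simply states that with Lemma~\ref{express-mq} in hand, the arguments of Proposition~\ref{Y-span} carry over verbatim to the quantum case, and defers the details to the reader---details which you have supplied faithfully. One small remark: when you invoke Proposition~\ref{q-div-XKbar} to commute $F_{G^-}$ past $\ov{K}'$, that proposition is stated for positive root vectors, so strictly speaking you are using its $\Omega$-image (Remark~\ref{npeo}); this is harmless but worth noting.
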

\begin{proof}
With Lemma~\ref{express-mq} at hand, it is easy to see
the arguments in the proof of Proposition~\ref{Y-span}
are also applicable in the quantum case.
We leave the detail to the reader.
\end{proof}

\section{A presentation for the quantum queer Schur superalgebras $\qQnr$}
By base change to the superspace $V$ of \S2, we now assume that $V$ is a superspace over the field $\Q(q)$.
Thus, $V$ has basis $\{v_1,\ldots,v_n, v_{-1},...,v_{-n}\}$.
Following \cite{Ol}, we set
\begin{equation}\label{eq:operatorS}
\aligned
\Theta&=  \sum_{1\leq a\leq n}(E_{-a,a}-E_{a,-a}),\\
T&=  \sum_{i,j\in I(n|n)}\text{sgn}(j)E_{i,j}\otimes E_{j,i},\\
S&= \sum_{i\leq j\in I(n|n)}S_{i,j}\otimes E_{i,j}\in\End_{\Q(q)}
(V^{\otimes 2}),
\endaligned
\end{equation}
where $S_{i,j}$ for $i\leq j$ are defined as follows:
\begin{equation}\label{S}
\aligned
S_{a,a}&=\sum^n_{b=1}q^{\delta_{ab}}(E_{b,b}+E_{-b,-b})=1+(q-1)(E_{a,a}+E_{-a,-a}),\quad 1\leq a\leq n,\\
S_{-a,-a}&=\sum^n_{b=1}q^{-\delta_{ab}}(E_{b,b}+E_{-b,-b})=1+(q^{-1}-1)(E_{a,a}+E_{-a,-a}),\quad1\leq a\leq n, \\
S_{b,a}&=(q-q^{-1})(E_{a,b}+E_{-a,-b}),\quad 1\leq b<a\leq n, \\
S_{-b,-a}&=-(q -q^{-1})(E_{a,b}+E_{-a,-b}),\quad 1\leq a<b\leq n, \\
S_{-b,a}&=-(q -q^{-1}) (E_{-a,b}+E_{a,-b}),\quad 1\leq a, b\leq n.
\endaligned
\end{equation}

To endomorphisms $Y\in\End_{\Q(q)}(V)$ and
$Z=\sum_{t}H_{t}\otimes I_{t}\in\End_{\Q(q)}(V)^{\otimes
2}=\End_{\Q(q)}(V^{\otimes
2})$, we associate the following elements in $\End_{\Q(q)}(V^{\otimes
r})$:
\begin{align*}
Y_{(k)}&={\rm id}^{\otimes k-1}\otimes Y\otimes {\rm id}^{\otimes r-k},\qquad 1\leq k\leq r,\\
Z_{(j,k)}&=\sum_{t}(H_{t})_{(j)}(I_{t})_{(k)}, \qquad 1\leq j\neq k\leq r.
\end{align*}

Define $\Phi_1:\qUq\rightarrow \End_{\Q(q)} (V)$ by
\begin{equation}  \label{PhiS}
  \Phi_1(L_{i,j})= S_{i,j}, \qquad \text{ for } i, j\in I(n|n), i\leq j.
\end{equation}
It is known \cite[\S 4]{Ol} that $\Phi_1$ is an algebra
homomorphism and hence defines a representation $(\Phi_1,
V)$ of $\qUq$.
Then using the comultiplication~\eqref{comult}, we obtain a representation $(\Phi_r, V^{\otimes r})$,
which can be viewed as a deformation of the
representation $(\phi_r, V^{\otimes r})$ of $U(\mfq)$.
Define the {\em quantum queer Schur superalgebra $\qQnr$} (or {\em $q$-Schur superalgebra of queer type}) by
\begin{equation}\label{Qqnr}
\qQnr=\Phi_r(\qUq),
\end{equation}
that is, the image of the homomorphism $\Phi_r$.
As before, the superalgebra $\qQnr$ can be viewed as a quotient of $\qUq$.

Similar to the classical case, there exists another explanation of the superalgebra $\qQnr$
via an analog of Jimbo-Schur duality for $\qUq$ as follows.
The Hecke-Clifford algebra $\HC$ is the
associative superalgebra over $\Q(q)$ with
even generators $T_1,\ldots,T_{r-1}$ and odd generators
$c_1,\ldots,c_r$ subject to \eqref{Cl} and the following relations:
\begin{align*}
(T_i-q)(T_i+q^{-1})=0,\;\quad& \qquad T_ic_j =c_jT_i\; (j\neq i,i+1),\\
T_iT_{i'}=T_{i'}T_i \;(|i-i'|>1), &  \qquad T_ic_i=c_{i+1}T_i,\\
T_iT_{i+1}T_i =T_{i+1}T_iT_{i+1},\quad\;\; & \qquad T_ic_{i+1}=c_iT_i-(q-q^{-1})(c_i-c_{i+1}),
\end{align*}
for all $1\leq i,i'\leq r-1$ and $1\leq j\leq r$.

Let $\ov{S}=TS\in\End_{\Q(q)}(V^{\otimes 2})$.
Then $\bar{S}_{(j,j+1)}\in\End_{\Q(q)}(V^{\otimes r})$.
It follows from \cite[Theorems 5.2-5.3]{Ol} that there exists a representation
$(\Psi_r, V^{\otimes r})$ of the Hecke-Clifford algebra $\HC$ defined by
$$
\Psi_r(T_j)=\bar{S}_{(j,j+1)}\quad\text{ and }\quad \Psi_r(c_k)=\Theta_{(k)},
$$
where $1\leq j\leq r-1$ and $1\leq k\leq r$.
Moreover, $\Phi_r(\qUq)=\End_{\HC}(V^{\otimes r})$ and hence, by \eqref{Qqnr}, the following holds
\begin{equation*}
\qQnr=\End_{\HC}(V^{\otimes r}).
\end{equation*}
Let $V_\Z$ (resp. $V_{\mc Z}$) be the free $\Z$-supermodule (resp. $\mc Z$-supermodule)
with basis $v_1,\ldots,v_n,$ $v_{-1},\ldots,v_{-n}$.
Let $\mathcal{H}^c_{r,\mcZ}$ (resp. $\mf H^c_{r,\Z}$) be the corresponding Hecke-Clifford algebra (resp. Sergeev superalgebra)
defined over $\mcZ$ (resp. $\Z$), and let
\begin{equation}\label{QsZ}
{\mc Q}_q(n,r)_{\mcZ}=\End_{\mathcal{H}^c_{r,\mcZ}}(V_{\mc Z}^{\otimes r}).
\end{equation}
Then the specialization of $\mathcal{H}^c_{r,\mcZ}$ at $q=1$ coincides the Sergeev superalgebra $\mf H^c_{r,\Z}$. Clearly, $\Q(q)\otimes_{\mcZ}\End_{\mathcal{H}^c_{r,\mcZ}}(V_{\mc Z}^{\otimes r})\subseteq\qQnr$.
Meanwhile by \cite[Theorem 4.5]{BK} we have $\Qnr=\Q\otimes_{\Z}\End_{\mf H^c_{r,\Z}}(V_\Z^{\otimes r})$.
As $\End_{\mathcal{H}^c_{r,\mcZ}}(V_{\mc Z}^{\otimes r})$ specializes to $\End_{\mf H^c_{r,\Z}}(V_\Z^{\otimes r})$ at $q=1$, we
have $\dim\Q(q)\otimes_{\mcZ}\End_{\mathcal{H}^c_{r,\mcZ}}(V_{\mc Z}^{\otimes r})\geq \dim\Q\otimes_{\Z}\End_{\mf H^c_{r,\Z}}(V_\Z^{\otimes r})$ and hence
\begin{equation}\label{dimQqnr}
\dim\qQnr\geq \dim\Qnr.
\end{equation}

\begin{lem}\label{Phir-cartan}
\begin{enumerate}\item Let $a,j\in I(n|n)$ and $1\leq b\leq n$. If $-b<a$ and $|j|\neq b$, then
$$
\big(\Phi_1(L_{-b,a})\big)(v_j)=0.
$$
\item Suppose $\udj\in I(n|n)^r$ and ${\rm wt}(\udj)=\mu$.
We have,  for $1\leq i\leq n$,
\begin{enumerate}
\item[(a)] $\Phi_r(K_i)(v_{\udj})=q^{\mu_i} v_{\udj}$;
\item[(b)] $\Phi_r(K_{\bar{i}})(v_{\udj})=0$ if $\mu_i=0$.
\end{enumerate}
\end{enumerate}
\end{lem}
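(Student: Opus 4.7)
The plan is to verify the three statements in order, using in each case only the explicit matrix formulas~\eqref{eq:operatorS}--\eqref{S} for $\Phi_1$, the generator rewriting \eqref{q-generator}, and the comultiplication~\eqref{comult}--\eqref{q-comult}.

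For part (1), I would simply stare at the list \eqref{S} and check each case for $(-b,a)$ with $-b<a$ and $1\le b\le n$. The hypothesis $-b<a$ forces either $a>0$ (so the relevant formula is $S_{-b,a}=-(q-q^{-1})(E_{-a,b}+E_{a,-b})$ with $1\le a,b\le n$) or $a=-a'$ with $1\le a'<b$ (giving $S_{-b,-a'}=-(q-q^{-1})(E_{a',b}+E_{-a',-b})$). In either case the image of any basis vector $v_j$ is supported on $\{v_{\pm b}\}$ times a coefficient $\delta_{|j|,b}$, so $S_{-b,a}(v_j)=0$ whenever $|j|\neq b$.

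For part (2)(a), I would use $\Delta(K_i)=K_i\otimes K_i$ from \eqref{q-comult} to conclude $\Phi_r(K_i)=\Phi_1(K_i)^{\otimes r}=S_{i,i}^{\otimes r}$. Since $S_{i,i}=1+(q-1)(E_{i,i}+E_{-i,-i})$, the single-site action is $S_{i,i}(v_k)=q^{\delta_{|k|,i}}v_k$, and applying this to $v_{\udj}$ yields $q^{\mu_i}v_{\udj}$ with $\mu_i=\#\{k\mid |j_k|=i\}$ by \eqref{wt}.

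For part (2)(b), I would use \eqref{q-generator} to write $\Phi_r(K_{\bar i})=-\tfrac{1}{q-q^{-1}}\Phi_r(L_{-i,i})$, and then iterate \eqref{comult} to obtain
\[
\Delta^{(r-1)}(L_{-i,i})=\sum_{-i\le k_1\le k_2\le\cdots\le k_{r-1}\le i}L_{-i,k_1}\otimes L_{k_1,k_2}\otimes\cdots\otimes L_{k_{r-1},i}.
\]
Set $k_0=-i$, $k_r=i$. Given a chain there is a least index $t$ with $k_{t+1}>-i$; the assumption $\mu_i=0$ means $|j_{t+1}|\neq i$, so part (1) applied to the $(t+1)$-th tensor factor $\Phi_1(L_{-i,k_{t+1}})(v_{j_{t+1}})$ gives $0$. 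Thus every term in the sum kills $v_{\udj}$, proving (b).

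The only mildly subtle point is the bookkeeping for the iterated coproduct in (b), but once part (1) is in hand this reduces to the observation that in any admissible chain of indices the first strict ascent from $-i$ produces a factor of the form $L_{-i,\ast}$ with $\ast>-i$, which annihilates the corresponding $v_{j_\ast}$.
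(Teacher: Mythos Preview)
Your argument is correct and matches the paper's approach in all three parts; the paper's proof of (2b) proceeds by contradiction (forcing $k_1=\cdots=k_{r-1}=-i$ one by one and then observing the last factor $\Phi_1(L_{-i,i})(v_{j_r})$ vanishes), which is equivalent to your ``first strict ascent'' formulation. One harmless slip in part~(1): the image of $v_j$ under $S_{-b,a}$ is supported on $\{v_{\pm a}\}$, not $\{v_{\pm b}\}$, but your conclusion that the coefficient carries a factor $\delta_{|j|,b}$ is what matters and is correct.
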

\begin{proof}
Part (1) follows from definitions \eqref{S}, \eqref{PhiS} and \eqref{Eij}, since $|j|\neq b$ implies
\begin{align*}
\big(\Phi_1(L_{-b,a})\big)(v_j)=-(q-q^{-1})(E_{a,-b}+E_{-a,b})(v_j)=0.
\end{align*}

Similarly, by definitions \eqref{PhiS}, \eqref{S}, \eqref{Eij}, and \eqref{q-generator}, we obtain
\begin{align*}
\Phi_1(K_i)(v_j)=\Phi_1(L_{i,i})(v_j)=\big(1+(q-1)(E_{i,i}+E_{-i,-i})\big)(v_j)=q^{\delta_{i,|j|}}v_j
\end{align*}
for $1\leq i\leq n$ and $j\in I(n|n)$.
Hence, the comultiplication \eqref{q-comult} on $U_q(\mfq)$ (and noting $K_i=L_{i,i}$) gives
\begin{align*}
\Phi_r(K_i)(v_{\udj})
=&(\Phi_1(K_i)\otimes \Phi_1(K_i)\otimes\cdots\otimes \Phi_1(K_i)) (v_{\udj})\\
=&\Big(\prod^r_{k=1}q^{\delta_{i,|j_k|}}\Big)v_{\udj}=q^{\mu_i} v_{\udj}\quad(\text{see \eqref{wt}}),
\end{align*}
proving part (2a).

Next, suppose $1\leq b\leq n$ and $\mu_b=0$. Then, by \eqref{wt}, $|j_k|\neq b$ for each $1\leq k\leq r$.
Suppose $\Phi_r(K_{\bar b})(v_{\udj})\neq 0$. Then $\Phi_r(L_{-b,b})(v_{\udj})=-(q-q^{-1})\Phi_r(K_{\bar b})(v_{\udj})\neq 0$
by \eqref{q-generator}. On the other hand,
by the comultiplication \eqref{comult} and the definition of $\Phi_r$,
\begin{equation*}\label{Phir-L-bb}
\aligned
\Phi_r(L_{-b,b})(v_{\udj})
=\bigg(\sum^{b}_{k_1=-b}\sum^{b}_{k_2=k_1}\cdots\sum^{b}_{k_{r-1}=k_{r-2}}
\Phi_1(&L_{-b,k_{1}})\otimes \Phi_1(L_{k_1,k_2})\otimes \cdots\otimes\\
& \Phi_r(L_{k_{r-2},k_{r-1}})\otimes\Phi_1(L_{k_{r-1},b})\bigg)(v_{\udj}).
\endaligned
\end{equation*}
Thus, there exist $-b\leq k_{1}\leq k_{2}\leq\cdots\leq k_{r-1}\leq b$
such that
\begin{equation}\label{last}
\Phi_1(L_{-b,k_1})(v_{j_1})\neq0,\quad\Phi_1(L_{k_1,k_2})(v_{j_2})\neq0,\quad\ldots,\quad\Phi_1(L_{k_{r-1},b})(v_{j_r})\neq0.
\end{equation}
Since $|j_k|\neq b$, repeatedly applying part (1) to the first, second, ... and second last inequality forces $-b=k_1$, $-b=k_2$, ... and $-b=k_{r-1}$.
Then by part (1) again, $\Phi_1(L_{k_{r-1},b})(v_{j_r})=\Phi_1(L_{-b,b})(v_{j_r})= 0$ (since $|j_r|\neq b$),
which is in contradiction to the last inequality in \eqref{last}.
Hence, part (2) is verified.
\end{proof}

We are now ready to establish the quantum version
of Theorem \ref{presentQr}. Recall the ideal $I_q$ defined in \eqref{ideal-Iq}.

\begin{theorem}\label{q-surjective}
The homomorphism $\Phi_r: \qUq\rightarrow \End_{\Q(q)}(V^{\otimes r})$ satisfies $I_q\subseteq\ker \Phi_r$ and induces an algebra isomorphism
\begin{equation}\label{ovPhir}
\ov{\Phi}_r: \qPnr\overset\sim\longrightarrow \qQnr.
\end{equation}
In particular, the Schur superalgebra $\qQnr$ is the associative superalgebra
generated by even generators
$ K^{\pm1}_i,E_j, F_j,$
and odd generators $K_{\bar i}, E_{\bar j}, F_{\bar j},$
with $1\leq i\leq n$ and $1\leq j\leq n-1$
subject to the relations {\rm (QQ1)-(QQ6)} together with the following extra relations:
\begin{itemize}

  \item[(QQ7)] $K_1\cdots K_n=q^r$;

   \item[(QQ8)] $(K_i-1)(K_i-q)\cdots(K_i-q^r)=0$, where $1\leq i\leq n$.\footnote{In Theorem \ref{presentQr}, the counterpart of (QQ8) is the relation $h_{i}(h_i-1)\cdots(h_i-r)=0$. This was not displayed, as it can easily be derived from (QS8).}
 \item[(QQ9)] $K_{\bi}(K_i-q)\cdots(K_i-q^r)=0$, where $1\leq i\leq n$;

\end{itemize}

\end{theorem}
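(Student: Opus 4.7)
The plan is to mirror the proof strategy of Theorem~\ref{presentQr} but leveraging the quantum analogues already established. First I would verify that $I_q\subseteq \ker\Phi_r$ by checking the three families of generators on an arbitrary weight vector $v_{\udj}$ with ${\rm wt}(\udj)=\mu=(\mu_1,\ldots,\mu_n)\in\La(n,r)$. Using Lemma~\ref{Phir-cartan}(2)(a), the element $K_1\cdots K_n$ acts on $v_{\udj}$ as $q^{\mu_1+\cdots+\mu_n}=q^r$, so $K_1\cdots K_n-q^r\in\ker\Phi_r$. For $(K_i-1)(K_i-q)\cdots(K_i-q^r)$, the scalar $q^{\mu_i}$ with $0\le\mu_i\le r$ is always a root of the polynomial, so this element annihilates $v_{\udj}$. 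Finally, for $K_{\bi}(K_i-q)\cdots(K_i-q^r)$ we split cases: if $\mu_i=0$, then $\Phi_r(K_{\bi})(v_{\udj})=0$ by Lemma~\ref{Phir-cartan}(2)(b); if $\mu_i\ge 1$, the factor $(K_i-q^{\mu_i})$ is present and annihilates $v_{\udj}$ before $K_{\bi}$ acts (modulo care with ordering, but since all $K_i, K_{\bi}$ preserve weight spaces this is fine). Hence $\Phi_r$ descends to a well-defined surjective homomorphism $\ov{\Phi}_r:\qPnr\twoheadrightarrow\qQnr$.

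The second step is a dimension count, following the pattern of the proof of Theorem~\ref{presentQr}. From Proposition~\ref{Yq-span} and \eqref{Bq}, we have
\[
\dim_{\Q(q)}\qPnr\le |\scrBq|=|M_n(\N|\Z_2)_r|.
\]
To bound $\dim\qQnr$ from below, I would use the integral form $\mathcal{Q}_q(n,r)_{\mcZ}=\End_{\mathcal{H}^c_{r,\mcZ}}(V_{\mcZ}^{\otimes r})$ introduced in~\eqref{QsZ}. Specializing $q\mapsto 1$ turns $\mathcal{H}^c_{r,\mcZ}$ into the Sergeev superalgebra $\mf H^c_{r,\Z}$ and $V_{\mcZ}^{\otimes r}$ into a free $\mf H^c_{r,\Z}$-module. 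By \eqref{dimQqnr} together with the dimension formula $\dim\Qnr=|M_n(\N|\Z_2)_r|$ from \eqref{dimQnrX}, we deduce
\[
\dim_{\Q(q)}\qQnr\ge \dim_{\Q}\Qnr=|M_n(\N|\Z_2)_r|.
\]
Combining these inequalities with the surjectivity of $\ov{\Phi}_r$ yields
\[
|M_n(\N|\Z_2)_r|\le \dim\qQnr\le \dim\qPnr\le |M_n(\N|\Z_2)_r|,
\]
forcing equality throughout and hence $\ov{\Phi}_r$ is an isomorphism.

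The presentation statement then follows: by Proposition~\ref{presentqUq}, $\qUq$ is presented by the generators and relations (QQ1)--(QQ6), and by \eqref{Pq} the queer quantum Schur superalgebra $\qPnr$ is the further quotient by the ideal $I_q$, whose generators are precisely (QQ7)--(QQ9).

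The main obstacle I anticipate is the lower bound $\dim\qQnr\ge \dim\Qnr$. The excerpt cites this via the specialization $q\to 1$ applied to the $\mcZ$-form $\mathcal{Q}_q(n,r)_{\mcZ}=\End_{\mathcal{H}^c_{r,\mcZ}}(V_{\mcZ}^{\otimes r})$, but verifying that this integral endomorphism ring actually specializes correctly (i.e.\ that $\End_{\mathcal{H}^c_{r,\mcZ}}(V_{\mcZ}^{\otimes r})\otimes_{\mcZ}\Q$ surjects onto $\End_{\mf H^c_{r,\Z}}(V_\Z^{\otimes r})$, without cokernel) is the one non-formal ingredient; everything else reduces to rewriting weight-vector computations and invoking Propositions~\ref{Yq-span} and~\ref{presentqUq}.
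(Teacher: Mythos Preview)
Your proposal is correct and follows essentially the same approach as the paper: first verify $I_q\subseteq\ker\Phi_r$ via Lemma~\ref{Phir-cartan} on weight vectors, then close the argument with the dimension chain $\dim\qPnr\le|\sBq|=|M_n(\N|\Z_2)_r|=\dim\Qnr\le\dim\qQnr\le\dim\qPnr$, using Proposition~\ref{Yq-span} and \eqref{dimQqnr}. Your caveat about the specialization step underlying \eqref{dimQqnr} is a fair observation---the paper treats this briefly as well---but you are correctly invoking what has already been established in the text, so no additional argument is needed here.
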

\begin{proof}
Fix an arbitrary $\udj\in I(n|n)^r$.
Suppose ${\rm wt}(\udj)=\mu\in\La(n,r)$.
Then by Lemma~\ref{Phir-cartan}(1) we have
\begin{align*}
\Phi_r(K_1\cdots K_n)(v_{\udj})=&q^{\mu_1+\cdots+\mu_n}v_{\udj}=q^rv_{\udj},\\
\Phi_r((K_i-1)(K_i-q)\cdots (K_i-q^r))(v_{\udj})=&(q^{\mu_i}-1)(q^{\mu_i}-q)\cdots (q^{\mu_i}-q^r)v_{\udj}=0
\end{align*}
since $0\leq \mu_i\leq r$ for $1\leq i\leq n$ and $\mu_1+\cdots+\mu_n=r$.
This means
\begin{align}\label{ker-Phir-ev}
\Phi_r(K_1\cdots K_n-q^r)=0,\quad \Phi_r((K_i-1)(K_i-q)\cdots (K_i-q^r))=0
\end{align}
for $1\leq i\leq n$.
Meanwhile still by Lemma~\ref{Phir-cartan}(1) we obtain
\begin{align}\label{Phir-KioddKi}
\Phi_r(K_{\bi}(K_i-q)\cdots (K_i-q^r))(v_{\udj})=&(q^{\mu_i}-q)\cdots (q^{\mu_i}-q^r)\Phi_r(K_{\bi})(v_{\udj}).
\end{align}
Similar to non-quantum case, if $\mu_i=0$, then $\Phi_r(K_{\bi})(v_{\udj})=0$ due to Lemma~\ref{Phir-cartan}(2).
Otherwise we have $1\leq\mu_i\leq r$. Then $(q^{\mu_i}-q)\cdots (q^{\mu_i}-q^r)=0$. Putting together,
by \eqref{Phir-KioddKi} we obtain
$\Phi_r(K_{\bi}(K_i-q)\cdots (K_i-q^r))(v_{\udj})=0$.
Therefore, we have proved
\begin{align}\label{ker-Phir-odd}
\Phi_r(K_{\bi}(K_i-q)\cdots (K_i-q^r))=0.
\end{align}
In summary, by~\eqref{ideal-Iq}, \eqref{ker-Phir-ev} and \eqref{ker-Phir-odd},
the ideal $I_q$ is contained in the kernel of the homomorphism $\Phi_r$. Hence, $\Phi_r$ induces a surjective homomorphism
\begin{equation}\label{ovPhir}
\ov{\Phi}_r: \qPnr\twoheadrightarrow \qQnr.
\end{equation}
Now a dimensional comparison gives the required isomorphism since, by \eqref{dimQqnr} and Proposition~\ref{Yq-span},
$$
\dim \qPnr\leq |\sBq|=|\sB|=\dim  \Qnr\leq \dim \qQnr\leq \dim \qPnr,
$$
The last assertion follows from the definition of $\qPnr$ in~\eqref{Pq}.
\end{proof}

Propositions \ref{Yq-span} and \ref{q-property}(5) can now be strengthened as follows.
\begin{cor}
The set $\sBq$ defined in \eqref{Bq}
forms a $\mcZ$-basis for $U_q(n,r)_{\mcZ}$.
In particular, the set $\{1_\la\ov{K}_D\mid \la\in\La(n,r),D\in\Z_2^n, D_i\leq \la_i, 1\leq i\leq n\}$
is a $\mcZ$-basis for $U^0_q(n,r)_\mcZ$.
Moreover,  $\dim_{\Q(q)} \qQnr=\dim_\Q\Qnr$ and $\dim_{\Q(q)} \mc{Q}_q^0(n,r)=\dim_\Q\mc{Q}^0(n,r)$ are given as in Corollary~\ref{dim formulas}.
\end{cor}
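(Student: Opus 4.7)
The plan is to bootstrap from Theorem~\ref{q-surjective} and the dimension chain established in its proof. Recall that in that proof we obtained
\[
\dim_{\Q(q)}\qPnr \;\leq\; |\sBq| \;\leq\; \dim_{\Q(q)}\qQnr \;\leq\; \dim_{\Q(q)}\qPnr,
\]
so all the inequalities are equalities; in particular $|\sBq|=\dim_{\Q(q)}\qPnr=\dim_{\Q(q)}\qQnr$, and this common value equals $|\sB|=\dim_\Q\Qnr$ by \eqref{dimQqnr} and the corresponding dimension statement in the non-quantum setting.

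\medskip\noindent\textbf{Step 1: $\sBq$ is a $\mcZ$-basis of $U_q(n,r)_\mcZ$.} By Proposition~\ref{Yq-span} the set $\sBq$ spans $U_q(n,r)_\mcZ$ over $\mcZ$, so it suffices to prove $\mcZ$-linear independence. Since $\mcZ=\Z[q,q^{-1}]$ is a domain with fraction field containing $\Q(q)$, it is enough to check independence after extending scalars to $\Q(q)$. The base change $\Q(q)\otimes_{\mcZ}U_q(n,r)_\mcZ$ is a $\Q(q)$-subspace of $\qPnr$ containing the image of the spanning set $\sBq$, hence equals $\qPnr$. Because $|\sBq|$ coincides with $\dim_{\Q(q)}\qPnr$ and $\sBq$ spans, it must be $\Q(q)$-linearly independent, and therefore $\mcZ$-linearly independent in $U_q(n,r)_\mcZ$.

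\medskip\noindent\textbf{Step 2: The $\mcZ$-basis of $U^0_q(n,r)_\mcZ$.} Observe that when $A\in M_n(\N|\Z_2)_r$ has $A^+=A^-=0$ and $A_0^0=0$ (forced by $M_n(\N|\Z_2)'$), the element $\fku^q_A$ reduces to $1_{\chi(A)}\ov{K}_{A_1^0}$, and the condition $\chi(A)\preceq\la=\chi(A)$ translates precisely into $D_i\le\la_i$, where $D=A_1^0$ and $\la=\chi(A)\in\La(n,r)$. Hence the set
\[
\mc{D}:=\{1_\la\ov{K}_D\mid \la\in\La(n,r),\,D\in\Z_2^n,\,D_i\le\la_i\}
\]
is a subset of the $\mcZ$-basis $\sBq$, so $\mc{D}$ is $\mcZ$-linearly independent. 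Spanning is Proposition~\ref{q-property}(5).

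\medskip\noindent\textbf{Step 3: Dimension formulas.} From the equality chain in Theorem~\ref{q-surjective} we obtain $\dim_{\Q(q)}\qQnr=|\sBq|=|\sB|=\dim_{\Q}\Qnr$, which via Corollary~\ref{dim formulas}(1) yields the formula for $\dim_{\Q(q)}\qQnr$. For the $0$-part, $|\mc{D}|=\sum_{\la\in\La(n,r)}2^{\ell(\la)}$ by a direct count, and by Step~2 this cardinality equals $\dim_{\Q(q)}\mc{Q}^0_q(n,r)$, matching Corollary~\ref{dim formulas}(2).

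\medskip
No genuine obstacle arises: the only slightly delicate point is the passage from $\mcZ$-spanning plus a dimension match over $\Q(q)$ to $\mcZ$-linear independence, which is handled by the domain property of $\mcZ$ and the identification $\Q(q)\otimes_\mcZ U_q(n,r)_\mcZ=\qPnr$ coming from the fact that the defining ideal $I_q\cap U_{q,\mcZ}$ is compatible with base change.
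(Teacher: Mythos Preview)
Your proof is correct and matches the intended argument: the paper states this corollary without proof, as it follows immediately from the chain of equalities $\dim_{\Q(q)}\qPnr=|\sBq|=|\sB|=\dim_{\Q}\Qnr=\dim_{\Q(q)}\qQnr$ obtained in the proof of Theorem~\ref{q-surjective}, combined with the spanning statements in Propositions~\ref{Yq-span} and~\ref{q-property}(5). One small notational slip: in Step~2 you write ``$A_0^0=0$ (forced by $M_n(\N|\Z_2)'$)'' while working with $A\in M_n(\N|\Z_2)_r$, where $A_0^0$ need not vanish; the correct identification is that the diagonal elements of $\sBq$ (those with $A^+=A^-=0$) are parametrized by pairs $(\la,D)$ via $\la=\chi(A)$ and $D=A_1^0$, with $D_i\le\la_i$ automatic since $\la_i=a^0_{ii}+D_i$---but this does not affect your argument.
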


\begin{rem} Since $\Phi_r(\qUZ)\subseteq {\mc Q}_q(n,r)_{\mcZ}$ by \eqref{Qqnr} and \eqref{QsZ}, the restriction of $\bar\Phi_r$ in \eqref{ovPhir} induces a superalgebra monomorphism
$\bar\Phi_{r,\mcZ}:U_q(n,r)_\mcZ\to{\mc Q}_q(n,r)_{\mcZ}$. It is natural to conjecture that $\bar\Phi_{r,\mcZ}$ is surjective.
\end{rem}

Similar to the non-quantum case, by an argument parallel to the proof of \cite[Theorem 3.4]{DG}
we have the following.
\begin{thm}
The quantum queer Schur superalgebra $\qQnr$ is the unitary associative superalgebra generated by
the even elements $1_\la, E_j, F_j $ and odd elements
$K_{\bi}, E_{\bj}, F_{\bj}$ for $\la\in\La(n,r),1\leq i\leq n, 1\leq j\leq n-1$
subject to the relations:
\begin{enumerate}
\item[(QQ1${}^\prime$)]
 $1_\la1_\mu=\delta_{\la,\mu}1_\la,\quad \sum_{\la\in\La(n,r)}1_\la=1$,
 $K_{\bi}1_\la=1_\la K_{\bi},$

\noindent $K_{\bar i}K_{\bar j}+K_{\bar j}K_{\bar i}=\ds\delta_{ij}\sum_{\la\in\La(n,r)}\frac{2(q^{2\la_i}-q^{-2\la_i})}{q^2-q^{-2}}1_\la$,

 \noindent $K_{\bi}1_\la=0$ if $\la_i=0$;

 \vspace{0.1in}

 \item[(QQ2${}^\prime$)]
$
E_j1_\la=\left\{
\begin{array}{ll}
1_{\la+\al_j}E_j,&\text{ if }\la+\al_j\in\La(n,r),\\
0,&\text{ otherwise},
\end{array}
\right.
$
$
E_{\bj}1_\la=\left\{
\begin{array}{ll}
1_{\la+\al_j}E_{\bj},&\text{ if }\la+\al_j\in\La(n,r),\\
0,&\text{ otherwise},
\end{array}
\right.
$
$
F_j1_\la=\left\{
\begin{array}{ll}
1_{\la-\al_j}F_j,&\text{ if }\la-\al_j\in\La(n,r),\\
0,&\text{ otherwise},
\end{array}
\right.
$
$
F_{\bj}1_\la=\left\{
\begin{array}{ll}
1_{\la-\al_j}F_{\bj},&\text{ if }\la-\al_j\in\La(n,r),\\
0,&\text{ otherwise},
\end{array}
\right.
$
$
1_\la E_j=\left\{
\begin{array}{ll}
E_j1_{\la-\al_j},&\text{ if }\la-\al_j\in\La(n,r),\\
0,&\text{ otherwise},
\end{array}
\right.
$
$
1_\la E_{\bj}=\left\{
\begin{array}{ll}
E_{\bj}1_{\la-\al_j},&\text{ if }\la-\al_j\in\La(n,r),\\
0,&\text{ otherwise},
\end{array}
\right.
$
$
1_\la F_j=\left\{
\begin{array}{ll}
F_j1_{\la+\al_j},&\text{ if }\la+\al_j\in\La(n,r),\\
0,&\text{ otherwise},
\end{array}
\right.
$
$
1_\la F_{\bj}=\left\{
\begin{array}{ll}
F_{\bj}1_{\la+\al_j},&\text{ if }\la+\al_j\in\La(n,r),\\
0,&\text{ otherwise};
\end{array}
\right.
$

 \vspace{0.1in}

 \item[(QQ3${}^\prime$)]
 $K_{\bi}E_i-qE_iK_{\bi}=\sum_{\la\in\La(n,r)}E_{\bi}q^{-\la_i}1_\la,\quad qK_{\bi}E_{i-1}-E_{i-1}K_{\bi}=-\sum_{\la\in\La(n,r)}q^{-\la_i}1_\la E_{\ov{i-1}},$

 \noindent $K_{\bi}F_i-qF_iK_{\bi}=-\sum_{\la\in\La(n,r)}q^{\la_i}F_{\bi}1_\la,\quad qK_{\bi}F_{i-1}-F_{i-1}K_{\bi}=\sum_{\la\in\La(n,r)}q^{\la_i}1_\la F_{\ov{i-1}},$

\noindent  $K_{\bi}E_{\bi}+qE_{\bi}K_{\bi}=\sum_{\la\in\La(n,r)}q^{-\la_i}E_i1_\la,\quad qK_{\bi}E_{\ov{i-1}}+E_{\ov{i-1}}K_{\bi}=\sum_{\la\in\La(n,r)}q^{-\la_i}1_\la E_{i-1},$

\noindent  $K_{\bi}F_{\bi}+qF_{\bi}K_{\bi}=\sum_{\la\in\La(n,r)}q^{\la_i}F_i1_\la,\quad qK_{\bi}F_{\ov{i-1}}+F_{\ov{i-1}}K_{\bi}=\sum_{\la\in\La(n,r)}q^{\la_i}1_\la F_{i-1},$

\noindent  $K_{\bi}E_j-E_jK_{\bi}=K_{\bi}F_j-F_jK_{\bi}=K_{\bi}E_{\bj}+E_{\bj}K_{\bi}=K_{\bi}F_{\bj}+F_{\bj}K_{\bi}=0$
for $j\neq i,i-1$;

 \vspace{0.1in}

 \item[(QQ4${}^\prime$)]
 $E_iF_j-F_jE_i=\delta_{ij}\sum_{\la\in\La(n,r)}[\la_i-\la_{i+1}]1_\la$,

\noindent  $E_{\bi}F_{\bj}+F_{\bj}E_{\bi}=\delta_{ij}\sum_{\la\in\La(n,r)}[\la_i+\la_{i+1}]1_\la+\delta_{ij}(q-q^{-1})K_{\bi}K_{\ov{i+1}}$,



\noindent  $E_iF_{\bj}-F_{\bj}E_i=\delta_{ij}\sum_{\la\in\La(n,r)}(q^{-\la_{i+1}}K_{\bi}-q^{-\la_i}K_{\ov{i+1}})1_\la$,


\noindent  $E_{\bi}F_j-F_jE_{\bi}=\delta_{ij}\sum_{\la\in\La(n,r)}(q^{\la_{i+1}}K_{\bi}-q^{\la_i}K_{\ov{i+1}})1_\la$,

\end{enumerate}
and the relations {\rm (QQ5)-(QQ6)}.
\end{thm}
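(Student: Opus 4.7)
The plan is to follow the strategy of \cite[Theorem 3.4]{DG} and produce mutually inverse superalgebra homomorphisms between $\qPnr$ (as presented in Theorem \ref{q-surjective}) and the abstract superalgebra $A$ over $\Q(q)$ defined by the generators $1_\la, E_j, F_j, K_{\bar i}, E_{\bar j}, F_{\bar j}$ subject to the relations (QQ1$'$), (QQ2$'$), (QQ3$'$), (QQ4$'$), (QQ5), (QQ6) listed in the statement.

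First I would construct a homomorphism $\pi\colon A\to \qPnr$ sending each abstract generator to the element with the same name in $\qPnr$, where $1_\la$ is interpreted as $\begin{bmatrix}\bsK\\ \la\end{bmatrix}$. The relations (QQ5)--(QQ6) are carried over directly from Theorem \ref{q-surjective}. Proposition \ref{q-property}(1) and (4) yield the idempotent part of (QQ1$'$), and the identity in (QQ1$'$) involving $K_{\bar i}K_{\bar j}+K_{\bar j}K_{\bar i}$ follows from (QQ1) by inserting the decomposition
\[
\frac{K_i^2-K_i^{-2}}{q^2-q^{-2}}=\sum_{\la\in\La(n,r)}\frac{q^{2\la_i}-q^{-2\la_i}}{q^2-q^{-2}}\,1_\la
\]
from Proposition \ref{q-property}(3). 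The relations (QQ2$'$) are exactly Proposition \ref{q-root-idem}, while (QQ3$'$) and (QQ4$'$) arise by multiplying (QQ3) and (QQ4) on the appropriate side by $1_\la$ and applying Proposition \ref{q-property}(3) to replace each $K_i^{\pm1}$ by $q^{\pm\la_i}$.

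Next I would construct the reverse map $\pi'\colon \qPnr\to A$ by setting
\[
K_i^{\pm 1}\;\longmapsto\;\sum_{\la\in\La(n,r)}q^{\pm\la_i}\,1_\la,\qquad \begin{bmatrix}\bsK\\ \la\end{bmatrix}\;\longmapsto\;1_\la,
\]
and sending the remaining generators to their namesakes in $A$. I must verify that all relations (QQ1)--(QQ9) of Theorem \ref{q-surjective} hold in $A$. Relation (QQ7) is immediate since $\sum_i\la_i=r$ for each $\la\in\La(n,r)$ and the idempotents $1_\la$ are orthogonal. Relations (QQ8)--(QQ9) reduce to identities on each summand $1_\la$; for (QQ9) one uses the additional fact from (QQ1$'$) that $K_{\bar i}1_\la=0$ whenever $\la_i=0$. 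The commutation relations (QQ2) follow from (QQ2$'$) by a change-of-variable calculation, e.g.
\[
K_iE_j=\sum_\la q^{\la_i}1_\la E_j=\sum_{\la+\al_j\in\La(n,r)} q^{\la_i}E_j1_{\la-\al_j}=q^{(\ep_i,\al_j)}E_j\sum_\mu q^{\mu_i}1_\mu=q^{(\ep_i,\al_j)}E_jK_i.
\]
Relations (QQ1), (QQ3) and (QQ4) are recovered similarly from (QQ1$'$), (QQ3$'$), (QQ4$'$) by rewriting each $\sum_\la q^{a\la_i+b\la_{i+1}}1_\la$ in terms of products of the $K_k^{\pm1}$, while (QQ5)--(QQ6) are shared between both presentations.

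Finally, the composites $\pi\circ\pi'$ and $\pi'\circ\pi$ are both identities: on one side this is the decomposition $K_i=\sum_\la q^{\la_i}1_\la$ from Proposition \ref{q-property}(3), and on the other it is the reconstruction of $1_\la=\begin{bmatrix}\bsK\\ \la\end{bmatrix}$ from the $K_i^{\pm1}$ modulo (QQ7)--(QQ8), which is the same polynomial identity used in the classical analysis of $\mathbf{T}^0$ in \cite[\S 8]{DG}. The main obstacle I anticipate is the careful case analysis required when verifying (QQ3$'$) and (QQ4$'$) in $\qPnr$, since these relations mix $K_{\bar i}$-commutators with right-hand sides containing $K_i^{\pm1}K_{i+1}^{\pm1}$ and $K_{\bar i}K_{\overline{i+1}}$; here one needs to keep track of the $q^{\pm\la_i}$ scalars produced by Proposition \ref{q-property}(3) and check they match the prescribed coefficients in (QQ3$'$)--(QQ4$'$). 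Once these verifications are complete, the existence of mutually inverse homomorphisms establishes the presentation.
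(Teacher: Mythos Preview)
Your proposal is correct and follows exactly the approach the paper has in mind: the paper does not write out a proof but simply refers to ``an argument parallel to the proof of \cite[Theorem 3.4]{DG}'', which is precisely the construction of mutually inverse homomorphisms between the two presentations that you have outlined. The verifications you list (Propositions \ref{q-property} and \ref{q-root-idem} for the idempotent relations, and multiplying (QQ1)--(QQ4) by $1_\la$ together with $K_i^{\pm1}1_\la=q^{\pm\la_i}1_\la$ for the primed relations) are the right ingredients, and nothing further is needed.
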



\end{document}